  \let\oldparagraph\paragraph
  \renewcommand{\paragraph}{
    \@ifstar
      \xxxParagraphStar
      \xxxParagraphNoStar
  }
  \newcommand{\xxxParagraphStar}[1]{\oldparagraph*{#1}\mbox{}}
  \newcommand{\xxxParagraphNoStar}[1]{\oldparagraph{#1}\mbox{}}
  \let\oldsubparagraph\subparagraph
  \renewcommand{\subparagraph}{
    \@ifstar
      \xxxSubParagraphStar
      \xxxSubParagraphNoStar
  }
  \newcommand{\xxxSubParagraphStar}[1]{\oldsubparagraph*{#1}\mbox{}}
  \newcommand{\xxxSubParagraphNoStar}[1]{\oldsubparagraph{#1}\mbox{}}
\patchcmd\longtable{\par}{\if@noskipsec\mbox{}\fi\par}{}{}
\def\maxwidth{\ifdim\Gin@nat@width>\linewidth\linewidth\else\Gin@nat@width\fi}
\def\maxheight{\ifdim\Gin@nat@height>\textheight\textheight\else\Gin@nat@height\fi}
\def\fps@figure{htbp}
  \renewcommand*\contentsname{Table of contents}
  \newcommand\contentsname{Table of contents}
  \renewcommand*\listfigurename{List of Figures}
  \newcommand\listfigurename{List of Figures}
  \renewcommand*\listtablename{List of Tables}
  \newcommand\listtablename{List of Tables}
  \renewcommand*\figurename{Figure}
  \newcommand\figurename{Figure}
  \renewcommand*\tablename{Table}
  \newcommand\tablename{Table}
\newtheorem{thm}{Theorem}
\newtheorem{lem}{Lemma}
\newtheorem{prop}{Proposition}
\newtheorem{cor}{Corollary}
\newtheorem{defn}{Definition}
\newtheorem{rem}{Remark}
\newcommand{\de}{\delta}
\newcommand{\ff}{\infty}
\newcommand{\ra}{\rightarrow}
\newcommand{\ep}{\epsilon}
\def\theequation{\thesubsection.\arabic{equation}}
\newcommand{\QG}{Q_G}
\let\Q\Rational
\def\Real{\mathrm{I\!R}}
\def\theequation{\thesection.\arabic{equation}}
\newcommand{\anon}{1}
\begin{document}

\def\spacingset#1{\renewcommand{\baselinestretch}%
{#1}\small\normalsize} \spacingset{1}
\newcommand{\Q}{\mathcal{Q}}
\newcommand{\QGc}{\mathcal{Q}_{G}}
\newcommand{\QGn}{\mathcal{Q}_{G,n}}
\newcommand{\QGone}{\mathcal{Q}_{G,1n}}
\newcommand{\QGtwo}{\mathcal{Q}_{G,2n}}
\newcommand{\QEM}{\mathcal{Q}_{\mathrm{EM}}}

\newcommand{\DG}{D_{G}}
\newcommand{\DGn}{D_{G,n}}
\newcommand{\Done}{D_{1n}}
\newcommand{\Dtwo}{D_{2n}}

\newcommand{\Mop}{M}
\newcommand{\Mnop}{M_{n}}
\newcommand{\Mone}{M_{1n}}
\newcommand{\Mtwo}{M_{2n}}

\newcommand{\mix}{f}
\newcommand{\trueg}{g}
\newcommand{\gn}{g_n}
\newcommand{\wk}{w_k}
\newcommand{\hcomp}{h}
\newcommand{\thet}{\theta}
\newcommand{\thetp}{\theta'}

\providecommand{\Env}{\mathsf{Env}}

\newcommand{\RAF}{A}
\newcommand{\resid}{\delta}


\if1\anon
{
  \title{\bf Divergence-Minimization for Latent-Structure Models: 
  Monotone Operators, Contraction Guarantees, and Robust Inference}
  \author{Lei Li
    \hspace{.2cm}\\
    Department of Statistics, George Mason University\\
    and \\
    Anand N. Vidyashankar \\
    Department of Statistics, George Mason University}
  \maketitle
} \fi

\if0\anon
{
  \bigskip
  \bigskip
  \bigskip
  \begin{center}
    {\LARGE\bf Title}
\end{center}
  \medskip
} \fi

\bigskip
\begin{abstract}
We develop a divergence-minimization (DM) framework for robust and efficient inference in latent-mixture models. By optimizing a residual-adjusted divergence, the DM approach recovers EM as a special case and yields robust alternatives
through different divergence choices. We establish that the sample objective decreases monotonically along the iterates, leading the DM sequence to stationary
points under standard conditions, and that at the population level the operator
exhibits local contractivity near the minimizer. Additionally, we verify
consistency and $\sqrt{n}$-asymptotic normality of minimum-divergence estimators
and of finitely many DM iterations, showing that under correct specification their
limiting covariance matches the Fisher information. Robustness is analyzed via the residual-adjustment function, yielding bounded influence functions and a strictly positive breakdown bound for bounded-RAF divergences, and we contrast
this with the non-robust behaviour of KL/EM. Next, we address the challenge of
determining the number of mixture components by proposing a penalized divergence
criterion combined with repeated sample splitting, which delivers consistent
order selection and valid post-selection inference. Empirically, DM
instantiations based on Hellinger and negative exponential divergences deliver
accurate inference and remain stable under contamination in mixture and image-segmentation tasks. The results clarify connections to MM and proximal-point methods and offer practical defaults, making DM a drop-in alternative to EM for
robust latent-structure inference.
\end{abstract}

{Keywords:
DM algorithm; divergence-based mixture complexity; sample splitting; computational complexity
}


\noindent%
\vfill

\newpage
\spacingset{1.8} 
\section{Introduction}

Models with latent structure, such as finite mixture models (FMM), are
ubiquitous in clustering, image segmentation, and density modeling across
fields ranging from astronomy to medical research. A standard approach in
many of these settings is the classical expectation--maximization (EM)
algorithm and its variants (e.g., stochastic EM) for model fitting and
inference. The properties of EM, which are likelihood-based, have been
well studied when the model is correctly specified. However, when the
model is misspecified or the data contain aberrant observations,
likelihood-based methods can perform poorly, and minimum divergence
estimators, also referred to as minimum disparity estimators (see
\cite{Lind94,BSV97}), have been proposed as robust and efficient
alternatives to likelihood-based inference. For models with latent
structure, however, a unified, divergence-agnostic treatment with
operator-level guarantees remains underdeveloped.

The primary objective of this paper is to develop a general
divergence-minimization (DM) algorithm for latent-structure models and
(i) to establish monotone descent and local contractivity properties,
(ii) large-sample guarantees (consistency and $\sqrt{n}$-asymptotic
normality), and (iii) a divergence-based mixture complexity estimator via
multiple sample splitting. As a consequence of our operator-theoretic
view, existing methods such as EM and proximal-point updates appear as
special cases of the DM algorithm. A brief literature review of
divergence-based methods for latent structure models appears below; an
extended review is deferred to Appendix~A.

\subsection{Background and literature review}

Divergence-based methods are widely used due to their ability to unify
classical and robust inference. From \cite{Ber77} through \cite{Lind94}
(among others), it is well established that divergence-based estimators
are robust to misspecification and first-order efficient under correct
specification. In finite mixtures, \cite{Woo95} analyzed a two-component
normal mixture and used the minimum Hellinger distance (MHD) to estimate
the mixing proportion. \cite{Cut96} estimated all parameters in normal
mixtures via MHD, referred to as the HMIX algorithm. For discrete
mixtures, \cite{karlis98} studied Poisson mixtures using HELMIX, a
\emph{Poisson-specific} variant whose update relies on recurrence
relations for the Poisson pmf and
on the special algebraic form of the of the Hellinger distance; it
does not readily generalize either to other divergences or to other count families. In particular, HELMIX does
not extend to the Poisson--Gamma (Negative Binomial) or
Poisson--lognormal mixtures that we study here, for which no
Hellinger-type EM algorithm appears to be available in the literature.
Our divergence–minimization (DM) algorithm provides EM-type procedures
for these models in a unified way, yielding estimators that are both
robust and first-order efficient under the correctly specified model
(their limiting covariance equals the Fisher information). Beyond
Hellinger distance, the DM framework accommodates negative exponential
and related divergences, which are known to improve stability in the
presence of outliers and inliers \citep{Lind94,BSV97}.

Our finite–step convergence and large–sample results are essentially
divergence–agnostic within the DM class: under mild curvature and
regularity conditions, the DM iterates are $\sqrt{n}$–consistent and
first–order efficient under the correctly specified model, with limiting
covariance equal to the Fisher information, for any disparity $G$ in
this class. Robustness, however, is driven not by the DM scheme itself
but by the residual adjustment factor $A$ associated with $G$: when $A$
is bounded away from $-1$ on the relevant range, the resulting DM
estimators have a strictly positive breakdown point and bounded
gross–error sensitivity to outliers, while uniform boundedness of $A$
yields additional stability under inlier contamination, in line with the
behaviour of negative exponential and related divergences. In contrast,
classical EM, including commonly used weight–flooring schemes, does not
yield robustness: its influence function is unbounded and its breakdown
point is essentially zero.

The remainder of the paper is organized as follows.
Section~\ref{sec:DMLS} introduces the DM functional and algorithm in a
principled way via variational elimination of the latent weights, and
clarifies its relationships to EM, MM, and proximal–point methods.
Section~\ref{sec:convergence} studies the resulting algorithm,
establishing monotone descent, local contraction for the population and
sample DM maps, and path properties of the iterates.
Section~\ref{sec:asymptotics} investigates the asymptotic behaviour of
the sample–level iterates, showing $\sqrt{n}$–consistency, first–order
efficiency under the correctly specified model, finite–step CLT and
(Godambe–)Wilks properties, and robustness in the form of bounded
influence functions and strictly positive breakdown points.
Finally, Section~\ref{sec:modelselection} addresses unknown mixture
complexity: we introduce a generalized divergence information criterion
(GDIC), study its robustness, and combine it with a simple
sample–splitting scheme (using a majority-vote aggregation across splits)
to obtain valid post–selection inference, where the selection split is
used to compute GDIC while the estimation split inherits the finite–step
CLT and (Godambe–)Wilks guarantees developed earlier. This yields, to
our knowledge, the first end–to–end divergence–based procedure that both
estimates $K$ and delivers robust inference in overdispersed latent
count mixtures such as Poisson–Gamma and Poisson–lognormal models.
Section~\ref{sec:simulation} presents empirical studies and
Section~\ref{sec:case} case studies; detailed background and historical
notes appear in Appendix~A. Additional technical details are presented
in Appendices~B through~N.
\section{DM Algorithm for Models with Latent Structure}{\label{sec:DMLS}}
We begin this section with a brief description of models with latent structure and minimum divergence estimation. 

{\bf{Finite mixtures and latent structure.}}
The random variable $Y$ with density given by
\[
f(y;\bm{\theta})=\sum_{k=1}^K \pi_k\,h_k(y; \bm{\phi}_k),\qquad
\bm{\theta}=(\pi,\bm{\phi})\in\Delta_{K-1}\times\Phi,\ \ \pi_k>0,\ \sum_k\pi_k=1,
\]
is referred to as the finite mixture model (FMM). When 
\begin{align*}
h_k(y; \bm{\phi}_k) = \int_{\Real} s_k(y\mid\lambda) r_k(\lambda\mid\bm{\phi}_k) d \rho(\lambda),
\end{align*}
where $s_k(\cdot\mid\lambda)$ is a density for each $\lambda$ and $r(\cdot\mid\bm{\phi}_k)$ is the density on $\Lambda$ with parameter $\bm{\phi}_k = (\phi_{k1}, \dots, \phi_{kd})$, we refer to it as Hierarchical Finite Mixture Model (HFMM).  In applications, we assume a common conditional kernel, that is, $s_k \equiv s$, 
and allow only the mixing law to vary across components; hence, we write
\[
h_k(y;\boldsymbol\phi_k)\equiv h(y;\boldsymbol\phi_k)=\int_{\Lambda} s(y| \lambda)\, r(\lambda|\boldsymbol\phi_k)\,d \rho(\lambda),
\]
where $\Lambda \subset \Real$. Turning to the model, let $Z \in \mathcal{Z} \coloneqq \{1, \ldots, K\}$ denote the random variable representing the unobserved class of $Y$ and set for $z=k$, $p(y,z; \bm{\theta})=\pi_k h(y;\bm{\phi}_k)$. Then, the marginal density of $Y$ is given by $f(y;\bm{\theta})=\int_{\mathcal{Z}}p(y,z;\bm{\theta}) d\nu(z)$, where $\nu$ is the counting measure on $\left(\mathcal{Z}, 2^{\mathcal{Z}}\right)$.
$\bm{\pi}$ represents the vector of class-inclusion probabilities and is also referred to as mixing weights.  To remove label ambiguity, we order the components by increasing mixing weights and break ties deterministically:
$
\pi_1 \le \pi_2 \le \cdots \le \pi_K,\quad\text{and if }\pi_j=\pi_{j+1}\text{ we use a fixed scalar summary }m(\phi)\text{ (e.g., a location) with }m(\phi_j)\le m(\phi_{j+1}).
$ All results are permutation‑invariant and hold for any fixed labeling rule; this convention only selects a canonical representative of the label-equivalence class.

Several commonly used models are particular cases of the HFMM. First, setting  $s(y|\lambda)=\mathrm{Poisson}(y| \lambda)=e^{-\lambda}\lambda^y/y!$ and 
$r_k( \lambda; \bm{\phi}_k) =r(\lambda;\bm{\phi}_k)=\mathrm{Gamma}(\lambda| \alpha_k,\beta_k)=\{\beta_k^{\alpha_k}/\Gamma(\alpha_k)\}\,\lambda^{\alpha_k-1}e^{-\beta_k\lambda}$, 
with $\bm{\phi}_k=(\alpha_k,\beta_k)$, $\alpha_k>0$, $\beta_k>0$, one obtains a finite mixture of Poisson-Gamma models.
The resulting marginal is the same as a mixture of negative binomial models; that is,
\[
h(y;\alpha_k,\beta_k)
=\frac{\Gamma(y+\alpha_k)}{\Gamma(\alpha_k)\,y!}\,\Big(\frac{\beta_k}{\beta_k+1}\Big)^{\alpha_k}\Big(\frac{1}{\beta_k+1}\Big)^y.
\]
Continuing with the Poisson distribution, setting $s(y\mid\lambda)$ as above and 
$r(\lambda;\bm{\phi}_k)=\mathrm{Lognormal}(\lambda\mid \mu_k,\sigma_k^2)=\{\lambda\,\sqrt{2\pi}\sigma_k\}^{-1}
\exp\!\big[-(\log\lambda-\mu_k)^2/(2\sigma_k^2)\big]$, with $\bm{\phi}_k=(\mu_k,\sigma_k^2)$, $\sigma_k^2>0$, we obtain 
\[
h(y;\mu_k,\sigma_k^2)=\int_0^\infty \mathrm{Poisson}(y\mid \lambda)\;
\mathrm{Lognormal}(\lambda\mid \mu_k,\sigma_k^2)\,d\lambda.
\]
The above expression is evaluated numerically as there is no closed-form expression. We will investigate these two HFMMs, along with the standard $K$-component Poisson mixture, in the simulation section. In the rest of the manuscript, we use the following ratio convention:
\noindent\textit{Ratio convention.} For any ratio \(a/b\) used below
(e.g., \(r(y)=g(y)/f(y;\bm{\theta}')\), \(t(z\mid y)=w(z\mid y;\bm{\theta})/w(z\mid y;\bm{\theta}')\)),
we interpret it only where the denominator is positive; on null sets, we define the ratio to be \(1\).

{\bf{Residuals and divergences.}}
Let $g$ denote the target density and let the \emph{Pearson residual} be
\[
\delta(y;\bm{\theta})=\frac{g(y)}{f(y;\bm{\theta})}-1\quad(\text{so }g=f\,(1+\delta)).
\]
Given a thrice differentiable convex generator $G: [-1, \ff) \ra [0, \ff)$  satisfying $G(0)=G^{\prime}(0)=0$ and $G^{\prime\prime}(0)=1$, its \emph{residual‑adjustment function} (RAF) is given by $A(\delta)\coloneqq  (1+\delta)G'(\delta)-G(\delta)$. From the properties of $G$, $A(\delta)$ is an increasing function on $[-1, \infty)$ and carries the relevant information about the trade-off between efficiency and robustness. The residual‑adjusted divergence
\[
D_G(g,f_{\bm{\theta}})=\int G\!\big(\delta(y;{\bm{\theta}})\big)\,f(y;{\bm{\theta}})\,dy.
\]
The population minimum‑divergence estimator (MDE) solves $\bm{\theta}^\star\in\arg\min_{\bm{\theta}} D_G(g,f_{\bm{\theta}})$; in practice $g$
is replaced by an empirical version $g_n$ yielding $\widehat{\bm{\theta}}_n^{(G)}\in\arg\min_{\bm{\theta}} D_G(g_n,f_{\bm{\theta}})$.  Different generators $G$ induce different RAFs and hence different weighting of residuals in the estimating equations. Some of the canonical choices are (i) squared Hellinger distance:
$G_{\mathrm{HD}}(t)=2(\sqrt{1+t}-1)^2,\quad
A_{\mathrm{HD}}(\delta)
=2[(1+\delta)^{1/2}-1]$ and (ii) the
Negative exponential divergence (NED): $G_{\mathrm{NED}}(t)=[\exp(-t)-1 +t],\quad A_{\mathrm{NED}}= 2-2 e^{-\delta}-\delta e^{-\delta}$.
A Variant of NED (vNED), with g and $f_{\bm{\theta}}$ flipped, equivalently replacing $\delta= g/f-1$ by $\delta^*=f/g-1$, yields a softened left‑tail for $\delta<0$ to avoid over‑penalizing ``holes'' and helps improve segmentation and count-mixture fits. The details are in the appendix. 

\noindent \textbf{Remark (Limitations of KL for robustness).} For the Kullback--Leibler generator $G_{\mathrm{KL}}(t) = (1+t)\log(1+t)-t$, the
corresponding residual adjustment factor is $A_{\mathrm{KL}}(\delta) = \delta$,
which is unbounded in the residual $\delta = g/f_{\bm{\theta}} - 1$. As a consequence,
large density ratios $g/f_{\bm{\theta}}$ can exert arbitrarily large leverage on the
estimating equations: the gradient $\nabla_{\bm{\theta}} D_G(g,f_{\bm{\theta}})$ is not
uniformly bounded over contaminated distributions, and individual data points
can dominate the fit. Common EM heuristics such as \emph{weight--flooring} (enforcing
$\pi_k \ge \pi_{\mathrm{floor}}$ and renormalizing) only constrain the mixing
weights and do not cap these per–observation contributions: a single extreme
observation can still drive the fitted component parameters and, in
model--selection settings, spuriously favor an extra component. In particular,
the influence function remains unbounded and the breakdown point is essentially
zero. In Sections~\ref{sec:robustness}--\ref{sec:breakdown} we make this
precise, showing that our general convergence and finite--step CLT results still
hold for KL, but the uniform contraction and breakdown lower bounds available
for bounded--RAF divergences (such as NED/vNED) do not extend to KL without
additional local density--ratio assumptions.

We now turn to a heuristic description of the DM method in terms of divergences, borrowing terminology from the EM literature.  As above, let $Y$ be an observable real-valued random variable and $Z$ is a latent random variable representing the class membership of $Y$. The pair $(Y, Z)$ is referred to as the complete data. Let $\mathcal{G}$ denote the densities on $\Real$, and $g(\cdot)$ be the true density of $Y$ and $f(\cdot; \bm{\theta}) \in \mathcal{F}_{\bm{\Theta}}$ denote the postulated density. The postulated joint density of $(Y, Z)$ is denoted by $p(\cdot, \cdot; \bm{\theta})$ and the marginal postulated density is $f(y; \bm{\theta}) = \int_{\mathcal{Z}} p(y, z; \bm{\theta}) d\nu(z)$.
The marginal density is referred to as the \emph{incomplete data} model. Let $w(z|y;\bm{\theta}) \coloneqq \tfrac{p(y,z;\bm{\theta})}{f(y;\bm{\theta})} =\mathbb{P}_{\bm{\theta}}(Z=z|Y=y)$ be the conditional density of $Z$ given $Y$, it follows that $p(y, z;\bm{\theta}) = f(y;\bm{\theta}) w(z|y;\bm{\theta}).$ This is referred to as the \emph{responsibility}. Finally, we frequently use the notations $w(z|y, \bm{\theta})$ and $w_z(y;\bm{\theta})$ interchangeably. 

The cross-entropy loss for the joint distribution of $(Y, Z)$, referred to as the \emph{EM $Q-$functional}, is $\bm{E}\left[-\log p(Y, Z; \bm{\theta}\right]$,  and can be expressed as
$\mathbf{E}_Y\left[ \mathbf E_{Z\mid Y;\,\bm{\theta}}\!\big[-\log p(Y,Z;\bm{\theta}')\big]\right]$. The conditional expectation, conditioned on $Y=y$, is referred to as the EM surrogate. To obtain a robust variant of the EM surrogate, we replace the cross-entropy loss with a residual adjusted divergence on complete data. Specifically, we introduce the complete-data residual (parameterized by an auxiliary conditional $\tilde q$)
\[
\delta(y,z; \bm{\theta}',\tilde q)=\frac{g(y)\,\tilde q(z\mid y)}{p(y,z;\bm{\theta}')}-1,
\quad
\QG(\bm{\theta}',\tilde q\mid\bm{\theta})
:=\iint G\!\big(\delta(y,z;\bm{\theta}',\tilde q)\big)\,p(y,z;\bm{\theta}')\,d\nu(z)\,dy.
\]
We begin with a simple lemma describing the variational elimination of the auxiliary conditional from the complete-data residual.

\begin{lem}[Variational elimination of the auxiliary conditional]
\label{lem:innerq}
For any $\bm{\theta}'$, 
\[
\min_{\tilde q}\ \widetilde\QG(\bm{\theta}',\tilde q\mid \bm{\theta})
\ =\ \DG(g, f_{\bm{\theta}'})\quad\text{attained at}\quad
\tilde q(\cdot\mid y)=w(\cdot\mid y;\bm{\theta}'):=\frac{p(y,\cdot;\bm{\theta}')}{f(y;\bm{\theta}')}.
\]
\end{lem}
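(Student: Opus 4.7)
The plan is to convert the infimum over the auxiliary conditional $\tilde q$ into a pointwise–in–$y$ Jensen inequality applied on the posterior $w(\cdot\mid y;\bm{\theta}')$. First I would factor $p(y,z;\bm{\theta}')=f(y;\bm{\theta}')\,w(z\mid y;\bm{\theta}')$ and, on the set where $f(y;\bm{\theta}')>0$, introduce the density ratios $r(y)=g(y)/f(y;\bm{\theta}')$ and $t(z\mid y)=\tilde q(z\mid y)/w(z\mid y;\bm{\theta}')$. Under the ratio convention both are well defined wherever the denominators are positive, and because $\tilde q(\cdot\mid y)$ is a probability conditional one has
\[
\int t(z\mid y)\,w(z\mid y;\bm{\theta}')\,d\nu(z)\;=\;\int \tilde q(z\mid y)\,d\nu(z)\;=\;1,
\]
so $t(\cdot\mid y)$ is a $w(\cdot\mid y;\bm{\theta}')$–density. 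A direct substitution then yields the factorization $\delta(y,z;\bm{\theta}',\tilde q)=r(y)\,t(z\mid y)-1$.

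Next I would apply Jensen's inequality to the convex generator $G$ against the probability measure $w(\cdot\mid y;\bm{\theta}')$. For each $y$ in the support of $f_{\bm{\theta}'}$,
\[
\int G\!\big(r(y)\,t(z\mid y)-1\big)\,w(z\mid y;\bm{\theta}')\,d\nu(z)\;\ge\;G\!\Big(r(y)\!\int t(z\mid y)\,w(z\mid y;\bm{\theta}')\,d\nu(z)-1\Big)\;=\;G(r(y)-1),
\]
and the right-hand side equals $G(\delta(y;\bm{\theta}'))$, the incomplete–data Pearson contribution. Multiplying through by $f(y;\bm{\theta}')$, using the factorization of $p$, and integrating over $y$ (Tonelli applies because $G\ge 0$) delivers
\[
Q_G(\bm{\theta}',\tilde q\mid\bm{\theta})\;\ge\;\int G\!\big(\delta(y;\bm{\theta}')\big)\,f(y;\bm{\theta}')\,dy\;=\;D_G(g,f_{\bm{\theta}'}).
\]
For attainment, I would substitute $\tilde q(\cdot\mid y)=w(\cdot\mid y;\bm{\theta}')$ so that $t(z\mid y)\equiv 1$: the inner integrand collapses to $G(\delta(y;\bm{\theta}'))$, Jensen becomes an equality, and the lower bound is achieved, so the infimum is a minimum attained at the claimed $\tilde q$.

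The only genuine technical nuisance will be the support bookkeeping under the ratio convention. I would implicitly restrict the minimization to conditionals $\tilde q(\cdot\mid y)$ absolutely continuous with respect to $w(\cdot\mid y;\bm{\theta}')$ for $g$-almost every $y$; any $\tilde q$ violating this places mass on $\{p(y,z;\bm{\theta}')=0\}$, where the residual is defined via the ratio convention and the associated contribution is either infinite or confined to null sets, in either case not smaller than the bound above. Strict convexity of $G$ in a neighborhood of $0$ (ensured by $G''(0)=1$ and the thrice differentiability assumption) would give uniqueness of the minimizer up to $w$-null sets via the equality case of Jensen, but this sharper conclusion is not required by the lemma statement. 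The remaining steps are routine Fubini/Tonelli verifications using non-negativity of $G$.
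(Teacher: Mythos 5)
Your proof is correct and follows essentially the same route as the paper's: factor $p(y,z;\bm{\theta}')=f(y;\bm{\theta}')\,w(z\mid y;\bm{\theta}')$, apply Jensen's inequality pointwise in $y$ to the convex generator $G$ against the posterior $w(\cdot\mid y;\bm{\theta}')$ using $\int (\tilde q/w)\,w\,d\nu=1$, integrate over $y$, and verify attainment at $\tilde q=w(\cdot\mid y;\bm{\theta}')$. The additional support/absolute-continuity bookkeeping you supply is a harmless refinement of what the paper handles via its ratio convention.
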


\begin{proof}
Let $\bm{\theta}'$ be fixed and set $w(\cdot\mid y;\bm{\theta}')$ as above. Then,  by Jensen's inequality, for each $y$,
\[
\int G\!\Big(\frac{g(y)}{f(y;\bm{\theta}')}\cdot \frac{\tilde q(z\mid y)}{w(z\mid y;\bm{\theta}')}-1\Big)\,w(z\mid y;\bm{\theta}')\,d\nu(z)
\ \ge\ 
G\!\Big(\frac{g(y)}{f(y;\bm{\theta}')}-1\Big),
\]
and $\int \frac{\tilde q}{w}\,w\,d\nu=\int \tilde q\,d\nu=1$. Next, integrating over $y$, we obtain
$\min_{\tilde q}\widetilde\QG(\bm{\theta}',\tilde q\mid\bm{\theta})=\DG(g,f_{\bm{\theta}'})$, attained at $\tilde q=w(\cdot\mid y;\bm{\theta}')$.
\end{proof}

\noindent We notice that fixing the auxiliary conditional at the current responsibilities yields a valid majorizer. Hence, we set
$$
Q_{G}(\bm{\theta}'\mid\bm{\theta}):=Q_{G}\big(\bm{\theta}',\,w(\cdot\mid\cdot;\bm{\theta})\,\bigm|\bm{\theta}\big)\ \ge\ \DG(g,f_{\bm{\theta}'})\quad
\text{with equality at }\ \bm{\theta}'=\bm{\theta}.
$$
Thus, if $\bm{\theta}_{m+1}\in\arg\min_{\bm{\theta}'} Q_G(\bm{\theta}'\mid\bm{\theta}_m)$ then
$D_G(g,f_{\bm{\theta}_{m+1}})\le D_G(g,f_{\bm{\theta}_m}).$ In other words, $Q_G(\bm{\theta}'\mid\bm{\theta})$ inherits the properties of EM surrogate. Next, setting
$t(z\mid y)=\tfrac{w(z\mid y;\bm{\theta})}{w(z\mid y;\bm{\theta}')}$ and recalling $\delta(y; \bm{\theta}')+1=\tfrac{g(y)}{f(y;\bm{\theta}')}$, observe that
\begin{align}{\label{def:Q_G}}
Q_G(\bm{\theta}'\mid\bm{\theta})
=\int_{\mathcal Y} f(y;\bm{\theta}')\Bigg\{\int_{\mathcal Z}
G\!\big((1+\de(y;\bm{\theta}'))\,t(z\mid y)-1\big)\,w(z\mid y;\bm{\theta}')\,d\nu(z)\Bigg\}dy.
\end{align}
By subtracting $G(-1+\tfrac{g(y)}{f(y;\bm{\theta}')})$ from the inner integral, it follows that the difference $Q_G(\bm{\theta}'\mid \bm{\theta})-D_G(g, \bm{\theta}')$ represents average latent-conditional gap. Applying Jensen's inequality w.r.t. $w(\cdot\mid y; \bm{\theta}')$ (noting $\int t(\cdot\mid y)\,w(\cdot\mid y;\bm{\theta}')\,d\nu=1$) it follows that $H_G(\bm{\theta}'\mid\bm{\theta})\ \ge\ 0,~\text{and}~
H_G(\bm{\theta}\mid\bm{\theta})=0$. This yields the separation-majorization identity, namely, $Q_G(\bm{\theta}'\mid\bm{\theta})=\Psi_G(\bm{\theta}')+H_G(\bm{\theta}'\mid\bm{\theta})\ \ \ge\ \ \Psi_G(\bm{\theta}')=D_G(g_n,f_{\bm{\theta}'})$ with equality iff $w(\cdot\mid y;\bm{\theta})=w(\cdot\mid y;\bm{\theta}')$ for $f(\cdot;\bm{\theta}')$-a.e.\ $y$.

\noindent \textbf{The DM Algorithm:}
The expression (\ref{def:Q_G}) can alternatively be  expressed as follows:
\begin{align}{\label{eq:iter}}
\nonumber Q_G(\bm{\theta}'\mid \bm{\theta})&=\bm{E}_Y\left[ \bm{E}_{Z|Y, \bm{\theta}'}\left[ G\left( -1 + \frac{g(Y)w(Z|Y;\bm{\theta})}{f(Y;\bm{\theta}') w(Z|Y;\bm{\theta}') } \right)\right]  \right]\\
&= \bm{E}_Y\left[ \bm{E}_{Z|Y, \bm{\theta}'}\left[ G\left( -1 + \frac{g(Y)w(Z|Y;\bm{\theta})}{p(Y,Z;\bm{\theta}')} \right)\right]  \right].
\end{align}
This suggests a natural approach to obtain the MDE by iterating the RHS of (\ref{eq:iter}). We will show, in the following lemma, that if $\bm{\theta}'$ is MDE, then $Q^{(G)}(\bm{\theta}'|\bm{\theta}') = D^{(G)}(\bm{\theta}') \leq Q^{(G)}(\bm{\theta}' |\bm{\theta})$ for $\bm{\theta} \in \bm{\Theta}$. Using Supplement Lemma~\ref{S-Inequality00} (variational elimination), choosing 
$\tilde q(\cdot\mid y)=w(\cdot\mid y;\bm \theta)$ yields the following majorization.

\begin{lem}\label{lem:MF}
For any $\bm{\theta},\bm{\theta}'\in \bm{\Theta}$, $D_{G}(\bm{\theta}') \leq Q_{G}(\bm{\theta}'|\bm{\theta})$ for all $\bm{\theta} \in \bm{\Theta}$ and the equality holds if and only if $\bm{\theta}' = \bm{\theta}$. Hence, for any  $\bm{\theta}$, the map $Q_{G}(\cdot | \bm{\theta})$ majorizes $D^{(G)}(\cdot)$.
\end{lem}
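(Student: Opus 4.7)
The plan is to deduce Lemma~\ref{lem:MF} directly from Lemma~\ref{lem:innerq} (variational elimination) by specializing the auxiliary conditional, then extract the equality characterization from the strict-convexity case of the underlying Jensen inequality. The key observation is that $Q_G(\bm{\theta}'\mid\bm{\theta})$ is, by construction in (\ref{def:Q_G}), the complete-data functional $\widetilde{\QG}(\bm{\theta}', \tilde q\mid\bm{\theta})$ evaluated at the specific auxiliary conditional $\tilde q(\cdot\mid y) = w(\cdot\mid y;\bm{\theta})$ rather than at the variational minimizer $w(\cdot\mid y;\bm{\theta}')$.

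First I would write out this identification explicitly, so that $Q_G(\bm{\theta}'\mid\bm{\theta}) = \widetilde{\QG}\big(\bm{\theta}', w(\cdot\mid\cdot;\bm{\theta})\,\bigm|\bm{\theta}\big)$. Next, Lemma~\ref{lem:innerq} gives $\min_{\tilde q}\widetilde{\QG}(\bm{\theta}',\tilde q\mid\bm{\theta}) = \DG(g, f_{\bm{\theta}'}) = \DG(\bm{\theta}')$, attained at $\tilde q = w(\cdot\mid\cdot;\bm{\theta}')$. Specializing the inequality at $\tilde q = w(\cdot\mid\cdot;\bm{\theta})$ yields $\DG(\bm{\theta}') \leq \QG(\bm{\theta}'\mid\bm{\theta})$, which is the majorization claim. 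The ``if'' direction of the equality is immediate: when $\bm{\theta}' = \bm{\theta}$, the plugged-in $\tilde q$ coincides with the variational minimizer, so the inequality in Lemma~\ref{lem:innerq} collapses to an equality and $\QG(\bm{\theta}\mid\bm{\theta}) = \DG(\bm{\theta})$.

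For the ``only if'' direction, I would revisit the proof of Lemma~\ref{lem:innerq}: applying Jensen's inequality pointwise in $y$ with respect to $w(\cdot\mid y;\bm{\theta}')$ on the integrand $G\big((1+\delta(y;\bm{\theta}'))\,t(z\mid y) - 1\big)$, where $t(z\mid y) = w(z\mid y;\bm{\theta})/w(z\mid y;\bm{\theta}')$. Since $G$ is strictly convex at any point other than $0$ (its second derivative is continuous with $G''(0) = 1$), equality in Jensen's inequality forces $t(\cdot\mid y)$ to be $w(\cdot\mid y;\bm{\theta}')$-a.s.\ constant in $z$; the normalization $\int t(z\mid y)\,w(z\mid y;\bm{\theta}')\,d\nu(z) = 1$ then pins the constant to $1$. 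Therefore equality in the majorization is equivalent to $w(\cdot\mid y;\bm{\theta}) = w(\cdot\mid y;\bm{\theta}')$ for $f(\cdot;\bm{\theta}')$-a.e.\ $y$, which is precisely the condition already flagged in the separation--majorization identity preceding the lemma.

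The main obstacle is the last upgrade from ``responsibilities coincide a.e.'' to $\bm{\theta} = \bm{\theta}'$: this is not automatic and requires the identifiability of the HFMM parameterization through its posterior responsibilities. I would close this step by invoking the ordered-label/tie-breaking convention imposed in Section~\ref{sec:DMLS}, together with the component-support assumption that $h(\cdot;\bm{\phi}_k)$ is positive on a common set, so that $w(\cdot\mid y;\bm{\theta})$ determines $(\pi,\bm{\phi})$ up to a permutation already fixed by the ordering. If this identification is not available for the model at hand, I would state the lemma in its weaker form (equality iff responsibilities coincide a.e.), since this is what is actually needed for the descent property $\DG(\bm{\theta}_{m+1}) \leq \DG(\bm{\theta}_m)$ to drive the DM iteration.
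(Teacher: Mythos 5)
Your proof is correct and follows essentially the same route as the paper: the majorization is obtained by specializing the variational-elimination lemma at $\tilde q=w(\cdot\mid\cdot;\bm{\theta})$, and the equality case comes from the equality condition in Jensen's inequality applied to the inner integral. Your observation that this argument by itself only yields equality iff the responsibilities coincide for $f(\cdot;\bm{\theta}')$-a.e.\ $y$ — so that the stated ``iff $\bm{\theta}'=\bm{\theta}$'' additionally requires identifiability of the parameterization through its posteriors (the paper's own separation--majorization identity is stated in exactly this weaker form) — is a correct and worthwhile refinement rather than a defect of your proof.
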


Based on Lemma \ref{lem:MF}, it follows that $Q_{G}(\bm{\theta}' |\bm{\theta})$ is a majorizing function for the MM algorithm (see \cite{Hun00a} ).  Also, note that the divergence in $Q_{G}(\bm{\theta}'|\bm{\theta})$ includes additional information regarding the unobserved conditional density $w(z|y;\bm{\theta})$, which enables the calculation of the posterior distribution, as in the EM algorithm. This is based on knowing the true density $g(\cdot)$ and hence $D_{G}(\bm{\theta})$ and $Q_{G}(\bm{\theta}'|\bm{\theta})$ will be referred to as the \emph{population level} DM objective functions. 

When $g(\cdot)$ is unknown it can be replaced by $g_n(\cdot)$ (estimated from observed data), leading to the \emph{sample-level} objective functions. One choice of $g_n(\cdot)$ is the kernel density estimator
\begin{align}\label{KDE-1}
g_n(y) = \frac{1}{n c_n}\sum_{i=1}^n \mathcal{K}\left( \frac{y-Y_i}{c_n} \right),
\end{align}
where $\mathcal{K}(\cdot)$ is a probability density and $c_n$ is the bandwidth. Other choices of $g_n(\cdot)$ include wavelet density estimator (see \cite{Dou90}) and local polynomial regression estimator (see \cite{Fan96}). We now introduce notations to describe the sample-level objective function and surrogate. Let \(\delta_n(y;\bm{\theta})=g_n(y)/f(y;\bm{\theta})-1\) denote the sample Pearson's residual and
\[
D_G(g_n,f_{\bm{\theta}})=\int G(\delta_n(y; \bm{\theta}))\,f(y;\bm{\theta})\,dy ~~ \text{and} ~~
A(\delta_n)=(1+\delta_n)G'(\delta_n)-G(\delta_n).
\]
denote the sample-level objective function and the RAF. Given responsibilities \(w_k(y;\bm{\theta})\), we set
\[
\delta_{k,n}(y;\bm{\theta},\bm{\theta}')=\frac{g_n(y)\,w_k(y;\bm{\theta})}{\pi_k' h(y;\phi_k')}-1,~~ \text{and} ~~
Q_G(\bm{\theta}'\mid\bm{\theta};g_n)=\sum_{k=1}^K \int G\!\big(\delta_{k,n}(y;\bm{\theta},\bm{\theta}')\big)\,\pi_k' h(y;\phi_k')\,dy.
\]
Thus, the DM algorithm can be broadly divided into two steps. Below, we fix $\mu \in \{g_n, g\}$. Then the steps of the DM algorithm are:
\begin{enumerate}
    \item \textbf{D-step}. Determine $Q_{G}\left(\bm{\theta}'| \bm{\theta};\mu \right)$ .
\item \textbf{M-step}. Choose $\bm{\theta}_{m+1} \in \bm{\Theta}$ to minimize $Q_{G}\left(\bm{\theta}' |\bm{\theta}_m; \mu \right)$ over $\bm{\theta}' \in \bm{\Theta}$. 
\end{enumerate}
As a \emph{standing convention} for the rest of the paper, for every fixed  \(n\), we denote by \(D_{G,n}(\bm{\theta}):=D_G(g_n,f_{\bm{\theta}})\) and \(Q_{G,n}(\bm{\theta}'\mid\bm{\theta}):=Q_G(\bm{\theta}'\mid\bm{\theta};g_n)\). Also, for a fixed $G$, we suppress the dependence on $G$ and simply refer to it as $D_n(\bm{\theta})$ and $Q_n(\bm{\theta}'\mid\bm{\theta})$. 

\noindent \textbf{The generalized DM Algorithm:}
It is possible that in the M-step, the minimizer $\bm{\theta}_{m+1}$ is not unique. Let $\bm{\Theta}_m$ denote the set of all minimizers at step $m$; that is, $\bm{\theta}_{m+1} \in  \bm{\Theta}_{m+1}$.
Sometimes it may be difficult to perform M-step numerically; in this case, we can define a generalized DM algorithm (referred to as the G-DM algorithm) as follows: Let $M: \bm{\theta}_m \to \bm{\Theta}_{m+1}$ be a point to set map: then the G-DM algorithm is an iterative scheme such that 
\begin{align*}
Q^{(G)}(\bm{\theta}'|\bm{\theta}_m) \leq Q^{(G)}(\bm{\theta}_m |\bm{\theta}_m) \quad \text{for all} \quad \bm{\theta}' \in M(\bm{\theta}_m).
\end{align*}
We notice here that for any G-DM sequence $\{\bm{\theta}_m \}$, $D_{G}(\bm{\theta}_{m+1}) \leq D_{G}(\bm{\theta}_m)$ and DM algorithm is a special case of G-DM algorithm. We emphasize here that by choosing different divergences, we obtain many existing algorithms, including the EM, HMIX, and HELMIX algorithms. These examples are given in the Supplementary analysis \ref{S-Special_cases}.

The key ingredient for the proposed population DM algorithm is the conditional Pearson-type ratio  between $g(y)w(z|y;\bm{\theta})$ and $f(y;\bm{\theta}')w(z|y;\bm{\theta}')$,  which we denote by
\begin{align*}
\tau(y, z; g, \bm{\theta}, \bm{\theta}') = \frac{g(y)w(z|y;\bm{\theta})}{f(y;\bm{\theta}')w(z|y;\bm{\theta}')}.
\end{align*}
When $z=k$, the denominator of $\tau(y, k; g, \bm{\theta}, \bm{\theta}')$ reduces to $\pi_k'h(y;\bm{\phi}'_k).$ We observe here that, unlike the Pearson residual, $\tau$ does not subtract 1; it plays the role of a conditional analogue instead. When there is no scope for confusion, for notational simplicity, we suppress $g$, $\bm{\theta}$ and $\bm{\theta}'$ in $\tau_k(y,g, \bm{\theta}, \bm{\theta}')$ and denote it as $\tau_k(y)$ and when $g$ is replaced by $g_n$ we refer to it as $\tau_{k,n}(y)$. 



\subsection{The DM-MIX Algorithm for FMM}\label{Chap5:S:DM-Alg}
A general description of the DM algorithm for HFMM (i.e., the DM-MIX algorithm) is given below. As noted above, $\tau_k(y) \coloneqq \tau_k(y, g; \bm{\theta}, \bm{\theta}') = \frac{g(y)w_k(y; \bm{\theta})}{\pi_k' h(y;\bm{\theta}')}$. Also, set
\[Q^{(k)}(\bm{\theta}'|\bm{\theta}) = \int_\Real G(\tau_k(y)) \pi_k' h(y; \bm{\phi}_k')dy.\]
When $g$ is replaced by $g_n$, we denote the above quantities by $\tau_{k,n}(\cdot, \cdot;\cdot)$ and $Q^{(k)}(\cdot|\cdot)$ by $Q_{n}^{(k)}(\cdot|\cdot)$. Finally, define $B(u)=G(u)-u G'(u)$. We now turn to the algorithm. For a given starting point $\bm{\theta}^{(0)}$ and a density estimate $g_n(y)$,
\begin{algorithm}[H]
    \scriptsize
	\caption{The DM-Mix Algorithm}
	\label{SP:1}
	Set m = 0.
	\begin{algorithmic}
		\REPEAT
		\STATE 1. Compute
		\begin{align*}
		Q_n\left(\bm{\theta}^{(m)}|\bm{\theta}^{(m)}\right) = \sum_{k=1}^{K} \int_{\mathcal{Y}} G \left(-1+ \frac{g_n(y)w_{k}(y;\bm{\theta}^{(m)})}{\pi_k^{(m)}h(y;\bm{\phi}_k^{(m)}) } \right) \pi_k^{(m)} h(y | \bm{\phi}_k^{(m)} ) dy, \quad \text{where} \quad w_{k}(y;\bm{\theta}) = \frac{\pi_k h(y;\bm{\phi}_k)}{\sum_{l=1}^K \pi_l h(y;\bm{\phi}_l)}.
		\end{align*}
		\STATE 2. Update $\bm{\phi}_k^{(m+1)}$:
		\begin{align*}
		\bm{\phi}_k^{(m+1)} ~ &= ~ \underset{\bm{\phi}_k' \in \bm{\Theta}}{\text{argmin}} ~  \int_{\mathcal{Y}} G\left(-1 + \frac{g_n(y)w_{k}(y;\bm{\theta}^{(m)})}{\pi_k^{(m)}h(y;\bm{\phi}_k')}\right)\pi_k^{(m)}h\left(y;\bm{\phi}_k'\right) dy.
		\end{align*}		
        \STATE 3. Update the mixing probabilities using the Lagrangian multiplier step:
        \begin{align*}
        \pi_k^{(m+1)}
        = \frac{\Phi_k\!\left(\bm{\pi}^{(m)}, \bm{\phi}^{(m+1)}\right)}
        {\sum_{\ell=1}^K \Phi_\ell\!\left(\bm{\pi}^{(m)}, \bm{\phi}^{(m+1)}\right)},
        \qquad k=1,\dots,K,
        \end{align*}
        where
        \[
        \Phi_k\!\left(\bm{\pi}, \bm{\phi}\right)
        := -\pi_k \,\frac{\partial Q_{n}^{(k)}(\pi_k,\bm{\phi}_k \mid \bm{\theta})}{\partial \pi_k},
        \]
        \[
        \frac{\partial Q_{n}^{(k)}}{\partial \pi_k}(\pi_k,\bm\phi_k\mid\bm\theta)
        = \int_{\mathcal Y} h(y;\bm\phi_k)\,\Big\{B(\tau_{k,n}(y))\Big\}\,dy,
        \]
        \[
        \Phi_k(\bm\pi,\bm\phi)
        := \pi_k \int_{\mathcal Y} h(y;\bm\phi_k)\,\Big\{B(\tau_{k,n}(y))\Big\}\,dy.
        \]
		\STATE 4. Update $w_{k}(y; \bm{\theta}^{(m+1)})$, 
        and compute $Q_n\left(\bm{\theta}^{(m+1)}|\bm{\theta}^{(m+1)}\right)$ and the difference $\epsilon_{m+1} = | Q_n\left(\bm{\theta}^{(m+1)}|\bm{\theta}^{(m+1)}\right) - Q_n\left(\bm{\theta}^{(m)}|\bm{\theta}^{(m)}\right) |.$	
		\STATE 5. Set m = m+1.
	\UNTIL{$\epsilon_{m}< \text{threshold}$.}
	\end{algorithmic}
\end{algorithm}
The EM, HMIX, and HELMIX algorithms are special cases of this framework for particular choices of $G(\cdot)$ and probability distributions (see Supplementary analysis \ref{S-Special_cases}), wherein we also illustrate that by reformulating the DM objective function algorithm can be interpreted as an MM algorithm, a proximal point algorithm, and a coordinate descent algorithm. 

The DM algorithm applied to HFMM has a special structure for updating the mixing probability $\bm{\pi}$. To see this, we partition the parameter vector $\bm{\theta}' = (\bm{\pi}', \bm{\phi}')$ into two sub-vectors $\bm{\pi}' = (\pi'_1, \cdots, \pi'_K)$ and $\bm{\phi}' = (\bm{\phi}_1', \cdots, \bm{\phi}_K')$, and we can update $\bm{\phi}'$ and $\bm{\pi}'$ iteratively. Specifically, starting with updating parameters $\bm{\phi}_k'$, we update $\pi_k'$ using the Lagrangian multiplier and then iterate the steps. This process leads to the estimate of $\pi_k$ as
\begin{align}\label{Update-Pi11}
\pi_k'
=\frac{\displaystyle \pi_k'\,\frac{\partial Q_n^{(k)}(\bm{\theta}'\mid\bm{\theta})}{\partial \pi_k'}}
{\displaystyle \sum_{\ell=1}^K \pi_\ell'\,\frac{\partial Q_n^{(\ell)}(\bm{\theta}'\mid\bm{\theta})}{\partial \pi_\ell'}}.
\end{align}
Details of the derivation are provided in the Supplementary analysis \ref{S-supp:update-pi}. The update of the mixing probability in (\ref{Update-Pi11}) naturally arises from leveraging the majorizing divergence function $Q(\cdot|\cdot)$.

\section{Convergence Guarantees of the DM Algorithm}\label{sec:convergence}

\noindent Figure~\ref{fig:dm-illustration} summarizes the DM map and the descent property of $D_G$.
\begin{figure}[H]
\centering
\includegraphics[width=0.4\textwidth]{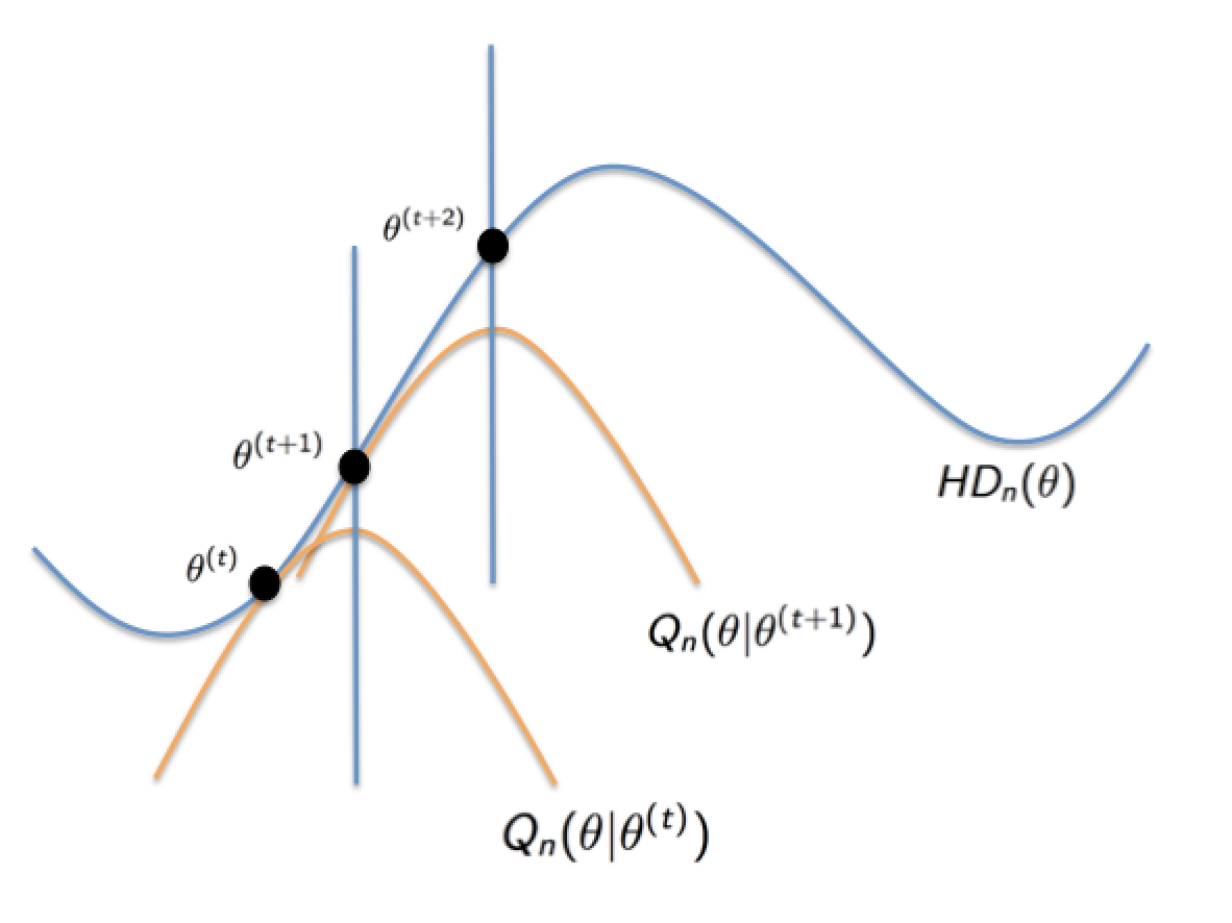}
	\caption{The DM Algorithm Illustration}
    \label{fig:dm-illustration}
\end{figure}

Let $M_n(\bm{\theta}): \bm \Theta \rightrightarrows \bm \Theta$ and $M(\bm{\theta}):\bm \Theta \rightrightarrows \bm \Theta$ be two set-valued maps defined as follows:
\begin{align*}
M_n(\bm{\theta})\coloneqq \arg\min_{\bm{\theta}'\in\bm{\Theta}} Q_n(\bm{\theta}'\mid\bm{\theta})\quad \text{and} \quad M(\bm{\theta})\coloneqq \arg\min_{\bm{\theta}'\in\bm{\Theta}} Q(\bm{\theta}'\mid\bm{\theta}).
\end{align*}
\begin{defn}[DM sequences]\label{DM-seq}
A sequence $\{\bm{\theta}_{m}\}$ is a \emph{population} DM sequence if $\bm{\theta}_{m+1}\in M(\bm{\theta}_m)$; 
a sequence $\{\bm{\theta}_{m,n}\}$ is a \emph{sample} DM sequence if $\bm{\theta}_{m+1,n}\in M_n(\bm{\theta}_{m,n})$.
\end{defn}
In this section, we establish the monotone descent property and stationarity of the limit points for the sample-level iterates. Using these, we establish local contraction of the population map $M(\cdot)$ together with the finite-sample perturbation bounds. 

\noindent \textbf{DM surrogate decomposition (recall).} With $G$ fixed, let $D_n(\bm{\theta}):=D_G(g_n,f_{\bm{\theta}})$ and
$Q_n(\bm{\theta}'\!\mid\bm{\theta}):=Q_G(\bm{\theta}'\!\mid{\bm{\theta}};g_n)$.
Define the remainder
\[
H_G(\bm{\theta}'\!\mid\bm{\theta})\ :=\ Q_n(\bm{\theta}'\!\mid\bm{\theta})\;-\;D_n(\bm{\theta}).
\]
Then $Q_n(\bm{\theta}'\!\mid\bm{\theta})=D_n(\bm{\theta}')+H_G(\bm{\theta}'\!\mid\bm{\theta})$ with
$H_G(\bm{\theta}'\!\mid\bm{\theta})\ge 0$ and $H_G(\bm{\theta}\!\mid\bm{\theta})=0$.
(See Supplementary material Section D for a derivation.)

\begin{lem}[Monotone descent for DM updates]\label{lem:dm-descent}
Let $\bm{\theta}^+\in M_n(\bm{\theta})$ be a DM update (set–valued selection). Then
\[
D_n(\bm{\theta}^+)\ \le\ Q_n(\bm{\theta}^+\!\mid\bm{\theta})\ \le\ Q_n(\bm{\theta}\!\mid\bm{\theta})\ =\ D_n(\bm{\theta}),
\]
hence $D_n$ is nonincreasing along DM iterates.
\end{lem}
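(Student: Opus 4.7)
The plan is to chain the three estimates directly, using only the sample-level majorization identity recalled just before the lemma together with the defining optimality of $M_n(\bm{\theta})$; no fresh calculation is needed. I would organize the argument as a single four-link chain
\[
D_n(\bm{\theta}^+)\ \le\ Q_n(\bm{\theta}^+\!\mid\bm{\theta})\ \le\ Q_n(\bm{\theta}\!\mid\bm{\theta})\ =\ D_n(\bm{\theta}),
\]
and justify each link separately, invoking the decomposition $Q_n(\bm{\theta}'\!\mid\bm{\theta})=D_n(\bm{\theta}')+H_G(\bm{\theta}'\!\mid\bm{\theta})$ with $H_G\ge 0$ and $H_G(\bm{\theta}\!\mid\bm{\theta})=0$ that the excerpt already records.

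For the leftmost inequality $D_n(\bm{\theta}^+)\le Q_n(\bm{\theta}^+\!\mid\bm{\theta})$, I would apply the majorization at the single point $\bm{\theta}'=\bm{\theta}^+$: since $H_G(\bm{\theta}^+\!\mid\bm{\theta})\ge 0$, the identity forces the claim. This step is the sample analogue of Lemma~\ref{lem:MF}, and ultimately traces back to the Jensen bound in Lemma~\ref{lem:innerq} applied fiberwise in $y$ against the responsibility measure $w(\cdot\mid y;\bm{\theta})$.

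For the middle inequality $Q_n(\bm{\theta}^+\!\mid\bm{\theta})\le Q_n(\bm{\theta}\!\mid\bm{\theta})$, I would use only the definition $M_n(\bm{\theta})=\arg\min_{\bm{\theta}'\in\bm{\Theta}}Q_n(\bm{\theta}'\!\mid\bm{\theta})$: any selection $\bm{\theta}^+\in M_n(\bm{\theta})$ delivers the minimum of $Q_n(\cdot\!\mid\bm{\theta})$, and evaluating at the admissible point $\bm{\theta}'=\bm{\theta}$ gives the bound. Nonemptiness of $M_n(\bm{\theta})$ is the standing hypothesis encoded in the set-valued formulation; when it fails one instead invokes the G-DM variant defined earlier, for which the descent inequality is built into its definition.

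For the final equality $Q_n(\bm{\theta}\!\mid\bm{\theta})=D_n(\bm{\theta})$, I would again read off the decomposition: the ratio $t(z\mid y)=w(z\mid y;\bm{\theta})/w(z\mid y;\bm{\theta}')$ collapses to $1$ at $\bm{\theta}'=\bm{\theta}$, Jensen becomes an equality, and $H_G(\bm{\theta}\!\mid\bm{\theta})=0$. Chaining the three relations yields the lemma, and monotonicity of $\{D_n(\bm{\theta}_{m,n})\}$ follows by induction on $m$. The result is essentially bookkeeping around the majorization already developed, so there is no genuine obstacle; the only care needed is to keep the inequalities pointing the right way and to remain agnostic to which selection from $M_n(\bm{\theta})$ is made when the minimizer is nonunique.
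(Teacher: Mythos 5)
Your proof is correct and follows essentially the same route as the paper's: both use the decomposition $Q_n(\bm{\theta}'\!\mid\bm{\theta})=D_n(\bm{\theta}')+H_G(\bm{\theta}'\!\mid\bm{\theta})$ with $H_G\ge 0$ and $H_G(\bm{\theta}\!\mid\bm{\theta})=0$ for the outer links, and the argmin property of $M_n(\bm{\theta})$ for the middle inequality. The additional remarks on Jensen's inequality, nonuniqueness of the selection, and the G-DM fallback are consistent with the paper but not needed for the argument.
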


\begin{proof}
By the decomposition above, $Q_n(\bm \theta^+\!\mid \bm \theta)\ge D_n(\bm \theta^+)$ and
$Q_n(\bm \theta\!\mid \bm \theta)=D_n(\bm \theta)$. Since $\bm \theta^+\in\arg\min_{\bm \theta'}Q_n(\bm \theta'\!\mid \bm \theta)$,
$Q_n(\bm \theta^+\!\mid \bm \theta)\le Q_n(\bm \theta\!\mid \bm\theta)$.
\end{proof}

\noindent\textit{Stationarity} will be established below via our self‐consistency and first‐order conditions.

\subsection{Global Properties of the DM Sequences}\label{Properties-DM}
Our first proposition concerns the convergence properties of the sample-level DM sequence. 

\begin{prop}\label{Prop:DM-Conv}
Assume (B1)-(B3) and (D1) holds and let $\{\bm{\theta}_{m,n}\}$ be any sample-level DM sequence. Then
$D_n(\bm{\theta}_{m+1,n}) \le D_n(\bm{\theta}_{m,n})$, with equality if and only if $\bm{\theta}_{m,n}$ is a fixed point of $M_n$. Moreover, every limit point $\bar{\bm{\theta}}$ of $\{\bm{\theta}_{m,n}\}$ is stationary for $D_n$, i.e. $\nabla D_n(\bar{\bm{\theta}})=0$.

\begin{proof}[Proof (majorization--minimization)]
By construction $D_n(\bm{\theta})\le Q_n(\bm{\theta}\mid\bm{\theta}')$ for all $\bm{\theta},\bm{\theta}'$, with equality at $\bm{\theta}=\bm{\theta}'$. Hence, noticing that $\theta_{m+1,n}$ is a minimizer,it follows that $D_n(\bm{\theta}_{m+1,n}) \le Q_n(\bm{\theta}_{m+1,n}\mid\bm{\theta}_{m,n}) \le Q_n(\bm{\theta}_{m,n}\mid\bm{\theta}_{m,n}) = D_n(\bm{\theta}_{m,n}).$ Next, if equality holds, then ${\bm{\theta}}_{m,n}$ minimizes $Q_n(\cdot \mid {\bm{\theta}}_{m,n})$ and is thus a fixed point. 
Now let $\bar{\bm{\theta}}$ be any limit point of $\{{\bm{\theta}}_{m,n}\}$. 
By the closed–graph property (D1), any such limit point satisfies $\bar{\bm{\theta}} \in M_n(\bar{\bm{\theta}})$, 
so $\bar{\bm{\theta}}$ minimizes $Q_n(\cdot \mid \bar{\bm{\theta}})$. 
On the other hand, for every ${\bm{\theta}}$, using the tangency identity $Q_n({\bm{\theta}} \mid {\bm{\theta}}) = D_n({\bm{\theta}})$, and differentiating this with respect to the first argument at ${\bm{\theta}} = \bar{\bm{\theta}}$ gives
\[
\nabla_{{\bm{\theta}}'} Q_n({\bm{\theta}}' \mid \bar{\bm{\theta}})\big|_{{\bm{\theta}}'=\bar{\bm{\theta}}}
  = \nabla D_n(\bar{\bm{\theta}}).
\]
Since $\bar{\bm{\theta}}$ minimizes $Q_n(\cdot \mid \bar{\bm{\theta}})$, the left–hand side is zero, and hence
$\nabla D_n(\bar{\bm{\theta}}) = 0$.
\end{proof}
\end{prop}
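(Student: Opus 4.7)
The plan is to exploit the majorization--minimization (MM) structure already recorded, namely the decomposition $Q_n(\bm{\theta}'\mid\bm{\theta}) = D_n(\bm{\theta}') + H_G(\bm{\theta}'\mid\bm{\theta})$ with $H_G(\bm{\theta}'\mid\bm{\theta})\ge 0$ and the tangency $H_G(\bm{\theta}\mid\bm{\theta})=0$. Since $\bm{\theta}_{m+1,n}\in M_n(\bm{\theta}_{m,n})$ is by definition a minimizer of $Q_n(\cdot\mid\bm{\theta}_{m,n})$, the standard MM chain
\[
D_n(\bm{\theta}_{m+1,n}) \le Q_n(\bm{\theta}_{m+1,n}\mid\bm{\theta}_{m,n}) \le Q_n(\bm{\theta}_{m,n}\mid\bm{\theta}_{m,n}) = D_n(\bm{\theta}_{m,n})
\]
yields the descent inequality. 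For the equality characterization I would argue directly: if $D_n(\bm{\theta}_{m+1,n})=D_n(\bm{\theta}_{m,n})$, then every inequality above collapses, forcing $\bm{\theta}_{m,n}$ itself to attain the minimum of $Q_n(\cdot\mid\bm{\theta}_{m,n})$, i.e.\ $\bm{\theta}_{m,n}\in M_n(\bm{\theta}_{m,n})$.

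For the stationarity assertion, the plan is to combine (i) the closed-graph property (D1) of the set-valued map $M_n$, (ii) the tangency identity, and (iii) an envelope-type argument. Fix a convergent subsequence $\bm{\theta}_{m_j,n}\to\bar{\bm{\theta}}$. By monotone descent and continuity of $D_n$ under (B1)--(B3), the sequence $D_n(\bm{\theta}_{m,n})$ converges, so $D_n(\bm{\theta}_{m_j+1,n})-D_n(\bm{\theta}_{m_j,n})\to 0$. Extracting a further subsequence so that $\bm{\theta}_{m_j+1,n}\to\bar{\bm{\theta}}^+$, (D1) gives $\bar{\bm{\theta}}^+\in M_n(\bar{\bm{\theta}})$, and the equality case then forces $\bar{\bm{\theta}}\in M_n(\bar{\bm{\theta}})$. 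To upgrade this to $\nabla D_n(\bar{\bm{\theta}})=0$, I would use the following envelope identity: the nonnegative function $\bm{\theta}'\mapsto Q_n(\bm{\theta}'\mid\bar{\bm{\theta}})-D_n(\bm{\theta}')$ vanishes at $\bm{\theta}'=\bar{\bm{\theta}}$, so its gradient there is zero, yielding
\[
\nabla_{\bm{\theta}'}Q_n(\bm{\theta}'\mid\bar{\bm{\theta}})\big|_{\bm{\theta}'=\bar{\bm{\theta}}} = \nabla D_n(\bar{\bm{\theta}}).
\]
Since $\bar{\bm{\theta}}$ minimizes $Q_n(\cdot\mid\bar{\bm{\theta}})$ and (assuming interiority) the first-order condition applies, the left-hand side vanishes, giving the claim.

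The main obstacle will be justifying the differentiation in the envelope step. Specifically, I need: (a) continuous differentiability of $\bm{\theta}'\mapsto Q_n(\bm{\theta}'\mid\bar{\bm{\theta}})$ and of $D_n$ at $\bar{\bm{\theta}}$, which requires dominated-convergence-type control on $G$, $G'$ and the mixture kernels --- this should be subsumed in (B1)--(B3); (b) interiority of $\bar{\bm{\theta}}$ in $\bm{\Theta}$, since otherwise the conclusion must be weakened to a KKT/variational condition on the simplex boundary (relevant when some $\pi_k\to 0$); and (c) the closed-graph property (D1), which in turn rests on joint lower semicontinuity of $Q_n$ and compactness of sublevel sets. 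Once these regularity pieces are in place, the descent and stationarity conclusions follow cleanly from the MM chain and the envelope identity; the conceptual content of the proof is really just the tangency of $Q_n$ to $D_n$ together with the closed graph of $M_n$.
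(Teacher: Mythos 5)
Your proposal is correct and follows essentially the same majorization--minimization route as the paper: the same descent chain, the same fixed-point characterization of equality, and the same closed-graph-plus-tangency argument for stationarity of limit points. In fact your treatment is slightly more careful in two places the paper glosses over --- the subsequence argument showing that the limit point actually lies in $M_n(\bar{\bm{\theta}})$ (rather than merely invoking (D1)), and the justification of the gradient identity via the nonnegative gap function $Q_n(\cdot\mid\bar{\bm{\theta}})-D_n(\cdot)$ attaining its minimum at $\bar{\bm{\theta}}$ --- and your interiority caveat for parameters on the simplex boundary is a legitimate point the paper leaves implicit.
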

\begin{rem}[When “stationary” coincides with “fixed point”]\label{rem:stat-vs-fp}
If, in addition, for each $\bm{\theta}$ the function $\bm{\theta}'\mapsto Q_n(\bm{\theta}'\mid\bm{\theta})$ is \emph{strongly convex} near $\bm{\theta}$ and \emph{tangent} to $D_n$ at $\bm{\theta}$ in the sense that
$\nabla_{\bm{\theta}'} Q_n(\bm{\theta}'\mid\bm{\theta})|_{\bm{\theta}'=\bm{\theta}}=\nabla D_n(\bm{\theta})$,
then $\bar{\bm{\theta}}$ is stationary for $D_n$ if and only if it minimizes $Q_n(\cdot\mid\bar{\bm{\theta}})$. Under the uniqueness of that minimizer, this is equivalent to $\bar{\bm{\theta}}$ being a fixed point of the DM update map.
\end{rem}
The next proposition concerns the self-consistency of the DM updates, namely, any minimizer of the observed-data objective also minimizes the complete-data surrogate at the same parameter value. That is, if 
$\bm{\theta}_n^*$ minimizes $D_n(\bm{\theta}')$ over $\bm{\theta}' \in \bm{\Theta}$, then $\bm{\theta}_n^*$ also minimizes $Q_n(\cdot |\bm{\theta}_n^*)$. 
\begin{prop}{\label{self-consist}}
Let $\bm{\theta}_n^\star \in \arg\min_{\bm{\theta}' \in \Theta} D_n(\bm{\theta}')$ and assume that $Q_n(\cdot \mid \bm{\theta})$
attains a minimum for each fixed $\bm{\theta}$, then $\bm{\theta}_n^\star \in \arg\min_{\bm{\theta}' \in \Theta} Q_n(\bm{\theta}' \mid \bm{\theta}_n^\star)$.
\begin{proof}
For any $\bm{\theta}'$, $Q_n(\bm{\theta}' \mid \bm{\theta}_n^\star)\,\ge\, D_n(\bm{\theta}')\,\ge\, D_n(\bm{\theta}_n^\star)
= Q_n(\bm{\theta}_n^\star \mid \bm{\theta}_n^\star)$, so $Q_n(\cdot \mid \bm{\theta}_n^\star)$ is minimized at $\bm{\theta}_n^\star$. 
\end{proof}
An immediate consequence of the above proposition is the following corollary.
\begin{cor}[Fixed point under uniqueness]\label{cor:fixed-point-sample}
If, in addition, $Q_n(\cdot \mid \bm{\theta}_n^\star)$ has a unique minimizer, then the update map satisfies
$M_n(\bm{\theta}_n^\star)=\bm{\theta}_n^\star$.
\end{cor}
\end{prop}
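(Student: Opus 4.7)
The plan is to obtain this corollary as a one–line deduction from Proposition \ref{self-consist}, together with the added uniqueness hypothesis. First I would apply the self--consistency statement of that proposition with the choice $\bm{\theta} = \bm{\theta}_n^\star$. Since $\bm{\theta}_n^\star$ is assumed to be a global minimizer of $D_n$, the proposition immediately yields
\[
\bm{\theta}_n^\star \in \arg\min_{\bm{\theta}' \in \bm{\Theta}} Q_n(\bm{\theta}' \mid \bm{\theta}_n^\star),
\]
so that $\bm{\theta}_n^\star$ is one selection from the surrogate's minimizer set at itself.

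Next I would invoke the added uniqueness hypothesis on $Q_n(\cdot \mid \bm{\theta}_n^\star)$: by assumption this argmin is a singleton, so it can only equal $\{\bm{\theta}_n^\star\}$. Finally, I would appeal to the definition $M_n(\bm{\theta}) := \arg\min_{\bm{\theta}' \in \bm{\Theta}} Q_n(\bm{\theta}' \mid \bm{\theta})$ recorded at the start of Section \ref{sec:convergence}; chaining the previous two observations gives $M_n(\bm{\theta}_n^\star) = \{\bm{\theta}_n^\star\}$, which is precisely the conclusion of the corollary once the singleton is identified with its unique element. In the language of Definition \ref{DM-seq}, the same identity shows that any sample DM sequence launched at $\bm{\theta}_n^\star$ is the constant sequence, since the set--valued selection at each step has no other choice.

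Candidly, there is no genuine obstacle here: all the substantive work has already been carried out in Proposition \ref{self-consist}, whose proof rests on the three--term chain $Q_n(\bm{\theta}' \mid \bm{\theta}_n^\star) \ge D_n(\bm{\theta}') \ge D_n(\bm{\theta}_n^\star) = Q_n(\bm{\theta}_n^\star \mid \bm{\theta}_n^\star)$. The only point that warrants a sentence of care is the set--valued bookkeeping, i.e.\ interpreting the equation $M_n(\bm{\theta}_n^\star) = \bm{\theta}_n^\star$ as an equality between a subset of $\bm{\Theta}$ and a singleton; without the uniqueness hypothesis one could only conclude the membership $\bm{\theta}_n^\star \in M_n(\bm{\theta}_n^\star)$, i.e.\ a fixed point in the set--valued sense possibly accompanied by other minimizers of $Q_n(\cdot\mid\bm{\theta}_n^\star)$.
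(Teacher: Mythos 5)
Your proposal is correct and follows exactly the route the paper intends: the corollary is obtained by invoking the self-consistency membership $\bm{\theta}_n^\star \in \arg\min_{\bm{\theta}'} Q_n(\bm{\theta}' \mid \bm{\theta}_n^\star)$ (which rests on the chain $Q_n(\bm{\theta}' \mid \bm{\theta}_n^\star) \ge D_n(\bm{\theta}') \ge D_n(\bm{\theta}_n^\star) = Q_n(\bm{\theta}_n^\star \mid \bm{\theta}_n^\star)$) and then collapsing the argmin set to a singleton via the uniqueness hypothesis, yielding $M_n(\bm{\theta}_n^\star)=\bm{\theta}_n^\star$. The paper states this as an immediate consequence without further argument, and your extra remark on the set-valued bookkeeping is a harmless clarification rather than a deviation.
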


We now turn to the cyclical behavior of the iterates of the  algorithm.

\noindent \textbf{Cyclical behavior}

For fixed $n$, a finite set of distinct points $\{\bm{\theta}^\ast_{1,n},\dots,\bm{\theta}^\ast_{t,n}\}\subset\Theta$ is a \emph{cycle of length $t\ge2$} for $M_n$ if
$M_n(\bm{\theta}^\ast_{i,n})=\bm{\theta}^\ast_{i+1,n}$ for $i=1,\dots,t-1$ and $M_n(\bm{\theta}^\ast_{t,n})=\bm{\theta}^\ast_{1,n}$.
We use assumptions (B1)-(B4) and the isolated-stationary-points condition (B4)  throughout this subsection.

\begin{lem}[Finiteness at a level]\label{lem:finiteness-level}
Assume (B1)-(B4). Fix any value $D_n^\star\in\Real$. Then the set of stationary points of $D_n$ with objective value $D_n^\star$ is at most finite.
\end{lem}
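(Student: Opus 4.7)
The plan is to realize the set
\[
S_n(D_n^\star)\ :=\ \{\bm{\theta}\in\bm{\Theta}:\ \nabla D_n(\bm{\theta})=0,\ D_n(\bm{\theta})=D_n^\star\}
\]
as a compact, discrete subset of $\bm{\Theta}$ and then invoke the elementary fact that such a set must be finite. Conceptually this is just ``closed in a compact level set, and each point is isolated, so no room for an infinite sequence.''

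First I would use (B1)--(B3), which provide continuity of $D_n$ and its gradient together with coercivity/compactness of sublevel sets of $D_n$ on $\bm{\Theta}$ (these are the same regularity ingredients we have already been using to guarantee existence of minimizers and the descent property in Lemma~\ref{lem:dm-descent} and Proposition~\ref{Prop:DM-Conv}). In particular the level set $L:=\{\bm{\theta}\in\bm{\Theta}:D_n(\bm{\theta})\le D_n^\star\}$ is compact, and $S_n(D_n^\star)\subset L$. Next, continuity of $\nabla D_n$ and of $D_n$ implies that $S_n(D_n^\star)$ is the intersection of the closed sets $\{\nabla D_n=0\}$ and $\{D_n=D_n^\star\}$ with $L$, hence closed in $L$. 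Being a closed subset of a compact set, $S_n(D_n^\star)$ is itself compact.

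Second I would use the isolated‑stationary‑points hypothesis (B4): for every stationary point $\bar{\bm{\theta}}$ of $D_n$ there exists an open neighborhood $U_{\bar{\bm{\theta}}}\subset\bm{\Theta}$ such that $\bar{\bm{\theta}}$ is the only stationary point of $D_n$ in $U_{\bar{\bm{\theta}}}$. In particular $\bar{\bm{\theta}}$ is the only element of $S_n(D_n^\star)$ in $U_{\bar{\bm{\theta}}}$, so $S_n(D_n^\star)$ is a discrete topological space in the subspace topology. Finally, a compact discrete space is finite: the open cover $\{U_{\bar{\bm{\theta}}}\cap S_n(D_n^\star)\}_{\bar{\bm{\theta}}\in S_n(D_n^\star)}$ consists of singletons, and compactness gives a finite subcover, forcing $|S_n(D_n^\star)|<\infty$.

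I do not anticipate a genuine obstacle here; the real content sits in the assumptions, not in the argument. The one place worth being a little careful is the compactness of $L$ when $\bm{\Theta}$ is not itself compact (as for the unbounded scale parameters in the Poisson--Gamma and Poisson--lognormal mixtures discussed in Section~\ref{sec:DMLS}); there I would lean on the coercivity part of (B1)--(B3) to rule out escape to the boundary at a finite level $D_n^\star$, and on the permutation convention fixed after the mixture definition (ordering components by increasing mixing weights with a deterministic tiebreaker) to eliminate label‑switching copies that would otherwise render any such set infinite. Everything else is the standard ``compact $+$ isolated $\Rightarrow$ finite'' packaging.
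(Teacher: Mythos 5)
Your argument is correct and is essentially identical to the paper's own proof: both establish that the set is closed (continuity from (B1)), compact (sublevel-set compactness from (B2)), and discrete (isolation from (B4)), then conclude finiteness via a finite subcover of isolating neighborhoods. The closing remarks about coercivity and label-switching are reasonable but not needed, since (B2) assumes sublevel-set compactness directly and (B4) already rules out accumulation of stationary points.
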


\begin{proof}[Proof sketch]
If infinitely many stationary points shared the same value, compactness (B2) would yield a convergent subsequence to another stationary point, contradicting isolation (B4). More detailed proof is in the appendix.
\end{proof}

\begin{prop}[Limit–set structure: convergence or finite cycle]\label{prop:cycle-structure}
Let $\{\bm{\theta}_{m,n}\}$ be any sample-level DM sequence with $\bm{\theta}_{m+1,n}\in M_n(\bm{\theta}_{m,n})$, and assume (B1)-(B4), and \textup{(D1)}$^{\prime}$. Then $D_n(\bm{\theta}_{m,n})\downarrow D_n^\star$, and every limit point of $\{\bm{\theta}_{m,n}\}$ is a stationary point of $D_n$ with value $D_n^\star$.
Moreover, the update $M_n$ permutes the (finite) set of limit points; hence either $\bm{\theta}_{m,n}\to\bm{\theta}^\ast_n$ (a stationary point with $M_n(\bm{\theta}^\ast_n)=\bm{\theta}^\ast_n$), or the limit points form a finite cycle.
\end{prop}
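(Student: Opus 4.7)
The plan is to combine the monotone-descent machinery already in place with the isolated-stationary-points hypothesis (B4) and the closed-graph/continuity content of (D1)$^{\prime}$. First, by Lemma~\ref{lem:dm-descent} we have $D_n(\bm{\theta}_{m+1,n}) \le D_n(\bm{\theta}_{m,n})$, and compactness of $\bm{\Theta}$ (B2) together with continuity of $D_n$ (B1) yields $D_n(\bm{\theta}_{m,n})\downarrow D_n^\star$ for some $D_n^\star\in\Real$. Since $\{\bm{\theta}_{m,n}\}$ lies in a compact set, its set $L^\star$ of limit points is nonempty; by continuity of $D_n$, every $\bar{\bm{\theta}}\in L^\star$ satisfies $D_n(\bar{\bm{\theta}})=D_n^\star$, and by Proposition~\ref{Prop:DM-Conv} every such limit point is stationary for $D_n$. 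Lemma~\ref{lem:finiteness-level} then forces $L^\star$ to be finite, say $L^\star=\{v_1,\ldots,v_T\}$.

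Next I would show that the update induces a well-defined self-map of $L^\star$. Fix $v\in L^\star$ with $\bm{\theta}_{m_j,n}\to v$ along some subsequence. By compactness, a further subsequence gives $\bm{\theta}_{m_j+1,n}\to v'$ for some $v'\in L^\star$, and the closed-graph content of (D1)$^{\prime}$ applied to $\bm{\theta}_{m_j+1,n}\in M_n(\bm{\theta}_{m_j,n})$ yields $v'\in M_n(v)$. Single-valuedness (or continuity of the selected update) supplied by (D1)$^{\prime}$ pins this $v'$ down uniquely, so $M_n$ restricts to a well-defined map $\sigma:L^\star\to L^\star$ with $\sigma(v_i)=M_n(v_i)$.

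The heart of the argument is upgrading $\sigma$ to a cyclic permutation and showing that the iterates track a single $\sigma$-orbit. Using isolation of the $v_i$, I would choose pairwise disjoint neighborhoods $U_1,\ldots,U_T$ and, invoking continuity of $M_n$ near each $v_i$, shrink them so that $\bm{\theta}_{m,n}\in U_i$ implies $\bm{\theta}_{m+1,n}\in U_{\sigma(i)}$. Since $\bm{\theta}_{m,n}$ accumulates only on $L^\star$, for all sufficiently large $m$ the iterates lie in $\bigcup_i U_i$, so the symbolic sequence $i_m$ defined by $\bm{\theta}_{m,n}\in U_{i_m}$ is eventually well-defined and satisfies $i_{m+1}=\sigma(i_m)$. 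Finiteness of $L^\star$ makes this $\sigma$-orbit eventually periodic; any $v_j$ not on the eventual cycle would have a neighborhood $U_j$ visited only finitely often, contradicting $v_j\in L^\star$. Hence $L^\star$ equals the eventual cycle, $\sigma$ acts on it as a cyclic permutation, and the iterates either converge to a fixed point ($T=1$) or traverse a finite cycle ($T\ge 2$). The hard part will be the passage from pointwise closed-graph control at the $v_i$ to uniform shadowing on neighborhoods $U_i$; this is precisely where (D1)$^{\prime}$ (beyond the plain closed-graph (D1) used earlier for stationarity) is doing real work, and where one must rule out any transient accumulation outside the eventual cycle.
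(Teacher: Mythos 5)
Your proposal is correct and follows essentially the same route as the paper's own proof: monotone descent plus compactness gives $D_n(\bm{\theta}_{m,n})\downarrow D_n^\star$, stationarity and Lemma~\ref{lem:finiteness-level} make the limit set finite, the closed-graph/continuity content of the update map turns $M_n$ into a permutation of that set, and the disjoint-neighborhood shadowing argument identifies the limit set with a single cycle (or a fixed point). The only cosmetic difference is that you derive surjectivity of $\sigma$ on $L^\star$ indirectly (a point off the eventual cycle would be visited only finitely often), whereas the paper extracts a further subsequence to exhibit a preimage directly; both are standard and equivalent here.
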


\begin{proof}[Proof sketch]
Monotone majorization gives $D_n(\bm{\theta}_{m+1,n})\le D_n(\bm{\theta}_{m,n})$ with a finite limit $D_n^\star$; any limit point is stationary by Prop.~\ref{Prop:DM-Conv}. By Lemma~\ref{lem:finiteness-level}, the limit set is finite. Continuity (D1) implies $M_n$ maps the set to itself, so $M_n$ acts as a permutation; this yields either a singleton (convergence) or a finite cycle.
\end{proof}

\begin{prop}[No cycles under uniqueness]\label{prop:no-cycles}
Assume (B1)-(B4), and \textup{(D2)} (for every $\bm{\theta}$, $Q_n(\cdot\mid\bm{\theta})$ has a unique minimizer). Then any sample-level DM sequence $\{\bm{\theta}_{m,n}\}$ converges to a stationary point $\bm{\theta}^\ast_n$ with $M_n(\bm{\theta}^\ast_n)=\bm{\theta}^\ast_n$. Moreover, if $\bm{\theta}_{m,n}\neq\bm{\theta}^\ast_n$ for all $m$, then $D_n(\bm{\theta}_{m+1,n})<D_n(\bm{\theta}_{m,n})$ and $D_n(\bm{\theta}_{m,n})\searrow D_n(\bm{\theta}^\ast_n)$.
\end{prop}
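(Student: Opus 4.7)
The plan is to invoke Proposition~\ref{prop:cycle-structure}, which under (B1)--(B4) together with the closed--graph condition (D1)$^{\prime}$ gives the dichotomy that every sample DM sequence either converges to a fixed point of $M_n$ or has a finite limit set on which $M_n$ acts as a nontrivial permutation. Under (D2), $M_n$ is single--valued and, combined with continuity of $Q_n$ and compactness from (B2), the argmin map is continuous, so (D1)$^{\prime}$ holds. It then remains to (a) rule out cycles and (b) upgrade the monotone descent of Lemma~\ref{lem:dm-descent} to strict descent whenever the iterate has not yet reached the limit.

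For (a) I argue by contradiction. Suppose $\{\bm{\theta}^\ast_{1,n},\dots,\bm{\theta}^\ast_{t,n}\}$ is a cycle of length $t\ge 2$. Proposition~\ref{prop:cycle-structure} gives that all points share a common $D_n$--value $D_n^\star$. Applying the three--term chain of Lemma~\ref{lem:dm-descent} at $\bm{\theta}^\ast_{i,n}$ with update $\bm{\theta}^\ast_{i+1,n}=M_n(\bm{\theta}^\ast_{i,n})$ yields
\[
D_n(\bm{\theta}^\ast_{i+1,n})\ \le\ Q_n(\bm{\theta}^\ast_{i+1,n}\mid\bm{\theta}^\ast_{i,n})\ \le\ Q_n(\bm{\theta}^\ast_{i,n}\mid\bm{\theta}^\ast_{i,n})\ =\ D_n(\bm{\theta}^\ast_{i,n}),
\]
and equal endpoints collapse all three inequalities to equalities. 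In particular $\bm{\theta}^\ast_{i,n}$ is itself a minimizer of $Q_n(\cdot\mid\bm{\theta}^\ast_{i,n})$, and (D2) then forces $\bm{\theta}^\ast_{i,n}=\bm{\theta}^\ast_{i+1,n}$, contradicting $t\ge 2$. Consequently every sample DM sequence converges to a fixed point $\bm{\theta}^\ast_n \in M_n(\bm{\theta}^\ast_n)$, whose stationarity $\nabla D_n(\bm{\theta}^\ast_n)=0$ is inherited from Proposition~\ref{Prop:DM-Conv}.

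Step (b) follows the same template. If $\bm{\theta}_{m,n}\neq\bm{\theta}^\ast_n$ for every $m$ but $D_n(\bm{\theta}_{m+1,n})=D_n(\bm{\theta}_{m,n})$ for some $m$, then the three--term chain at $\bm{\theta}_{m,n}$ again collapses to equality throughout, (D2) forces $M_n(\bm{\theta}_{m,n})=\bm{\theta}_{m,n}$, and single--valuedness freezes all subsequent iterates at $\bm{\theta}_{m,n}$, giving $\bm{\theta}^\ast_n=\bm{\theta}_{m,n}$ and contradicting the standing hypothesis. Strict descent, combined with the lower boundedness of $D_n$ (nonnegativity of $G$ together with (B3)), then delivers $D_n(\bm{\theta}_{m,n})\searrow D_n(\bm{\theta}^\ast_n)$. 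The main obstacle I anticipate is translating the equal--$D_n$--values conclusion at limit points into an equality of $Q_n$--minimizers so that (D2) can be activated; this rests crucially on the tangency identity $Q_n(\bm{\theta}\mid\bm{\theta})=D_n(\bm{\theta})$ established in Section~\ref{sec:DMLS}, without which the cycle--elimination argument in (a) collapses.
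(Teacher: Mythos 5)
Your proof is correct and follows the same overall skeleton as the paper's: derive single-valuedness and continuity of $M_n$ from (D2), establish strict descent off fixed points, exclude cycles, and conclude convergence via Proposition~\ref{prop:cycle-structure}. The one place you genuinely diverge is the cycle-exclusion step. The paper argues dynamically: once the orbit enters a small invariant neighborhood of a putative cycle, the constancy of $D_n$ along one pass around the cycle contradicts strict descent. You instead apply the majorization chain $D_n(\bm{\theta}^\ast_{i+1,n})\le Q_n(\bm{\theta}^\ast_{i+1,n}\mid\bm{\theta}^\ast_{i,n})\le Q_n(\bm{\theta}^\ast_{i,n}\mid\bm{\theta}^\ast_{i,n})=D_n(\bm{\theta}^\ast_{i,n})$ directly to consecutive cycle points, observe that the common objective value $D_n^\star$ collapses all inequalities to equalities, and invoke (D2) to force $\bm{\theta}^\ast_{i,n}=\bm{\theta}^\ast_{i+1,n}$. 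This is tighter and arguably fills in a step the paper leaves informal, since it never needs to construct the invariant neighborhoods or track where the orbit eventually lands; the same equality-collapse device also powers your strict-descent step (b), which is logically equivalent to the paper's direct argument that uniqueness forces $Q_n(\bm{\theta}_{m+1,n}\mid\bm{\theta}_{m,n})<Q_n(\bm{\theta}_{m,n}\mid\bm{\theta}_{m,n})$ whenever $\bm{\theta}_{m+1,n}\neq\bm{\theta}_{m,n}$. One cosmetic remark: identifying the limit of the decreasing sequence $D_n(\bm{\theta}_{m,n})$ as $D_n(\bm{\theta}^\ast_n)$ uses continuity of $D_n$ together with $\bm{\theta}_{m,n}\to\bm{\theta}^\ast_n$, not merely lower boundedness; the ingredient is available from (B1), so this is a matter of attribution rather than a gap.
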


\begin{proof}[Proof sketch]
Uniqueness excludes nontrivial cycles and enforces single-valuedness/continuity of $M_n$ near stationary points; strict inequality follows from majorization unless at a minimizer of $Q_n(\cdot\mid \bm{\theta}_{m,n})$, which coincides with stationarity.
\end{proof}
\subsection{Contraction Properties}
We analyze the DM operator at both the population and sample levels. Throughout this subsection, assume the true model holds, $g(y)=f(y;\bm{\theta}^\star)$, and define $q(\cdot):=Q(\cdot\mid\bm{\theta}^\star)$. We will use the strong convexity of $q$ near $\bm{\theta}^\star$ and a first–order stability (FOS) condition that controls the sensitivity of the gradient map $\nabla Q(\cdot\mid\bm{\theta})$ with respect to its second argument.

\noindent \textbf{Geometric Convergence of the DM Sequences}

In this section, we focus on the guarantees for the population-level DM algorithm. 
\text{For }$q:\bm{\Theta}\to\mathbb{R},\ q \text{ is }\lambda\text{–strongly convex on }B_2(r;{\bm{\theta}}^\ast)\text{ if}
\quad
q({\bm{\theta}}_1)-q({\bm{\theta}}_2)-\langle \nabla q({\bm{\theta}}_2),\,{\bm{\theta}}_1-{\bm{\theta}}_2\rangle
\ge \tfrac{\lambda}{2}\,\|{\bm{\theta}}_1-{\bm{\theta}}_2\|_2^2.$
By self-consistency, $\bm{\theta}^\star=M(\bm{\theta}^\star)$, and the first-order optimality conditions are, for all $\bm{\theta}$, $\langle \nabla_{\bm{\theta'}} Q(\bm{\theta}^*\mid \bm{\theta}^*),\,\bm{\theta}-\bm{\theta}* \rangle \ge 0
\quad \text{and} \quad \langle \nabla_{\bm{\theta}'} Q(M(\bm{\theta})\mid\bm{\theta}),\,\bm{\theta}-M(\bm{\theta})\rangle \ge 0$.

\begin{defn}
(First-order Stability (FOS)) 
We say that $\{Q(\cdot\mid\bm{\theta}):\bm{\theta}\in\bm{\Theta} \}$ satisfies \textup{FOS($\gamma$)} on $B_2(r';\bm{\theta}^\star)$ if
\begin{align}\label{Defn:FOS}
||\nabla Q(M(\bm{\theta})|\bm{\theta}^* ) - \nabla Q(M(\bm{\theta})|\bm{\theta}) ||_2 \leq \gamma||\bm{\theta} -\bm{\theta}^* ||_2 \quad \text{for all} \quad \bm{\theta} \in \mathbb{B}_2(r';\bm{\theta}^*).
\end{align}    
\end{defn}

\begin{thm}\label{THM:Population:contraction}
For some radius $r'>0$ and pair $(\gamma, \lambda)$ such that $0<\gamma < \lambda$, suppose that the function $Q(\cdot|\bm{\theta}^*)$ is $\lambda$-strongly convex  and that the FOS($\gamma$) condition (\ref{Defn:FOS}) holds on the ball $\mathbb{B}_2(r';\bm{\theta}^*)$. Then the population DM operator $M$ is contractive over $\mathbb{B}_2(r';\bm{\theta}^*)$; in particular, the following inequality holds: 
\begin{align*}
||M(\bm{\theta}) - \bm{\theta}^* ||_2 \leq \frac{\gamma}{\lambda} ||\bm{\theta}-\bm{\theta}^* ||_2 \quad \text{for all} \quad \bm{\theta} \in \mathbb{B}_2(r';\bm{\theta}^*).
\end{align*}
As an immediate consequence, under the conditions of the theorem, for any initial point $\bm{\theta}_0 \in \mathbb{B}_2(r';\bm{\theta}^*)$, the population DM sequence $\{\bm{\theta}_m \}$ exhibits geometric convergence; that is,
\begin{align*}
|| \bm{\theta}_m - \bm{\theta}^*||_2 \leq \left( \frac{\gamma}{\lambda} \right)^m ||\bm{\theta}^0 - \bm{\theta}^* ||_2 \quad \text{for all} \quad m = 1, 2, \cdots .
\end{align*}
\end{thm}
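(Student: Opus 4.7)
The plan is to combine the three ingredients already in hand, namely $\lambda$-strong convexity of $q(\cdot):=Q(\cdot\mid\bm{\theta}^\star)$ on $\mathbb{B}_2(r';\bm{\theta}^\star)$, the first-order (variational) optimality conditions at $\bm{\theta}^\star$ and at $M(\bm{\theta})$, and the FOS($\gamma$) bound, in the standard BWY-style two-point contraction argument. The contraction bound itself immediately implies that the iterates remain trapped in $\mathbb{B}_2(r';\bm{\theta}^\star)$, so the geometric rate follows by trivial induction. Hence the whole argument is a single ball-free display.

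\textbf{Step 1 (quadratic lower bound from strong convexity).} I would apply $\lambda$-strong convexity of $q$ at the pair $(M(\bm{\theta}),\bm{\theta}^\star)$ both ways and add, obtaining the monotonicity estimate
\[
\langle \nabla q(M(\bm{\theta}))-\nabla q(\bm{\theta}^\star),\, M(\bm{\theta})-\bm{\theta}^\star\rangle \;\ge\; \lambda\,\|M(\bm{\theta})-\bm{\theta}^\star\|_2^2.
\]
The VI at $\bm{\theta}^\star$, $\langle\nabla q(\bm{\theta}^\star),\,\bm{\theta}-\bm{\theta}^\star\rangle\ge 0$, applied with the choice $\bm{\theta}=M(\bm{\theta})$ lets me drop the $\nabla q(\bm{\theta}^\star)$ term (it appears with a sign that only helps), reducing the left side to $\langle \nabla q(M(\bm{\theta})),\, M(\bm{\theta})-\bm{\theta}^\star\rangle$.

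\textbf{Step 2 (use the DM optimality to introduce a gradient difference).} The VI at $M(\bm{\theta})$ for the surrogate $Q(\cdot\mid\bm{\theta})$ gives $\langle \nabla_{\bm{\theta}'}Q(M(\bm{\theta})\mid\bm{\theta}),\,M(\bm{\theta})-\bm{\theta}^\star\rangle\le 0$, so subtracting this from the previous display and identifying $\nabla q(M(\bm{\theta}))=\nabla_{\bm{\theta}'}Q(M(\bm{\theta})\mid\bm{\theta}^\star)$ yields
\[
\lambda\,\|M(\bm{\theta})-\bm{\theta}^\star\|_2^2 \;\le\; \langle \nabla_{\bm{\theta}'}Q(M(\bm{\theta})\mid\bm{\theta}^\star)-\nabla_{\bm{\theta}'}Q(M(\bm{\theta})\mid\bm{\theta}),\, M(\bm{\theta})-\bm{\theta}^\star\rangle.
\]
Cauchy--Schwarz on the right side, followed by FOS($\gamma$) applied to the gradient difference, bounds the RHS by $\gamma\,\|\bm{\theta}-\bm{\theta}^\star\|_2\cdot\|M(\bm{\theta})-\bm{\theta}^\star\|_2$. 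Dividing through by $\|M(\bm{\theta})-\bm{\theta}^\star\|_2$ (the case of equality to zero being trivial) gives the claimed contraction $\|M(\bm{\theta})-\bm{\theta}^\star\|_2\le(\gamma/\lambda)\,\|\bm{\theta}-\bm{\theta}^\star\|_2$.

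\textbf{Step 3 (iteration).} Since $\gamma/\lambda<1$, any $\bm{\theta}_0\in\mathbb{B}_2(r';\bm{\theta}^\star)$ produces $\bm{\theta}_1\in\mathbb{B}_2((\gamma/\lambda)r';\bm{\theta}^\star)\subset\mathbb{B}_2(r';\bm{\theta}^\star)$, so the hypotheses of the one-step bound remain valid at every iterate; a one-line induction then gives $\|\bm{\theta}_m-\bm{\theta}^\star\|_2\le(\gamma/\lambda)^m\|\bm{\theta}_0-\bm{\theta}^\star\|_2$. The main obstacle I anticipate is purely bookkeeping at the constraint boundary: strong convexity and FOS are stated on the open ball, and one must be careful that the VI forms of first-order optimality (which are the statements actually given just before the theorem) are used instead of $\nabla q(\bm{\theta}^\star)=0$ and $\nabla_{\bm{\theta}'}Q(M(\bm{\theta})\mid\bm{\theta})=0$, since $\bm{\theta}^\star$ or $M(\bm{\theta})$ could lie on the boundary of $\bm{\Theta}$. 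The displays above are written precisely so that this substitution goes through unchanged.
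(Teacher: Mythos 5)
Your argument is correct and is essentially the paper's own proof: both rest on the two variational first-order conditions (at $\bm{\theta}^\star$ and at $M(\bm{\theta})$), the $\lambda$-strong-convexity monotonicity bound $\langle \nabla q(M(\bm{\theta}))-\nabla q(\bm{\theta}^\star), M(\bm{\theta})-\bm{\theta}^\star\rangle \ge \lambda\|M(\bm{\theta})-\bm{\theta}^\star\|_2^2$, and Cauchy--Schwarz plus FOS($\gamma$) on the gradient difference $\nabla_{\bm{\theta}'}Q(M(\bm{\theta})\mid\bm{\theta}^\star)-\nabla_{\bm{\theta}'}Q(M(\bm{\theta})\mid\bm{\theta})$, differing only in whether the two VIs are added before or after invoking strong convexity. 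The iteration step, including the observation that $\gamma/\lambda<1$ keeps the iterates inside $\mathbb{B}_2(r';\bm{\theta}^\star)$, matches what the paper treats as an immediate consequence.
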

The FOS($\gamma$) condition is standard in contraction analyses of EM-type
algorithms (cf. \cite{Siv17}) and, in our mixture setting, follows from Lipschitz regularity of the responsibilities and component scores; see the
Component Lipschitz regularity assumption and Corollary~\ref{cor:Kexplicit}
for an explicit bound on $\gamma_K$. We emphasize that FOS($\gamma$) is only
required locally in a neighbourhood of $\theta^\star$, and is not needed for the global monotone descent or stationarity results in Section~\ref{Properties-DM}; it is used solely to obtain geometric rates for the population map and its noisy sample analogue. We now turn to theoretical results on the sample-level DM algorithm. To this end, let
\begin{align}\label{pop-sam-lower}
\mathcal M_{\mathrm{unif}}(n,\rho)
:=\sup_{\bm{\theta}\in B_2(r;\bm{\theta}^*)}\ \inf_{\eta\in M_n(\bm{\theta})}
\big\|\eta - M(\bm{\theta})\big\|_2,
\qquad \mathbb{P}\!\left[\mathcal M_{\mathrm{unif}}(n,\rho)\le \varepsilon\right]\ge 1-\rho.
\end{align}
Our next result concerns the rate of convergence of the sample-level DM sequence to the population level minimizer of $D(\bm{\theta})$.
\begin{thm}[Noisy contraction]\label{thm:noisy}
Assume that the conditions of Theorem~\ref{THM:Population:contraction} with contraction factor $\kappa\in(0,1)$ on $B_2(r;\bm{\theta}^\ast)$ hold.
If $M_{\mathrm{unif}}(n,\rho)\le (1-\kappa)\,r$, then
\[
\|\bm \theta_{m,n}-\bm \theta^\ast\|_2 \le \kappa^m\|\bm \theta_{0,n}-\bm \theta^\ast\|_2 + \frac{M_{\mathrm{unif}}(n,\rho)}{1-\kappa}
\quad\text{with prob. }\ge 1-\rho.
\]
\end{thm}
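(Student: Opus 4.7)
The plan is to combine the population contraction from Theorem~\ref{THM:Population:contraction} with the uniform proximity bound (\ref{pop-sam-lower}) via a one--step triangle inequality, iterated on an invariant neighborhood of $\bm\theta^\ast$. Concretely, I would work on the event $\mathcal{E}_n := \{\mathcal{M}_{\mathrm{unif}}(n,\rho) \le (1-\kappa)r\}$, which by the definition of $\mathcal{M}_{\mathrm{unif}}(n,\rho)$ in (\ref{pop-sam-lower}) has probability at least $1-\rho$, and select a DM sequence $\bm\theta_{m+1,n}\in M_n(\bm\theta_{m,n})$ that attains (up to arbitrarily small slack) the infimum in (\ref{pop-sam-lower}) at $\bm\theta=\bm\theta_{m,n}$. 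On $\mathcal{E}_n$, this guarantees $\|\bm\theta_{m+1,n}-M(\bm\theta_{m,n})\|_2 \le \mathcal{M}_{\mathrm{unif}}(n,\rho)$ whenever $\bm\theta_{m,n}\in B_2(r;\bm\theta^\ast)$.

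The one--step recursion then follows from the triangle inequality combined with the self--consistency identity $M(\bm\theta^\ast)=\bm\theta^\ast$:
\begin{align*}
\|\bm\theta_{m+1,n}-\bm\theta^\ast\|_2
&\le \|\bm\theta_{m+1,n}-M(\bm\theta_{m,n})\|_2 + \|M(\bm\theta_{m,n})-M(\bm\theta^\ast)\|_2 \\
&\le \mathcal{M}_{\mathrm{unif}}(n,\rho) + \kappa\,\|\bm\theta_{m,n}-\bm\theta^\ast\|_2,
\end{align*}
where Theorem~\ref{THM:Population:contraction} is applied to the second term. Writing $a_m := \|\bm\theta_{m,n}-\bm\theta^\ast\|_2$, this is the affine recursion $a_{m+1}\le \kappa\,a_m + \mathcal{M}_{\mathrm{unif}}(n,\rho)$. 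A short induction shows that $B_2(r;\bm\theta^\ast)$ is forward--invariant: if $a_m\le r$, then $a_{m+1}\le \kappa r + (1-\kappa)r = r$, since $\mathcal{M}_{\mathrm{unif}}(n,\rho)\le (1-\kappa)r$ on $\mathcal{E}_n$. Starting from $\bm\theta_{0,n}\in B_2(r;\bm\theta^\ast)$ (implicit in the hypothesis, as otherwise the contraction at the first step is not available), the recursion applies for every $m\ge 0$, and unrolling it together with $\sum_{j=0}^{m-1}\kappa^j\le 1/(1-\kappa)$ yields the stated bound.

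The main obstacle is not contraction per se but the set--valued structure of $M_n$: one must verify that a DM sequence can be selected so that the one--step gap $\|\bm\theta_{m+1,n}-M(\bm\theta_{m,n})\|_2$ is indeed controlled by $\mathcal{M}_{\mathrm{unif}}(n,\rho)$ at every step, and that this selection can be made jointly measurable along the iterates. Under (D2), $M_n$ is single--valued near $\bm\theta^\ast$ and no choice is needed; in the general case one invokes a measurable--selection theorem (e.g.\ Kuratowski--Ryll-Nardzewski) applied to the closed--valued map $\bm\theta\mapsto \{\eta\in M_n(\bm\theta): \|\eta-M(\bm\theta)\|_2\le \mathcal{M}_{\mathrm{unif}}(n,\rho)+\varepsilon\}$ and lets $\varepsilon\downarrow 0$. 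Once this selection issue is dispensed with, the remainder is routine geometric--series bookkeeping.
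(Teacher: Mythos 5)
Your proposal is correct and follows essentially the same route as the paper's proof: a one-step triangle inequality combining the population contraction with the uniform operator deviation, an induction showing the ball $B_2(r;\bm{\theta}^\ast)$ is forward-invariant under the condition $\mathcal M_{\mathrm{unif}}(n,\rho)\le(1-\kappa)r$, and a geometric-series unrolling of the affine recursion. Your additional care with the set-valued selection (attaining the infimum in the definition of $\mathcal M_{\mathrm{unif}}$ up to vanishing slack, with a measurable-selection argument) addresses a point the paper's proof passes over by treating $M_n$ as single-valued, but it does not change the substance of the argument.
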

We next provide an explicit form of the bound in Theorem \ref{thm:noisy}. We need one additional assumption and a related notation.

\textbf{Assumption (Component Lipschitz regularity).}
There exists $L_{\mathrm{comp}}>0$ such that for all $\theta,\theta'\in B_2(r';{\bm{\theta}}^\ast)$,
\[
\sup_{y}\max_{k}\Big\{|w_k(y;{\bm{\theta}})-w_k(y;{\bm{\theta}}')|
+ \|\nabla_{\phi}\log h(y;\phi_k({\bm{\theta}}))-\nabla_{\phi}\log h(y;\phi_k({\bm{\theta}}'))\|_2\Big\}
\;\le\; L_{\mathrm{comp}}\;\|{\bm{\theta}}-{\bm{\theta}}'\|_2 .
\]
Also, set $J (\bm{\eta}, \bm{\theta}) \coloneqq  D_2\big(\nabla_1 Q\big)(\bm{\eta}\mid \bm{\theta})$ where $\nabla_1 Q(\bm{\eta}|\bm{\theta})$ is the gradient of $Q(\cdot|\bm{\theta})$ wrt the first argument and $D_2$ is the Fr\'echet derivative wrt the second argument. Set 
\[
C_{\mathrm{fos}}
\coloneqq
\sup_{\bm{\theta}\in B_2(r';\bm{\theta}^*)}\ \sup_{\bm{\eta}\in M(\bm{\theta})}\|J(\bm{\eta}\mid \bm{\theta})\,\|_{\mathrm{op}}=   \sup_{\bm{\theta}\in B_2(r';\bm{\theta}^*)}\ \sup_{\bm{\eta}\in M(\bm{\theta})}\sqrt{\lambda_{max}(J^TJ)},
\]
where $\lambda_{max}(J^TJ)$ is the maximal eigen-value of the matrix $J^TJ$. Also, set $\gamma_K= \inf\{\gamma: \text{FOS($\gamma$) holds for the $K-$ component family on}~ \mathbb{B}_2(r';\bm{\theta}^*)\}$. Let $\pi_{min}$ denote the minimal mixing weight on the neighborhood. That is, $\pi_{min}=\inf_{\bm{\theta} \in \mathbb{B}(r';\bm{\theta}*)}\min_{1 \le k \le K} \pi_k(\bm{\theta}).$ Let $p(K)$ denote the model complexity; that is $p(K)=(K-1) (\text{mixing weights})+ K d_{\bm{\phi}} (\text{component parameters)}.$

\begin{cor}[Explicit $K$-scaling]\label{cor:Kexplicit}
If in addition $\gamma_K\le (C_{\mathrm{fos}}/\pi_{\min})K L_{\mathrm{comp}}$ so that $\kappa_K=\gamma_K/\lambda<1$, and
\[
M_{\mathrm{unif}}(n,\rho) \le C_{\mathrm{op}}A_{\max}\sqrt{\frac{p(K)+\log(1/\rho)}{n}},
\]
then, with probability at least $(1-\rho)$,
\[
\|{\bm{\theta}}_{m,n}-{\bm{\theta}}^\ast\|_2 \le \kappa_K^{\,m}\|{\bm{\theta}}_{0,n}-{\bm{\theta}}^\ast\|_2
+ \frac{C_{\mathrm{op}}A_{\max}}{1-\kappa_K}\sqrt{\frac{p(K)+\log(1/\rho)}{n}}.
\]
\end{cor}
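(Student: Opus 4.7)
The plan is to verify the two hypotheses of Theorem~\ref{thm:noisy} under the stated conditions and then substitute the two explicit bounds. First, the strict inequality $\kappa_K = \gamma_K/\lambda < 1$, together with $\lambda$-strong convexity of $q$ and the assumed bound $\gamma_K \le (C_{\mathrm{fos}}/\pi_{\min})\,K\,L_{\mathrm{comp}}$, supplies the contraction hypothesis of Theorem~\ref{THM:Population:contraction}, and hence of Theorem~\ref{thm:noisy} with contraction factor $\kappa = \kappa_K$. Second, the assumed rate on $M_{\mathrm{unif}}(n,\rho)$ implies, for $n$ large enough (absorbable inside the same $\rho$-event), the admissibility condition $M_{\mathrm{unif}}(n,\rho) \le (1-\kappa_K)\,r$ required by Theorem~\ref{thm:noisy}. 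Invoking that theorem then yields, with probability at least $1-\rho$,
\[
\|\bm{\theta}_{m,n} - \bm{\theta}^\ast\|_2 \le \kappa_K^{m}\,\|\bm{\theta}_{0,n} - \bm{\theta}^\ast\|_2 + \frac{M_{\mathrm{unif}}(n,\rho)}{1-\kappa_K},
\]
and substituting the explicit rate bound for $M_{\mathrm{unif}}(n,\rho)$ gives the claimed inequality.

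For completeness, I would also sketch where the two hypothesized bounds come from. For the $K$-scaling of $\gamma_K$, the key step is to write $\nabla_1 Q(\bm{\eta}\mid\bm{\theta}^\ast) - \nabla_1 Q(\bm{\eta}\mid\bm{\theta}) = -\int_0^1 J\bigl(\bm{\eta},\bm{\theta}(s)\bigr)\,(\bm{\theta}-\bm{\theta}^\ast)\,ds$ along the segment $\bm{\theta}(s) = (1-s)\,\bm{\theta}^\ast + s\,\bm{\theta}$, take $\ell_2$-norms, and then decompose $\|J\|_{\mathrm{op}}$ along the mixture structure. The responsibilities $w_k = \pi_k h_k/\sum_\ell \pi_\ell h_\ell$ contribute a factor $1/\pi_{\min}$ through differentiation of the normalizing denominator; the component log-scores contribute $L_{\mathrm{comp}}$ through the Component Lipschitz regularity assumption; summing the pointwise contributions over $k = 1,\ldots,K$ produces the factor $K$; and $C_{\mathrm{fos}}$ absorbs the remaining RAF-dependent constants via the definition of the operator norm bound on $J$. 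This is the natural mixture-specific refinement of FOS$(\gamma)$ that exposes the $K$-dependence.

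For the stochastic bound on $M_{\mathrm{unif}}(n,\rho)$, strong convexity of $Q(\cdot\mid\bm{\theta})$ converts a uniform gradient deviation $\sup_{\bm{\theta},\bm{\eta}}\|\nabla_1 Q_n(\bm{\eta}\mid\bm{\theta}) - \nabla_1 Q(\bm{\eta}\mid\bm{\theta})\|_2$ into a deviation of minimizers by a factor $1/\lambda$; one then applies a covering argument over $B_2(r;\bm{\theta}^\ast)$ of metric entropy of order $p(K)$, combined with sub-Gaussian concentration whose envelope is governed by $A_{\max}$, the supremum of the RAF on the relevant residual range. The $\log(1/\rho)$ contribution is the usual union-bound tax over the cover. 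The main technical obstacle is establishing this empirical-process bound uniformly in both $\bm{\theta}$ and $\bm{\eta} \in M(\bm{\theta})$: this requires boundedness of the RAF, which is ensured for HD and NED-type divergences but fails for KL without additional density-ratio control, matching the caveat in the remark preceding this section.
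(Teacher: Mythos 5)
Your proof is correct and follows essentially the same route as the paper: Corollary~\ref{cor:Kexplicit} is obtained by verifying the hypotheses of Theorem~\ref{thm:noisy} with $\kappa=\kappa_K=\gamma_K/\lambda$ and substituting the assumed rate for $M_{\mathrm{unif}}(n,\rho)$, exactly as the paper intends (your added observation that the admissibility condition $M_{\mathrm{unif}}(n,\rho)\le(1-\kappa_K)r$ must be checked for large $n$ is a worthwhile clarification). Your sketches of the provenance of the two bounds --- the $K/\pi_{\min}$ factor from differentiating the responsibilities, $L_{\mathrm{comp}}$ from the component Lipschitz assumption, and the $\sqrt{(p(K)+\log(1/\rho))/n}$ rate from strong convexity plus an entropy/concentration argument with envelope $A_{\max}$ --- also match the paper's supporting lemmas.
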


\begin{rem}[How $d$ enters $M_{\mathrm{unif}}(n,r)$ when $M_n$ and $M$ are set-valued]
\label{rem:Munif-d-vs-pK}
Notice that $\Psi(\bm\theta;g)=\nabla_{\bm\theta} D_G(g,f_{\bm\theta})
= -\int A\!\Big(\frac{g(y)}{f_{\bm\theta}(y)}-1\Big) s_{\bm\theta}(y)\,f_{\bm\theta}(y)\,dy$, 
$s_{\bm\theta}=\nabla_{\bm\theta}\log f_{\bm\theta}$.
For any signed perturbation $h$ with $\int h=0$,
\[
\partial_g \Psi(\bm\theta;g)[h]
= -\!\int A'\!\Big(\frac{g(y)}{f_{\bm\theta}(y)}-1\Big)\, s_{\bm\theta}(y)\, h(y)\,dy.
\]
Thus, $\Psi(\bm\theta;g_{n})-\Psi(\bm\theta;g)
= \partial_g \Psi(\bm\theta;g)[\,g_n-g\,] \;+\; r_n(\bm\theta)$,and 
\quad 
$\sup_{\bm\theta\in\mathbb{B}_2(r;\bm\theta^\star)}\!\|r_n(\bm\theta)\|_2
= o_p\!\big(\|g_n-g\|_{\mathcal H}\big)$, where the remainder bound follows from local Lipschitz continuity of $A'$. Under the calibration $A'(0)=1$ and at the model ($g=f_{\bm\theta^\star}$), this simplifies to
$\partial_g \Psi(\bm\theta^\star;g)[h]=-\int s_{\bm\theta^\star}(y) h(y)\,dy$.
Hence, the leading plug-in effect depends only on the \emph{score class} $\{s_{\bm\theta}:\bm\theta\in\mathbb{ B}_2(r;\bm\theta^\star)\}$, and is bounded by its dual-norm envelope:
\[
\sup_{\bm\theta\in\mathbb{ B}_2(r;\bm\theta^\star)}\!
\|\Psi(\bm\theta;g_n)-\Psi(\bm\theta;g)\|_2
\;\le\; A'_{\max}\,\Env(K)\,\|g_n-g\|_{\mathcal H}\;+\;o_p\!\big(\|g_n-g\|_{\mathcal H}\big).
\]
Hence, for an
envelope/entropy constant $\Env(K)$ of $\{s_\xi:\ \xi\in M(\theta),\ \theta\in\mathbb B_2(r;\theta^\star)\}$ and
the seminorm
\[
\|h\|_{\mathcal H}\;:=\;\sup_{\theta\in\mathbb B_2(r;\theta^\star)}\ \sup_{\xi\in M(\theta)}\ \int \|s_\xi(y)\|_2\,|h(y)|\,dy,
\]
one obtains the high-probability bound
\begin{equation}\label{eq:Munif-decomp-set}
M_{\mathrm{unif}}(n,r)\ \lesssim\ C_\mathrm{fos}\,A'_{\max}\,\Env(K)\,\|g_n-g\|_{\mathcal H}\qquad\text{with prob.\ }\ge 1-\rho,
\end{equation}
where $C_\mathrm{fos}$ is the local FOS modulus and $A'_{\max}:=\sup_{\delta\ge -1}|A'(\delta)|$ is the RAF envelope
(both as used in Corollary~2). Strong convexity then converts score-level perturbations to argmin-level
deviations, yielding the noisy-contraction bound with $M_{\mathrm{unif}}(n,r)$ in place of $\|M_n-M\|$.  All dependence on the data dimension $d$ enters \emph{only} through the plug-in rate $\|g_n-g\|_{\mathcal H}$. For the discrete (finite or countable support) $\|g_n-g\|_{\mathcal H}=O_p(n^{-1/2})$; thus
$M_{\mathrm{unif}}(n,r)\lesssim C_\mathrm{fos}A'_{\max}\Env(K)\,n^{-1/2}$ and hence there is no dependence on $d$. In the continuous $d$-variate (kernel plug-in) case, for $\beta$-Hölder $g$ and a product kernel,
$\|g_n-g\|_{\mathcal H}=O_p\!\big(h^{\beta}+\sqrt{1/(n h^{d})}\big)$, optimized at
$n^{-\beta/(2\beta+d)}$—the standard nonparametric rate. Consequently, the \emph{model-side} constants—$C_\mathrm{fos}$, $A_{\max}$, local curvature/strong convexity, and any mixture-specific envelopes such as $\Env(K)\lesssim C_1\sqrt{K}$ or $C_2K/\pi_{\min}$—govern the explicit
$p(K)$-dependence, while $d$ affects only $\|g_n-g\|_{\mathcal H}$ via known density-estimation rates.
\end{rem}





\section{Asymptotic Results}\label{sec:asymptotics}

Throughout Sections~4.1–4.4 we take the number of mixture components $K$ as fixed and known.
Section~4.5 treats the case of unknown $K$ via split–select–estimate and dimension matching.
For each sample size $n$, let $m_n\in\mathbb{N}$ denote the number of DM iterations we run on $D_n$.
Accordingly, $\bm{\theta}_{m_n,n}$ denotes the iterate after $m_n$ updates at sample size $n$.

\subsection{Properties of Truncated Iterates}

\begin{defn}
A population (resp. sample) level DM algorithm sequence $\{\bm{\theta}_m\}$ (resp. $\{\bm{\theta}_{m, n} \}$) is called a population (resp. sample) level optimal sequence if $\bm{\theta}^* =
\lim_{m\to \infty} \bm{\theta}_m \in \underset{\bm{\theta}' \in \bm{\Theta}}{\text{argmin}} ~D(\bm{\theta}')$ 
(resp. $\bm{\theta}_n^* = \lim_{m\to \infty} \bm{\theta}_{m, n} \in \underset{\bm{\theta}' \in \bm{\Theta}}{\text{argmin}}~ D_n(\bm{\theta}')$). 
\end{defn}

The following theorem is concerned with the consistency and asymptotic normality of the finitely iterated sample-level optimal DM sequence $\{\bm{\theta}_{m,n} \}$. 
\begin{thm}\label{Thm:1}
Assume the number of components $K$ is fixed and the model is correctly specified $g=f_{\bm{\theta}^\star}$.
\smallskip
\begin{enumerate}
\item \noindent\textbf{ Consistency.}
Under \textbf{(C0)–(C3)}, any sample optimal DM sequence $\{\bm{\theta}_{m,n}\}$ satisfies
$\lim_{n\to\infty}\lim_{m\to\infty}\bm{\theta}_{m,n}=\bm{\theta}^\star$ almost surely. If $m_n\to\infty$ then $\bm{\theta}_{m_n,n}\stackrel{p}{\to}\bm{\theta}^\star$.

\smallskip
\item \noindent\textbf{ $\sqrt n$–normality (truncated iterates).}
Assume Theorem~\ref{THM:Population:contraction} holds: $M$ is $\kappa$–contractive on $B_2(r;\theta^\star)$ with $\kappa\in(0,1)$.
If $\theta_{0,n}\in B_2(r;\theta^\star)$, 
\[
m_n\ \ge\ \Big\lceil \frac{(\tfrac12+\delta)\log n + c_0}{|\log\kappa|}\Big\rceil\qquad (c_0>0, \delta >0 \text{ fixed}),
\]
then under the conditions  \textbf{(F1)}, \textbf{(C1)}–\textbf{(C2)}, \textbf{(K1)}–\textbf{(K2)}, \textbf{(M1)}–\textbf{(M8)}  
\[
\sqrt n\,(\bm{\theta}_{m_n,n}-\theta^\star)\ \Rightarrow\ \mathcal N\!\big(0,\ I(\bm{\theta}^\star)^{-1}\big).
\]
\emph{In particular, for some $\delta >0$} \(m_n = \lceil ( (\tfrac12+\delta) \log n)/|\log \kappa| \rceil + O(1)\), so only \(O(\log n)\) iterations are required.
\end{enumerate}
\end{thm}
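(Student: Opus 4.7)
The plan is to split
\[
\bm{\theta}_{m_n,n} - \bm{\theta}^\star
\;=\;\underbrace{\bigl(\bm{\theta}_{m_n,n} - \bm{\theta}_n^\star\bigr)}_{\text{iteration error}}
\;+\;\underbrace{\bigl(\bm{\theta}_n^\star - \bm{\theta}^\star\bigr)}_{\text{statistical error}},
\]
where $\bm{\theta}_n^\star$ is the sample MDE and a fixed point of $M_n$ (Corollary~\ref{cor:fixed-point-sample}). The statistical term is handled by classical Z/M--estimator theory for the MDE score equation, and the iteration term is killed by sample--level contraction of $M_n$ around $\bm{\theta}_n^\star$. Choosing $m_n$ large enough that the contraction residual is $o(n^{-1/2})$ then collapses the iterate to the MDE at rate $\sqrt n$ and delivers the CLT via Slutsky.

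\textbf{Consistency (part 1).} First I would apply Wald--type consistency: under \textbf{(C0)--(C3)} the sample criterion $D_n$ converges locally uniformly to $D$, $D$ has unique minimizer $\bm{\theta}^\star$ (from correct specification and identifiability in \textbf{(C1)}), hence $\bm{\theta}_n^\star\to\bm{\theta}^\star$ a.s. For each fixed $n$, monotone descent together with stationarity (Proposition~\ref{Prop:DM-Conv}) and uniqueness of the minimizer give $\lim_{m\to\infty}\bm{\theta}_{m,n}=\bm{\theta}_n^\star$; taking $n\to\infty$ yields the iterated limit claim. For the diagonal statement with $m_n\to\infty$, Theorem~\ref{thm:noisy} supplies
\[
\|\bm{\theta}_{m_n,n}-\bm{\theta}^\star\|_2 \;\le\; \kappa^{m_n}\|\bm{\theta}_{0,n}-\bm{\theta}^\star\|_2 + \frac{M_{\mathrm{unif}}(n,\rho)}{1-\kappa}
\]
on events of probability at least $1-\rho$; since $M_{\mathrm{unif}}(n,\rho)\to 0$ in probability and $\kappa^{m_n}\to 0$, the RHS is $o_p(1)$, so $\bm{\theta}_{m_n,n}\stackrel{p}{\to}\bm{\theta}^\star$.

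\textbf{Asymptotic normality (part 2).} For the MDE limit, I would differentiate the identity $\Psi_n(\bm{\theta}_n^\star)=0$ with $\Psi_n(\bm{\theta}):=\nabla D_n(\bm{\theta})$, Taylor--expand, and use \textbf{(F1)}, \textbf{(K1)--(K2)}, and \textbf{(M1)--(M8)} to verify: (i) $\sqrt n\,\Psi_n(\bm{\theta}^\star)\Rightarrow \mathcal{N}(0,\mathcal I(\bm{\theta}^\star))$ via the usual linearization of the RAF--weighted score (which reduces to the likelihood score because $A'(0)=G''(0)=1$ at $\delta\equiv 0$ under $g=f_{\bm{\theta}^\star}$); and (ii) the sensitivity $\nabla\Psi_n\to \mathcal I(\bm{\theta}^\star)$ uniformly near $\bm{\theta}^\star$. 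Inversion gives $\sqrt n(\bm{\theta}_n^\star-\bm{\theta}^\star)\Rightarrow \mathcal N(0,\mathcal I(\bm{\theta}^\star)^{-1})$. For the iteration term, because $M_n(\bm{\theta}_n^\star)=\bm{\theta}_n^\star$ and the sample map is locally contractive near $\bm{\theta}_n^\star$ with factor $\kappa_n=\kappa+o_p(1)$ (inherited from the population Theorem~\ref{THM:Population:contraction} via uniform convergence of the relevant derivatives of $Q_n$), an induction yields
\[
\|\bm{\theta}_{m_n,n}-\bm{\theta}_n^\star\|_2 \;\le\; \kappa_n^{\,m_n}\,\|\bm{\theta}_{0,n}-\bm{\theta}_n^\star\|_2 \;\le\; 2r\,\kappa_n^{\,m_n}
\]
eventually (since $\bm{\theta}_{0,n}\in B_2(r;\bm{\theta}^\star)$ and $\bm{\theta}_n^\star\to\bm{\theta}^\star$). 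With $m_n\ge\lceil((1/2+\delta)\log n+c_0)/|\log\kappa|\rceil$, this is $O_p(n^{-(1/2+\delta)})$, so $\sqrt n(\bm{\theta}_{m_n,n}-\bm{\theta}_n^\star)=O_p(n^{-\delta})=o_p(1)$, and Slutsky yields the stated CLT.

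\textbf{Main obstacle.} The technical bottleneck is not the MDE CLT, which is a standard exercise once the RAF regularity kicks in, but the \emph{uniform transfer of contractivity} from $M$ to $M_n$ in a shrinking neighborhood of $\bm{\theta}_n^\star$. Concretely, one must show that the sample strong--convexity constant $\lambda_n$ of $Q_n(\cdot\mid\bm{\theta}_n^\star)$ and the sample first--order--stability constant $\gamma_n$ both converge to their population analogs, uniformly over balls of radius $r'$. This requires a Glivenko--Cantelli/stochastic--equicontinuity argument for the classes of second--derivative and cross--derivative integrands $\{\partial^2_{\bm{\theta}'} Q_n(\cdot\mid\bm{\theta}):\bm{\theta},\bm{\theta}'\in B_2(r';\bm{\theta}^\star)\}$, which the envelope and entropy assumptions in \textbf{(M1)--(M8)} are designed to cover. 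Once $\kappa_n=\gamma_n/\lambda_n<1$ uniformly with high probability, the two--scale split above closes, and the iteration count $m_n=O(\log n)$ is both necessary and sufficient to make the truncation error asymptotically negligible relative to the $n^{-1/2}$ statistical fluctuation.
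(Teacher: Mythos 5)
Your proposal is correct and follows essentially the same route as the paper's proof: the same decomposition of $\bm{\theta}_{m_n,n}-\bm{\theta}^\star$ into the iteration error relative to the sample MDE (killed by the inherited sample-level contraction $\kappa_n=\kappa+o_p(1)$ and the choice $\sqrt n\,\kappa^{m_n}\to 0$) plus the statistical error of the MDE (handled by the Z-estimation CLT under (F1), (C1)--(C2), (K1)--(K2), (M1)--(M8)), concluded via Slutsky. Your identification of the uniform transfer of contractivity from $M$ to $M_n$ as the technical bottleneck matches the step the paper itself treats most tersely ("same FOS/strong-convexity argument with $g$ replaced by $g_n$ and uniform LLN").
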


\begin{cor}[Finite-step Godambe CLT]\label{cor:finite-godambe}
Let $\hat{\bm{\theta}}_n$ solve $\Psi_n(\bm{\theta}):=\nabla_{\bm{\theta}} D_G(g_n,f_{\bm{\theta}})=0$ and let $\bm{\theta}_{m+1,n}\in M_n(\bm{\theta}_{m,n})$ with $\bm{\theta}_{0,n}\in B_2(r;\bm{\theta}^\star)$.
Assume \textup{(G1)–(G4)} at ${\bm{\theta}}^\dagger:=\arg\min_{\bm{\theta}} D_G(g,f_{\bm{\theta}})$, and the contraction in Theorem~\ref{THM:Population:contraction}.
If $\sqrt n\,\kappa^{m_n}\to0$, then
\[
\sqrt n\,(\bm{\theta}_{m_n,n}-\bm{\theta}^\dagger)\ \Rightarrow\ \mathcal N\!\big(0,\ H^{-1} V H^{-1}\big),
\]
with $H:=\nabla_{\bm{\theta}}\Psi(\bm \theta^\dagger;g)$ and $V:=Var_g\!\big[A'(\tfrac{g}{f_{\bm{\theta}^\dagger}}-1)s_{\bm{\theta}^\dagger}(Y)\big]$.
In particular, under correct model specification and $A'(0)=1$, the covariance reduces to $I(\bm{\theta}^\star)^{-1}$.
\end{cor}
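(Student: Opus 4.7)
The plan is to split
\[
\sqrt n\,(\bm{\theta}_{m_n,n}-\bm{\theta}^\dagger)
= \sqrt n\,(\bm{\theta}_{m_n,n}-\hat{\bm{\theta}}_n)
+ \sqrt n\,(\hat{\bm{\theta}}_n-\bm{\theta}^\dagger),
\]
prove a sandwich CLT for the exact sample MDE $\hat{\bm{\theta}}_n$ by standard Z-estimator theory, show the truncation remainder is $o_p(1)$ by re-centring the contraction machinery of Theorem~\ref{THM:Population:contraction}--Theorem~\ref{thm:noisy} at $\hat{\bm{\theta}}_n$, and conclude by Slutsky. The reduction to $I(\bm{\theta}^\star)^{-1}$ under correct specification is a short algebraic check.

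For the sandwich CLT I would work from the estimating equation $\Psi_n(\hat{\bm{\theta}}_n)=0$ with $\Psi(\bm{\theta};g)=-\int A(g(y)/f_{\bm{\theta}}(y)-1)\,s_{\bm{\theta}}(y)\,f_{\bm{\theta}}(y)\,dy$ as computed in Remark~\ref{rem:Munif-d-vs-pK}. Consistency $\hat{\bm{\theta}}_n\xrightarrow{p}\bm{\theta}^\dagger$ follows from (G1) and the uniform convergence of $D_G(g_n,\cdot)$ supplied by (G4). A Taylor expansion gives $0=\Psi_n(\bm{\theta}^\dagger)+\nabla\Psi_n(\tilde{\bm{\theta}}_n)(\hat{\bm{\theta}}_n-\bm{\theta}^\dagger)$ for some $\tilde{\bm{\theta}}_n$ between $\hat{\bm{\theta}}_n$ and $\bm{\theta}^\dagger$. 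The functional-derivative expansion of Remark~\ref{rem:Munif-d-vs-pK}, combined with $\Psi(\bm{\theta}^\dagger;g)=0$, delivers $\sqrt n\,\Psi_n(\bm{\theta}^\dagger)=-\int A'(g/f_{\bm{\theta}^\dagger}-1)\,s_{\bm{\theta}^\dagger}(y)\,\sqrt n(g_n-g)(y)\,dy+o_p(1)$, which by (G3) and a classical CLT converges weakly to $\mathcal N(0,V)$ with $V=\mathrm{Var}_g[A'(g/f_{\bm{\theta}^\dagger}-1)s_{\bm{\theta}^\dagger}(Y)]$. Coupled with $\nabla\Psi_n(\tilde{\bm{\theta}}_n)\xrightarrow{p}H$ (via the ULLN in (G4) and consistency) and invertibility of $H$ from (G2), this gives $\sqrt n(\hat{\bm{\theta}}_n-\bm{\theta}^\dagger)\Rightarrow\mathcal N(0,H^{-1}VH^{-1})$.

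For the truncation remainder I would first observe that $\hat{\bm{\theta}}_n$ is itself a fixed point of $M_n$: Proposition~\ref{self-consist} combined with Corollary~\ref{cor:fixed-point-sample} (whose uniqueness hypothesis is met locally by the $\lambda$-strong convexity in Theorem~\ref{THM:Population:contraction}) gives $M_n(\hat{\bm{\theta}}_n)=\hat{\bm{\theta}}_n$ with probability approaching one. By consistency, $\hat{\bm{\theta}}_n$ eventually lies in $B_2(r'/2;\bm{\theta}^\star)$, and the strong-convexity and FOS($\gamma$) moduli of Theorem~\ref{THM:Population:contraction} hold uniformly in a neighborhood of $\bm{\theta}^\star$. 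Re-centring the contraction there and invoking the uniform fluctuation control of $M_n$ around $M$ from Theorem~\ref{thm:noisy} and Corollary~\ref{cor:Kexplicit} yields a sample contraction factor $\kappa_n$ with $\kappa_n\to\kappa<1$ such that $\|M_n(\bm{\theta})-\hat{\bm{\theta}}_n\|_2\le\kappa_n\|\bm{\theta}-\hat{\bm{\theta}}_n\|_2$ on a high-probability neighborhood. Iterating $m_n$ times yields $\|\bm{\theta}_{m_n,n}-\hat{\bm{\theta}}_n\|_2\le\kappa_n^{\,m_n}\|\bm{\theta}_{0,n}-\hat{\bm{\theta}}_n\|_2=O_p(\kappa^{m_n})$, so the assumption $\sqrt n\,\kappa^{m_n}\to0$ forces $\sqrt n(\bm{\theta}_{m_n,n}-\hat{\bm{\theta}}_n)=o_p(1)$, and Slutsky closes the CLT.

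For the reduction under correct specification, $g=f_{\bm{\theta}^\star}$ forces $\bm{\theta}^\dagger=\bm{\theta}^\star$, $\delta\equiv 0$, $A(0)=0$ and $A'(0)=1$, so $V=\mathrm{Var}_{\bm{\theta}^\star}[s_{\bm{\theta}^\star}(Y)]=I(\bm{\theta}^\star)$; differentiating $\Psi(\bm{\theta};g)=-\int A(\delta)s_{\bm{\theta}}f_{\bm{\theta}}\,dy$ at $\bm{\theta}^\star$ and using $A(0)=0$ to kill the term involving $\nabla_\theta(s_{\bm{\theta}}f_{\bm{\theta}})$ leaves $H=\int s_{\bm{\theta}^\star}s_{\bm{\theta}^\star}^\top f_{\bm{\theta}^\star}\,dy=I(\bm{\theta}^\star)$, so $H^{-1}VH^{-1}=I(\bm{\theta}^\star)^{-1}$. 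The main obstacle is the third paragraph: transferring the population contraction centered at $\bm{\theta}^\star$ to a sample contraction centered at the \emph{random} fixed point $\hat{\bm{\theta}}_n$ with factor uniformly below one, which requires carefully combining the strong-convexity/FOS budget of Theorem~\ref{THM:Population:contraction} with the uniform score-envelope bound of Corollary~\ref{cor:Kexplicit} so that the perturbation $M_n-M$ does not erode the contraction before $\hat{\bm{\theta}}_n$ is reached.
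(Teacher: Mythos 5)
Your proposal is correct and follows essentially the same route as the paper's proof: decompose $\sqrt n(\bm{\theta}_{m_n,n}-\bm{\theta}^\dagger)$ into the optimization error $\sqrt n(\bm{\theta}_{m_n,n}-\hat{\bm{\theta}}_n)$ and the statistical error $\sqrt n(\hat{\bm{\theta}}_n-\bm{\theta}^\dagger)$, handle the latter by the Z--estimation (sandwich) CLT under (G1)--(G4), kill the former via the fixed-point property $\hat{\bm{\theta}}_n\in M_n(\hat{\bm{\theta}}_n)$ and a sample contraction factor $\kappa_n\le\kappa+o_p(1)$ together with $\sqrt n\,\kappa^{m_n}\to0$, and finish with Slutsky and the algebraic reduction $H=V=I(\bm{\theta}^\star)$ at the model. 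Your third paragraph in fact spells out more carefully than the paper the re-centring of the contraction at the random fixed point $\hat{\bm{\theta}}_n$, which the paper compresses into ``the same uniformity argument as in Theorem~\ref{Thm:1}(ii).''
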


\begin{cor}[Finite-step (Godambe-)Wilks]\label{cor:finite-wilks}
Under the assumptions of Corollary~\ref{cor:finite-godambe} and $\sqrt n\,\kappa^{m_n}\to0$,
\[
2n\Big\{D_G(g_n,f_{\bm{\theta}^\dagger})-D_G(g_n,f_{\bm{\theta}_{m_n,n}})\Big\}
\ \Rightarrow\ \sum_{j=1}^{p(K_0)}\lambda_j\,\chi^2_{1,j},
\]
where $\{\lambda_j\}$ are eigenvalues of $J:=H^{-1/2} V H^{-1/2}$. 
If $g=f_{\bm{\theta}^\star}$ and $A'(0)=1$ then $H=V=I(\theta^\star)$ and the limit is $\chi^2_{p(K_0)}$.
\end{cor}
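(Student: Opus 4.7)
\textbf{Proof plan for Corollary~\ref{cor:finite-wilks}.}
The plan is to reduce $2n\{D_n(\bm{\theta}^\dagger)-D_n(\bm{\theta}_{m_n,n})\}$ to a quadratic form in the standardized score $\sqrt{n}\,\nabla D_n(\bm{\theta}^\dagger)$ via a second-order Taylor expansion, and then diagonalize to read off the weighted chi-square limit. Let $S_n:=\nabla_{\bm{\theta}} D_n(\bm{\theta}^\dagger)$, $H:=\nabla_{\bm{\theta}}\Psi(\bm{\theta}^\dagger;g)=\nabla^2 D_G(g,f_{\bm{\theta}^\dagger})$, and $a_n:=\bm{\theta}_{m_n,n}-\bm{\theta}^\dagger$. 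Under (G1)–(G4) and the smoothness of $G$, standard M-estimation arguments give $\sqrt{n}\,S_n\Rightarrow \mathcal N(0,V)$ with $V$ as in Corollary~\ref{cor:finite-godambe}, and the sample Hessian $\nabla^2 D_n(\tilde{\bm{\theta}})\to H$ in probability uniformly over any $O_p(n^{-1/2})$-neighbourhood of $\bm{\theta}^\dagger$.

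First, I would expand
\[
D_n(\bm{\theta}_{m_n,n})=D_n(\bm{\theta}^\dagger)+S_n^\top a_n+\tfrac12 a_n^\top H a_n+o_p(\|a_n\|^2),
\]
so that $2n\{D_n(\bm{\theta}^\dagger)-D_n(\bm{\theta}_{m_n,n})\}=-2n\,S_n^\top a_n-n\,a_n^\top H a_n+o_p(n\|a_n\|^2)$. The key structural step is to establish the Bahadur-type linearization
\[
a_n=-H^{-1}S_n+o_p(n^{-1/2}).
\]
This proceeds in two pieces: (i) for the exact Z-estimator $\hat{\bm{\theta}}_n$ solving $\Psi_n(\bm{\theta})=0$, Taylor expansion of $0=\Psi_n(\hat{\bm{\theta}}_n)=S_n+\nabla^2 D_n(\tilde{\bm{\theta}})(\hat{\bm{\theta}}_n-\bm{\theta}^\dagger)$ combined with the Hessian consistency gives $\hat{\bm{\theta}}_n-\bm{\theta}^\dagger=-H^{-1}S_n+o_p(n^{-1/2})$; (ii) the truncation gap $\bm{\theta}_{m_n,n}-\hat{\bm{\theta}}_n$ is $o_p(n^{-1/2})$ because Theorem~\ref{thm:noisy} together with the hypothesis $\sqrt{n}\,\kappa^{m_n}\to 0$ makes the contraction residual negligible relative to $n^{-1/2}$.

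Substituting the Bahadur expansion yields
\[
-2n\,S_n^\top a_n=2n\,S_n^\top H^{-1}S_n+o_p(1),\qquad
n\,a_n^\top H a_n=n\,S_n^\top H^{-1}S_n+o_p(1),
\]
and the two combine to give
\[
2n\{D_n(\bm{\theta}^\dagger)-D_n(\bm{\theta}_{m_n,n})\}=n\,S_n^\top H^{-1}S_n+o_p(1).
\]
Writing $\sqrt{n}\,S_n\Rightarrow Z\sim\mathcal N(0,V)$, continuous mapping gives the limit $Z^\top H^{-1}Z$. Diagonalizing $V^{1/2}H^{-1}V^{1/2}$ (which is similar to $J=H^{-1/2}VH^{-1/2}$ and hence has the same eigenvalues $\{\lambda_j\}$) expresses this limit as $\sum_{j=1}^{p(K_0)}\lambda_j\chi^2_{1,j}$ with independent $\chi^2_{1,j}$. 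The special case $g=f_{\bm{\theta}^\star}$ with $A'(0)=1$ forces $H=V=I(\bm{\theta}^\star)$, whence $J=I_{p(K_0)}$ and the limit collapses to $\chi^2_{p(K_0)}$.

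The main obstacle is step~(ii) above: turning the contraction bound from Theorem~\ref{thm:noisy}, which is stated in $\ell_2$ distance with a rate $\kappa^{m_n}\|\bm{\theta}_{0,n}-\bm{\theta}^\star\|_2$ plus a statistical noise term of order $M_{\mathrm{unif}}(n,\rho)=O_p(n^{-1/2})$, into an $o_p(n^{-1/2})$ control of $\bm{\theta}_{m_n,n}-\hat{\bm{\theta}}_n$ rather than of $\bm{\theta}_{m_n,n}-\bm{\theta}^\star$. This requires noting that both $\bm{\theta}_{m_n,n}$ and $\hat{\bm{\theta}}_n$ are approximate fixed points of $M_n$ within an $O_p(n^{-1/2})$ ball and that $M_n$ inherits local Lipschitz contractivity from $M$ (with the same $\kappa$ up to $o_p(1)$) by the FOS($\gamma$) condition; a single extra application of the sample contraction then converts $\sqrt{n}\kappa^{m_n}\to 0$ into the required $o_p(n^{-1/2})$ gap. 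Once this is in place, the rest is a routine Slutsky-plus-continuous-mapping argument.
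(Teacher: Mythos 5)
Your proposal is correct, but it takes a recognizably different route from the paper's. The paper expands $D_G(g_n,f_{\bm{\theta}})$ to second order \emph{around the exact minimizer} $\widehat{\bm{\theta}}_n$ of $D_n$, where the first-order term vanishes identically; this immediately yields $2n\{D_n(\bm{\theta}^\dagger)-D_n(\bm{\theta}_{m_n,n})\}=n(\bm{\theta}_{m_n,n}-\bm{\theta}^\dagger)^\top H(\bm{\theta}_{m_n,n}-\bm{\theta}^\dagger)+o_p(1)$ (the displaced quadratic term being $O_p(n\kappa^{2m_n})=o_p(1)$), after which the weighted-$\chi^2$ limit is read off directly from the finite-step Godambe CLT of Corollary~\ref{cor:finite-godambe} applied to the Wald-type quadratic form. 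You instead expand around $\bm{\theta}^\dagger$, retain the nonvanishing score $S_n=\nabla D_n(\bm{\theta}^\dagger)$, and substitute the Bahadur linearization $a_n=-H^{-1}S_n+o_p(n^{-1/2})$ to collapse the linear and quadratic terms into the score form $nS_n^\top H^{-1}S_n+o_p(1)$; your similarity argument identifying the eigenvalues of $V^{1/2}H^{-1}V^{1/2}$ with those of $J=H^{-1/2}VH^{-1/2}$ is correct. The two reductions are algebraically equivalent, but the paper's choice of expansion point buys a shorter argument (no linear term to cancel, and Corollary~\ref{cor:finite-godambe} can be invoked as a black box), whereas your version is more self-contained: it re-derives the linearization that underlies the finite-step CLT rather than citing it. You also correctly flag, and resolve in the same way the paper does, the one delicate point — converting the contraction bound of Theorem~\ref{thm:noisy} (distance to $\bm{\theta}^\star$) into an $o_p(n^{-1/2})$ bound on $\bm{\theta}_{m_n,n}-\widehat{\bm{\theta}}_n$ via the locally inherited sample contraction $\kappa_n\le\kappa+o_p(1)$ toward the fixed point $\widehat{\bm{\theta}}_n$ of $M_n$.
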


\subsection{Robustness}{\label{sec:robustness}}
For $\epsilon\in[0,1)$ and a contamination density $\eta_n$, define the $\epsilon$-contaminated model
\[
f_{\epsilon,n}(y;\bm{\theta}) \;:=\; (1-\epsilon)\,f(y;\bm{\theta}) \,+\, \epsilon\,\eta_n(y),
\qquad \bm{\theta}\in\Theta.
\]
Our first focus isn on contraction and noisy contraction under small contamination. For $\epsilon\in[0,\epsilon_0]$, set $g_\epsilon:=(1-\epsilon)g+\epsilon\,\eta$ and 
$\bm\theta^\dagger_\epsilon:=\arg\min_{\bm\theta} D_G(g_\epsilon,f_{\bm\theta})$.
Define
\[
Q_\epsilon(\bm\theta'\!\mid\bm\theta):=Q_G(\bm\theta'\!\mid\bm\theta;\,g_\epsilon),\quad
M_\epsilon(\bm\theta):=\arg\min_{\bm\theta'} Q_\epsilon(\bm\theta'\!\mid\bm\theta),
\]
and analogously $Q_{\epsilon,n}$ and $M_{\epsilon,n}$ with $g_{\epsilon,n}$.
Assume the fixed-order smoothness/curvature and FOS conditions of Theorem~\ref{THM:Population:contraction}
hold uniformly for $\epsilon\in[0,\epsilon_0]$ on a common ball $B_2(r;\bm\theta^\star)$.

\begin{cor}[Population contraction under contamination]\label{cor:pop-contraction-eps}
There exists $\bar\kappa\in(0,1)$ and $r>0$ such that, for every $\epsilon\in[0,\epsilon_0]$,
\[
\bigl\|M_\epsilon(\bm\theta)-\bm\theta^\dagger_\epsilon\bigr\|_2
\ \le\ \bar\kappa\,\|\bm\theta-\bm\theta^\dagger_\epsilon\|_2,\qquad
\forall\,\bm\theta\in B_2(r;\bm\theta^\dagger_\epsilon).
\]
\end{cor}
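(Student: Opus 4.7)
The plan is to apply Theorem~\ref{THM:Population:contraction} directly to the $\epsilon$-contaminated DM triple $\bigl(D_G(g_\epsilon,\cdot),\,Q_\epsilon,\,M_\epsilon\bigr)$. The proof of Theorem~\ref{THM:Population:contraction} uses only three ingredients at the limiting parameter: (i) self-consistency, i.e.\ $\bm\theta^\dagger_\epsilon\in M_\epsilon(\bm\theta^\dagger_\epsilon)$; (ii) $\lambda$-strong convexity of $q_\epsilon(\cdot):=Q_\epsilon(\cdot\mid\bm\theta^\dagger_\epsilon)$ on a ball around $\bm\theta^\dagger_\epsilon$; and (iii) first-order stability FOS($\gamma$) of $\{Q_\epsilon(\cdot\mid\bm\theta):\bm\theta\}$ on that same ball. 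By the standing hypothesis, these three properties hold uniformly for $\epsilon\in[0,\epsilon_0]$ on a common ball $B_2(r;\bm\theta^\star)$, with constants $\underline\lambda>0$ and $\overline\gamma<\underline\lambda$ independent of $\epsilon$.

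First I would verify (i): the same majorization inequality $D_G(g_\epsilon,f_{\bm\theta'})\le Q_\epsilon(\bm\theta'\mid\bm\theta)$ established by Lemma~\ref{lem:MF} applies with $g$ replaced by $g_\epsilon$, so the argument of Proposition~\ref{self-consist} gives $\bm\theta^\dagger_\epsilon\in M_\epsilon(\bm\theta^\dagger_\epsilon)$, and Corollary~\ref{cor:fixed-point-sample} (in its population version) turns this into a genuine fixed point when the minimizer is unique. Next, by continuity of $\epsilon\mapsto D_G(g_\epsilon,f_{\bm\theta})$ and standard dominated-convergence arguments (using the envelopes already invoked in Remark~\ref{rem:Munif-d-vs-pK}), the map $\epsilon\mapsto\bm\theta^\dagger_\epsilon$ is continuous at $\epsilon=0$. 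Shrinking $\epsilon_0$ if needed, I can then place $\bm\theta^\dagger_\epsilon\in B_2(r/2;\bm\theta^\star)$, and choose a common radius $r'>0$ so that $B_2(r';\bm\theta^\dagger_\epsilon)\subset B_2(r;\bm\theta^\star)$ for every $\epsilon\in[0,\epsilon_0]$. On this inner ball the uniform strong-convexity constant $\underline\lambda$ and the uniform FOS modulus $\overline\gamma$ of the hypothesis both apply.

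Applying Theorem~\ref{THM:Population:contraction} verbatim to the contaminated triple then yields, for every $\epsilon\in[0,\epsilon_0]$,
\[
\|M_\epsilon(\bm\theta)-\bm\theta^\dagger_\epsilon\|_2
\;\le\; \frac{\overline\gamma}{\underline\lambda}\,\|\bm\theta-\bm\theta^\dagger_\epsilon\|_2,
\qquad \bm\theta\in B_2(r';\bm\theta^\dagger_\epsilon),
\]
and setting $\bar\kappa:=\overline\gamma/\underline\lambda\in(0,1)$ and renaming $r'\mapsto r$ delivers the stated bound.

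The main obstacle I anticipate is not the contraction argument itself (which is a direct replay of Theorem~\ref{THM:Population:contraction}) but justifying the uniformity in the preamble as an actual consequence of small-$\epsilon$ perturbation, since without uniform control on $(\lambda_\epsilon,\gamma_\epsilon)$ the conclusion is vacuous. The natural route is a first-order perturbation argument: $q_\epsilon-q_0$ is a linear functional of $g_\epsilon-g=\epsilon(\eta-g)$ applied to a smooth integrand involving $A,A'$ and the score and Hessian of $\log f_{\bm\theta}$, so, using the RAF envelope $A'_{\max}$ and the component-Lipschitz regularity already imposed, both $\nabla^2 q_\epsilon$ and the Jacobian driving FOS differ from their $\epsilon=0$ counterparts by $O(\epsilon)$ in operator norm, uniformly on $B_2(r;\bm\theta^\star)$. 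Choosing $\epsilon_0$ so this $O(\epsilon)$ correction is below half the spectral margin $\underline\lambda-\overline\gamma$ preserves $\overline\gamma<\underline\lambda$. The subtlety is that FOS is evaluated along the \emph{moving} fixed point $\bm\theta^\dagger_\epsilon$, so the Lipschitz bound for the score class must be combined with the continuity of $\epsilon\mapsto\bm\theta^\dagger_\epsilon$ to keep the remainder uniform across both $\epsilon$ and $\bm\theta\in B_2(r';\bm\theta^\dagger_\epsilon)$.
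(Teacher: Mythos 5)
Your proposal is correct and follows essentially the same route as the paper's proof: both establish self-consistency of $\bm\theta^\dagger_\epsilon$ under $M_\epsilon$, verify that the strong-convexity and FOS($\gamma$) hypotheses of Theorem~\ref{THM:Population:contraction} hold with uniform constants $\gamma_\epsilon/\lambda_\epsilon\le\bar\kappa<1$ on a common ball around the moving minimizer (using continuity of $\epsilon\mapsto\bm\theta^\dagger_\epsilon$ to shrink the radius), and then replay the contraction argument of that theorem with $\epsilon$-subscripted objects. The only cosmetic difference is that the paper obtains the uniformity by continuity of the Jacobian and Gâteaux derivative in $(\bm\theta,g_\epsilon)$ whereas you sketch an explicit $O(\epsilon)$ perturbation bound; both serve the same purpose.
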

The proof of the corollary is in Appendix L. We now turn to the sample based contraction whose proof is relegated to the Appendix L.

\begin{cor}[Noisy contraction and opt-to-stat under contamination]\label{cor:noisy-contraction-eps}
Let
\[
\mathcal M_{\mathrm{unif}}(n,r;\epsilon)
:=\sup_{\bm\theta\in B_2(r;\bm\theta^\star)}\ \sup_{\bm\xi\in M_\epsilon(\bm\theta)}\
\inf_{\bm\eta\in M_{\epsilon,n}(\bm\theta)} \|\bm\eta-\bm\xi\|_2.
\]
Then any selection $\bm\theta_{t+1,\epsilon,n}\in M_{\epsilon,n}(\bm\theta_{t,\epsilon,n})$ with
$\bm\theta_{t,\epsilon,n}\in B_2(r;\bm\theta^\dagger_\epsilon)$ obeys
\[
\|\bm\theta_{t+1,\epsilon,n}-\bm\theta^\dagger_\epsilon\|_2
\ \le\ \bar\kappa\,\|\bm\theta_{t,\epsilon,n}-\bm\theta^\dagger_\epsilon\|_2
\ +\ \mathcal M_{\mathrm{unif}}(n,r;\epsilon).
\]
If, in addition, $\|g_{\epsilon,n}-g_\epsilon\|_{\mathcal H}=o_p(n^{-1/2})$ and $A'(0)=1$ with
$A'_{\max}:=\sup_{\delta\ge-1}|A'(\delta)|<\infty$, then $\mathcal M_{\mathrm{unif}}(n,r;\epsilon)=o_p(n^{-1/2})$ and any ($\delta >0$)
\[
m_n \ \ge\ \Big\lceil \frac{(\tfrac12+\delta)\log n + c_0}{|\log\bar\kappa|} \Big\rceil
\]
yields the opt-to-stat bound $\sqrt{n}\,\|\bm\theta^{(m_n)}_{\epsilon,n}-\widehat{\bm\theta}_{\epsilon,n}\|_2\to^p 0$
\end{cor}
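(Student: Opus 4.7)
The plan is to combine the population-level contraction under contamination (Corollary~\ref{cor:pop-contraction-eps}) with the definition of $\mathcal M_{\mathrm{unif}}(n,r;\epsilon)$ to get the noisy contraction by a single triangle inequality, and then to iterate this inequality and compare with the sample-level minimizer (which is a fixed point of $M_{\epsilon,n}$) to obtain the opt-to-stat bound. Throughout, I would keep iterates inside $B_2(r;\bm\theta^\dagger_\epsilon)$ by choosing the starting radius small enough that $\bar\kappa\,r+\mathcal M_{\mathrm{unif}}(n,r;\epsilon)\le r$ holds on the high-probability event where the envelope bound for $\mathcal M_{\mathrm{unif}}$ is valid.

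For the noisy contraction, fix any population selection $\bm\xi\in M_\epsilon(\bm\theta_{t,\epsilon,n})$. By Corollary~\ref{cor:pop-contraction-eps} (applied uniformly in $\epsilon\in[0,\epsilon_0]$ on the common ball),
\[
\|\bm\xi-\bm\theta^\dagger_\epsilon\|_2\ \le\ \bar\kappa\,\|\bm\theta_{t,\epsilon,n}-\bm\theta^\dagger_\epsilon\|_2.
\]
By the definition of $\mathcal M_{\mathrm{unif}}(n,r;\epsilon)$, the iterate $\bm\theta_{t+1,\epsilon,n}\in M_{\epsilon,n}(\bm\theta_{t,\epsilon,n})$ may be chosen to satisfy $\|\bm\theta_{t+1,\epsilon,n}-\bm\xi\|_2\le \mathcal M_{\mathrm{unif}}(n,r;\epsilon)$. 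The triangle inequality then yields the stated bound. (If the selection is prescribed rather than chosen, the same argument applies after replacing $\bm\xi$ by the population point nearest to the given $\bm\theta_{t+1,\epsilon,n}$; the $\sup$–$\inf$ form of $\mathcal M_{\mathrm{unif}}$ handles both cases.)

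For the opt-to-stat bound, I would first verify $\mathcal M_{\mathrm{unif}}(n,r;\epsilon)=o_p(n^{-1/2})$ by transporting Remark~\ref{rem:Munif-d-vs-pK} to the contaminated target: applied with $g$ replaced by $g_\epsilon$ and $g_n$ by $g_{\epsilon,n}$, the score-linearization argument gives
\[
\mathcal M_{\mathrm{unif}}(n,r;\epsilon)\ \lesssim\ C_{\mathrm{fos}}\,A'_{\max}\,\Env(K)\,\|g_{\epsilon,n}-g_\epsilon\|_{\mathcal H},
\]
which is $o_p(n^{-1/2})$ by assumption. The only item to check is that the remainder in the plug-in expansion remains $o_p(\|g_{\epsilon,n}-g_\epsilon\|_{\mathcal H})$ uniformly on $B_2(r;\bm\theta^\dagger_\epsilon)$; this follows from local Lipschitz continuity of $A'$ and the uniform smoothness assumed in the hypothesis of Corollary~\ref{cor:pop-contraction-eps}, together with $A'_{\max}<\infty$. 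Iterating the noisy contraction from $\bm\theta_{0,\epsilon,n}$ for $m_n$ steps gives the geometric-plus-residual estimate
\[
\|\bm\theta^{(m_n)}_{\epsilon,n}-\bm\theta^\dagger_\epsilon\|_2
\ \le\ \bar\kappa^{\,m_n}\|\bm\theta_{0,\epsilon,n}-\bm\theta^\dagger_\epsilon\|_2
\ +\ \tfrac{1}{1-\bar\kappa}\,\mathcal M_{\mathrm{unif}}(n,r;\epsilon).
\]
Applying Proposition~\ref{self-consist} and Corollary~\ref{cor:fixed-point-sample} to $(Q_{\epsilon,n},D_{G}(g_{\epsilon,n},f_{\cdot}))$, the minimizer $\widehat{\bm\theta}_{\epsilon,n}$ is a fixed point of $M_{\epsilon,n}$, so the same inequality with $m_n=\infty$ (equivalently, starting from $\widehat{\bm\theta}_{\epsilon,n}$ itself) yields $\|\widehat{\bm\theta}_{\epsilon,n}-\bm\theta^\dagger_\epsilon\|_2\le (1-\bar\kappa)^{-1}\mathcal M_{\mathrm{unif}}(n,r;\epsilon)$. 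A final triangle inequality combined with the choice $m_n\ge \lceil ((\tfrac12+\delta)\log n+c_0)/|\log\bar\kappa|\rceil$, which forces $\sqrt n\,\bar\kappa^{\,m_n}\to 0$, gives $\sqrt n\,\|\bm\theta^{(m_n)}_{\epsilon,n}-\widehat{\bm\theta}_{\epsilon,n}\|_2\to^p 0$.

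The main obstacle is the uniform (in $\epsilon\in[0,\epsilon_0]$) transport of the plug-in expansion in Remark~\ref{rem:Munif-d-vs-pK} to the contaminated case, because the envelope $\Env(K)$ and the FOS modulus $C_{\mathrm{fos}}$ must be controlled on a single ball $B_2(r;\bm\theta^\star)$ that also contains every $\bm\theta^\dagger_\epsilon$; this is where the assumption that the curvature/FOS conditions of Theorem~\ref{THM:Population:contraction} hold uniformly in $\epsilon$ is essential. A secondary but routine technicality is ensuring the iterates do not leave $B_2(r;\bm\theta^\dagger_\epsilon)$: on the event $\{\mathcal M_{\mathrm{unif}}(n,r;\epsilon)\le (1-\bar\kappa)r\}$ this invariance is automatic, and that event has probability $1-o(1)$ by the $o_p(n^{-1/2})$ bound above.
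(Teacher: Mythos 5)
Your proposal is correct and follows essentially the same route as the paper's proof: the noisy recursion via a single triangle inequality combining Corollary~\ref{cor:pop-contraction-eps} with the definition of $\mathcal M_{\mathrm{unif}}(n,r;\epsilon)$, the envelope bound $\mathcal M_{\mathrm{unif}}(n,r;\epsilon)\lesssim C_{\mathrm{fos}}A'_{\max}\Env(K)\|g_{\epsilon,n}-g_\epsilon\|_{\mathcal H}/\lambda_\epsilon$ obtained from strong convexity (inverse-Lipschitz stability of the argmin) plus the score/RAF linearization, and the geometric-plus-residual iteration with $\sqrt n\,\bar\kappa^{m_n}\to0$. Your final step is marginally more careful than the paper's: you route the opt-to-stat comparison through $\bm\theta^\dagger_\epsilon$ and invoke the fixed-point property of $\widehat{\bm\theta}_{\epsilon,n}$ (Proposition~\ref{self-consist}) before a last triangle inequality, whereas the paper writes the same recursion directly around $\widehat{\bm\theta}_{\epsilon,n}$; the two arguments are equivalent and yield the same bound.
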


\noindent \textbf{Remark (Robustness under contamination: bounded–RAF vs. KL).}
The sample–level term in Corollary~\ref{cor:noisy-contraction-eps} is controlled by the uniform operator–deviation bound
\[
\mathcal M_{\mathrm{unif}}(n,r;\varepsilon)\ \lesssim\
\frac{C_{\mathrm{fos}}\,A'_{\max}\,\mathrm{Env}(K)}{\lambda_\varepsilon}\,
\|g_{\varepsilon,n}-g_\varepsilon\|_{\mathcal H},
\]
on $B_2(r;\bm\theta^\dagger_\varepsilon)$ (see Supplement, Corollary S.R.1).  
Hence \textbf{bounded–RAF} generators (NED, vNED), for which $A'_{\max}<\infty$, yield
$\mathcal M_{\mathrm{unif}}(n,r;\varepsilon)=o_p(n^{-1/2})$ under the usual plug-in rate, and any
$m_n=O(\log n)$ gives the opt-to-stat conclusion.  
For \textbf{KL}, the same conclusion requires a local density/score floor; without it one can have
$\mathcal M_{\mathrm{unif}}(n,r;\varepsilon)\not=o_p(n^{-1/2})$ and the opt-to-stat step may fail (see Supplement, Proposotion~S.R.2).

We now turn to evaluate the influence function. Let $T(\cdot)$ denote the DM population functional and write $\bm{\theta}^\star_\epsilon(n):=T\!\big(f_{\epsilon,n}(\cdot;\bm{\theta})\big)$, with $\bm{\theta}^\star=\bm{\theta}^\star_0$.
\begin{thm}\label{Robust}
Let $\{\bm{\theta}_{\epsilon,m,n}\}$ be a sample-level DM optimal sequence at contamination level $\epsilon$, and set
$\bm{\theta}^\star_{\epsilon,n}:=\lim_{m\to\infty}\bm{\theta}_{\epsilon,m,n}$ whenever the limit exists.
Assume \textbf{(C2)} and \textbf{(O2)} hold uniformly in $n$ and $\epsilon\in[0,\epsilon_0)$.
\begin{enumerate}
\item For each fixed $\epsilon\in[0,\epsilon_0)$, suppose $\bm{\theta}^\star_{\epsilon,n}$ is unique for all $n\ge 1$.
Then $\{\bm{\theta}^\star_{\epsilon,n}\}_{n\ge1}$ is a \emph{bounded} sequence and
\[
\lim_{n\to\infty}\bm{\theta}^\star_{\epsilon,n} \;=\; \bm{\theta}^\star_\epsilon.
\]
\item If \textbf{(M1)}–\textbf{(M2)} hold, then $T$ is Gâteaux differentiable at $f(\cdot;\bm{\theta}^\star)$ along the mixture direction $\eta_n$, and
\[
\lim_{\epsilon\downarrow 0}\frac{\bm{\theta}^\star_{\epsilon,n}-\bm{\theta}^\dagger_{\epsilon}}{\epsilon}
\;=\; \big[I(\bm{\theta}^\star)\big]^{-1} \int_{\Real} \eta_n(y)\,u(y;\bm{\theta}^\star)\,dy,
\]
where $u(y;\bm{\theta})\coloneqq \nabla_{\bm{\theta}} \log f(y;\bm{\theta})$ and $I(\bm{\theta}^\star)$ is the Fisher information.
\end{enumerate}
\end{thm}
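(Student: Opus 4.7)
My plan is to split the argument into the two parts of the theorem, first establishing the consistency of the sample-level DM limit, and then applying an implicit-function calculation to the first-order conditions for the population functional.

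For Part (1), I would adopt a Wald-type argmin consistency strategy. By Proposition~\ref{Prop:DM-Conv}, any sample DM limit $\bm{\theta}^\star_{\epsilon,n}$ is stationary for $D_n(\cdot;g_{\epsilon,n})$; under the stated uniqueness hypothesis and the descent property of Lemma~\ref{lem:dm-descent}, it is in fact the global minimizer. Conditions (C2) and (O2) supply compactness (or coercivity) of $\bm{\Theta}$ together with a uniform law
\[
\sup_{\bm{\theta}\in\bm{\Theta}}\bigl|D_n(\bm{\theta};g_{\epsilon,n})-D(\bm{\theta};g_\epsilon)\bigr|\ \longrightarrow\ 0
\]
as $n\to\infty$, uniformly in $\epsilon\in[0,\epsilon_0)$. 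A well-separated population minimizer $\bm{\theta}^\star_\epsilon$, combined with this uniform convergence, then yields boundedness of $\{\bm{\theta}^\star_{\epsilon,n}\}_n$ and convergence $\bm{\theta}^\star_{\epsilon,n}\to\bm{\theta}^\star_\epsilon$ via a standard continuous-mapping argument.

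For Part (2), the idea is to apply the implicit function theorem to the population estimating equation $\Psi(\bm{\theta};g):=\nabla_{\bm{\theta}}D_G(g,f_{\bm{\theta}})=0$ along the mixture path $g_\epsilon:=(1-\epsilon)f_{\bm{\theta}^\star}+\epsilon\eta_n$. At $\epsilon=0$ the solution is $\bm{\theta}^\star$ and $\Psi(\bm{\theta}^\star;f_{\bm{\theta}^\star})=0$ since $A(0)=0$. Differentiating $\Psi(\bm{\theta}^\dagger_\epsilon;g_\epsilon)=0$ in $\epsilon$ at zero yields
\[
\partial_{\bm{\theta}}\Psi\big|_{(\bm{\theta}^\star,f_{\bm{\theta}^\star})}\cdot \partial_\epsilon\bm{\theta}^\dagger_\epsilon\big|_{\epsilon=0}\ +\ \partial_g\Psi\big|_{(\bm{\theta}^\star,f_{\bm{\theta}^\star})}\bigl[\eta_n-f_{\bm{\theta}^\star}\bigr]\ =\ 0.
\]
From the Remark following Corollary~\ref{cor:Kexplicit}, $\partial_g\Psi[h]=-\int A'(0)\,s_{\bm{\theta}^\star}(y)\,h(y)\,dy$, so under the calibration $A'(0)=1$ the driving term collapses to $-\int u(y;\bm{\theta}^\star)\,\eta_n(y)\,dy$ because the score integrates to zero against $f_{\bm{\theta}^\star}$. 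A direct calculation using (M1)--(M2) and the standard score identity then identifies $\partial_{\bm{\theta}}\Psi|_{(\bm{\theta}^\star,f_{\bm{\theta}^\star})}$ with the Fisher information $I(\bm{\theta}^\star)$. Inverting gives the claimed Gâteaux derivative formula, and the bridge from $\bm{\theta}^\dagger_\epsilon$ to $\bm{\theta}^\star_{\epsilon,n}$ in the numerator is supplied by Part (1).

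The main obstacle will be ensuring uniformity in $\epsilon\in[0,\epsilon_0)$ of all derivative bounds and the subsequent interchange of the $n\to\infty$ and $\epsilon\downarrow 0$ limits. This requires uniform invertibility of $\partial_{\bm{\theta}}\Psi$ in an $\epsilon$-neighborhood of $(\bm{\theta}^\star,f_{\bm{\theta}^\star})$, local Lipschitz control of $A'$ and of the score, and an $o(\epsilon)$ remainder for the first-order expansion of $\Psi(\bm{\theta}^\dagger_\epsilon;g_\epsilon)$ that is uniform in $n$. These are standard consequences of (C2), (O2) and (M1)--(M2) together with boundedness of the RAF on the relevant range, but they must be patched together by a diagonal argument to legitimise the exchange of limits implicit in writing the derivative in terms of $\bm{\theta}^\star_{\epsilon,n}$ rather than $\bm{\theta}^\dagger_\epsilon$.
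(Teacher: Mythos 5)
Your proposal is correct and follows essentially the same route as the paper's own proof: Part (1) via uniform convergence of $D_G(g_{\epsilon,n},f_{\bm{\theta}})$ to $D_G(g_\epsilon,f_{\bm{\theta}})$ plus compact sublevel sets and argmin continuity, and Part (2) via the implicit function theorem applied to $\Psi(T(g);g)=0$, with $\partial_g\Psi[h]=-\int A'(0)\,u(y;\bm{\theta}^\star)h(y)\,dy$, calibration $A'(0)=1$, $\partial_{\bm{\theta}}\Psi=I(\bm{\theta}^\star)$, and the score identity $\int u\,f_{\bm{\theta}^\star}\,dy=0$ along the direction $\eta_n-f_{\bm{\theta}^\star}$. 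The only difference is cosmetic: the paper works with the functional $T(g_{\epsilon,n})$ directly for fixed $n$ (so no interchange of the $n$ and $\epsilon$ limits is required in Part (2)), whereas your closing worry about a diagonal argument is unnecessary for the statement as given.
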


\begin{rem}[Influence function and gross-error sensitivity]
For point-mass contamination $\eta_n=\delta_y$, (ii) gives the influence function
$\mathrm{IF}(y;T,f)=I(\bm{\theta}^\star)^{-1}u(y;\bm{\theta}^\star)$,
so the gross-error sensitivity is $\sup_y\|I(\bm{\theta}^\star)^{-1}u(y;\bm{\theta}^\star)\|$.
(First-order efficiency also follows since $I(\bm{\theta}^\star)$ matches the MLE information.)
\end{rem}

\subsection{Breakdown Point Analysis}\label{sec:breakdown}
Roughly speaking, the breakdown point of a functional is the smallest proportion of contaminated value(s) that can produce arbitrary estimates.We consider the contamination path
\[
g_{\epsilon,n}(y) \;=\; (1-\epsilon)\,g(y)\;+\;\epsilon\,\eta_n(y),\qquad \epsilon\in[0,1].
\]
Following \cite{Simp87}, the (asymptotic) \emph{breakdown point} of a functional $T$ is
\[
\epsilon^\star \;:=\; \inf\Big\{\epsilon\in[0,1]:\ \exists\,\{\eta_n\}\ \text{with}\ \|T(g_{\epsilon,n})-T(g)\|\to\infty\ \text{as }n\to\infty\Big\}.
\]
\begin{defn}[Contaminated surrogate and update]\label{def:Qeps}
Define, for $\bm{\theta}',\bm{\theta}\in\Theta$,
\[
Q_{\epsilon,n}(\bm{\theta}'\mid \bm{\theta})
\;=\;
\mathbf{E}_Y\!\left[
\mathbf{E}_{Z\mid Y}\!\left\{
G\!\left(
-1 + \frac{g_{\epsilon,n}(Y)\,w(Z\mid Y;\bm{\theta})}{f(Y;\bm{\theta}')\,w(Z\mid Y;\bm{\theta}')}
\right)\right\}\right].
\]
A sample-level DM sequence at contamination level $\epsilon$ satisfies $\bm \theta_{\epsilon,m+1,n}\in
M_\epsilon(\bm{\theta}_{\epsilon,m,n})$ with $M_\epsilon(\bm{\theta}):=\arg\min_{\bm{\theta}'} Q_{\epsilon,n}(\bm{\theta}'\mid \bm \theta)$.
If $\lim_{n\to\infty} \bm \theta_{\epsilon,m,n}=\bm{\theta}^\star_{\epsilon,m}$ with $\bm{\theta}^\star_{\epsilon,m}=T(g_{\epsilon,m})$,
we call $\{\bm{\theta}^\star_{\epsilon,m}\}_{m\ge 0}$ a \emph{population} DM optimal sequence at contamination level $\epsilon$.
\end{defn}

\textbf{Notation for contamination and finite steps.}
For $\epsilon\in[0,\epsilon_0]$, let $\bm{\theta}^{\dagger}_{\epsilon}:=\arg\min_{\bm{\theta}} D_G(g_{\epsilon},f_{\bm{\theta}})$,
$\hat{\bm{\theta}}_{\epsilon,n}:=\arg\min_{\bm{\theta}} D_G(g_{\epsilon,n},f_{\bm{\theta}})$, and
$\bm{\theta}^{(m)}_{\epsilon,n}\in M_{\epsilon,n}^{(m)}(\bm{\theta}^{(0)}_{\epsilon,n})$ be the $m$-step DM iterate.
We assume the contraction in Theorem~\ref{THM:Population:contraction} holds uniformly on $\mathbb {B}_2(r;\bm{\theta}^\star)$
with factor $\kappa\in(0,1)$, and choose $m_n$ so that $\sqrt{n}\kappa^{m_n}\to0$ (e.g., 
$m_n\ge\lceil(\tfrac12\log n + c_0)/|\log\kappa|\rceil$).

\begin{thm}[Finite-step Godambe CLT under contamination]\label{thm:finite-godambe-eps}
Assume (G1)–(G4) hold uniformly for $\epsilon\in[0,\epsilon_0]$ at $\bm{\theta}^\dagger_\epsilon$ and
the bandwidth/plug-in rate satisfies $\|g_{\epsilon,n}-g_\epsilon\|_{\mathcal H}=o_p(n^{-1/2})$.
Then, with $m_n$ as above,
\[
\sqrt{n}\,\big(\bm{\theta}^{(m_n)}_{\epsilon,n}-\bm{\theta}^\dagger_\epsilon\big)
\ \Rightarrow\ \mathcal N\!\big(0,\ H_\epsilon^{-1} V_\epsilon H_\epsilon^{-1}\big),
\]
where $H_\epsilon:=\nabla_\theta\Psi(\theta^\dagger_\epsilon;g_\epsilon)$ and
$V_\epsilon:=Var_{g_\epsilon}\!\Big[A'\!\Big(\tfrac{g_\epsilon(Y)}{f_{\bm{\theta}^\dagger_\epsilon}(Y)}-1\Big)
s_{\bm \theta^\dagger_\epsilon}(Y)\Big]$.
\emph{In particular}, at the model ($\epsilon=0$) and with $A'(0)=1$, the covariance reduces to $I(\bm{\theta}^\star)^{-1}$.
\end{thm}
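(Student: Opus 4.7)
The target is a standard M-estimator CLT composed with a finite-step optimization error bound. The plan is to decompose
\[
\sqrt{n}\,\bigl(\bm{\theta}^{(m_n)}_{\epsilon,n}-\bm{\theta}^\dagger_\epsilon\bigr)
\;=\;\sqrt{n}\,\bigl(\widehat{\bm{\theta}}_{\epsilon,n}-\bm{\theta}^\dagger_\epsilon\bigr)
\;+\;\sqrt{n}\,\bigl(\bm{\theta}^{(m_n)}_{\epsilon,n}-\widehat{\bm{\theta}}_{\epsilon,n}\bigr),
\]
handle the second (opt-to-stat) summand with the contraction machinery already established, and handle the first (M-estimator) summand with a Z-estimator linearization around $\bm{\theta}^\dagger_\epsilon$. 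Finally, I would apply Slutsky to combine the two.

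\textbf{Opt-to-stat term.} I would invoke Corollary~\ref{cor:noisy-contraction-eps} directly. Under the uniform contraction constant $\bar\kappa\in(0,1)$ from Corollary~\ref{cor:pop-contraction-eps} and the assumed plug-in rate $\|g_{\epsilon,n}-g_\epsilon\|_{\mathcal H}=o_p(n^{-1/2})$, the uniform operator deviation satisfies $\mathcal M_{\mathrm{unif}}(n,r;\epsilon)=o_p(n^{-1/2})$. Iterating the one-step noisy-contraction bound from $\bm{\theta}^{(0)}_{\epsilon,n}$ inside $B_2(r;\bm{\theta}^\dagger_\epsilon)$ yields
\[
\bigl\|\bm{\theta}^{(m_n)}_{\epsilon,n}-\widehat{\bm{\theta}}_{\epsilon,n}\bigr\|_2
\ \lesssim\ \bar\kappa^{\,m_n}\,\|\bm{\theta}^{(0)}_{\epsilon,n}-\widehat{\bm{\theta}}_{\epsilon,n}\|_2
\;+\;\frac{\mathcal M_{\mathrm{unif}}(n,r;\epsilon)}{1-\bar\kappa}.
\]
With $m_n\ge\lceil(\tfrac12+\delta)\log n/|\log\bar\kappa|\rceil + O(1)$ we have $\sqrt n\,\bar\kappa^{m_n}\to 0$, so both terms are $o_p(n^{-1/2})$ and the second summand is $o_p(1)$.

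\textbf{M-estimator CLT for $\widehat{\bm{\theta}}_{\epsilon,n}$.} Writing $\Psi_n(\bm{\theta};g_{\epsilon,n}):=\nabla_{\bm{\theta}} D_G(g_{\epsilon,n},f_{\bm{\theta}})$, the estimator solves $\Psi_n(\widehat{\bm{\theta}}_{\epsilon,n};g_{\epsilon,n})=0$, while by definition $\Psi(\bm{\theta}^\dagger_\epsilon;g_\epsilon)=0$. By (G2)–(G3), a mean-value expansion gives
\[
0\;=\;\Psi_n(\bm{\theta}^\dagger_\epsilon;g_{\epsilon,n})+\widetilde H_\epsilon\bigl(\widehat{\bm{\theta}}_{\epsilon,n}-\bm{\theta}^\dagger_\epsilon\bigr),
\]
where $\widetilde H_\epsilon\xrightarrow{p} H_\epsilon=\nabla_{\bm{\theta}}\Psi(\bm{\theta}^\dagger_\epsilon;g_\epsilon)$ is invertible uniformly in $\epsilon$. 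Using the plug-in linearization from Remark~2 (with $A'(0)=1$ not needed here),
\[
\sqrt n\,\Psi_n(\bm{\theta}^\dagger_\epsilon;g_{\epsilon,n})
=\sqrt n\,\partial_g\Psi(\bm{\theta}^\dagger_\epsilon;g_\epsilon)[\,g_{\epsilon,n}-g_\epsilon\,]\;+\;o_p(1),
\]
and the leading term is an empirical process of the form
$-n^{-1/2}\sum_{i=1}^n \psi_\epsilon(Y_i)+o_p(1)$ with
$\psi_\epsilon(y)=A'\!\bigl(g_\epsilon(y)/f_{\bm{\theta}^\dagger_\epsilon}(y)-1\bigr)s_{\bm{\theta}^\dagger_\epsilon}(y)-\mathbf E_{g_\epsilon}[\cdot]$. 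A standard CLT (using (G4) for envelope and $L^2$ integrability) yields $\sqrt n\,\Psi_n(\bm{\theta}^\dagger_\epsilon;g_{\epsilon,n})\Rightarrow\mathcal N(0,V_\epsilon)$. Hence $\sqrt n\,(\widehat{\bm{\theta}}_{\epsilon,n}-\bm{\theta}^\dagger_\epsilon)\Rightarrow\mathcal N(0,H_\epsilon^{-1}V_\epsilon H_\epsilon^{-1})$, and Slutsky with the opt-to-stat bound completes the proof. The $\epsilon=0$, $A'(0)=1$ specialization uses the standard Bartlett-type identity $H_0=V_0=I(\bm{\theta}^\star)$.

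\textbf{Main obstacle.} The routine Z-estimator step is the easy half; the delicate part is the plug-in linearization, i.e.\ bounding the remainder $\sqrt n\,r_n(\bm{\theta}^\dagger_\epsilon)=o_p(1)$ \emph{uniformly} in $\epsilon\in[0,\epsilon_0]$. This requires local Lipschitz control of $A'$ on the range of the contaminated density ratios (which is why bounded-RAF divergences behave well, while KL requires an extra density floor), together with a uniform envelope/entropy bound on the score class $\{s_{\bm{\theta}^\dagger_\epsilon}\}$ so that the stated $\|g_{\epsilon,n}-g_\epsilon\|_{\mathcal H}=o_p(n^{-1/2})$ rate transfers into $o_p(n^{-1/2})$ control on the estimating equation. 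This is exactly the content bundled into (G4) and the calibration of $\Env(K)$ in Remark~2, and is the step I would write out most carefully.
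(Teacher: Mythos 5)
Your proposal is correct and follows essentially the same route as the paper's proof: the identical decomposition into the Z-estimation CLT for $\widehat{\bm\theta}_{\epsilon,n}$ around $\bm\theta^\dagger_\epsilon$ (yielding the sandwich covariance $H_\epsilon^{-1}V_\epsilon H_\epsilon^{-1}$) plus the opt-to-stat term controlled by the contraction and the $o_p(n^{-1/2})$ operator deviation, combined via Slutsky, with the same $A'(0)=1$ specialization at $\epsilon=0$. The only cosmetic difference is that you contract toward $\bm\theta^\dagger_\epsilon$ via the population map plus $\mathcal M_{\mathrm{unif}}$ (as in Corollary~\ref{cor:noisy-contraction-eps}), whereas the paper contracts directly toward the fixed point $\widehat{\bm\theta}_{\epsilon,n}$ of the sample map with factor $\kappa_n\le\kappa+o_p(1)$; these are equivalent under the stated assumptions.
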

\begin{thm}Assume the conditions of Theorem~\ref{thm:finite-godambe-eps} and $\sqrt n\,\kappa^{m_n}\to0$.
Then the same limit holds with $\bm{\theta}^{(m)}_{\epsilon,n}$ replaced by $\bm{\theta}^{(m_n)}_{\epsilon,n}$.
\end{thm}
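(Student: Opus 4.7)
The plan is to leverage the noisy contraction bound of Corollary~\ref{cor:noisy-contraction-eps} together with the self-consistency of the exact contaminated minimum-divergence estimator (Proposition~\ref{self-consist}) to establish an opt-to-stat bound, which then lets me transfer the CLT from the exact MDE $\hat{\bm{\theta}}_{\epsilon,n}$ to the finite-step iterate $\bm{\theta}^{(m_n)}_{\epsilon,n}$ via Slutsky. The heavy lifting has already been done in Theorem~\ref{thm:finite-godambe-eps} and Corollary~\ref{cor:noisy-contraction-eps}; this theorem essentially strengthens the lower-bound requirement on $m_n$ to the sharpest possible $\sqrt n\,\kappa^{m_n}\to 0$.

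First, I would fix $\epsilon\in[0,\epsilon_0]$ and invoke Proposition~\ref{self-consist} applied to $D_G(g_{\epsilon,n},\cdot)$. Under (G1)--(G4) the local minimizer $\hat{\bm{\theta}}_{\epsilon,n}$ is unique, so Corollary~\ref{cor:fixed-point-sample} gives $M_{\epsilon,n}(\hat{\bm{\theta}}_{\epsilon,n})=\hat{\bm{\theta}}_{\epsilon,n}$ with probability tending to one. By the uniform (in $\epsilon$) strong-convexity and FOS hypotheses underlying Corollary~\ref{cor:pop-contraction-eps}, $M_{\epsilon,n}$ is a $\bar\kappa$-contraction on $\mathbb{B}_2(r;\bm{\theta}^\dagger_\epsilon)$. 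Re-centering this contraction at the random fixed point $\hat{\bm{\theta}}_{\epsilon,n}$ (justified by consistency $\hat{\bm{\theta}}_{\epsilon,n}\to^p\bm{\theta}^\dagger_\epsilon$ and a small shrinking of $r$) yields, on a high-probability event,
\[
\|\bm{\theta}^{(m+1)}_{\epsilon,n}-\hat{\bm{\theta}}_{\epsilon,n}\|_2\ \le\ \bar\kappa\,\|\bm{\theta}^{(m)}_{\epsilon,n}-\hat{\bm{\theta}}_{\epsilon,n}\|_2
\]
for every $m<m_n$, using induction to check that iterates remain inside the contraction ball (each step strictly shrinks the distance to the fixed point).

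Iterating this bound $m_n$ times and multiplying by $\sqrt n$ gives the opt-to-stat estimate
\[
\sqrt n\,\|\bm{\theta}^{(m_n)}_{\epsilon,n}-\hat{\bm{\theta}}_{\epsilon,n}\|_2\ \le\ \sqrt n\,\bar\kappa^{m_n}\,\|\bm{\theta}^{(0)}_{\epsilon,n}-\hat{\bm{\theta}}_{\epsilon,n}\|_2\ =\ o_p(1),
\]
where I use $\|\bm{\theta}^{(0)}_{\epsilon,n}-\hat{\bm{\theta}}_{\epsilon,n}\|_2=O_p(1)$ by the initialization hypothesis together with consistency, and the standing assumption $\sqrt n\,\kappa^{m_n}\to0$. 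Writing $\sqrt n(\bm{\theta}^{(m_n)}_{\epsilon,n}-\bm{\theta}^\dagger_\epsilon)=\sqrt n(\bm{\theta}^{(m_n)}_{\epsilon,n}-\hat{\bm{\theta}}_{\epsilon,n})+\sqrt n(\hat{\bm{\theta}}_{\epsilon,n}-\bm{\theta}^\dagger_\epsilon)$ and applying Slutsky with the CLT for $\hat{\bm{\theta}}_{\epsilon,n}$ (which follows from (G1)--(G4) by standard M-estimator theory, and coincides with the limiting content of Theorem~\ref{thm:finite-godambe-eps}), the stated limit $\mathcal N(0,H_\epsilon^{-1}V_\epsilon H_\epsilon^{-1})$ is inherited by $\bm{\theta}^{(m_n)}_{\epsilon,n}$.

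The main obstacle is ensuring the contraction holds uniformly in $\epsilon\in[0,\epsilon_0]$ and that all $m_n$ iterates stay inside a shrinking ball centered at the random MDE $\hat{\bm{\theta}}_{\epsilon,n}$. Uniformity in $\epsilon$ follows from the assumed uniform strong-convexity and FOS conditions on $\mathbb{B}_2(r;\bm{\theta}^\star)$, and the invariance of the iterates under the contraction ball is routine because each update strictly shrinks the distance by a factor of $\bar\kappa$. The delicate point is replacing the deterministic center $\bm{\theta}^\dagger_\epsilon$ in the original noisy-contraction statement by the random center $\hat{\bm{\theta}}_{\epsilon,n}$; this is handled by picking a slightly smaller radius $r'<r$, restricting to the high-probability event $\{\|\hat{\bm{\theta}}_{\epsilon,n}-\bm{\theta}^\dagger_\epsilon\|_2\le r-r'\}$, and a triangle-inequality argument, none of which affects the asymptotic rate.
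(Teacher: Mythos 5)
Your proposal is correct and follows essentially the same route as the paper: establish the opt-to-stat bound $\sqrt n\,\|\bm{\theta}^{(m_n)}_{\epsilon,n}-\hat{\bm{\theta}}_{\epsilon,n}\|_2=o_p(1)$ by iterating the local contraction of the sample update map around the exact contaminated MDE, then decompose $\sqrt n(\bm{\theta}^{(m_n)}_{\epsilon,n}-\bm{\theta}^\dagger_\epsilon)$ into the optimization error plus $\sqrt n(\hat{\bm{\theta}}_{\epsilon,n}-\bm{\theta}^\dagger_\epsilon)$ and conclude by Slutsky with the Z-estimation CLT. Your explicit use of self-consistency to identify $\hat{\bm{\theta}}_{\epsilon,n}$ as the fixed point, and the re-centering of the contraction ball at the random MDE, merely spell out details the paper's proof takes for granted.
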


\begin{thm}[Uniform breakdown lower bound for bounded RAF]\label{thm:breakdown}
Let $g_\epsilon=(1-\epsilon)g+\epsilon q$, and let $\hat{\bm{\theta}}_\epsilon\in\arg\min_{\bm{\theta}} D_G(g_\epsilon,f_{\bm{\theta}})$.
Assume: (i) $D_G(g,\cdot)$ is $\lambda$-strongly convex on $B({\bm{\theta}}^\star,r)$ and attains its minimum at ${\bm{\theta}}^\star$; 
(ii) for some $S_K<\infty$ and all ${\bm{\theta}}\in B({\bm{\theta}}^\star,r)$, 
$\|\nabla_{\bm{\theta}} D_G(q,f_{\bm{\theta}})\|\le S_K\,A_{\max}$ with $A_{\max}:=\sup_{\delta\ge-1}|\RAF(\delta)|<\infty$ (e.g., NED/vNED); 
(iii) $B({\bm{\theta}}^\star,r)$ contains no other local minimum. Then, for any
$\epsilon<\epsilon^\dagger:=\lambda r/(S_K A_{\max})$,
we have $\hat{\bm{\theta}}_\epsilon\in B({\bm{\theta}}^\star,r)$ and 
$\|\hat{\bm{\theta}}_\epsilon-{\bm{\theta}}^\star\|\le (\epsilon S_K A_{\max})/\lambda$.
\end{thm}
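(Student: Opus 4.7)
The plan is to combine $\lambda$-strong convexity of the clean divergence with a uniform, $\varepsilon$-linear bound on the contamination-induced gradient perturbation, and then keep $\hat{\bm{\theta}}_\varepsilon$ inside $B(\bm{\theta}^\star,r)$ via a continuation argument. Write $\Psi(\bm{\theta};h):=\nabla_{\bm{\theta}}D_G(h,f_{\bm{\theta}})$. Assumption (i) gives $\Psi(\bm{\theta}^\star;g)=0$, so strong convexity yields $\langle \Psi(\bm{\theta};g),\bm{\theta}-\bm{\theta}^\star\rangle \ge \lambda\|\bm{\theta}-\bm{\theta}^\star\|^2$ for every $\bm{\theta}\in B(\bm{\theta}^\star,r)$; this is the lower-bound side of the argument.

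For the upper-bound side, suppose first that $\hat{\bm{\theta}}_\varepsilon$ is interior, so that $\Psi(\hat{\bm{\theta}}_\varepsilon;g_\varepsilon)=0$. The key step is the perturbation bound $\|\Psi(\bm{\theta};g_\varepsilon)-\Psi(\bm{\theta};g)\|\le c\,\varepsilon S_K A_{\max}$ on $B(\bm{\theta}^\star,r)$, with an absolute constant $c$ that can be absorbed into $S_K$. I would derive it from the decomposition
\[
\Psi(\bm{\theta};g_\varepsilon) \;=\; (1-\varepsilon)\,\Psi(\bm{\theta};g) \;+\; \varepsilon\,\Psi(\bm{\theta};q) \;+\; \mathcal{R}_\varepsilon(\bm{\theta}),
\]
which is an exact identity when the RAF is linear (KL) and leaves an $O(\varepsilon)$ remainder in the bounded-RAF case (NED/vNED) via a Taylor expansion of $A$ in the residual. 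Since $|A|\le A_{\max}$, the explicit formula $\Psi(\bm{\theta};h)=-\!\int A(h/f_{\bm{\theta}}-1)\,s_{\bm{\theta}}\,f_{\bm{\theta}}\,dy$ gives $\|\Psi(\bm{\theta};h)\|\le A_{\max}\int \|s_{\bm{\theta}}\|\,f_{\bm{\theta}}\,dy\le S_K A_{\max}$ uniformly in $h$, which is exactly assumption (ii), applied both to $q$ and to $g$. Plugging $\bm{\theta}=\hat{\bm{\theta}}_\varepsilon$ into the strong-convexity inequality and invoking $\Psi(\hat{\bm{\theta}}_\varepsilon;g_\varepsilon)=0$ then yields
\[
\lambda\|\hat{\bm{\theta}}_\varepsilon-\bm{\theta}^\star\|^2 \;\le\; \langle \Psi(\hat{\bm{\theta}}_\varepsilon;g)-\Psi(\hat{\bm{\theta}}_\varepsilon;g_\varepsilon),\,\hat{\bm{\theta}}_\varepsilon-\bm{\theta}^\star\rangle \;\le\; \varepsilon S_K A_{\max}\,\|\hat{\bm{\theta}}_\varepsilon-\bm{\theta}^\star\|,
\]
so $\|\hat{\bm{\theta}}_\varepsilon-\bm{\theta}^\star\|\le \varepsilon S_K A_{\max}/\lambda$, which is the quantitative claim.

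The interiority assumption is closed off by a continuation argument: at $\varepsilon=0$, uniqueness (iii) forces $\hat{\bm{\theta}}_0=\bm{\theta}^\star$ (strictly interior); the implicit function theorem applied at the nondegenerate FOC---valid because strong convexity makes the Hessian invertible---produces a continuous branch $\varepsilon\mapsto\hat{\bm{\theta}}_\varepsilon$. On any subinterval where this branch stays in $B$, the previous inequality gives $\|\hat{\bm{\theta}}_\varepsilon-\bm{\theta}^\star\|< r$ provided $\varepsilon<\varepsilon^\dagger=\lambda r/(S_K A_{\max})$, while (iii) forbids a competing local minimum in $B$; together these prevent the branch from touching $\partial B$ on $[0,\varepsilon^\dagger)$, which bootstraps the a priori interiority used above.

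The main obstacle is making the perturbation bound hold uniformly for nonlinear $A$. The exact splitting $\Psi(\bm{\theta};g_\varepsilon)=(1-\varepsilon)\Psi(\bm{\theta};g)+\varepsilon\Psi(\bm{\theta};q)$ holds only for the linear RAF of KL; for NED/vNED the remainder $\mathcal{R}_\varepsilon$ must be controlled by a secondary estimate---typically a boundedness/Lipschitz property of $A'$ over the relevant range of $g_\varepsilon/f_{\bm{\theta}}-1$, combined with the common envelope on $\|s_{\bm{\theta}}\|_{L^1(f_{\bm{\theta}})}$. Ensuring this control is uniform across $\bm{\theta}\in B(\bm{\theta}^\star,r)$ and across every admissible contaminator $q$ is the technical crux, and is precisely what assumption (ii) packages into the single constant $S_K A_{\max}$.
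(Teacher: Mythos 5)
Your overall strategy (strong convexity plus an $\varepsilon$-linear gradient perturbation bound, closed off by a continuation argument for interiority) is a genuinely different route from the paper's, which works at the level of divergence \emph{values}: the paper shows that on the sphere $\|\bm{\theta}-\bm{\theta}^\star\|=r$ one has $D_G(g_\epsilon,f_{\bm{\theta}})-D_G(g_\epsilon,f_{\bm{\theta}^\star})\ge(1-\epsilon)\lambda r^2-\epsilon S_K A_{\max}r>0$, so no minimizer can sit on or outside the sphere, and then compares the quadratic lower bound with the linear perturbation term inside the ball to obtain $\|\hat{\bm{\theta}}_\epsilon-\bm{\theta}^\star\|\le\epsilon S_K A_{\max}/\lambda$. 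Crucially, the paper uses hypothesis (ii) only through the mean-value theorem \emph{in} $\bm{\theta}$ along the segment from $\bm{\theta}^\star$ to $\bm{\theta}$, i.e. $|D_G(q,f_{\bm{\theta}})-D_G(q,f_{\bm{\theta}^\star})|\le S_K A_{\max}\,\|\bm{\theta}-\bm{\theta}^\star\|$; it never differentiates in the density argument.

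Your version instead hinges on the bound $\|\Psi(\bm{\theta};g_\varepsilon)-\Psi(\bm{\theta};g)\|\le\varepsilon S_K A_{\max}$, and this is where the gap lies. As you yourself note, the splitting $\Psi(\bm{\theta};g_\varepsilon)=(1-\varepsilon)\Psi(\bm{\theta};g)+\varepsilon\Psi(\bm{\theta};q)$ is exact only for affine $A$ (KL); for NED/vNED the mean-value form of the difference is
\[
\Psi(\bm{\theta};g_\varepsilon)-\Psi(\bm{\theta};g)\;=\;-\,\varepsilon\int A'(\tilde\delta)\,\big(q(y)-g(y)\big)\,s_{\bm{\theta}}(y)\,dy,
\]
which is controlled by $A'_{\max}$ and the envelope $\int\|s_{\bm{\theta}}\|\,|q-g|\,dy$ --- neither of which is supplied by hypothesis (ii), which only bounds $\|\nabla_{\bm{\theta}}D_G(q,f_{\bm{\theta}})\|$, an integral of $A$ (not $A'$) evaluated at the pure residual $q/f_{\bm{\theta}}-1$. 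Nor can the constant be ``absorbed into $S_K$'': $S_K$ is fixed by the hypothesis and appears in the stated threshold $\epsilon^\dagger=\lambda r/(S_K A_{\max})$. Even granting the exact linear splitting, bounding $\|\Psi(\bm{\theta};g)-\Psi(\bm{\theta};q)\|$ via (ii) and the score envelope costs $2S_K A_{\max}$ rather than $S_K A_{\max}$. So the step you flag as ``the technical crux'' is indeed unresolved under the stated hypotheses, and the proof does not close as written; the paper's value-level argument sidesteps it entirely. (The IFT/continuation machinery for interiority is also more than needed: once the contaminated objective is shown to exceed its value at $\bm{\theta}^\star$ everywhere on the sphere, interiority of the minimizer follows directly from continuity and (iii).)
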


\begin{rem}
In regular mixtures one may take $S_K\lesssim C_1\sqrt{p(K)}$ or $S_K\lesssim C_2 K/\pi_{\min}$, yielding
$\epsilon^\dagger\gtrsim (\lambda r)/(A_{\max} C_1\sqrt{p(K)})$ (resp.\ $\lambda r\,\pi_{\min}/(A_{\max} C_2 K)$).
For HD (unbounded RAF), the same bound holds under a bounded density‑ratio condition on $q$ in the neighborhood.
\end{rem}

\noindent\textbf{Remark(Breakdown under contamination: bounded–RAF vs unbounded–RAF).}
The lower bound in Theorem~\ref{thm:breakdown} hinges on the uniform envelope
$\| \nabla_{\bm\theta} D_G(q,f_{\bm\theta})\|\le S_K\,A_{\max}$ with 
$A_{\max}:=\sup_{\delta\ge -1}|A(\delta)|<\infty$ (bounded RAF).
Thus generators with bounded RAF (e.g. NED, vNED) enjoy a \emph{uniform} breakdown lower bound 
$\epsilon^\dagger \asymp \lambda r /(S_K A_{\max})$ on $B_2(r;\bm\theta^\star)$.
For unbounded RAFs (e.g. Hellinger, KL), $A_{\max}=+\infty$ and the uniform bound in Theorem~\ref{thm:breakdown} need not hold.
If a local density-ratio/score floor is imposed on the contamination within the neighborhood
(e.g. $q(y)\le (1+\Gamma)\,f(y;\bm\theta)$ for all $y$ and $\bm\theta\in B_2(r;\bm\theta^\star)$), 
then the same proof yields a local lower bound with $A_{\max}$ replaced by 
$A_\Gamma:=\sup_{\delta\in[-1,\Gamma]}|A(\delta)|<\infty$; see Supplement, Corollary in S.BD. 1 (for HD, $A_\Gamma=2(\sqrt{1+\Gamma}-1)$; for KL, $A_\Gamma=\max\{1,\Gamma-1\}$). 
Conversely, without any local floor one can construct contaminations for unbounded RAFs that violate any uniform
lower bound; see Supplement, Proposition in S.BD.2.

\section{\texorpdfstring{Extension of Asymptotic Analysis when $K$ is unknown}{alpha-spending Designs}}\label{sec:modelselection}
We treat the case where the number of components $K$ is unknown. Split the sample into two independent parts $D_{1n}$ and $D_{2n}$ of sizes $n_1$ and $n_2$ with $n_1/n \to \tau \in (0,1)$ and $n_2/n \to 1-\tau$. We use $D_{1n}$ to select $\hat{K}_n$ (e.g., via a divergence-based information criterion), and $D_{2n}$ to estimate the parameters conditional on $\hat {K}_n$. Let $K_0$ denote the true number of components and let $\bm{\theta}^\star$ be the true parameter with $K_0$ components. 

{\textbf{Dimension Matching:}}
Let $\widehat K_n$ be the estimator of the order $K_0$ using the selection split $\mathcal D_{1n}$ of size $n_1$. On the estimation split $\mathcal D_{2n}$ of size $n_2$, let
\[
\widehat{\bm\theta}_{n_2}\ \in\ \arg\min_{\bm\theta\in\Theta_{\widehat K_n}} D_{2n}(\bm\theta)
\quad\text{with}\quad
D_{2n}(\bm\theta)\coloneqq D\!\big(g_{n_2}, f_{\bm\theta}\big).
\]
Define the \emph{dimension‑matched estimator} $\overline{\bm\theta}_{n_2}$ and the
\emph{dimension‑matched truth} $\bm\theta^\star(\widehat K_n)$ by
\[
\overline{\bm\theta}_{n_2}=
\begin{cases}
(\widehat{\bm\theta}_{n_2},\mathbf 0), & \widehat K_n<K_0,\\[2pt]
\widehat{\bm\theta}_{n_2}, & \widehat K_n\ge K_0,
\end{cases}
\qquad
\bm\theta^\star(\widehat K_n)=
\begin{cases}
\bm\theta^\star, & \widehat K_n\le K_0,\\[2pt]
(\bm\theta^\star,\mathbf 0), & \widehat K_n>K_0,
\end{cases}
\]
so both live in $\mathbb R^{\,p\,\max\{\widehat K_n,K_0\}}$.  See \cite{kav18} for background on
dimension‑matching in mixtures.

{\textbf{Model Selection.}} To determine the number of unknown mixture components, we use the divergence-based mixture complexity estimator. Let $\Delta_K:=\{\pi\in[0,1]^K:\ \sum_{k=1}^K \pi_k=1\}$ and 
\[
\bm{\Theta}_K
:= \Big\{\, (\pi,\phi_1,\ldots,\phi_K):
\ \pi\in\Delta_K,\ \phi_k\in\Phi\ (k=1,\ldots,K) \,\Big\}.
\]

On the selection split $D_{1n}$ of size $n_1$, define the generalized divergence information criterion
\[
\mathrm{GDIC}_{n_1}(K)
\;:=\;
\widehat{\mathcal{R}}_{n_1}(K)
\;+\;
\frac{b_{n_1}}{n_1}\,p(K),
\qquad
\widehat{\mathcal{R}}_{n_1}(K)
:= \inf_{\bm{\theta} \in \bm{\Theta}_K} D_{1n}(\bm{\theta}),
\]
where $p(K)$ is the (identified) parameter dimension of $\bm{\Theta}_K$, and $b_{n_1}$ is a penalty weight with $b_{n_1}\to\infty$ and $\frac{b_{n_1}}{n_1}\to0$. We then choose
$\hat{K}_n \in \arg\min_{1\le K \le K_{\max}} \mathrm{GDIC}_{n_1}(K)$.

\emph{Default choice.} To match the usual BIC when $D_{1n}$ is the average negative log-likelihood, take
$b_{n_1}=\tfrac12 \log n_1$, so the penalty is $(p(K)\log n_1)/ (2n_1)$.

\begin{thm}[Consistency of the GDIC selector]\label{thm:gdic-consistency}
The following hold:
\begin{enumerate}
\item \textbf{Uniform LLN:} For each fixed $K\le K_{\max}$,
$\sup_{\bm{\theta}\in\bm{\Theta}_K}\,|D_{1n}(\bm{\theta})-D(\bm{\theta})| \xrightarrow{p} 0$ as $n_1\to\infty$.
\item \textbf{Identifiability gap:} Let $K_0$ be the true component number and $D_K^\star:=\inf_{\bm{\theta}\in \bm{\Theta}_K} D(\bm{\theta})$.
Then $D_K^\star>D_{K_0}^\star$ for all $K<K_0$.
\item \textbf{Local regularity at $K_0$:} $D$ is twice continuously differentiable at its (unique) minimizer
$\bm{\theta}^\star\in \bm{\Theta}_{K_0}$ with positive definite Hessian $H(\bm{\theta}^\star)$, and the sample minimizer
$\widehat{\bm{\theta}}_{K_0,1n}\in\arg\min_{\bm{\Theta}_{K_0}} D_{1n}$ satisfies
$\widehat{\bm{\theta}}_{K_0,1n}$ converges in probability to $\bm{\theta}^\star$ and
$D_{1n}(\widehat{\bm{\theta}}_{K_0,1n})-D(\bm{\theta}^\star)=O_p(n_1^{-1})$.
\item \textbf{Overfit control:} For each $K>K_0$, there exists a (possibly boundary) population minimizer
$\bm{\theta}_K^\dagger\in\bm{\Theta}_K$ with $D(\bm{\theta}_K^\dagger)=D(\bm{\theta}^\star)$ such that the sample minimum obeys
$D_{1n}(\widehat{\bm{\theta}}_{K,1n})-D(\bm{\theta}^\star)=O_p(n_1^{-1})$.

\item If $b_{n_1}\to\infty$ and $b_{n_1}/n_1\to0$, then
$
\mathbb{P}\!\big(\widehat K_n=K_0\big) \;\longrightarrow\; 1.
$
\end{enumerate}
 
\end{thm}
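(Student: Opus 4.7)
The strategy is to treat items~(1)--(4) as building blocks and combine them into the consistency claim in item~(5). The core argument will be a finite union bound over $K\neq K_0$: under-selection ($K<K_0$) is excluded by the population identifiability gap of item~(2), while over-selection ($K>K_0$) is excluded because the penalty grows strictly faster than the $O_p(n_1^{-1})$ in-sample reduction in fit granted by item~(4).

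For item~(1), I would apply a Glivenko--Cantelli style ULLN to the class $\{y\mapsto G(\delta(y;\bm{\theta}))\,f(y;\bm{\theta}):\bm{\theta}\in\bm{\Theta}_K\}$, using compactness of $\bm{\Theta}_K$, joint continuity in $(y,\bm{\theta})$, and an integrable envelope delivered by the RAF growth/regularity conditions (B1)--(B3). Item~(2) follows from identifiability of finite mixtures combined with strict convexity of $G$ at $0$ with $G(0)=0$: no $K$-component mixture can reproduce a $K_0$-component mixture when $K<K_0$, so $D(g,f_{\bm{\theta}})>0$ on $\bm{\Theta}_K$, giving $D_K^\star>D_{K_0}^\star=0$. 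Item~(3) combines the ULLN with a well-separated-minimum argument for consistency of $\widehat{\bm{\theta}}_{K_0,1n}$, and a second-order Taylor expansion of $D_{1n}$ at $\bm{\theta}^\star$, using positive definite $H(\bm{\theta}^\star)$ together with an $O_p(n_1^{-1/2})$ bound on the gradient at the truth, yields the $O_p(n_1^{-1})$ rate.

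The main obstacle is item~(4), since for $K>K_0$ the population divergence attains its minimum on a non-identified set $\bm{\Theta}_K^\dagger$ obtained by padding $\bm{\theta}^\star$ with zero-weight or duplicated components. My plan is to use the sandwich
\[
0 \;\le\; D_{1n}(\widehat{\bm{\theta}}_{K,1n})-D(\bm{\theta}^\star)\;\le\;D_{1n}(\tilde{\bm{\theta}})-D(\tilde{\bm{\theta}}),
\]
for any fixed embedded $\tilde{\bm{\theta}}\in\bm{\Theta}_K^\dagger$, exploiting $D(\tilde{\bm{\theta}})=D(\bm{\theta}^\star)$. Since $\tilde{\bm{\theta}}$ is a stationary point of $g\mapsto D(g,f_{\tilde{\bm{\theta}}})$, a functional Taylor expansion in the plug-in direction $g_{n_1}-g$ kills the linear term, so the right-hand side inherits the squared plug-in rate $\|g_{n_1}-g\|_{\mathcal H}^2=O_p(n_1^{-1})$ from the tools in Remark~\ref{rem:Munif-d-vs-pK}. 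The subtle point is that $\widehat{\bm{\theta}}_{K,1n}$ may drift within or toward $\bm{\Theta}_K^\dagger$, so controlling the infimum uniformly requires a quadratic lower bound for $D$ in directions transverse to $\bm{\Theta}_K^\dagger$; this is where overparameterized-mixture identifiability must be invoked, and it is the place where I expect the most care to be needed.

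To assemble item~(5), I would decompose
\[
\mathrm{GDIC}_{n_1}(K)-\mathrm{GDIC}_{n_1}(K_0)=\bigl[\widehat{\mathcal R}_{n_1}(K)-\widehat{\mathcal R}_{n_1}(K_0)\bigr]+\tfrac{b_{n_1}}{n_1}\bigl(p(K)-p(K_0)\bigr),
\]
and analyze the two regimes separately. For $K<K_0$, items~(1)--(2) force the bracket to converge in probability to $D_K^\star-D_{K_0}^\star>0$ while the penalty vanishes, so the difference is positive with probability tending to one. For $K>K_0$, items~(3)--(4) give a bracket of order $O_p(n_1^{-1})$ (and nonpositive by nesting), whereas the penalty gap equals $(p(K)-p(K_0))\,b_{n_1}/n_1$, which is strictly positive and of larger order $b_{n_1}/n_1\gg n_1^{-1}$ because $b_{n_1}\to\infty$. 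A finite union bound over $K\in\{1,\dots,K_{\max}\}\setminus\{K_0\}$ then delivers $\mathbb P(\widehat K_n=K_0)\to 1$.
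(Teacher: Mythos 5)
Your overall skeleton — items (1)--(4) as building blocks, the decomposition of $\mathrm{GDIC}_{n_1}(K)-\mathrm{GDIC}_{n_1}(K_0)$ into a fit bracket plus a penalty gap, and a finite union bound over $K\neq K_0$ — is exactly the paper's argument for item (5), and your treatments of (2), (3) match the paper's in substance. Two execution points differ. For the ULLN in (1) the paper does not use a Glivenko--Cantelli/entropy argument at all: it applies the mean-value theorem to $G$ to get the pointwise bound $|G(g_{1n}/f_{\bm\theta})-G(g/f_{\bm\theta})|\le A'_{\max}\,|g_{1n}-g|/f_{\bm\theta}$, so that $\sup_{\bm\theta}|D_{1n}(\bm\theta)-D(\bm\theta)|\le A'_{\max}\|g_{1n}-g\|_{L^1}$ — the $f_{\bm\theta}$ cancels and uniformity in $\bm\theta$ is free, with no envelope or bracketing needed; your route works but is heavier than necessary. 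For the overfit control in (4), your sandwich $0\le D_{1n}(\widehat{\bm\theta}_{K,1n})-D(\bm\theta^\star)\le D_{1n}(\tilde{\bm\theta})$ with $f_{\tilde{\bm\theta}}=g$, followed by the second-order expansion of $h\mapsto D_G(g+h,g)$ (linear term vanishing since $G'(0)=0$), is actually \emph{more} complete than what the paper writes for this part, and it renders your stated worry about quadratic lower bounds transverse to the non-identified set $\bm\Theta_K^\dagger$ moot: the claim concerns only the optimal \emph{value}, and the sandwich controls it from both sides using nothing but nonnegativity of $G$ and the minimizing property of $\widehat{\bm\theta}_{K,1n}$.

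One small but genuine gap: in item (2) you pass from ``$D(\bm\theta)>0$ for every $\bm\theta\in\bm\Theta_K$'' directly to ``$D_K^\star>0$.'' Pointwise positivity does not yield a positive infimum; you need the infimum to be attained (or a uniform separation argument). The paper closes this with an explicit first step: continuity of $D$ on the compact $\bm\Theta_K$ gives a minimizer $\widehat{\bm\theta}_K$, strict propriety of the disparity forces $D(\widehat{\bm\theta}_K)=0$ only if $f_{\widehat{\bm\theta}_K}=g$ a.e., and minimality of $K_0$ rules that out. You should add the attainment step, which is immediate under (B2)/(C1) but cannot be omitted.
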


We now state the asymptotic theorem when $K$ is unknown.

\begin{thm}[Plug-in and post-selection CLTs on the estimation split]\label{thm:plugin-post}
Let the sample be split into independent parts $\mathcal D_{1n}$ (size $n_1$) and $\mathcal D_{2n}$ (size $n_2$),
with $n_1/n\to\tau\in(0,1)$ and $n_2/n\to 1-\tau$. On $\mathcal D_{1n}$ select $\widehat K_n$ (e.g., by GDIC),
and on $\mathcal D_{2n}$ compute the minimum-divergence estimator
\[
\widehat{\bm\theta}_{n_2}\in\arg\min_{\bm\theta\in\bm{\Theta}_K} D\!\big(g_{n_2},f_{\bm\theta}\big)
\quad\text{(for the order $K$ indicated below).}
\]
Assume \emph{correct model specification} at the true order $K_0$ (i.e., $g=f_{\bm\theta^\star}$ for some
$\bm\theta^\star\in\bm{\Theta}_{K_0}$), generator calibration $A'(0)=1$, and the plug-in rate
$\|g_{n_2}-g\|_{\mathcal H}=o_p(n_2^{-1/2})$ (e.g., empirical pmf or discrete-kernel with $h\!\to\!0$ and $n_2h\!\to\!\infty$).
Also suppose the fixed-order regularity conditions hold at $K_0$ (e.g., (F1), (C1)--(C2), (K1)--(K2), (M1)--(M8)).

\smallskip
\noindent\textbf{(a) Fixed order.}
For any fixed $K$ (in particular $K=K_0$),
\[
\sqrt{n_2}\,\big(\widehat{\bm\theta}_{n_2}-\bm\theta^\star\big)\ \Rightarrow\ 
\mathcal N\!\big(0,\ I(\bm\theta^\star)^{-1}\big).
\]

\noindent\textbf{(b) Post-selection (unconditional).}
If, in addition, the selector is consistent, $\mathbb{P}(\widehat K_n=K_0)\to 1$ (e.g., under the conditions of the GDIC
consistency theorem), define the dimension-matched estimator $\overline{\bm\theta}_{n_2}$ and truth
$\bm\theta^\star(\widehat K_n)$ by
\[
\overline{\bm\theta}_{n_2}=
\begin{cases}
(\widehat{\bm\theta}_{n_2},\mathbf 0), & \widehat K_n<K_0,\\[2pt]
\widehat{\bm\theta}_{n_2}, & \widehat K_n\ge K_0,
\end{cases}
\qquad
\bm\theta^\star(\widehat K_n)=
\begin{cases}
\bm\theta^\star, & \widehat K_n\le K_0,\\[2pt]
(\bm\theta^\star,\mathbf 0), & \widehat K_n>K_0,
\end{cases}
\]
so both live in $\mathbb R^{\,p\,\max\{\widehat K_n,K_0\}}$. Then
\[
\sqrt{n_2}\,\big\{\overline{\bm\theta}_{n_2}-\bm\theta^\star(\widehat K_n)\big\}
\ \Rightarrow\ \mathcal N\!\big(0,\ I(\bm\theta^\star)^{-1}\big).
\]
\emph{Remark.} If the DM algorithm is truncated on $\mathcal D_{2n}$, it suffices to take 
$m_{n_2}=O(\log n_2)$ iterations (Theorem~3(ii)) so that the optimization error is $o_p(n_2^{-1/2})$.
\end{thm}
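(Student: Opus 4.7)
The plan is to treat part (a) as a direct application of the fixed-order finite-step CLT already established for a single sample, and then derive part (b) from (a) by combining the independence of the two splits with the consistency of the GDIC selector via a simple indicator decomposition.

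For part (a), I would invoke Corollary~\ref{cor:finite-godambe} (equivalently Theorem~\ref{Thm:1}(ii)) on the estimation split $\mathcal{D}_{2n}$. Under correct specification $g = f_{\bm{\theta}^\star}$, the Godambe sandwich matrices satisfy $H = V = I(\bm{\theta}^\star)$: the calibration $A'(0) = 1$ forces $V$ to equal the Fisher information, and $H$ coincides with the Hessian of the population divergence at $\bm{\theta}^\star$, which under the stated regularity conditions equals $I(\bm{\theta}^\star)$. Thus $H^{-1} V H^{-1} = I(\bm{\theta}^\star)^{-1}$. The plug-in rate $\|g_{n_2} - g\|_{\mathcal{H}} = o_p(n_2^{-1/2})$ is precisely what Remark~\ref{rem:Munif-d-vs-pK} requires to make the score-level perturbation asymptotically negligible, and if the DM iterates are truncated, the logarithmic-iteration bound from Theorem~\ref{Thm:1}(ii) delivers $\sqrt{n_2}\,\kappa^{m_{n_2}} \to 0$, yielding an optimization error of order $o_p(n_2^{-1/2})$. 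Since $n_2/n \to 1-\tau \in (0,1)$ forces $n_2 \to \infty$, the claimed weak convergence follows.

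For part (b), I would exploit two ingredients: the independence of $\mathcal{D}_{1n}$ and $\mathcal{D}_{2n}$ (so conditional on $\mathcal{D}_{1n}$ the estimation sample is still i.i.d.\ from $g$) and the consistency $\mathbb{P}(\widehat{K}_n = K_0) \to 1$ from Theorem~\ref{thm:gdic-consistency}. Writing $V_n := \sqrt{n_2}\bigl\{\overline{\bm{\theta}}_{n_2} - \bm{\theta}^\star(\widehat{K}_n)\bigr\}$, for any bounded continuous $\phi$ I would decompose
\[
\mathbb{E}[\phi(V_n)] \;=\; \mathbb{E}\bigl[\phi(V_n)\,\mathbf{1}_{\{\widehat{K}_n = K_0\}}\bigr] \;+\; \mathbb{E}\bigl[\phi(V_n)\,\mathbf{1}_{\{\widehat{K}_n \neq K_0\}}\bigr].
\]
The second term is bounded by $\|\phi\|_\infty\,\mathbb{P}(\widehat{K}_n \neq K_0) \to 0$. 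On the event $\{\widehat{K}_n = K_0\}$, the dimension-matching convention collapses: $\overline{\bm{\theta}}_{n_2}$ and $\bm{\theta}^\star(\widehat{K}_n)$ both live in $\mathbb{R}^{p K_0}$ and coincide with the unpadded objects in part (a). Because $\mathcal{D}_{1n}$ and $\mathcal{D}_{2n}$ are independent, the conditional law of the fixed-order statistic of (a) given $\mathcal{D}_{1n}$ coincides with its unconditional law; combining this with the vanishing probability of $\{\widehat{K}_n \neq K_0\}$ shows the first term converges to $\mathbb{E}[\phi(Z)]$ with $Z \sim \mathcal{N}(0,\,I(\bm{\theta}^\star)^{-1})$. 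The Portmanteau theorem then delivers the claimed unconditional weak convergence.

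The main technical obstacle, and essentially the only delicate point, is the dimension mismatch when $\widehat{K}_n \neq K_0$: the vector $V_n$ then lives in the random-dimensional space $\mathbb{R}^{p\max\{\widehat{K}_n, K_0\}}$, so a direct weak-convergence statement in a fixed Euclidean space is ambiguous on that event. The indicator decomposition is precisely what localizes the entire limiting argument to the event $\{\widehat{K}_n = K_0\}$, whose probability tends to one, and renders the complement irrelevant to the limit. A secondary care point is the interplay with truncated DM iterates on $\mathcal{D}_{2n}$: invoking Theorem~\ref{Thm:1}(ii) with $m_{n_2} = O(\log n_2)$ ensures the optimization error does not pollute the $n_2^{-1/2}$ scaling, mirroring the argument already used in Corollary~\ref{cor:finite-godambe}.
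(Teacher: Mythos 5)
Your proposal is correct and follows essentially the same route as the paper's proof: part (a) is the fixed-order Z-estimation/Godambe CLT on $\mathcal D_{2n}$ with $H=V=I(\bm\theta^\star)$ under calibration and correct specification, and part (b) reduces to (a) on the event $\{\widehat K_n=K_0\}$, whose complement is asymptotically negligible by selector consistency and the independence of the splits. Your explicit indicator/Portmanteau decomposition and the remark on the dimension-matching collapse merely spell out details the paper's terser argument leaves implicit.
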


\noindent \textbf{Robust selection under bounded–RAF disparities.}
For $\epsilon\in[0,\epsilon_0)$ let $g_\epsilon:=(1-\epsilon)g+\epsilon\,\eta$ and write
$D_\epsilon(\bm \theta):=D_G(g_\epsilon,f_{\bm\theta})$ and $D_{\epsilon,1n}({\bm\theta}):=D_G(g_{\epsilon,1n},f_{\bm\theta})$
on the selection split $\mathcal D_{1n}$.
For $K<K_0$ set the (contaminated) population gap
\[
\Delta_\epsilon(K)\ :=\ \inf_{{\bm\theta}\in\bm\Theta_K} D_\epsilon({\bm\theta})\;-\;\inf_{{\bm\theta}\in\bm\Theta_{K_0}} D_\epsilon({\bm\theta}),
\qquad \Delta_0(K)>0 \ \text{by Theorem~\ref{thm:gdic-consistency}(2)}.
\]

We next turn to investigate the  robustness features of the model selector described in Theorem ~\ref{thm:gdic-consistency}.

\begin{thm}[GDIC: finite-sample over/under-estimation bounds for bounded RAFs]\label{thm:gdic-robust-bounds}
Assume \textup{(F1)}, \textup{(C0)–(C3)}, \textup{(K1)–(K2)}, \textup{(M1)–(M8)} on a fixed neighborhood of $\bm\theta^\star$,
and that the RAF derivative is bounded and nonincreasing on $[0,\infty)$:
$A'(0)=1$, $A'_{\max}:=\sup_{\delta\ge-1}|A'(\delta)|<\infty$. Suppose the selection-split plug-in satisfies
$\|g_{\epsilon,1n}-g_\epsilon\|_{\mathcal H}=o_p(n_1^{-1/2})$. Let $b_{n_1}\to\infty$ with $b_{n_1}/n_1\to 0$ (e.g. $b_{n_1}=\tfrac12\log n_1$).

\smallskip
\noindent\textbf{(Underestimation, $K<K_0$).}
There exist constants $c_1,c_2>0$ (depending only on the local envelopes and curvature) such that for every fixed $K<K_0$ and all sufficiently small $\epsilon\le \epsilon^\flat$,
\[
\mathbb{P}\!\big(\widehat K_n=K\big)
\ \le\ \exp\!\Big(-c_1\,n_1\,\Delta_\epsilon(K)^2\Big)\ +\ c_2\,\mathbb{P}\!\Big(\|g_{\epsilon,1n}-g_\epsilon\|_{\mathcal H}>\tfrac12\Delta_\epsilon(K)\Big),
\]
where $\Delta_\epsilon(K)\ge \Delta_0(K)-C\,\epsilon$ with $C\lesssim A'_{\max}\,\mathrm{Env}(K)$. In particular,
\(
\mathbb{P}(\widehat K_n<K_0)\le \sum_{K<K_0}\exp(-c_1 n_1(\Delta_0(K)-C\epsilon)^2)+o(1).
\)

\smallskip
\noindent\textbf{(Overestimation, $K>K_0$).}
Let $\nu_K:=p(K)-p(K_0)$ and assume the (Godambe–)Wilks expansion for the selection-split DM contrast:
\[
2n_1\Big\{D_{1n}(\widehat{\bm\theta}_{K_0,1n})-D_{1n}(\widehat{\bm\theta}_{K,1n})\Big\}
\ \rightsquigarrow\ \chi^2_{\nu_K}\qquad(K>K_0).
\]
Then, with $b_{n_1}=\tfrac12\log n_1$ (BIC-type GDIC),
\[
\mathbb{P}\!\big(\widehat K_n\ge K_0+1\big)
\ \le\ \sum_{K=K_0+1}^{K_{\max}}
\mathbb{P}\!\Big(\chi^2_{\nu_K}\ \ge\ \nu_K\,\log n_1 + o(1)\Big)
\ =\ O\!\Big(\sum_{K=K_0+1}^{K_{\max}} n_1^{-\nu_K/2}\Big).
\]
Hence GDIC with BIC-penalty is (over-fit) consistent even under small contamination for the bounded-RAF class.
\end{thm}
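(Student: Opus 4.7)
The plan is to split the event $\{\widehat K_n\ne K_0\}$ into its under- and over-estimation parts and handle each by a distinct mechanism: for $K<K_0$, combine a robust identifiability gap with a uniform concentration of the selection-split minimum disparity around its contaminated population value; for $K>K_0$, use the assumed (Godambe–)Wilks expansion together with the BIC-type chi-squared tail. Throughout, the boundedness of $A'$ will be used precisely to obtain linearization remainders that are uniform in the parameter and in the contamination level.

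For the \emph{underestimation bound} ($K<K_0$), I would first rewrite $\{\widehat K_n=K\}$ via GDIC comparison as
\[
\widehat{\mathcal R}_{n_1}(K)-\widehat{\mathcal R}_{n_1}(K_0)\ \le\ \tfrac{b_{n_1}}{n_1}\,\{p(K_0)-p(K)\}=o(1),
\]
and then use the Gâteaux expansion in Theorem~\ref{Robust} together with the uniform envelope $A'_{\max}\,\Env(K)$ from Remark~\ref{rem:Munif-d-vs-pK} to obtain the contaminated-gap lower bound $\Delta_\epsilon(K)\ge\Delta_0(K)-C\epsilon$ valid for $\epsilon\le\epsilon^\flat$; this is where boundedness of $A'$ is essential. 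Next, I would establish the uniform two-sided concentration
\[
\Big|\widehat{\mathcal R}_{n_1}(K)-\inf_{\bm\theta\in\bm\Theta_K}D_\epsilon(\bm\theta)\Big|\ \le\ C'\,\|g_{\epsilon,1n}-g_\epsilon\|_{\mathcal H}\ +\ R_{n_1}(K),
\]
with $R_{n_1}(K)$ controlled by a Bernstein/bracketing bound on the uniformly bounded class $\{A(\delta_\epsilon(\cdot;\bm\theta))\,s_{\bm\theta}(\cdot):\bm\theta\in\bm\Theta_K\}$. Splitting $\{\widehat K_n=K\}$ according to whether $\|g_{\epsilon,1n}-g_\epsilon\|_{\mathcal H}\le\tfrac12\Delta_\epsilon(K)$ or not, the first sub-event forces a sub-Gaussian deviation of the sample gap of size $\Delta_\epsilon(K)$ and yields the factor $\exp(-c_1 n_1\Delta_\epsilon(K)^2)$, while the complement contributes the second term in the stated bound; the union over $K<K_0$ produces the summary displayed in the theorem.

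For the \emph{overestimation bound} ($K>K_0$), I would note that with the BIC-type choice $b_{n_1}=\tfrac12\log n_1$,
\[
\{\widehat K_n=K\}\ \subseteq\ \Big\{2n_1\{D_{1n}(\widehat{\bm\theta}_{K_0,1n})-D_{1n}(\widehat{\bm\theta}_{K,1n})\}\ \ge\ \nu_K\log n_1 + o(1)\Big\},
\]
which collapses the problem to a single tail bound. Applying the assumed (Godambe–)Wilks expansion, whose limit is $\chi^2_{\nu_K}$ under correct specification and the calibration $A'(0)=1$, followed by the standard upper tail $\mathbb{P}(\chi^2_\nu\ge t)\lesssim t^{\nu/2-1}e^{-t/2}$ evaluated at $t=\nu_K\log n_1$, yields each summand of order $O(n_1^{-\nu_K/2})$ (absorbing the polylogarithmic factor); a union bound over $K_0<K\le K_{\max}$ gives the claimed rate.

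\textbf{Main obstacle.} The hardest step is the uniform-in-$\bm\theta$ linearization that underlies the concentration inequality in the underestimation half: one must turn the remainder in $D_G(g_{\epsilon,1n},f_{\bm\theta})-D_\epsilon(\bm\theta)$ into a product of $\|g_{\epsilon,1n}-g_\epsilon\|_{\mathcal H}$ and an envelope that is uniform both in $\bm\theta\in\bm\Theta_K$ and in $\epsilon\in[0,\epsilon^\flat]$. Boundedness of $A'$ is exactly the property that yields such an envelope without additional local density-ratio hypotheses, which is why the theorem is stated for the bounded-RAF class; for HD or KL one would need a local density-ratio floor on the contamination, paralleling the route taken in the breakdown analysis of Theorem~\ref{thm:breakdown}.
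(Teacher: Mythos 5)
Your proposal is correct and follows essentially the same route as the paper's proof: the underestimation bound via the contaminated identifiability gap, a bounded-RAF uniform linearization in $\|g_{\epsilon,1n}-g_\epsilon\|_{\mathcal H}$, and a Bernstein-type concentration split at $t=\tfrac12\Delta_\epsilon(K)$; the overestimation bound via the assumed (Godambe--)Wilks expansion, the $\chi^2_{\nu_K}$ tail at threshold $\nu_K\log n_1$, and a union bound over $K>K_0$. Your write-up is in fact somewhat more explicit than the paper's sketch (e.g., in isolating the event decomposition and the role of the envelope $A'_{\max}\,\Env(K)$), but the key ideas coincide.
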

The proof of the Theorem is relegated to the appendix.

\noindent \textbf{Remark (Finite-sample robustness of GDIC vs BIC/AIC under contamination).}
On the selection split, each observation contributes $O(n_1^{-1})$ to $D_{1n}(\cdot)$.
For divergence generators with \emph{bounded RAF} $A$ (e.g. NED, vNED), the per-point decrement in $D_{1n}$ achievable by inserting an additional component is uniformly bounded by $A_{\max}/n_1$. With a BIC-type penalty $b_{n_1}=\tfrac12\log n_1$, a single “extraneous” point cannot overturn the penalty once $n_1$ is large, so GDIC does not overestimate $K$ on the basis of a single contaminated datum. In contrast, for KL (likelihood disparity), the per-point contribution to $D_{1n}$ is unbounded in the tail; absent a local density/score floor, one extreme contaminated point can reduce the KL-based contrast by more than the BIC penalty and spuriously trigger an extra component. The next two results make this precise. (Full proofs are in the Supplement.)

\begin{prop}[GDIC overfit control with bounded RAFs]\label{prop:gdic-bounded-overfit}
Assume the setting of Theorem~\ref{thm:gdic-consistency} on the selection split, and that the RAF is bounded:
$A_{\max}:=\sup_{\delta\ge -1}|A(\delta)|<\infty$. Fix $K>K_0$ and let $\Delta p:=p(K)-p(K_0)\ge 1$.
Consider $b_{n_1}\ge c\log n_1$ (e.g. $b_{n_1}=\tfrac12\log n_1$). Then for every data set of size $n_1$, set $SS_K \coloneqq \{\text{selection–split points that a $K$–only component captures}\}$
\[
\mathrm{GDIC}_{n_1}(K)-\mathrm{GDIC}_{n_1}(K_0)
\;\ge\; \frac{\Delta p\,b_{n_1}}{n_1}\;-\;\frac{A_{\max}}{n_1}\,\#SS_K\;-\;R_{n_1},
\]
with $R_{n_1}=O_p(n_1^{-1/2})$ from sampling noise. In particular, for any threshold $m_\star$
\[
\#\{SS_K\}\ \le\ m_\star
\quad\Longrightarrow\quad
\mathrm{GDIC}_{n_1}(K)-\mathrm{GDIC}_{n_1}(K_0)\ \ge\ \frac{\Delta p\,b_{n_1}-A_{\max} m_\star}{n_1} - R_{n_1}.
\]
Hence a \emph{single} extraneous point ($m_\star=1$) cannot induce overestimation for all sufficiently large $n_1$,
because $\Delta p\,b_{n_1}/n_1 \gg A_{\max}/n_1$. More generally, at least
\[
m_{\min}(n_1,K)\ :=\ \big\lceil (\Delta p)\,b_{n_1}/A_{\max}\big\rceil
\]
contaminated points on the selection split are necessary (up to $o_p(1)$) to force $\widehat K_n\ge K$.
\end{prop}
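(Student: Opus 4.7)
I would decompose the GDIC gap into a data–fit decrement plus a penalty, bound the data–fit decrement by a per–observation envelope that is available precisely because the residual adjustment function $A$ is bounded, and then invert that inequality to recover the threshold $m_{\min}$. Writing $\Delta p:=p(K)-p(K_0)$, the starting identity is
\begin{equation*}
\mathrm{GDIC}_{n_1}(K)-\mathrm{GDIC}_{n_1}(K_0)
\;=\;\big[\widehat{\mathcal R}_{n_1}(K)-\widehat{\mathcal R}_{n_1}(K_0)\big]+\frac{\Delta p\,b_{n_1}}{n_1}.
\end{equation*}
The penalty piece is deterministic, and the dimension–matched embedding from Section~\ref{sec:modelselection} (zero–padding extra mixing weights) yields $\bm{\Theta}_{K_0}\hookrightarrow\bm{\Theta}_K$, so $\widehat{\mathcal R}_{n_1}(K)\le\widehat{\mathcal R}_{n_1}(K_0)$ and it is the nonnegative quantity $\widehat{\mathcal R}_{n_1}(K_0)-\widehat{\mathcal R}_{n_1}(K)$ that needs an upper bound.

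\textbf{Key step.} Using the integral form $D_{1n}(\bm{\theta})=\int G(\delta_{n_1}(y;\bm{\theta}))f(y;\bm{\theta})\,dy$ together with the identity $A(\delta)=(1+\delta)G'(\delta)-G(\delta)$, I would carry out a von Mises / Lindsay–type expansion of $D_{1n}$ between $\widehat{\bm{\theta}}_{K_0,1n}$ and $\widehat{\bm{\theta}}_{K,1n}$, exploiting the first–order stationarity $\nabla D_{1n}(\widehat{\bm{\theta}}_{K,1n})=0$ inside $\bm{\Theta}_K$ to replace the linear term by a per–observation influence contribution. The upshot is a representation of the form
\begin{equation*}
D_{1n}(\widehat{\bm{\theta}}_{K_0,1n})-D_{1n}(\widehat{\bm{\theta}}_{K,1n})
\;=\;\frac{1}{n_1}\sum_{i\in SS_K} A\!\big(\delta_{n_1}(Y_i;\tilde{\bm{\theta}})\big)\,\xi_i + R_{n_1},
\end{equation*}
where $\xi_i\in[-1,1]$ encodes how the responsibility of observation $i$ shifts under the two fits, $\tilde{\bm{\theta}}$ lies on the segment between the two minimizers, and $R_{n_1}=O_p(n_1^{-1/2})$ is the second–order remainder controlled by the local strong convexity and Lipschitz regularity baked into \textup{(F1)}, \textup{(C1)–(C2)}. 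Because $|A|\le A_{\max}$ uniformly in its argument, summing over $SS_K$ delivers the envelope $\widehat{\mathcal R}_{n_1}(K_0)-\widehat{\mathcal R}_{n_1}(K)\le A_{\max}\,\#SS_K/n_1+R_{n_1}$, which is the only place bounded RAF is used.

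\textbf{Assembly and the threshold.} Substituting into the decomposition yields the displayed inequality, and the monotone threshold $m_{\min}(n_1,K)=\lceil\Delta p\,b_{n_1}/A_{\max}\rceil$ is obtained by setting the deterministic part $\Delta p\,b_{n_1}-A_{\max} m_\star$ to zero. For $b_{n_1}=\tfrac12\log n_1$ and $m_\star=1$ the deterministic part grows like $\tfrac12\Delta p\log n_1/n_1$ while the contamination credit is only $A_{\max}/n_1$, so the gap is eventually positive and overestimation from a single point is excluded.

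\textbf{Main obstacle.} The principal difficulty is making the per–observation decomposition rigorous with the correct accounting of $SS_K$: one must verify that observations outside $SS_K$ contribute only at the second–order rate $O_p(n_1^{-1})$ rather than the linear rate $O_p(n_1^{-1/2})$, so that the $R_{n_1}$ remainder does not swamp the signal $\Delta p\,b_{n_1}/n_1$. This is where bounded RAF is essential—it produces a uniform envelope on the first–order leverages—while the local curvature and score–class envelopes inherited from Section~\ref{sec:convergence} and Remark~\ref{rem:Munif-d-vs-pK} furnish the Hessian control needed for the remainder. A secondary subtlety is giving $SS_K$ an operational definition (e.g., the set of observations whose leading responsibility index changes between the $K_0$– and $K$–fits) that is consistent with the leverage bound used in the expansion; I would adopt this definition and verify that the responsibility–shift coefficients $\xi_i$ indeed vanish outside $SS_K$ up to $O(n_1^{-1/2})$ contributions already absorbed into $R_{n_1}$.
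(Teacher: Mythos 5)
Your proposal follows essentially the same route as the paper's own argument: the same decomposition of the GDIC gap into a data--fit decrement plus the penalty $\Delta p\,b_{n_1}/n_1$, and the same key mechanism in which $A$ arises as the (mean--value) derivative of the per--observation divergence contribution along an interpolation between the two fits, so that boundedness of the RAF caps the fit improvement at $A_{\max}\#SS_K/n_1$; the paper merely packages this as a standalone ``per--point leverage'' lemma rather than a von Mises expansion, and it likewise leaves the precise accounting of $SS_K$ and the $R_{n_1}=O_p(n_1^{-1/2})$ remainder at sketch level. Your identification of the remaining gaps (an operational definition of $SS_K$ and control of the non--$SS_K$ contributions) matches, and in places sharpens, what the paper itself only asserts.
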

The proof is in the Supplement, Proposition in S-GDIC, B1.

\begin{cor}[Tail bound under $\varepsilon$-contamination; bounded RAFs]\label{cor:gdic-overfit-tail}
Under the assumptions of Proposition~5, if the selection split has i.i.d.\ $\varepsilon$-contamination with rate $\varepsilon\in(0,1)$, then for any fixed $K>K_0$ and all large $n_1$,
\[
\mathbb{P}\!\big(\widehat K_n\ge K\big)\ \le\ \mathbb{P}\!\big(\mathrm{Bin}(n_1,\varepsilon)\ge m_{\min}(n_1,K)\big)\;+\;o(1)
\ \le\ \exp\!\Big(-\,n_1\,\mathrm{kl}\!\big(\tfrac{m_{\min}(n_1,K)}{n_1}\,\|\,\varepsilon\big)\Big)\;+\;o(1),
\]
where $\mathrm{kl}(x\|y)$ is the Bernoulli KL divergence. In particular, with $b_{n_1}=\tfrac12\log n_1$,
$m_{\min}(n_1,K)\asymp (\Delta p)\log n_1/A_{\max}$ and $\mathbb{P}(\widehat K_n\ge K)=O(n_1^{-c})$ for some $c>0$.
\end{cor}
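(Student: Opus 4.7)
The plan is to reduce the event $\{\widehat K_n\ge K\}$ to a binomial tail event via the per-observation envelope in Proposition~\ref{prop:gdic-bounded-overfit}, and then apply a standard Chernoff bound for $\mathrm{Bin}(n_1,\varepsilon)$.

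First, I would invoke Proposition~\ref{prop:gdic-bounded-overfit} and contrapose: the event $\{\widehat K_n\ge K\}$ implies $\mathrm{GDIC}_{n_1}(K)\le\mathrm{GDIC}_{n_1}(K_0)$, which forces
\[
\#SS_K\ \ge\ m_{\min}(n_1,K)\ +\ o_p(1),
\]
where $m_{\min}(n_1,K)=\lceil(\Delta p)b_{n_1}/A_{\max}\rceil$, after transferring the sampling-noise residual $R_{n_1}$ to the count scale (which is negligible relative to $m_{\min}$ once $b_{n_1}\to\infty$). Thus $\mathbb{P}(\widehat K_n\ge K)\le \mathbb{P}(\#SS_K\ge m_{\min})+o(1)$.

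Second, and this is the crux of the argument, I would show that $\#SS_K\le N_{\mathrm{cont}}:=\sum_{i=1}^{n_1}\mathbf{1}\{Y_i\sim\eta\}$, the count of contaminated observations on the selection split. The intuition is that under correct specification at order $K_0$ with bounded RAF, clean points ($Y_i\sim g=f_{\bm\theta^\star}$) are already fit by the $K_0$-component model with per-observation residual of order $n_1^{-1/2}$ (via the Wilks-type expansion invoked in Theorem~\ref{thm:gdic-robust-bounds}), and hence cannot saturate the envelope $A_{\max}/n_1$ that Proposition~\ref{prop:gdic-bounded-overfit} charges per ``captured'' point; only contaminated observations can. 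Since the contaminations are i.i.d.\ Bernoulli$(\varepsilon)$, $N_{\mathrm{cont}}\sim\mathrm{Bin}(n_1,\varepsilon)$, yielding the first inequality of the corollary.

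Third, I apply the classical Chernoff--Hoeffding bound $\mathbb{P}(\mathrm{Bin}(n,p)\ge k)\le\exp(-n\,\mathrm{kl}(k/n\|p))$ (valid for $k/n\ge p$) with $k=m_{\min}(n_1,K)$ and $p=\varepsilon$ to obtain the stated KL-exponent. For the polynomial-rate corollary, substituting $b_{n_1}=\tfrac12\log n_1$ gives $m_{\min}\asymp(\Delta p)\log n_1/A_{\max}$; the standard lower bound $\mathrm{kl}(x\|\varepsilon)\ge x\log(x/\varepsilon)-x+\varepsilon$ on the upper-tail regime then yields $n_1\,\mathrm{kl}(m_{\min}/n_1\|\varepsilon)\gtrsim c\log n_1$, i.e., $\exp(-n_1\mathrm{kl}(\cdot))=O(n_1^{-c})$ for a constant $c=c(\Delta p,A_{\max},\varepsilon)>0$.

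The main obstacle will be making the dominance claim $\#SS_K\le N_{\mathrm{cont}}$ rigorous: Proposition~\ref{prop:gdic-bounded-overfit} supplies only an aggregate envelope on the disparity reduction achievable by an extra component, not an observation-by-observation decomposition. One must commit to a precise definition of ``captured''—e.g., the set of indices whose individual contribution to $D_{1n}(\widehat{\bm\theta}_{K,1n})-D_{1n}(\widehat{\bm\theta}_{K_0,1n})$ exceeds a threshold—and then argue, via uniform control of the Wilks expansion on the clean sub-sample, that clean indices fail this threshold with high probability. A secondary subtlety is that the Chernoff upper-tail bound is informative only when $m_{\min}(n_1,K)/n_1>\varepsilon$; so the polynomial rate implicitly restricts $\varepsilon$ to be small relative to $(\Delta p)\log n_1/(A_{\max}n_1)$, and this regime should be flagged in the statement.
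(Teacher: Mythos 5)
Your proposal follows essentially the same route as the paper: the paper proves this chain in the supplement (proof of Theorem~\ref{thm:gdic-breakdown-sample}, overfit part), working on the complement event $\{X_{n_1}\le m\}$ with $X_{n_1}\sim\mathrm{Bin}(n_1,\varepsilon)$, using a per-point leverage lemma (Lemma~\ref{lem:per-point}) to bound the achievable decrease by $mA_{\max}/n_1$, concluding $\mathbb{P}(\widehat K_n\ge K)\le\mathbb{P}(X_{n_1}\ge m_{\min})+o(1)$, and then applying the Chernoff--Hoeffding bound exactly as you do. Your contraposition through $\#SS_K\ge m_{\min}$ followed by $\#SS_K\le N_{\mathrm{cont}}$ is logically the same argument in the reverse direction.

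Both of the subtleties you flag are genuine. The dominance claim $\#SS_K\le N_{\mathrm{cont}}$ is the step the paper addresses with Lemma~\ref{lem:per-point} plus the assertion that the clean-sample contribution is absorbed into the residual $R_{n_1}$; for this to beat the penalty $(\Delta p)\,b_{n_1}/n_1\asymp\log n_1/n_1$ one actually needs the clean-sample Wilks-type rate $O_p(n_1^{-1})$ rather than the cruder $O_p(n_1^{-1/2})$ quoted in Proposition~\ref{prop:gdic-bounded-overfit}, so your insistence on a uniform control of the Wilks expansion on the clean sub-sample is the right repair. Your second caveat is the more consequential one: the bound $\mathbb{P}(\mathrm{Bin}(n_1,\varepsilon)\ge k)\le\exp(-n_1\,\mathrm{kl}(k/n_1\|\varepsilon))$ is valid only for $k\ge n_1\varepsilon$, and with $m_{\min}\asymp\log n_1$ and a \emph{fixed} $\varepsilon\in(0,1)$ one has $m_{\min}/n_1\to0<\varepsilon$, so the binomial tail tends to one and no polynomial rate is possible. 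The paper implicitly concedes this in Theorem~\ref{thm:gdic-breakdown-sample}(2), which restricts to $\varepsilon<\varepsilon_{\rm over}(K)\asymp\nu_K\log n_1/(2A_{\max}n_1)$; the "In particular" clause of the corollary as stated for fixed $\varepsilon$ is therefore only meaningful in that shrinking-contamination regime, precisely as you say.
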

The Proof of the result standard.

\begin{thm}[Sample-level breakdown lower bounds for GDIC with bounded RAF]\label{thm:gdic-breakdown-sample}
Work on the selection split $\mathcal D_{1n}$ of size $n_1$.
Assume \textup{(F1)}, \textup{(C0)–(C3)}, \textup{(K1)–(K2)}, \textup{(M1)–(M8)} on a neighborhood of $\bm\theta^\star$.
Suppose the divergence generator has bounded RAF $A$ (i.e. $A_{\max}:=\sup_{\delta\ge -1}|A(\delta)|<\infty$), and the plug-in satisfies
$\|g_{1n}-g\|_{\mathcal H}=o_p(n_1^{-1/2})$. Let $b_{n_1}\to\infty$ with $b_{n_1}/n_1\to 0$ and write $\nu_K:=p(K)-p(K_0)$.

\begin{enumerate}
\item \textbf{(Underestimation)} For each fixed $K<K_0$ let $\,\Delta_0(K):=\inf_{\bm\theta\in\bm\Theta_K}\!D(\bm\theta)-D(\bm\theta^\star)>0$ (Theorem~8(2)). Then for any $0<\varepsilon<\varepsilon_{\rm under}(K):=\Delta_0(K)/(4A_{\max})$,
\[
\mathbb{P}\!\big(\widehat K_n=K\big)\ \le\ 
\exp\!\Big(-c_1\,n_1\,[\,\Delta_0(K)-2A_{\max}\varepsilon\,]^2\Big)\ +\ o(1),
\]
for some $c_1>0$ depending only on the local envelopes; hence $\mathbb{P}(\widehat K_n<K_0)\to 0$ for any fixed $\varepsilon<\min_{K<K_0}\varepsilon_{\rm under}(K)$.

\item \textbf{(Overestimation)} For each $K>K_0$ define 
$ m_{\min}(K,n_1):=\big\lceil \nu_K\,b_{n_1}/A_{\max}\big\rceil$.
Let $X_{n_1}$ be the number of contaminated points falling in $\mathcal D_{1n}$ (e.g. $X_{n_1}\sim{\rm Bin}(n_1,\varepsilon)$ under i.i.d. $\varepsilon$–contamination).
Then for every $K>K_0$ and all large $n_1$,
\[
\mathbb{P}\!\big(\widehat K_n\ge K\big)\ \le\ \mathbb{P}\!\big(X_{n_1}\ge m_{\min}(K,n_1)\big)\;+\;o(1).
\]
In particular, with BIC-type penalty $b_{n_1}=\tfrac12\log n_1$ and any fixed $\varepsilon<\varepsilon_{\rm over}(K):=\frac{\nu_K}{2A_{\max}}\frac{\log n_1}{n_1}$,
\[
\mathbb{P}\!\big(\widehat K_n\ge K\big)\ \le\ \exp\!\big(-\,c_2\,n_1\big)\;+\;o(1),
\]
for some $c_2=c_2(\varepsilon,\nu_K,A_{\max})>0$. In particular, a \emph{single} extraneous point cannot force overestimation ($\widehat K_n\ge K_0{+}1$) once $n_1$ is large.
\end{enumerate}
\end{thm}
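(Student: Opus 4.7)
The plan is to handle the two halves of the statement separately, reducing each to machinery that has already been set up in Theorem~\ref{thm:gdic-robust-bounds}, Proposition~\ref{prop:gdic-bounded-overfit}, and Corollary~\ref{cor:gdic-overfit-tail}, and then combining those finite--sample GDIC inequalities with a Chernoff tail for the number of contaminated points. The only genuinely new ingredients are (a) translating the generic population gap bound into the explicit threshold $\varepsilon_{\rm under}(K)$ for part~(1), and (b) carrying out the Bernoulli--KL tail for $\mathrm{Bin}(n_1,\varepsilon)$ at the BIC scale for part~(2).

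For part~(1), I would start from $\{\widehat K_n = K\}\subset\{\mathrm{GDIC}_{n_1}(K)\le \mathrm{GDIC}_{n_1}(K_0)\}$, and since $p(K_0)-p(K)\ge 1$ the penalty gap on the right is $O(b_{n_1}/n_1)=o(1)$, so the event reduces to $\widehat{\mathcal R}_{n_1}(K_0)-\widehat{\mathcal R}_{n_1}(K)\ge -o(1)$. Next I import the contaminated--gap estimate $\Delta_\varepsilon(K)\ge \Delta_0(K)-2A_{\max}\varepsilon$ from Theorem~\ref{thm:gdic-robust-bounds}, whose linearity in $\varepsilon$ rests on the bounded--RAF perturbation $|D_\varepsilon(\bm\theta)-D(\bm\theta)|\le A_{\max}\varepsilon$ uniformly on $\bm\Theta_K$ (this is the Gâteaux derivative representation $\partial_g D_G[\eta-g]$ used in the Remark after Corollary~\ref{cor:Kexplicit}). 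Restricting to $\varepsilon<\varepsilon_{\rm under}(K)=\Delta_0(K)/(4A_{\max})$ leaves a strictly positive contaminated gap, at least $\Delta_0(K)/2$. A uniform LLN with Bernstein/Hoeffding concentration for $D_{1n}-D_\varepsilon$ on $\bm\Theta_K$, available under \textup{(C0)–(C3)} and the envelope $\mathrm{Env}(K)$, then converts the above event into the displayed exponential tail $\exp(-c_1 n_1[\Delta_0(K)-2A_{\max}\varepsilon]^2)$ up to the plug-in remainder $o_p(n_1^{-1/2})$.

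For part~(2), I would invoke Proposition~\ref{prop:gdic-bounded-overfit} directly: the inequality
\[
\mathrm{GDIC}_{n_1}(K)-\mathrm{GDIC}_{n_1}(K_0)\ \ge\ \frac{\nu_K b_{n_1}-A_{\max}\,\#SS_K}{n_1}-R_{n_1}
\]
shows that $\widehat K_n\ge K$ forces $\#SS_K\ge m_{\min}(K,n_1):=\lceil \nu_K b_{n_1}/A_{\max}\rceil$. Since the set $SS_K$ of selection--split points a $K$--only component can absorb is contained in the set of contaminated points within $\mathcal D_{1n}$, we have $\#SS_K\le X_{n_1}$, and hence $\mathbb P(\widehat K_n\ge K)\le \mathbb P(X_{n_1}\ge m_{\min}(K,n_1))+o(1)$, exactly as in Corollary~\ref{cor:gdic-overfit-tail}. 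With $b_{n_1}=\tfrac12\log n_1$ we get $m_{\min}(K,n_1)\asymp (\nu_K/2A_{\max})\log n_1$; for any fixed $\varepsilon<\varepsilon_{\rm over}(K)=\nu_K\log n_1/(2A_{\max}n_1)$ the Chernoff/Bernoulli--KL tail $\exp(-n_1\,\mathrm{kl}(m_{\min}/n_1\,\Vert\,\varepsilon))$ collapses to the claimed $\exp(-c_2 n_1)$ bound, with $c_2$ depending only on $\varepsilon,\nu_K,A_{\max}$. The "single extraneous point cannot force overestimation" conclusion follows because $m_{\min}(K,n_1)\to\infty$ under $b_{n_1}\to\infty$.

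The main obstacle I expect is the uniform control required in part~(1): converting the pointwise bound $|D_\varepsilon(\bm\theta)-D(\bm\theta)|\le A_{\max}\varepsilon$ and the plug-in rate $\|g_{\varepsilon,1n}-g_\varepsilon\|_{\mathcal H}=o_p(n_1^{-1/2})$ into a concentration inequality that is uniform over the non-identified parameter space $\bm\Theta_K$ for $K<K_0$, whose population minimum may be attained on the boundary (where several components merge or weights vanish). This requires an envelope/bracketing-entropy argument for the score class $\{s_{\bm\theta}:\bm\theta\in\bm\Theta_K\}$ parallel to the envelope $\mathrm{Env}(K)$ used in Remark~\ref{rem:Munif-d-vs-pK}; compactness and continuity of $D$ on $\bm\Theta_K$ under \textup{(C0)–(C3)} make the uniform LLN go through, after which the exponential tail is a standard Bernstein application. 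The rest of the argument is a direct reassembly of the finite--sample GDIC inequalities already proved.
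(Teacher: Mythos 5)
Your proposal is correct and follows essentially the same route as the paper's own proof: part (1) via the linear‑response bound $\Delta_\varepsilon(K)\ge\Delta_0(K)-2A_{\max}\varepsilon$ combined with a uniform Bernstein/Hoeffding concentration for the bounded‑RAF contrast on $\bm\Theta_K$, and part (2) via the per‑point leverage inequality of Proposition~\ref{prop:gdic-bounded-overfit} followed by the binomial Chernoff/Bernoulli--KL tail at the BIC scale. The uniformity issue over the non‑identified boundary of $\bm\Theta_K$ that you flag is indeed the delicate point, and the paper handles it at the same level of detail (an appeal to the ULLN and envelope bounds) as you propose.
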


\noindent \textbf{Remark (Unbounded RAFs).}
For generators with unbounded RAF (e.g. KL, HD), $A_{\max}=+\infty$ and the uniform bounds in Theorem~\ref{thm:gdic-breakdown-sample} do not apply. If a local density/score floor holds on the selection split (e.g. $q(y)\le (1+\Gamma)\,f(y;\bm\theta)$ and $\inf_{\bm\theta\in B_2(r;\bm\theta^\star)}f(y;\bm\theta)\ge c_*>0$ on the relevant neighborhood), the same conclusions hold with $A_{\max}$ replaced by $A_\Gamma:=\sup_{\delta\in[-1,\Gamma]}|A(\delta)|<\infty$. Without such a floor, a single extreme point can overturn a BIC penalty for KL, and no sample-level breakdown lower bound holds (see Supplement, Prop. S.GDIC-KL).

\noindent\textbf{Remark (Positive breakdown and per–point influence for GDIC--DM).}
On the selection split, each observation contributes $O(n_1^{-1})$ to the GDIC contrast
$D_{1n}(\cdot)$. For divergence generators with bounded RAF $A$ (e.g. NED, vNED), the
per–point decrement achievable by inserting an extra component is uniformly bounded by
$A_{\max}/n_1$, whereas the BIC–type penalty contributes $(\Delta p)\,b_{n_1}/n_1$ with
$\Delta p = p(K) - p(K_0) \ge 1$. Thus, a single contaminated point cannot overturn the
penalty once $n_1$ is large; more generally, at least a nonvanishing fraction of contaminated points is required to force overestimation, as quantified in Theorem~\ref{thm:gdic-breakdown-sample}. In particular, GDIC--DM with bounded RAF enjoys a strictly positive sample–level breakdown bound, in sharp contrast to KL–based EM and BIC, whose unbounded RAF allows a single extreme point to destroy any uniform breakdown lower bound.

Our next result is the stable post-selection limit distribution of the minimum divergence estimator.

\begin{thm}[Stable post-selection CLT]\label{thm:stable-post}
Let $\mathcal D_{1n}$ and $\mathcal D_{2n}$ be independent splits with $|\mathcal D_{2n}|=n_2$.
Let $\widehat K_n=\widehat K_n(\mathcal D_{1n})$ be the order selected on $\mathcal D_{1n}$ (e.g., GDIC),
and let $\overline{\bm\theta}_{n_2}=\overline{\bm\theta}_{n_2}(\mathcal D_{2n};\widehat K_n)$ denote the
dimension-matched MDE on $\mathcal D_{2n}$ at order $\widehat K_n$.
Assume that model is correctly specified at $K_0$; that is, $g=f_{\bm\theta^\star}$ for some $\bm\theta^\star\in\bm{\Theta}_{K_0}$ and $A'(0)=1$. Assume that $\|g_{n_2}-g\|_{\mathcal H}=o_p(n_2^{-1/2})$ and that the conditions used for the fixed-order plug-in CLT hold at $K_0$; that is, conditions (F1), (C1)–(C2), (K1)–(K2), (M1)–(M8) hold. Let $\widehat{K}_n$ be a consistent estimator of $K_0$. Then conditionally on $\mathcal{D}_{1n}$,
\[
\sqrt{n_2}\,\big\{\overline{\bm\theta}_{n_2}-\bm\theta^\star(K_0)\big\}
\ \Rightarrow\ \mathcal N\!\big(0,\,I(\bm\theta^\star)^{-1}\big)\quad\text{in probability},
\]
and hence the same convergence holds unconditionally.
\end{thm}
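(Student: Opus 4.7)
The plan is to reduce the post-selection CLT to the fixed-order plug-in CLT by exploiting two structural facts: (i) the splits $\mathcal D_{1n}$ and $\mathcal D_{2n}$ are independent, so conditioning on $\mathcal D_{1n}$ merely freezes the value of $\widehat K_n$ without affecting the estimation sample; and (ii) consistency $\mathbb P(\widehat K_n = K_0)\to 1$ confines the analysis to a high-probability event on which the dimension-matching convention is trivial. The payoff is that, once restricted to this event, the target $\overline{\bm\theta}_{n_2}$ literally equals the fixed-order MDE at the true order, and Theorem~\ref{thm:plugin-post}(a) applies verbatim.

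First, I would restrict attention to the event $E_n := \{\widehat K_n = K_0\}$. On $E_n$, the dimension-matching definitions give $\overline{\bm\theta}_{n_2} = \widehat{\bm\theta}_{n_2}$ (the unconstrained MDE in $\bm\Theta_{K_0}$) and $\bm\theta^\star(K_0) = \bm\theta^\star$, so
\[
\sqrt{n_2}\bigl\{\overline{\bm\theta}_{n_2}-\bm\theta^\star(K_0)\bigr\}
\;=\;\sqrt{n_2}\bigl\{\widehat{\bm\theta}_{n_2}-\bm\theta^\star\bigr\}\quad\text{on }E_n.
\]
Since $\widehat K_n$ is $\sigma(\mathcal D_{1n})$-measurable and $\mathcal D_{2n}$ is independent of $\mathcal D_{1n}$, the conditional law of $\sqrt{n_2}(\widehat{\bm\theta}_{n_2}-\bm\theta^\star)$ given $\mathcal D_{1n}$ is the unconditional law at the (fixed) order $K_0$.

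Next, I would invoke Theorem~\ref{thm:plugin-post}(a) at order $K_0$: under correct specification, $A'(0)=1$, $\|g_{n_2}-g\|_{\mathcal H}=o_p(n_2^{-1/2})$, and the regularity package (F1), (C1)--(C2), (K1)--(K2), (M1)--(M8), one has $\sqrt{n_2}(\widehat{\bm\theta}_{n_2}-\bm\theta^\star)\Rightarrow Z\sim \mathcal N(0,I(\bm\theta^\star)^{-1})$. For any bounded continuous $\varphi$, the $\mathcal D_{1n}$-conditional expectation decomposes as
\[
\mathbb E\!\left[\varphi\!\bigl(\sqrt{n_2}\{\overline{\bm\theta}_{n_2}-\bm\theta^\star(K_0)\}\bigr)\,\big|\,\mathcal D_{1n}\right]
\;=\;\mathbf 1_{E_n}\,a_{n_2}\;+\;\mathbf 1_{E_n^c}\,b_n ,
\]
with $a_{n_2}:=\mathbb E[\varphi(\sqrt{n_2}(\widehat{\bm\theta}_{n_2}-\bm\theta^\star))]\to \mathbb E[\varphi(Z)]$ deterministically and $|b_n|\le \|\varphi\|_\infty$. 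Since $\mathbf 1_{E_n^c}\xrightarrow{p}0$ by consistency, Slutsky's lemma yields $\mathbb E[\varphi(\cdot)\mid\mathcal D_{1n}]\xrightarrow{p}\mathbb E[\varphi(Z)]$, which is precisely conditional weak convergence in probability. The unconditional statement then follows by dominated convergence after taking expectations.

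The main obstacle is essentially bookkeeping rather than analytical. On $E_n^c$ the vectors $\overline{\bm\theta}_{n_2}$ and $\bm\theta^\star(K_0)$ may reside in different ambient spaces under the padding convention, but this contributes only through bounded test functions on an event of vanishing probability, so no pathwise control is required there; one only needs measurability. A subtler point is that the reduction on $E_n$ exploits \emph{exact} independence of the splits---with mere exchangeability or weak dependence one would need stable-convergence machinery rather than a single Slutsky step. Finally, I would verify that the constrained MDE $\widehat{\bm\theta}_{n_2}\in\bm\Theta_{K_0}$ on the estimation split coincides with the MDE in Theorem~\ref{thm:plugin-post}(a); this is immediate from the definitions, as both minimize $D(g_{n_2},f_{\bm\theta})$ over $\bm\Theta_{K_0}$.
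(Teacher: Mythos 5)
Your proposal is correct and follows essentially the same route as the paper's proof: condition on $\mathcal D_{1n}$, use independence of the splits to reduce to the fixed-order plug-in CLT (Theorem~\ref{thm:plugin-post}(a)) at $K_0$, and absorb the event $\{\widehat K_n \neq K_0\}$ via consistency of the selector and Slutsky. Your test-function formalization of ``conditional weak convergence in probability'' is somewhat more explicit than the paper's, but the underlying argument is identical.
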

\begin{rem}[Repeated splits / majority vote]
In practice, we may repeat the split $C$ times and aggregate $\widehat K_n$ by majority vote; if each split’s selector is consistent, the aggregated selector is also consistent, and Theorem~\ref{thm:gdic-consistency} continues to hold.
\end{rem}

\section{Numerical Experiments}\label{sec:simulation}

We evaluate DM instantiations with Hellinger (HD), and vNED divergences on synthetic finite mixtures (Poisson, Poisson--Gamma/negative binomial, Poisson--lognormal), focusing on contamination and model selection. We use 5000 Monte Carlo repetitions. Full experimental details, additional models (Poisson, Poisson-lognormal), and runtime comparisons are provided in the Supplementary materials \ref{S-add-simulations}. We first demonstrate robustness with known $K$ and then evaluate the full split-select-estimate pipeline with unknown $K$. In the following tables, 'Ave' represents the average value of the estimates and 'StD' denotes the standard deviation." All methods are initialized using k-means clustering.



\subsection{Robustness Simulation}

\noindent \textbf{PG Mixture}

We simulate a two-component PG mixture ($K=2$) and true parameters
$(\pi_1,\alpha_1,\beta_1,\alpha_2,\beta_2)=(0.3,10,1,1,2)$ (details in supplementary materials \ref{S-add-simulations}). We inject point-mass contamination by replacing an $\ep-$ fraction of values with the value 50. Figure \ref{fig:pg_outlier} plots the average estimates versus $\epsilon$.


\begin{figure}[H]
    \centering
    \includegraphics[width=0.8\textwidth]{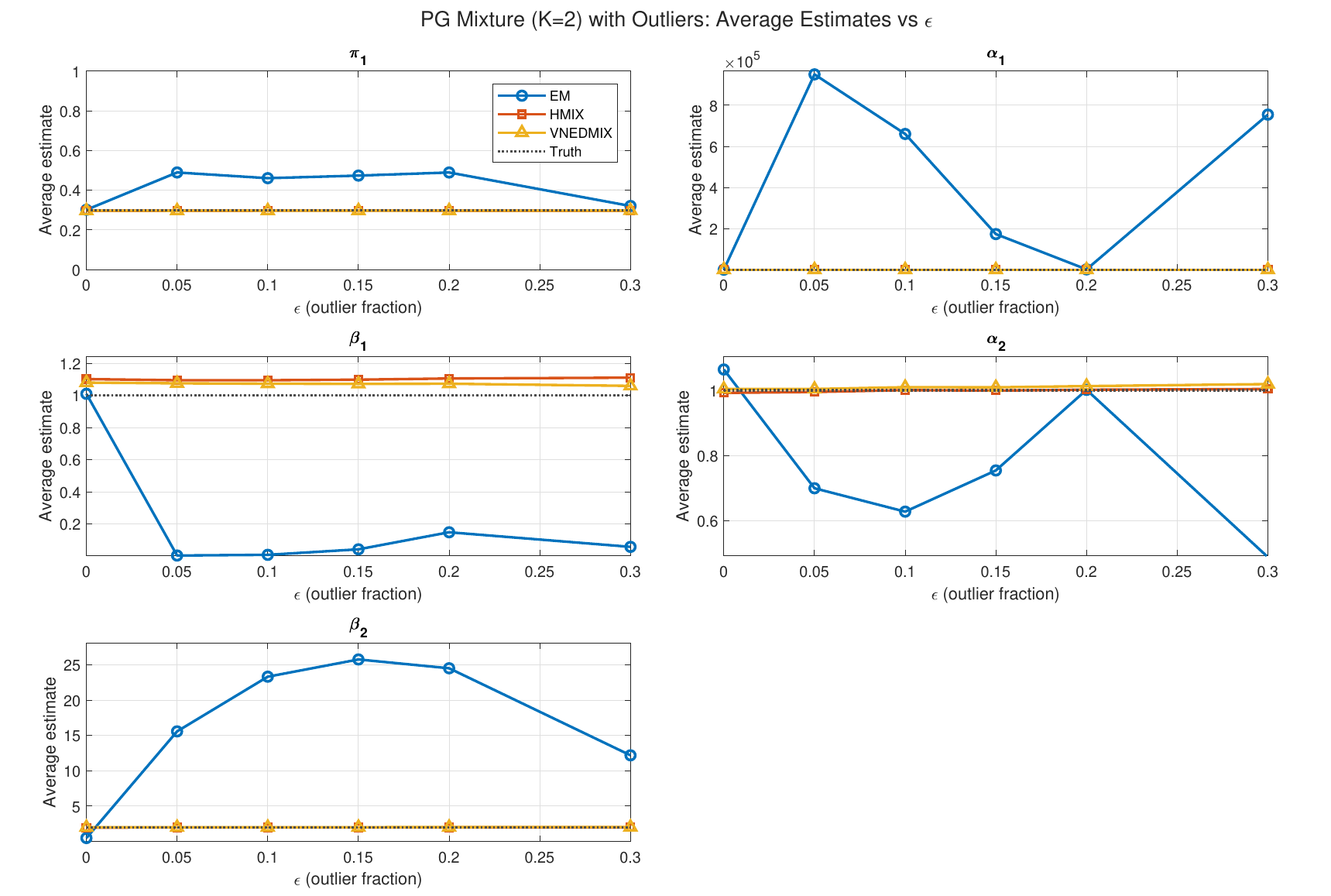}
    \caption{\small Average parameter estimates versus contamination level $\epsilon$ in a two–component PG mixture with known $K=2$.}
    \label{fig:pg_outlier}
\end{figure}
As expected of likelihood-based methods, EM is sensitive to large injected counts and its averages drift with $\epsilon$.
HMIX and vNEDMIX, which replace the likelihood by divergences against the empirical distribution, down-weight tail mismatches and remain closer to truth over a wider range of $\epsilon$. 
For readability, $y$-axes are trimmed using robust quantiles so occasional non-convergent runs at high contamination do not distort the display.


\subsection{The number of mixtures \texorpdfstring{$K$}{K} is unknown}\label{chap5:subS:K-unknown}



We now estimate both the number of components and the parameters via split–select–estimate with repeated splits.
Let $\mathcal{D}_{1n}$ and $\mathcal{D}_{2n}$ be an even random split of the sample of sizes $n_1$ and $n_2$. We adopt the following four-step process:
\begin{enumerate}[leftmargin=1.5em,itemsep=0.25em]
\item \emph{Select $K$ on $\mathcal{D}_{1n}$:} compute $\widehat K_n$ by GDIC with penalty $b_{n_1}=\tfrac12\log n_1$.
\item \emph{Estimate on $\mathcal{D}_{2n}$:} fit the MDE at order $\widehat K_n$ to obtain $\bm{\widehat\theta}$.
\item \emph{Repeat $C$ times:} take a majority vote over $\widehat K_n^{(1)},\dots,\widehat K_n^{(C)}$; 
average parameters across runs that select the voted order.
\item \emph{Repeat for $R$ Monte Carlo runs} and report Ave/StD/MSE (we use $R=5{,}000$ in the tables).
\end{enumerate}
All results in this subsection use the empirical kernel for $g_n(\cdot)$.


\begin{table}[H]
    \scriptsize
    \centering
    \captionsetup{font=small}
    \caption{Parameter Estimation with Unknown $K$ with Data Splitting Method}
    \label{PG_m1_66}
    \begin{tabular}{ccccccc}
        \toprule
        PG model &     & $\hat{\pi}_1$ & $\hat{\alpha}_1$ & $\hat{\beta}_1$ & $\hat{\alpha}_2$ & $\hat{\beta}_2$ \\ \hline
        \multirow{2}{*}{EM ($100.0\%$)} 
            & Ave & 0.297 & 10.62 & 1.054 & 0.937 & 1.826 \\ \cline{2-7}
            & StD & 0.004 & 0.675  & 0.063 & 0.078 & 0.192 \\ \cline{2-7}
            \hline
        \multirow{2}{*}{HMIX ($98.2\%$)}  
            & Ave & 0.298 & 10.61 & 1.060 & 0.988 & 1.962 \\ \cline{2-7}
            & StD & 0.004 & 0.692  & 0.065 & 0.083 & 0.205 \\ \cline{2-7}
            \hline
        \multirow{2}{*}{VNEDMIX ($86.2\%$)} 
            & Ave & 0.298 & 10.48 & 1.048 & 0.996 & 1.986 \\ \cline{2-7}
            & StD & 0.004 & 0.698  & 0.066 & 0.084 & 0.209 \\ 
             \bottomrule
    \end{tabular}
\end{table}


Table~\ref{PG_m1_66} summarizes a two-component PG mixture with truth $(\pi_1,\alpha_1,\beta_1,\alpha_2,\beta_2)=(0.3,10,1,1,2)$ using $C=5$ repeats in the split–select–estimate pipeline.
For each method (EM, HMIX, VNEDMIX) we first identify $K$ and then report Ave/StD computed \emph{only} on datasets where that method correctly identified $K=2$.
All three methods are essentially unbiased for component means. For example, the implied means $\alpha_1/\beta_1$ are $\approx 10.08$ (EM), $10.01$ (HMIX), and $10.00$ (VNEDMIX) versus the truth $10$, and for component~2 they are approximately $0.51$, $0.50$, and $0.50$ versus the truth $0.5$. Dispersion parameters are close to the truth with small variability.
When $K$ is correctly identified, EM, HMIX, and VNEDMIX produce comparably accurate estimates in this PG setting.

\section{Data Analysis: Image Segmentation}\label{sec:case}

In this section, we apply our method to the image segmentation problem. 
Our methods will be tested on the Lena image
(see original image \ref{S-ori:image_b}) which contains $770\times 776$ pixels. The gray-scale intensity values of the images range from 0 to 255 (all are integer-valued). We treat each pixel as one data point and fit a three-component Poisson mixture model for both images and apply EM algorithm, HMIX algorithm, and VNEDMIX algorithm.
The empirical density estimate is used as the non-parametric density estimate. The gray-scale intensity values are labeled as 1, 100, and 200 to represent three classes. 


\textbf{Robust Image Recovery}

We study the robustness property of the proposed method. 
Specifically, we generate contaminated pixel data following the Poisson distribution with mean $250$ with probability 0.3; if the generated value exceeds 255, then set it as 255 (since the maximum value for color is 255 under this format). We fit a three-component Poisson mixture model using EM algorithm, HMIX algorithm, and VNEDMIX algorithm. From Figure \ref{fig:lena-orig-hmix},
we observe that the EM algorithm captures a lot of ``noise'' added to the image (the face has a lot of shadow area). 
In comparison, the HMIX algorithm and VNEDMIX algorithm are able to ignore the ``outliers'' well (the EM and VNEDMIX algorithm recovered figure are shown in supplementary material \ref{S-em_a:poiss} and \ref{S-vNEDmix_a:poiss} respectively); in particular, the face contours are captured better. Furthermore, the point estimates based on the EM algorithm are highly affected while the other two algorithms give similar results to those from uncorrupted image. 

One may argue that once we add $30\%$ outliers, these outliers may be treated as another component. However, even if we fit a four-component Poisson mixture model and apply EM algorithm, it still produces as much shadow similar to using three-component Poisson mixture (image not displayed here). In contrast, HMIX and VNEDMIX will potentially eliminate this additional component and produce more ``reasonable'' image. Additional image analysis including original image recovery, model selection tables can be found in supplementary analysis \ref{S-supp:add-image-ana}.

\begin{figure}[H]
  \centering
  \captionsetup{font=footnotesize}
  \newcommand{\panelw}{0.47\linewidth} 

  \begin{subfigure}{\panelw}
    \centering
    \includegraphics[width=\linewidth,keepaspectratio]{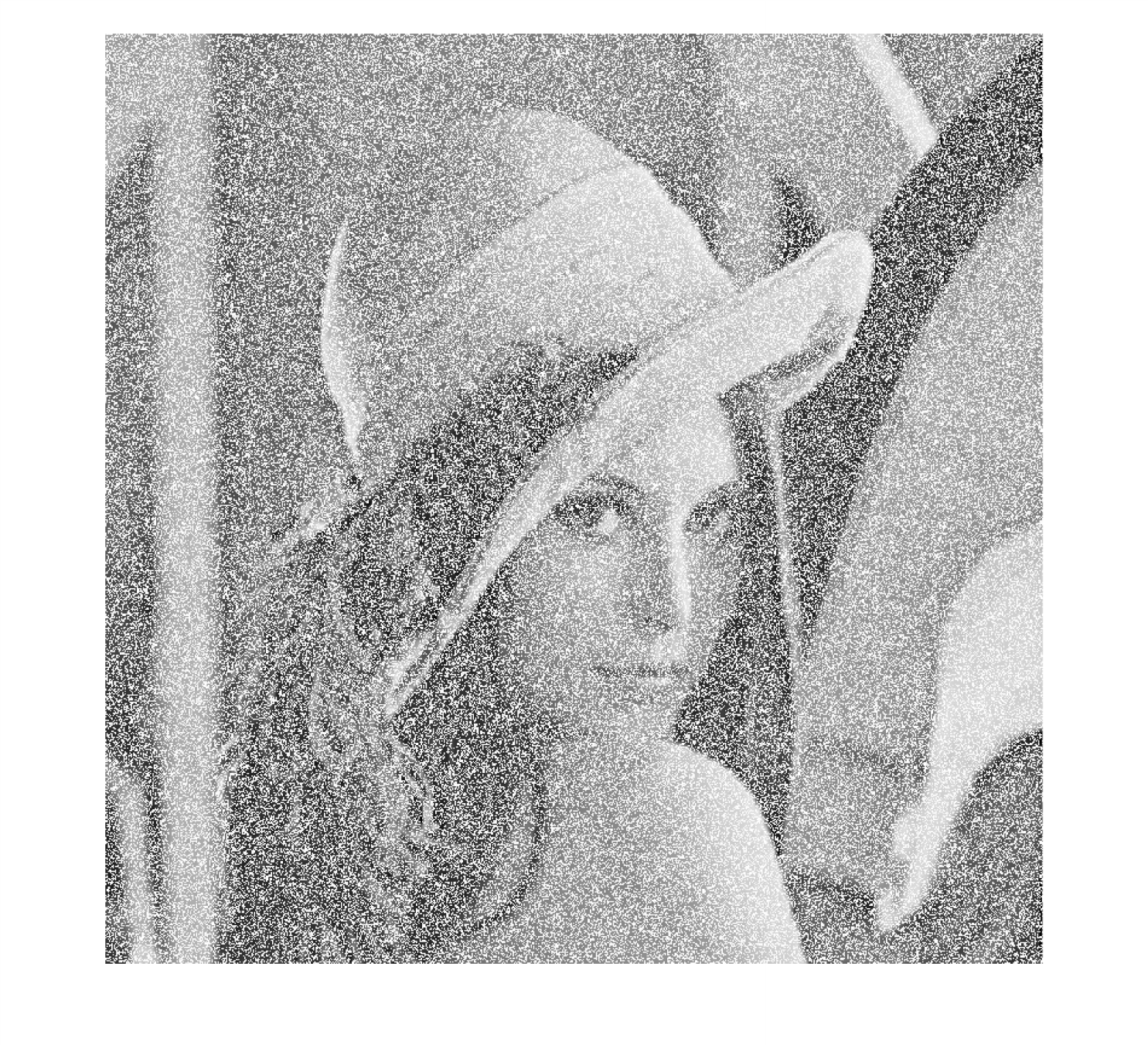}
    \caption{Corrupted image for Lena}
    \label{corrupted_c}
  \end{subfigure}
  \hfill
  \begin{subfigure}{\panelw}
    \centering
    \includegraphics[width=\linewidth,keepaspectratio]{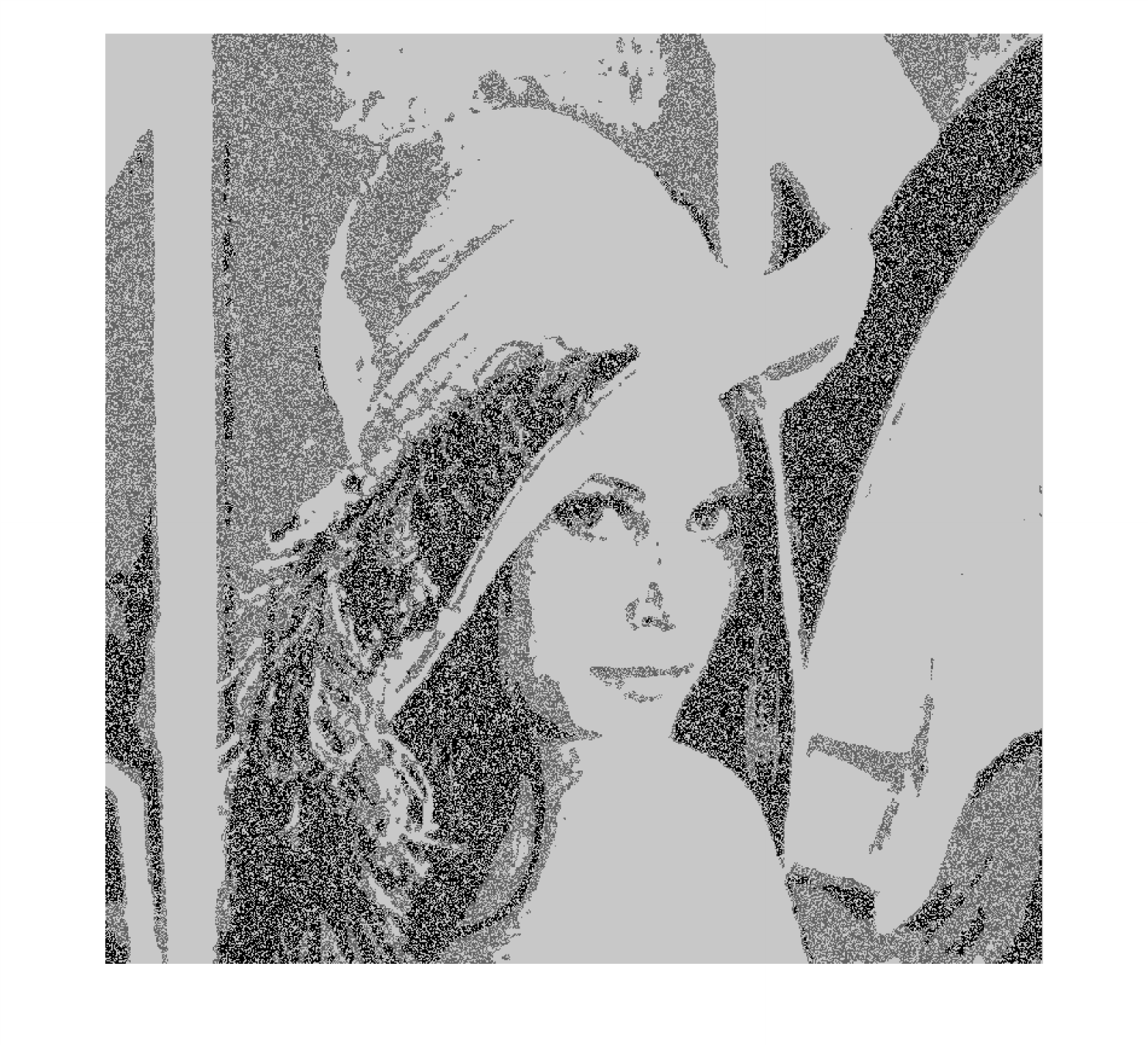}
    \caption{Recovered image from HMIX algorithm}
    \label{fig:hmix_recon_lena}
  \end{subfigure}

  \caption{Lena image reconstruction after adding $30\%$ outliers.}
  \label{fig:lena-orig-hmix}
\end{figure}


\addtolength{\textheight}{-.2in}%

\section{Concluding Remarks}\label{Chap5:S:Conclusion}

We developed a divergence–minimization (DM) framework for models with latent structure that unifies classical algorithms (EM, HMIX, HELMIX) and yields robust, likelihood–free updates through a single operator. At the algorithmic level, we established a majorization–separation identity that links a complete–data surrogate to the observed–data divergence, and used it to prove monotone descent at the sample level together with global stationarity of limit points. At the population level, we provided a local contraction result under strong convexity and first–order stability, and derived a noisy-contraction bound for the sample operator.

For model selection, we proposed a divergence-based criterion (GDIC) with repeated sample-splitting and showed that it consistently recovers the mixture order under mild conditions; the post-selection MDE enjoys the usual $\sqrt{n}$-asymptotic normality. On the empirical side, we studied discrete-kernel estimators of $g_n$ in finite mixtures of counts, quantified bandwidth effects, and found that simple triangular kernels are competitive at small sample sizes. Finally, synthetic experiments and image-segmentation case studies demonstrate that HD/NED/vNED instantiations of DM are competitive with EM in well-specified regimes and deliver markedly greater stability under contamination and misspecification.

{\bf{Future work.}}
Natural extensions of our work include (i) scalable DM for high-dimensional latent structures (sparse or low-rank components), (ii) semiparametric DM with estimated nuisance $g_n$ under weaker smoothness and discrete-support constraints, (iii) selection consistency beyond fixed $K_{\max}$ (growing-model regimes), and (iv) tighter nonasymptotic guarantees for repeated sample-splitting and majority-vote selection.

\section{Disclosure statement}\label{disclosure-statement}
The authors have no conflict of interest.

\newpage





\bigskip

\begin{center}

{\large\bf SUPPLEMENTARY MATERIAL}\label{supplementary-material}

\end{center}


\section*{A: Background and Literature Review}

Finite mixture models (FMM), a class of models for data with latent structure, have gained increasing attention for their flexibility in capturing multiple modes and unobserved heterogeneity. They are applied across diverse fields including astronomy, social sciences, biology, engineering, and medicine, and more recently have been adopted in neural network and deep learning research. Since the pioneering work of \cite{pea94} for mixtures of univariate normal distributions, FMM have been extended to a general class of distributions. Turning to fitting the models, starting with the work of \cite{rao48} who used Fisher's method of scoring, the field has evolved into the routine use of EM algorithm and its variants as in the seminal paper of \cite{Dem77} (See also \cite{gan78}, \cite{gan79a}, \cite{gan79}, \cite{gan80}, \cite{oneil78}, and \cite{ait80} for more details). 

As is well known, the EM algorithm introduces a latent label for each observation, representing group membership, and treats it as missing data. This facilitates the formulation of ``complete data'' (missing and observed data) log-likelihood function, under the assumption of independence. The Expectation step (E-step) computes the conditional expectation of the complete-data log-likelihood given the current parameter estimates and observed data.
The Maximization step (M-step) then updates the parameters by maximizing this expected log-likelihood. These steps are iterated until convergence. Regarding its convergence properties,
\cite{Dem77} established that the log-likelihood function is non-decreasing after each iteration and \cite{Wu83} provided regularity conditions under which the limit points of EM algorithm are stationary points of the likelihood function. Further convergence properties are classical; see \cite{Wu83} and \cite{Vai05}. Acceleration and variants are surveyed in \citet{lou82,Mcl95a, Paul04}.


Despite the popularity of EM algorithm, it is well known that it has several limitations. First, the EM algorithm may converge slowly, and the situation is worse when the ``incomplete information'' dominates the likelihood. Various methods and modifications have been tried to improve the speed of convergence, such as  Aitken's acceleration method (see \cite{Mcl95a}), Louis' method (see \cite{lou82} ), Conjugate Gradient method (see \cite{jam93}), and EM Gradient algorithm (see \cite{lange95a}). 
Second, it is not stable as it may often converge to the local optima. 
This problem is worse for contaminated data (see \cite{Cut96}). To address the issue, \cite{Hu17} proposed a robust EM-type algorithm for log-concave mixtures regression models, where they use the trimmed least squares technique (see \cite{Rou85}) to achieve the robustness properties. However, the performance is not satisfactory when the percentage of outliers increase. Additionally, the use of trimmed least squares lead to loss of efficiency at the model.
We address these challenges using the divergence method. 

Divergence-based methods (see \cite{Lind94} and \cite{Basu94}) are appealing for parametric inference when the model is misspecified or when the data are contaminated. These methods have the property that they are first-order efficient when the model is correctly specified and are robust under model misspecification and the presence of outliers.
Starting with the work of \cite{Ber77}, who proposed minimum Hellinger distance (MHD) estimation for independent identically distributed (i.i.d.) data, extensions and variations of the theme have been studied, for example, in \cite{sta81}, \cite{Dono88}, \cite{esl91}, \cite{basuha94},\cite{basusra97}, \cite{ACAV06}, \cite{Simp87}, \cite{Simp89}, and \cite{LiAV19}.
Turning to mixture models, \cite{Woo95} considered two- component normal mixture model and proposed to use MHD method to estimate the mixing proportion. \cite{Cut96} estimated all the unknown parameters using MHD method for normal mixture by the use of the so called HMIX algorithm. 
Note that the genesis of the HMIX algorithm and its connection to the EM algorithm are unknown. For discrete data, \cite{karlis98} considered the MHD estimation for Poisson mixtures and used the so called HELMIX algorithm, a variant of the HMIX algorithm. The HELMIX algorithm is specialized  for Poisson models as it utilizes a recurrence relation for Poisson probabilities that do not generalize to other distributions. More recent references regarding the robust EM algorithm and divergences 
are referred \cite{nielsen2016}, \cite{qin2013}, \cite{sam2019}, \cite{hu2019EMseg}, \cite{lucke2019k}, \cite{zhao2020}. Minimum-divergence (disparity) methods remain first-order efficient at the model yet damp the effect of large residuals via the RAF, which motivates our DM operator in Section~2 of the main text.

\newpage 

\section*{B: Plot of Residual Adjustment Function (RAF)}

\begin{figure}[H] 
    \centering
    \includegraphics[width=0.65\linewidth]{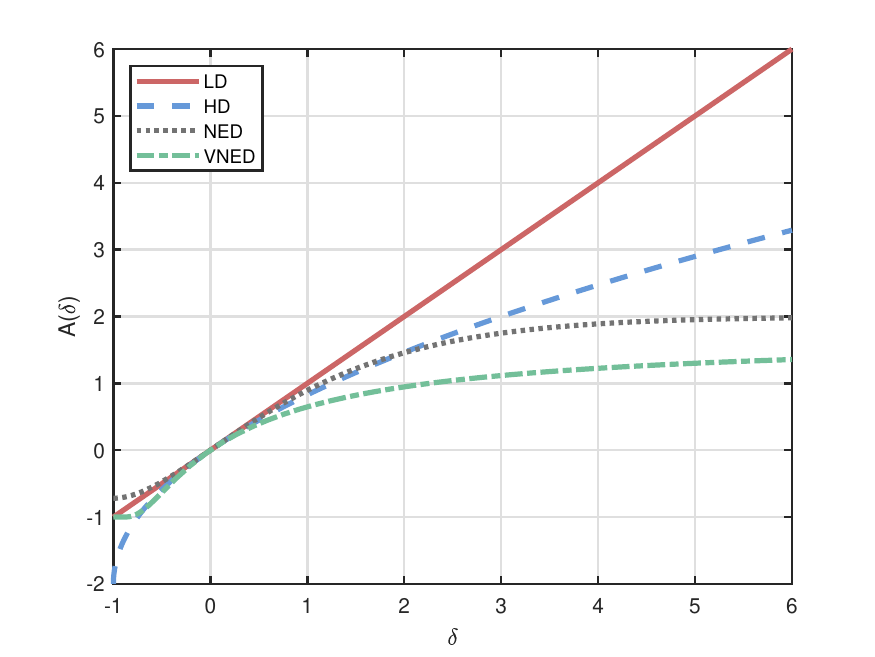}
    \caption{Plot of Residual Adjustment Function $A(\delta)$ for LD, HD, NED, and vNED} 
    \label{RAF} 
\end{figure}

\newpage 

\section*{C: Derivation of updating mixing probability in (\ref{Update-Pi11})}\label{supp:update-pi}

\begin{proof}[Proof of the update $\pi$]
Recall
\[
Q(\bm{\theta}'\mid\bm{\theta})
=\sum_{k=1}^K \int_{\mathcal{Y}}
\pi_k'\,h(y;\bm{\phi}_k')\,
G\!\left(\tau_k(y)\right)\,dy,
\qquad
\tau_k(y):=\frac{g(y)\,w_k(y;\bm{\theta})}{\pi_k'\,h(y;\bm{\phi}_k')},
\]
where \(w_k(y;\bm{\theta})=\dfrac{\pi_k h(y;\bm{\phi}_k)}{\sum_{\ell=1}^K \pi_\ell h(y;\bm{\phi}_\ell)}\)
and the mixing weights satisfy \(\pi_k'>0\) and \(\sum_{k=1}^K \pi_k'=1\).
For brevity write \(h_k'(y):=h(y;\bm{\phi}_k')\) and \(\tau_k=\tau_k(y)\).

We minimize \(Q(\bm{\theta}'\mid\bm{\theta})\) over \(\bm{\pi}'\) on the simplex via the Lagrangian
\[
\mathcal{L}(\bm{\pi}',\lambda)=Q(\bm{\theta}'\mid\bm{\theta})+\lambda\!\left(\sum_{k=1}^K \pi_k' - 1\right).
\]

\textbf{Step 1: Gradient with respect to \(\pi_k'\).}
For the integrand \(\pi_k' h_k'(y) G(\tau_k)\) with \(\tau_k=(g\,w_k)/(\pi_k' h_k')\), we have
\[
\frac{\partial}{\partial \pi_k'}\big[\pi_k' h_k' G(\tau_k)\big]
= h_k' G(\tau_k)+\pi_k' h_k' G'(\tau_k)\,\frac{\partial \tau_k}{\partial \pi_k'}
= h_k'\Big\{G(\tau_k)-\tau_k\,G'(\tau_k)\Big\},
\]
because \(\dfrac{\partial \tau_k}{\partial \pi_k'}=-\dfrac{\tau_k}{\pi_k'}\).
Therefore
\[
\frac{\partial \mathcal{L}}{\partial \pi_k'}
= \int_{\mathcal{Y}} h_k'(y)\Big\{G(\tau_k(y))-\tau_k(y)\,G'(\tau_k(y))\Big\}\,dy + \lambda.
\]
Define
\[
\Xi_k(\bm{\pi}',\bm{\phi}') \;:=\; \int_{\mathcal{Y}} h_k'(y)\Big\{G(\tau_k(y))-\tau_k(y)\,G'(\tau_k(y))\Big\}\,dy.
\]
\noindent\textbf{Abbreviation.}\quad $B(u):=G(u)-uG'(u)\,.$
\quad\text{Then}\quad
$\frac{\partial Q^{(k)}(\theta'\mid\theta)}{\partial \pi'_k}
=\int h'_k(y)\,B(\tau_k(y))\,dy\,.$

Stationarity (KKT) gives, for all \(k\),
\begin{equation}\label{eq:Xi-const}
\Xi_k(\bm{\pi}',\bm{\phi}') \;=\; -\,\lambda.
\end{equation}

\textbf{Step 2: Product identity and compact reparametrization.}
Note that \(\pi_k' h_k'(y)\,\tau_k(y)\equiv g(y)\,w_k(y;\bm{\theta})\).
Multiplying \eqref{eq:Xi-const} by \(\pi_k'\) and using this identity yields
\[
\pi_k'\,\Xi_k(\bm{\pi}',\bm{\phi}')
=\int_{\mathcal{Y}} \Big\{\pi_k' h_k'(y)\,G(\tau_k(y))
- g(y)\,w_k(y;\bm{\theta})\,G'(\tau_k(y))\Big\}dy.
\]
Introduce
\[
\Phi_k(\bm{\pi}',\bm{\phi}')
\;:=\; \int_{\mathcal{Y}} \Big\{\pi_k' h_k'(y)\,G(\tau_k(y))
- g(y)\,w_k(y;\bm{\theta})\,G'(\tau_k(y))\Big\}dy,
\]
so that
\begin{equation}\label{eq:Psi-piXi}
\Phi_k(\bm{\pi}',\bm{\phi}') \;=\; \pi_k'\,\Xi_k(\bm{\pi}',\bm{\phi}') \;=\; -\,\lambda\,\pi_k'.
\end{equation}
Moreover, since \(Q_k(\bm{\theta}'\mid\bm{\theta})=\int_{\mathcal{Y}}\pi_k' h_k'(y) G(\tau_k(y))\,dy\),
we have
\[
\frac{\partial Q_k(\bm{\theta}'\mid\bm{\theta})}{\partial \pi_k'} = \Xi_k(\bm{\pi}',\bm{\phi}'),
\qquad\text{hence}\qquad
\Phi_k(\bm{\pi}',\bm{\phi}')=\pi_k'\,\frac{\partial Q_k(\bm{\theta}'\mid\bm{\theta})}{\partial \pi_k'}.
\]

\textbf{Step 3: Normalization and the update.}
Summing \eqref{eq:Psi-piXi} over \(k\) and using \(\sum_k \pi_k'=1\) gives
\[
\sum_{k=1}^K \Phi_k(\bm{\pi}',\bm{\phi}') = -\,\lambda.
\]
Therefore, for each \(k\),
\[
\pi_k'
=\frac{\Phi_k(\bm{\pi}',\bm{\phi}')}{\sum_{\ell=1}^K \Phi_\ell(\bm{\pi}',\bm{\phi}')}
=\frac{\displaystyle \pi_k'\,\frac{\partial Q_k(\bm{\theta}'\mid\bm{\theta})}{\partial \pi_k'}}
{\displaystyle \sum_{\ell=1}^K \pi_\ell'\,\frac{\partial Q_\ell(\bm{\theta}'\mid\bm{\theta})}{\partial \pi_\ell'}}.
\]
By construction, \(\sum_k \pi_k'=1\). The integrability and finiteness of \(\Phi_k\) follow from
the stated regularity conditions.
\end{proof}

\newpage

\section*{D: Relation of DM algorithm with other popular algorithms}

\textbf{1. Relation with the Proximal Point Algorithm}

\cite{Mar70}, \cite{Roc76a} and \cite{Roc76b} proposed an iterative algorithm which is referred to as the proximal point algorithm and can be described as
\begin{align*}
\bm{\theta}_{m+1} = \underset{\bm{\theta}' \in \bm{\Theta}}{\text{argmax}}\left\{\Psi(\bm{\theta}') - \frac{\bm{\beta}_m}{2}||\bm{\theta}' - \bm{\theta}_m  ||_2^2  \right\},
\end{align*}
where $\Psi:\bm{\Theta} \to \Real$ and
the quadratic penalty is relaxed using a positive sequence $\{\bm{\beta}_m \}$.
\cite{Paul04} introduced the entropy-like proximal point algorithm as
\begin{align*}
\bm{\theta}_{m+1} = \underset{\bm{\theta}' \in \bm{\Theta}}{\text{argmax}}\left\{\Psi(\bm{\theta}') - H(\bm{\theta}'|\bm{\theta}_m) \right\},
\end{align*}
where $H: \bm{\Theta}\times \bm{\Theta}  \to \Real_+$ satisfies $H(\bm{\theta}'|\bm{\theta}') = 0$ for all $\bm{\theta}' \in \bm{\Theta}$.
It is known that the EM algorithm can be considered as an example of proximal point algorithm (see \cite{Stph00}), and other extensions of proximal point algorithm are referred to \cite{Paul04}, \cite{Stph08}, \cite{Cun10}, and \cite{Moh16}.

The DM algorithm can also be treated as proximal point method by letting $\Psi(\bm{\theta}') = -D(\bm{\theta}')$ and $H(\bm{\theta}'|\bm{\theta}) = -D(\bm{\theta}') + Q(\bm{\theta}'|\bm{\theta})$. 

We now provide another relation via some examples between DM algorithm and the proximal point algorithm. In particular, we can view $\Psi(\bm{\theta}')$ as the divergence information from the observed data and treat $H(\bm{\theta}'|\bm{\theta}_m)$ as the divergence information for the latent variable. 
For example, for EM objective function, note that,
\begin{align}
Q_{\text{\tiny{KL}}}(\bm{\theta}'|\bm{\theta}) &= \nonumber - \int_{\mathcal{Y}} \int_{\mathcal{Z}} g(y) w(z|y;\bm{\theta}) \log \left(\frac{p(y, z;\bm{\theta}') }{g(y) w(z|y;\bm{\theta}) } \right) dz dy \\
&= \nonumber - \int_{\mathcal{Y}}  \log \left( \frac{f(y;\bm{\theta}')}{g(y)}  \right) g(y) dy - \int_{\mathcal{Y}} \left( \int_{\mathcal{Z}} w(z|y;\bm{\theta}) \log \left( \frac{w(Z|Y;\bm{\theta}')}{w(z|y;\bm{\theta})}  \right) dz \right) g(y) dy. 
\end{align}
Let 
\begin{align}
\Psi_{\text{\tiny{EM}}}(\bm{\theta}') &= -\int_{\mathcal{Y}}  \log \left( \frac{f(y;\bm{\theta}')}{g(y)}  \right) g(y) dy \quad \label{EM-Proximal-1} \text{and} \\
H_{\text{\tiny{EM}}}(\bm{\theta}'|\bm{\theta}) &= \int_{\mathcal{Y}} \left( \int_{\mathcal{Z}} w(z|y;\bm{\theta}) \log \left( \frac{w(Z|Y;\bm{\theta}')}{w(z|y;\bm{\theta})}  \right) dz \right) g(y) dy, \label{EM-Proximal-2}
\end{align}
then it becomes the proximal point algorithm. More importantly, the benefit for splitting $Q_{\text{\tiny{KL}}}(\bm{\theta}'|\bm{\theta})$ into $\Psi_{\text{\tiny{EM}}}(\bm{\theta}')$ and $H_{\text{\tiny{EM}}}(\bm{\theta}'|\bm{\theta})$ is that $Q_{\text{\tiny{KL}}}(\bm{\theta}'|\bm{\theta})$ is considered as the Kullback-Leibler divergence for complete data, $\Psi_{\text{\tiny{EM}}}(\bm{\theta}')$ is exactly the Kullback-Leibler divergence for observed data, and $H_{\text{\tiny{EM}}}(\bm{\theta}'|\bm{\theta})$ can be viewed as the Kullback-Leibler divergence for the latent variable given observed data (and parameters in previous step). 
As another example, taking $\Psi_{\text{\tiny{EM}}}(\bm{\theta}')$ and $H_{\text{\tiny{EM}}}(\bm{\theta}'|\bm{\theta})$ as (\ref{EM-Proximal-1}) and (\ref{EM-Proximal-2}) respectively, we have that 
\begin{align}
\Psi_{\text{\tiny{EM}}}(\bm{\theta}') &\approx 2 \int_{\mathcal{Y}} \left( g^{\frac{1}{2}}(y) - f^{\frac{1}{2}}(y;\bm{\theta}')  \right)^2 dy \equiv 2 \Psi_{\text{\tiny{HD}}}(\bm{\theta}'), \quad \text{and} \label{HD-Proximal-1}  \\
H_{\text{\tiny{EM}}}(\bm{\theta}'|\bm{\theta}) &\approx  -2 \int_{\mathcal{Y}} \left(\int_{\mathcal{Z}} \left( c^{\frac{1}{2}}(z|y;\bm{\theta}') - c^{\frac{1}{2}} (z|y;\bm{\theta})  \right)^2 dz \right) g(y) dy \nonumber \\
&\equiv -2 H_{\text{\tiny{HD}}}(\bm{\theta}'|\bm{\theta}). \label{HD-Proximal-2}
\end{align}
In addition, from (\ref{HD-Proximal-1}) and (\ref{HD-Proximal-2}) and using substitution principle, we have 
\begin{align*}
Q_{\text{\tiny{HD}}}(\bm{\theta}'|\bm{\theta}) = \Psi_{\text{\tiny{HD}}}(\bm{\theta}') - H_{\text{\tiny{HD}}}(\bm{\theta}'|\bm{\theta}).
\end{align*}
Similar as Kullback-Leibler divergence, the Hellinger distance divergence for complete data can also be divided into the ``observed data divergence'' part and the ``latent data divergence'' part, i.e., $Q_{\text{\tiny{HD}}}(\bm{\theta}'|\bm{\theta})$ is the Hellinger distance divergence for complete data, $\Psi_{\text{\tiny{HD}}}(\bm{\theta}')$ is the Hellinger distance divergence for observed data, and $H_{\text{\tiny{HD}}}(\bm{\theta}'|\bm{\theta})$ is the Hellinger distance divergence for latent variable given observed data (and current parameter).
More generally, it is believed that for any divergence, $Q(\bm{\theta}'|\bm{\theta})$ the divergence objective function for complete data, $\Psi(\bm{\theta}')$ is the divergence objective function for observed data, and $H(\bm{\theta}'|\bm{\theta})$ is the divergence objective function for latent variable given observed data (and current parameter). The detailed discussion is considered elsewhere.

\noindent \textbf{2. Relation with MM algorithm}

\cite{Hun00a} proposed the general MM algorithm to construct optimization algorithms. Specifically, an MM algorithm creates a surrogate function that minorizes or majorizes the objective function and when the surrogate function is optimized, the objective function is forced to decrease or increase correspondingly. MM algorithms are widely used in a broad application areas, for instance, EM algorithm, robust regression (see \cite{Hub81}) quantile regression (see \cite{Hun00}), survival analysis (see \cite{Hun02}), paired and multiple comparisons (specifically on generalized Bradley-Terry models, see \cite{HunD04}), etc. 
For a more detailed review of MM algorithm, see \cite{Hun04}.

The MM algorithm proceeds as follows. Let $\bm{\theta}_m$ represent a fixed value of the parameter $\bm{\theta}$, and let $\psi(\bm{\theta}|\bm{\theta}_m)$ denote a real-valued function of $\bm{\theta}$ depending on $\bm{\theta}_m$. The function $\psi(\bm{\theta}|\bm{\theta}_m)$ is said to majorize a real-valued function $t(x)$ at the point $\bm{\theta}_m$ provided 
\begin{align}\label{MM:1} 
\psi(\bm{\theta}|\bm{\theta}_m) \geq t(\bm{\theta}) \quad \text{for all} ~ \bm{\theta}, ~ \text{and}~  \psi(\bm{\theta}_m|\bm{\theta}_m) = t(\bm{\theta}_m).
\end{align}
Ordinarily, $\bm{\theta}_m$ denotes the current iteration in a search surface of $t(\bm{\theta})$. In a majorize-minimize MM algorithm, we minimize the majorizing function $\psi(\bm{\theta}|\bm{\theta}_m)$ rather than $t(\bm{\theta})$. If $\bm{\theta}_{m+1}$ represents the minimizer of $\psi(\bm{\theta}|\bm{\theta}_m)$, then MM procedure forces $t(\bm{\theta})$ downhill.
To see this, note that
\begin{small}
\begin{align*}
t(\bm{\theta}_{m+1}) =\psi(\bm{\theta}_{m+1}|\bm{\theta}_m) + t(\bm{\theta}_{m+1}) - \psi(\bm{\theta}_{m+1}|\bm{\theta}_m) \leq  \psi(\bm{\theta}_{m}|\bm{\theta}_m) + t(\bm{\theta}_{m}) - \psi(\bm{\theta}_{m}|\bm{\theta}_m) =t(\bm{\theta}_m).
\end{align*}
\end{small}
It turns out that DM algorithm also belongs to the class of MM algorithms by letting $\psi(\bm{\theta}|\bm{\theta}_m) \equiv Q(\bm{\theta}|\bm{\theta}_m) $, $t(\bm{\theta}) \equiv D(\bm{\theta})$.

\noindent \textbf{Proximal Point Algorithm and MM algorithm}

Note that the proximal point algorithm also belongs to the class of MM algorithms. Specifically, let $\psi(\bm{\theta}|\bm{\theta}_m) = -(\Psi(\bm{\theta}) - H(\bm{\theta}|\bm{\theta}_m))$, $t(\bm{\theta}) = -\Psi(\bm{\theta})$. By definition of $\Psi(\cdot)$ and $H(\cdot|\cdot)$, it follows that (\ref{MM:1}) is satisfied. Furthermore, if $\bm{\theta}_{m+1}$ represents the minimizer of $\psi(\bm{\theta}|\bm{\theta}_m)$, then $t(\bm{\theta}_{m+1}) \leq t(\bm{\theta}_m)$.

\noindent \textbf{3. Relation of DM algorithm with coordinate descent.}
Following \cite{nea98}, EM can be viewed as a coordinate ascent
algorithm. Similarly, DM can be viewed as a coordinate descent algorithm.
Define
\[
\mathscr D(\tilde q,\theta') = \mathbf{E}_Y\!\left[\,
\mathbf{E}_{Z\sim \tilde q(\cdot|Y)}\,
G\!\left(-1+\frac{g(Y)\,\tilde q(Z|Y)}{f(Y;\bm \theta')\,w(Z|Y;\bm \theta')}\right)
\right],
\]
where $\tilde q(\cdot|y)$ is any conditional density on $Z$ given $Y=y$.  
By Lemma~\ref{Inequality00}, $\mathscr D(\tilde q,\bm \theta')\ge D(\bm \theta')$ with equality iff
$\tilde q(\cdot|y)=w(\cdot|y;\bm \theta')$. Thus the DM algorithm alternates:
\begin{enumerate}
  \item \emph{D-step.} For fixed $\bm \theta$, minimize $\mathscr D(\tilde q,\bm \theta)$
  over $\tilde q$, yielding $\tilde q(z|y)=w(z|y;\bm\theta)$.
  \item \emph{M-step.} For this choice of $\tilde q$, minimize $\mathscr D(\tilde q, \bm \theta')$
  over $\bm \theta'$, which recovers the DM update.
\end{enumerate}
Hence DM belongs to the class of two-block coordinate descent algorithms.

\begin{lem}\label{Inequality00}
For any conditional density $\tilde{q}(\cdot|y)$ and all $\bm{\theta}' \in \bm{\Theta}$,
\[
\mathscr{D}(\tilde{q}, \bm{\theta}') \;\geq\; D(\bm{\theta}').
\]
Equality holds if and only if $\tilde{q}(z|y) = w(z|y;\bm{\theta}')$ for almost every $y \in \mathcal{Y}$.
\end{lem}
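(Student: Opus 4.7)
The plan is to reduce this to the variational-elimination identity already proved in the main text (Lemma~\ref{lem:innerq}). After bookkeeping, the functional $\mathscr{D}(\tilde q, \bm{\theta}')$ is the complete-data Lindsay disparity $\iint G\!\big(g(y)\tilde q(z\mid y)/p(y,z;\bm{\theta}')-1\big)\,p(y,z;\bm{\theta}')\,d\nu(z)\,dy$ evaluated at the reference measure $p(y,z;\bm{\theta}')=f(y;\bm{\theta}')\,w(z\mid y;\bm{\theta}')$. Factoring $p = f\cdot w$ rewrites
\[
\mathscr{D}(\tilde q,\bm{\theta}') = \int f(y;\bm{\theta}')\!\int G\!\Big(\tfrac{g(y)\,\tilde q(z\mid y)}{f(y;\bm{\theta}')\,w(z\mid y;\bm{\theta}')}-1\Big)\,w(z\mid y;\bm{\theta}')\,d\nu(z)\,dy,
\]
so the argument then follows exactly the Jensen template used for Lemma~\ref{lem:innerq}. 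Setting $\tilde q = w(\cdot\mid y;\bm{\theta}')$ in the displayed right-hand side collapses the inner integrand to $G(\delta(y;\bm{\theta}'))$ and recovers $D(\bm{\theta}')$, confirming both the tangency at the minimizer and that the stated inequality direction is the correct one.

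For the core inequality, I would work pointwise in $y$ and apply Jensen's inequality to the convex generator $G$ against the probability measure $w(\cdot\mid y;\bm{\theta}')$. With $F_y(z):=g(y)\tilde q(z\mid y)/[f(y;\bm{\theta}')\,w(z\mid y;\bm{\theta}')]-1$, the responsibilities-weighted average satisfies
\[
\int F_y(z)\,w(z\mid y;\bm{\theta}')\,d\nu(z) = \frac{g(y)}{f(y;\bm{\theta}')}\int \tilde q(z\mid y)\,d\nu(z) - 1 = \delta(y;\bm{\theta}'),
\]
since $\tilde q(\cdot\mid y)$ is a probability density (the paper's ratio convention handles the null set where $w=0$). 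Jensen's inequality then yields $\int G(F_y(z))\,w(z\mid y;\bm{\theta}')\,d\nu(z) \geq G(\delta(y;\bm{\theta}'))$; multiplying by $f(y;\bm{\theta}')$ and integrating in $y$ gives $\mathscr{D}(\tilde q,\bm{\theta}') \geq \int f(y;\bm{\theta}')\,G(\delta(y;\bm{\theta}'))\,dy = D_G(g,f_{\bm{\theta}'}) = D(\bm{\theta}')$.

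The equality clause is read off the equality conditions in Jensen's inequality. Strict convexity of $G$ (guaranteed locally near the origin by $G''(0)=1$ and globally on the relevant range by the standing thrice-differentiable convex regularity) forces $F_y(\cdot)$ to be $w(\cdot\mid y;\bm{\theta}')$-a.s.\ constant in $z$ on the support of $w(\cdot\mid y;\bm{\theta}')$. Since both $\tilde q(\cdot\mid y)$ and $w(\cdot\mid y;\bm{\theta}')$ integrate to one, that constant can only be $\delta(y;\bm{\theta}')$, which forces $\tilde q(z\mid y)/w(z\mid y;\bm{\theta}') \equiv 1$, i.e.\ $\tilde q(z\mid y)=w(z\mid y;\bm{\theta}')$ for $\nu$-a.e.\ $z$ on a set of $y$ of full $f(\cdot;\bm{\theta}')$-measure, which under the ratio convention is exactly the ``a.e.\ $y\in\mathcal Y$'' statement of the lemma.

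The main obstacle is really only the first paragraph's reconciliation of notation: the coordinate-descent display writes the inner law of $Z$ as $\tilde q(\cdot\mid Y)$, whereas the clean Jensen argument (and the tangency check at $\tilde q = w$) both require the inner measure to be $w(\cdot\mid Y;\bm{\theta}')$, so that the $\tilde q$ factor sits inside the argument of $G$ and not as the integrating weight. I would therefore open the proof by making this identification explicit, either interpreting the displayed formula as shorthand for the Lindsay complete-data disparity against the reference $p(y,z;\bm{\theta}')$, or rewriting the $\tilde q$-integration via the Radon--Nikodym factor $\tilde q/w$ so that the inner expectation collapses onto the responsibilities. Once this bookkeeping step is settled, the remaining argument is the standard Jensen/strict-convexity story and is essentially a transcription of the proof of Lemma~\ref{lem:innerq}.
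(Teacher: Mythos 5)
Your proof is correct, and it resolves the one real subtlety the right way: the displayed definition of $\mathscr{D}(\tilde q,\bm{\theta}')$ writes the inner law as $Z\sim\tilde q(\cdot\mid Y)$, but the quantity the lemma actually compares to $D(\bm{\theta}')$ is the complete-data disparity with reference weight $p(y,z;\bm{\theta}')=f(y;\bm{\theta}')\,w(z\mid y;\bm{\theta}')$, which is exactly how the paper's own proof treats it. Where you differ from the paper is in how Jensen is deployed. The paper's supplementary proof introduces the perspective function $U(t)=tG(-1+a/t)$, verifies $U''(t)=G''(\delta)a^2/t^3\ge 0$, and applies Jensen to $U$ under the measure $\tilde q(\cdot\mid y)$ evaluated at the points $f w/\tilde q$ (whose $\tilde q$-mean is $f$); the $\tilde q$ weights then cancel to produce the $fw$-weighted integral. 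You instead apply Jensen directly to $G$ under the responsibilities $w(\cdot\mid y;\bm{\theta}')$, using that the $w$-average of $F_y(z)=g\tilde q/(fw)-1$ equals $\delta(y;\bm{\theta}')$ because $\int\tilde q\,d\nu=1$. The two arguments are equivalent (the perspective transform is precisely what converts one into the other), and your version is the one the main text already uses for Lemma~\ref{lem:innerq}, so it buys a shorter proof that does not require the auxiliary convexity computation for $U$; the paper's version keeps the auxiliary conditional $\tilde q$ as the integrating measure, which is perhaps more natural if one starts from the literal $\mathbf{E}_{Z\sim\tilde q}$ display. One shared loose end: the ``only if'' part of the equality clause needs $G''>0$ on the range actually attained by the residuals (convexity plus $G''(0)=1$ alone does not preclude affine stretches of $G$ away from the origin); this holds for every generator used in the paper, and the paper's own proof is no more careful on this point, so it is a remark rather than a gap.
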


\noindent \textbf{The generalized DM Algorithm:}
It is possible that in the M-step, the minimizer $\bm{\theta}_{m+1}$ is not unique. Let $\bm{\Theta}_m$ denote the set of all minimizers at step $m$; that is, $\bm{\theta}_{m+1} \in  \bm{\Theta}_{m+1}$.
Sometimes it may be difficult to perform M-step numerically; in this case, we can define a generalized DM algorithm (referred to as the G-DM algorithm) as follows: Let $M: \bm{\theta}_m \to \bm{\Theta}_{m+1}$ be a point to set map: then the G-DM algorithm is an iterative scheme such that 
\begin{align*}
Q^{(G)}(\bm{\theta}'|\bm{\theta}_m) \leq Q^{(G)}(\bm{\theta}_m |\bm{\theta}_m) \quad \text{for all} \quad \bm{\theta}' \in M(\bm{\theta}_m).
\end{align*}
We notice here that for any G-DM sequence $\{\bm{\theta}_m \}$, $D^{(G)}(\bm{\theta}_{m+1}) \leq D^{(G)}(\bm{\theta}_m)$ and DM algorithm is a special case of G-DM algorithm. We emphasize here that by choosing different divergences, we obtain many existing algorithms, including the EM, HMIX, and HELMIX algorithms.

\newpage

\section*{\texorpdfstring{E: Special Cases of DM Algorithms for Various Choices of $G(\cdot)$}{Special Cases of DM}}\label{Special_cases}

\noindent \textbf{Special case 1: EM Algorithm}

Let $G(\delta) = (\delta + 1) \log(\delta + 1)$, which corresponds to the Kullback-Leibler divergence, we get the objective function obtained from the E-step in the EM algorithm. Specifically, since MLE can be obtained by minimizing the Kullback-Leibler divergence, it follows that
\begin{align}\label{EM:11}
Q_{\text{\tiny{KL}}}(\bm{\theta}'| \bm{\theta}) &= \nonumber  \int_{\mathcal{Y}} \int_{\mathcal{Z}} \left( \frac{g(y) w(z|y;\bm{\theta}) }{f(y;\bm{\theta}')w(Z|Y;\bm{\theta}') } \right) \log \left(\frac{g(y) w(z|y;\bm{\theta}) }{f(y;\bm{\theta}')w(Z|Y;\bm{\theta}') } \right)f(y;\bm{\theta}')w(Z|Y;\bm{\theta}') dz dy \\
&= \nonumber \int_{\mathcal{Y}} \int_{\mathcal{Z}} g(y) w(z|y;\bm{\theta}) \log \left(\frac{g(y) w(z|y;\bm{\theta}) }{f(y;\bm{\theta}')w(Z|Y;\bm{\theta}') } \right) dz dy  \\
&= \nonumber \int_{\mathcal{Y}} \int_{\mathcal{Z}} g(y) w(z|y;\bm{\theta}) \log \left(g(y) w(z|y;\bm{\theta})\right) dz dy  \\
&\quad \nonumber- \int_{\mathcal{Y}} \int_{\mathcal{Z}} g(y) w(z|y;\bm{\theta}) \log \left(f(y;\bm{\theta}')w(Z|Y;\bm{\theta}') \right) dz dy \\
&\equiv  \int_{\mathcal{Y}} \int_{\mathcal{Z}} g(y) w(z|y;\bm{\theta}) \log \left(g(y) w(z|y;\bm{\theta})\right) dz dy - \tilde{Q}(\bm{\theta}'|\bm{\theta}) ,
\end{align}
where 
\begin{align*}
\tilde{Q}(\bm{\theta}'|\bm{\theta}) = \int_{\mathcal{Y}} \left( \int_{\mathcal{Z}}  w(z|y;\bm{\theta}) \log \left(f(y;\bm{\theta}')w(Z|Y;\bm{\theta}') \right) dz \right) g(y) dy.
\end{align*}
The first term in (\ref{EM:11}) does not involve $\bm{\theta}'$ and hence can be omitted in terms of optimization. The second term $\tilde{Q}(\bm{\theta}'|\bm{\theta})$, in the current literature, is referred to as the population level EM objective function, see \cite{Siv17}, \cite{Dwi18}. 
If we estimate $g(\cdot)$ through empirical measure, then $\tilde{Q}(\bm{\theta}'|\bm{\theta})$ will reduce to the sample level EM objective function, and is given by 
\begin{align*}
\tilde{Q}_n(\bm{\theta}'|\bm{\theta}) = \frac{1}{n}\sum_{i=1}^n \int_{\mathcal{Z}} c(z|y_i;\bm{\theta}) \log \left(f(y_i;\bm{\theta}')c(z|y_i;\bm{\theta}') \right) dz .
\end{align*}

\noindent \textbf{Special case 2: HMIX Algorithm}

Next consider another example. By taking $G(\delta) = 2[(\delta+1)^{1/2} - 1 ]^2$, which corresponds to the Hellinger distance divergence, we get the population level HMIX objective function 
\begin{align*}
Q_{\text{\tiny{HD}}}(\bm{\theta}'|\bm{\theta}) &= \nonumber 2\int_{\mathcal{Y}} \int_{\mathcal{Z}} \left( \left( \frac{g(y) w(z|y;\bm{\theta}) }{f(y;\bm{\theta}')w(Z|Y;\bm{\theta}')}  \right)^{\frac{1}{2}} - 1 \right)^2 f(y;\bm{\theta}')w(Z|Y;\bm{\theta}') dz dy \\
&= \nonumber 4 - 4 \int_{\mathcal{Y}} \int_{\mathcal{Z}} \left[ g(y) w(z|y;\bm{\theta}) p(y, z;\bm{\theta}') \right]^{\frac{1}{2}} dz dy,\\
&\equiv \nonumber 4 - 4 \mathcal{A}(\bm{\theta}'|\bm{\theta}).
\end{align*} 
On the other hand, the sample level HMIX objective function is
\begin{align*}
    \mathcal{A}_n(\bm{\theta}'| \bm{\theta}) = \int_{\mathcal{Y}} \int_{\mathcal{Z}} \left[ g_n(y) w(z|y;\bm{\theta}) f(y;\bm{\theta}')w(Z|Y;\bm{\theta}') \right]^{\frac{1}{2}} dz dy.
\end{align*}

\noindent \textbf{Special case 3: Algorithm from the Negative Exponential Divergence}

Next consider another example. By taking $G(\delta) = [e^{-\delta} - 1 +\delta]$, which corresponds to the negative exponential divergence, we get the population level objective function based on Negative exponential divergence. Specifically, it is given by 
\begin{align}
Q_{\text{\tiny{NED}}}(\bm{\theta}'|\bm{\theta})
&= \nonumber \exp(1) \int_{\mathcal{Y}} \int_{\mathcal{Z}} \exp\left(-\frac{g(y)w(z|y;\bm{\theta})}{f(y;\bm{\theta}')w(Z|Y;\bm{\theta}') } \right)f(y;\bm{\theta}')w(Z|Y;\bm{\theta}') dz dy - 1\\
&\equiv \nonumber \exp(1) \text{NED}(\bm{\theta}'|\bm{\theta}) - 1. 
\end{align}
So $Q_{\text{NED}}(\bm{\theta}'|\bm{\theta})$ is the population level objective function generated by Negative exponential divergence. The sample level $Q_{\text{NED}_n}(\bm{\theta}'|\bm{\theta})$ is defined analogously, where $g(\cdot)$ is replaced by $g_n(\cdot)$.

\noindent \textbf{Special case 4: Algorithm from the Variant Negative Exponential Divergence}

We introduce a divergence similar to Negative exponential divergence called vNED. Specifically, $G(\cdot)$ is given by 
\begin{align*}
G_{\text{\tiny{vNED}}}(\delta) = \exp\left(-\frac{1}{1+\delta} +1\right)(1+\delta)-(2\delta+1).
\end{align*}
Then the corresponding population DM objective function $Q(\cdot|\cdot)$ is as follows:
\begin{align*}
Q_{\text{\tiny{vNED}}}(\bm{\theta}'|\bm{\theta})
&= \exp(1) \int_{\mathcal{Y}} \int_{\mathcal{Z}} \exp\left( -\frac{f(y;\bm{\theta}')w(Z|Y;\bm{\theta}')}{g(y)w(z|y;\bm{\theta})} \right) g(y)w(z|y;\bm{\theta}) dz dy-1  \\
&\equiv \exp(1) \text{vNED}(\bm{\theta}'|\bm{\theta})-1.
\end{align*}



\noindent \textbf{Special case 5: Algorithm from Blended Weighted Hellinger Distance Divergence}

\cite{Basu94} proposed the blended weighted Hellinger distance (BWHD) divergence as
\begin{align*}
G_{\text{\tiny{BWHD}}}(\delta) = \frac{1}{2} \frac{\delta^2}{[ \tau (\delta+1)^{\frac{1}{2}} + 1-\tau ]^2}, \quad \text{where} \quad \tau \in [0, 1].
\end{align*}
Then the BWHD objective function from DM algorithm is given as 
\begin{align*}
Q_{\text{\tiny{BWHD}}}(\bm{\theta}'|\bm{\theta}) = \int_{\mathcal{Y}} \int_{\mathcal{Z}} \left( \frac{g(y)w(z|y;\bm{\theta}) - f(y;\bm{\theta}')w(Z|Y;\bm{\theta}') }{\tau(g(y)w(z|y;\bm{\theta}))^{\frac{1}{2}} + (1-\tau) (f(y;\bm{\theta}')w(Z|Y;\bm{\theta}'))^{\frac{1}{2}}  } \right)^2 dz dy.
\end{align*}
Apart from the examples described as above, one can get other DM algorithm objective functions following similar ideas (e.g., blended weighted Negative exponential divergence, blended weight chi-square divergence, etc), hence we omit here.

\noindent \textbf{Special case 6: Algorithm from the Cressie-Read Family}

Next we consider an important subfamily of general divergence, the Cressie-Read (CR) family (see \cite{cri84}, \cite{read88}). For CR family, $G(\cdot)$ is given by
\begin{align*}
G_{\text{\tiny{CR}}}(\delta) = \frac{(\delta+1)^{\alpha+1}-1}{\alpha(\alpha+1)}, \quad \text{where}\quad  \alpha \in \Real.
\end{align*}
When $\alpha = -1$, it corresponds to the Kullback-Leibler divergence; when $\alpha = -\frac{1}{2}$, it corresponds to Hellinger distance divergence; when $\alpha = 1,$ it corresponds to Pearson's $\chi^2$ and when $\alpha = -2$, it corresponds to Neyman's $\chi^2$. Note that when $\alpha = -1$, the divergence is defined by continuity.
The DM algorithm objective function for CR family is 
\begin{align*}
Q_{\text{\tiny{CR}}}(\bm{\theta}'|\bm{\theta}) = \frac{1}{\alpha(1+\alpha)} \int_{\mathcal{Y}} \int_{\mathcal{Z}} \left[\left( \frac{g(y)w(z|y;\bm{\theta}) }{f(y;\bm{\theta}')w(Z|Y;\bm{\theta}')} \right)^{\alpha+1}-1  \right] f(y;\bm{\theta}')w(Z|Y;\bm{\theta}') dz dy.
\end{align*}

\noindent \textbf{Special case 7: Algorithm from Power Divergence Family}

Another important subfamily of general divergence is the power divergence (PD) family (see \cite{basu13}).
For PD family, $G(\cdot)$ is given by 
\begin{align*}
G_{\text{\tiny{PD}}}(\delta) = \frac{1}{\alpha(1+\alpha)} \left[ \left(\delta+1 \right)^{1+\alpha} - \left(\delta+1 \right)  \right] - \frac{\delta}{1+\alpha} \quad \text{where} \quad \alpha \in \Real.
\end{align*}
The DM algorithm objective function for PD family is 
\begin{align*}
Q_{\text{\tiny{PD}}}(\bm{\theta}'|\bm{\theta}) &= \int_{\mathcal{Y}} \int_{\mathcal{Z}} \Bigg\{\frac{1}{\alpha(1+\alpha)}\left[ \left(\frac{g(y)w(z|y;\bm{\theta}) }{f(y;\bm{\theta}')w(Z|Y;\bm{\theta}')} \right)^{1+\alpha} - \left(\frac{g(y)w(z|y;\bm{\theta}) }{f(y;\bm{\theta}')w(Z|Y;\bm{\theta}')} \right)  \right]  \\
&\quad +\frac{1}{1+\alpha} \left[1- \frac{g(y)w(z|y;\bm{\theta}) }{f(y;\bm{\theta}')w(Z|Y;\bm{\theta}')} \right] \Bigg\} f(y;\bm{\theta}')w(Z|Y;\bm{\theta}') dzdy.
\end{align*}

\begin{table}[H]
\centering
\scriptsize
\renewcommand{\arraystretch}{1.2}
\setlength{\tabcolsep}{4pt}
\caption{Representative special cases of DM algorithm for different $G(\cdot)$.}
\label{tab:DM-special-cases}
\begin{tabular}{p{2.8cm} p{4.2cm} p{7.5cm}}
\hline
\textbf{Algorithm} & \textbf{Generator $G(\delta)$} & \textbf{Population DM objective (up to constants)} \\
\hline
KL (EM) &
$(\delta+1)\log(\delta+1)$ &
$\iint g(y)w(z|y;\theta)\,
\log\!\frac{g(y)w(z|y;\theta)}{f(y;\theta')w(z|y;\theta')}\,dz\,dy$ \\[6pt]

Hellinger (HMIX) &
$2\big((\delta+1)^{1/2}-1\big)^2$ &
$4-4\iint \big[g(y)w(z|y;\theta)\,p(y,z;\theta')\big]^{1/2}\,dz\,dy$ \\[6pt]

NED &
$e^{-\delta}-1+\delta$ &
$e^{1}\!\iint \exp\!\left(-\tfrac{g(y)w(z|y;\theta)}{f(y;\theta')w(z|y;\theta')}\right)
f(y;\theta')w(z|y;\theta')\,dz\,dy-1$ \\[6pt]

vNED &
$\exp\!\left(-\tfrac{1}{1+\delta}+1\right)(1+\delta)-(2\delta+1)$ &
$e^{1}\!\iint \exp\!\left(-\tfrac{f(y;\theta')w(z|y;\theta')}{g(y)w(z|y;\theta)}\right)
g(y)w(z|y;\theta)\,dz\,dy-1$ \\[6pt]

BWHD &
$\tfrac12 \frac{\delta^2}{[\tau(\delta+1)^{1/2}+1-\tau]^2}$ &
$\iint \left(\frac{g(y)w(z|y;\theta)-f(y;\theta')w(z|y;\theta')}
{\tau\sqrt{g(y)w(z|y;\theta)}+(1-\tau)\sqrt{f(y;\theta')w(z|y;\theta')}}\right)^{2}\,dz\,dy$ \\[6pt]

CR family &
$\tfrac{(\delta+1)^{\alpha+1}-1}{\alpha(\alpha+1)}$ &
$\tfrac{1}{\alpha(1+\alpha)}\iint 
\Big[\big(\tfrac{g(y)w}{f(y;\theta')w'}\big)^{\alpha+1}-1\Big] f(y;\theta')w'\,dz\,dy$ \\[6pt]

PD family &
$\frac{1}{\alpha(1+\alpha)}\!\Big[(\delta+1)^{1+\alpha}-(\delta+1)\Big]-\frac{\delta}{1+\alpha}$ &
$\iint\!\Bigg\{\frac{\big(\tfrac{g(y)w}{f(y;\theta')w'}\big)^{1+\alpha}-\tfrac{g(y)w}{f(y;\theta')w'}}{\alpha(1+\alpha)}
+\frac{1}{1+\alpha}\Big(1-\tfrac{g(y)w}{f(y;\theta')w'}\Big)\Bigg\}
f(y;\theta')w'\,dz\,dy$ \\
\hline
\end{tabular}
\end{table}

\newpage 

\section*{\texorpdfstring{F: DM Algorithm for Various Choices of $G(\cdot)$ with Application to FMM}{DM Algorithm with Application to FMM}}\label{DM-Mix}

In this subsection, we specialize the DM framework to finite mixture models (FMMs) described in Subsection~2.1. 
For each choice of divergence generator $G(\cdot)$, the corresponding population-level DM objective can be expressed in closed form. Recall that,
\begin{align} 
Q_G(\bm\theta'\!\mid\bm\theta)
=\sum_{k=1}^K\int \pi_k' h(y;\bm\phi_k')\,G\!\Big(\tau_k(y)\Big)\,dy \quad \text{and} \quad
\tau_k(y):=\frac{g(y)\,w_k(y;\bm\theta)}{\pi_k' h(y;\bm\phi_k')}.
\end{align}
We list below the objectives for several important divergences; detailed update formulas for $\bm{\theta}'$ will be given in Section~\ref{Chap5:S:DM-Alg}.

For EM algorithm, $\tilde{Q}(\cdot|\cdot)$ for finite mixture model is given by 
\begin{align*}
Q_{\text{EM}}(\bm{\theta}'|\bm{\theta})\coloneqq \tilde{Q}(\bm{\theta}'|\bm{\theta}) =  \sum_{k=1}^K \int_{\mathcal{Y}} g(y) w_k(y;\bm{\theta}) \log \left( \pi_k' h(y; \bm{\phi}_k') \right) dy.
\end{align*}
The HMIX objective function $\mathcal{A}(\cdot|\cdot)$ is given by
\begin{align*}
Q{\text{HD}}(\bm{\theta}'|\bm{\theta}) \coloneqq \mathcal{A}(\bm{\theta}'|\bm{\theta}) = \sum_{k=1}^K \int_{\mathcal{Y}} \left[ g(y) w_k(y; \bm{\theta}) \pi_k' h(y;\bm{\phi}_k')\right]^{\frac{1}{2}} dy.
\end{align*}
The NED objective function for finite mixture model. It is given by
\begin{align*}
Q_{\text{NED}}(\bm{\theta}'|\bm{\theta})
= \sum_{k=1}^K \int_{\mathcal{Y}} \exp\left(-\frac{g(y)w_k(y;\bm{\theta})}{\pi_k'h(y; \bm{\phi}_k')} \right) \pi_k' h(y;\bm{\phi}_k')dy.
\end{align*}
Next consider the vNED objective function for finite mixture model. It is given by
\begin{align*}
Q_{\text{vNED}}(\bm{\theta}'|\bm{\theta})
= \sum_{k=1}^K \int_{\mathcal{Y}} \exp\left(-\frac{\pi_k'h(y; \bm{\phi}_k')}{g(y)w_k(y;\bm{\theta})} \right) g(y)w_k(y;\bm{\theta})dy.
\end{align*}
Next consider the Blended weight Hellinger distance (BWHD) objective function for finite mixture model, where for $0 <\tau <1$, $Q_{\text{\tiny{BWHD}}}(\cdot|\cdot)$ is given by
\begin{align*}
Q_{\text{\tiny{BWHD}}}(\bm{\theta}'|\bm{\theta}) = \sum_{k=1}^K \int_{\mathcal{Y}} \left( \frac{g(y)w_k(y;\bm{\theta}) - \pi_k' h(y;\bm{\phi}_k') }{\tau(g(y)w_k(y;\bm{\theta}))^{\frac{1}{2}} + (1-\tau) (\pi_k' h(y;\bm{\phi}_k'))^{\frac{1}{2}}} \right)^2 dy.
\end{align*}
Next consider the CR family, where $\alpha \in \mathbb{R}-{-1, 0}$. $Q_{\text{\tiny{CR}}}(\cdot|\cdot)$ for finite mixture model is given by
\begin{align*}
Q_{\text{\tiny{CR}}}(\bm{\theta}'|\bm{\theta}) = \frac{1}{\alpha(1+\alpha)} \sum_{k=1}^K \int_{\mathcal{Y}} \left[\left( \frac{g(y)w_k(y;\bm{\theta}) }{\pi_k' h(y;\bm{\phi}_k')} \right)^{\alpha+1}-1  \right] \pi_k' h(y;\bm{\phi}_k') dy.
\end{align*}
Next consider the PD family (an equivalent alternative parametrization scaled to satisfy $A'(0)=1$. $Q_{\text{\tiny{PD}}}(\cdot|\cdot)$ for finite mixture model is given by
\begin{align*}
Q_{\text{\tiny{PD}}}(\bm{\theta}'|\bm{\theta}) &= \sum_{k=1}^K \int_{\mathcal{Y}}  \Bigg\{\frac{1}{\alpha(1+\alpha)}\left[ \left(\frac{g(y)w_k(y;\bm{\theta}) }{\pi_k' h(y;\bm{\phi}_k')} \right)^{1+\alpha} - \left(\frac{g(y)w_k(y;\bm{\theta}) }{\pi_k' h(y;\bm{\phi}_k')} \right)  \right]  \\
&\quad +\frac{1}{1+\alpha} \left[1- \frac{g(y)w_k(y;\bm{\theta}) }{\pi_k' h(y;\bm{\phi}_k')} \right] \Bigg\} \pi_k' h(y;\bm{\phi}_k') dy.
\end{align*}
We will provide specific algorithms for updating $\bm{\theta}'$ in finite mixture models in Section \ref{Chap5:S:DM-Alg}.


\newpage 

\section*{G: Discrete Kernels and K-means Clustering Algorithm}\label{DSK-sec}

\noindent \textbf{Discrete Kernels}

In general, let $Y_1, \cdots, Y_n$ be i.i.d. random variables with an unknown probability mass function (p.m.f.) $f$ on $\mathbb{Z}$. 
A discrete kernel estimator of $f$ can be defined as
\begin{align}\label{discrete-kernel}
g_n(y) = \frac{1}{n} \sum_{i=1}^{n} \mathcal{K}_{y,c}(Y_i), \quad y \in \mathbb{Z},
\end{align}
where $\mathcal{K}_{y,c}(\cdot)$ is a discrete kernel p.m.f.\ centered at $y$, and $c_n$ is a sequence of smoothing bandwidths. 
More rigorously, following the notation from \cite{Kok11}, the discrete associated kernel is defined as follows.

\begin{defn}
Let $\mathbb{T}$ be the discrete support of the p.m.f.\ $f$ to be estimated, $y$ a fixed target in $\mathbb{T}$, and $c>0$ a bandwidth.  
A p.m.f.\ $\mathcal{K}_{y,c}(\cdot)$ on support $\mathbb{S}_y$ (not depending on $c$) is said to be an \emph{associated kernel} if it satisfies:
\begin{align*}
y \in \mathbb{T}, \quad 
\lim_{c \to 0} \mathbf{E}[X_{y,c}] = y, 
\quad \text{and} \quad  
\lim_{c \to 0} \mathbf{Var}[X_{y,c}] = 0,
\end{align*}
where $X_{y,c}$ is a random variable with p.m.f.\ $\mathcal{K}_{y,c}(\cdot)$.
\end{defn}

\begin{enumerate}
\item \textbf{The empirical kernel:}
\begin{align}\label{emp-ker}
\mathcal{K}_{y,c}(x) = I_{x=y}, \quad x \in \mathbb{T}, \; c \geq 0,
\end{align}
where $I_A$ is the indicator function.  

\item \textbf{Discrete triangular kernel:} (\cite{Kok07})  
For support $\mathbb{T}$ (bounded or unbounded), bandwidth $c>0$, and integer $a>0$, define
\begin{align*}
\mathcal{K}_{y,c}(x) = \frac{(a+1)^c - |x-y|^c}{P(a,c)}, \quad x \in \{y-a, \dots, y+a\},
\end{align*}
with normalizing constant $P(a,c) = (2a+1)(a+1)^c - 2\sum_{k=0}^a k^c$.

\item \textbf{Poisson kernel:}  
For $y \in \mathbb{N}, c>0$,
\begin{align*}
\mathcal{K}_{y,c}(x) = \frac{(y+c)^x e^{-(y+c)}}{x!}, \quad x \in \mathbb{N}.
\end{align*}

\item \textbf{Binomial kernel:}  
For $y \in \mathbb{N}, c \in (0,1]$,
\begin{align*}
\mathcal{K}_{y,c}(x) = \binom{y+1}{x} \left(\frac{y+c}{y+1}\right)^x 
\left(\frac{1-c}{y+1}\right)^{y+1-x}, \quad x \in \{0,1,\dots,y+1\}.
\end{align*}

\item \textbf{Negative binomial kernel:}  
For $y \in \mathbb{N}, c>0$,
\begin{align*}
\mathcal{K}_{y,c}(x) = \binom{y+x}{x} 
\left(\frac{y+c}{2y+1+c}\right)^x 
\left(\frac{y+1}{2y+1+c}\right)^{y+1}, \quad x \in \mathbb{N}.
\end{align*}
\end{enumerate}

In order to measure the performance of different discrete kernel estimators, we use the practical criterion integrated squared error (ISE) given by
\begin{align*}
    \text{ISE} = \sum_{y \in \mathbb{T}} \left(g_n(y) - f(y; \bm{\theta}) \right)^2,
\end{align*}

\noindent \textbf{K-means clustering algorithm}

Given samples $y_1, \cdots, y_n$ and fix the number of clusters $K$, place initial centroids $c_1^{(0)}, \cdots, c_k^{(0)}$ at random locations.
\begin{algorithm}[H]
    \footnotesize
	\caption{The K-means Clustering Algorithm}
	\label{K-means}	 
	1. Set m = 0, 
	\begin{algorithmic}
		\REPEAT
		\STATE 2. For each point $y_i$: \\
		~~~(i). Find nearest centroid, i.e. $k^* = \underset{k}{argmin}~ L(y_i,
		            c_k^{(m)})$, where $L(\cdot)$ represents some distance metric. \\
		~~~(ii). Assign $y_i$ to cluster $k^*$.            		            		
		\STATE 3. Find new centroids: new centroid $c_{k}^{(m+1)} =$ mean of all points $y_i$ assigned to cluster $k^*$. 	
		\STATE 4. Set $m=m+1$.	
		\UNTIL{None of the cluster assignments change.}
	\end{algorithmic}
\end{algorithm}

\begin{rem}
	In step 2 (i), the examples of distance metrics are Euclidean distance or 
	$L_1$ distance.
	Besides, the choice of distance metric depends on the model.	
\end{rem}

\begin{rem}
	This algorithm works well without any outliers or with few outliers. If there exists a considerable percentage of outliers in the sample, then the K-means clustering algorithm is more likely to treat the outlier group as a new cluster, and the initial points will be more likely to be away from the real values.
\end{rem}

\newpage 

\section*{H: Additional Algorithms}\label{Add_algorithms}

If we choose the Hellinger distance divergence, then the algorithm (referred to as the DM-HMIX algorithm or simply HMIX algorithm) is provided in Algorithm \ref{HD:1}. 
\begin{algorithm}[htbp]
    \scriptsize
	\caption{The DM-HMIX Algorithm}
	\label{HD:1}
	Set m = 0.
	\begin{algorithmic}
		\REPEAT
		\STATE 1. Compute
		\begin{align*}
		HD_n(\bm{\theta}^{(m)}) = \sum_{k=1}^K \int_\mathcal{Y} \left(g_n(y)w_k(y;\bm{\theta}^{(m)}) \pi_k^{(m)} h(y; \bm{\phi}_k^{(m)}) \right)^{\frac{1}{2}} dy, \quad \text{where} \quad w_k(y;\bm{\theta}) = \frac{\pi_k h(y;\bm{\phi}_k)}{\sum_{l=1}^K \pi_l h(y;\bm{\phi}_l)}.
		\end{align*}
		\STATE 2. Update $\bm{\phi}_k^{(m+1)}$:
		\begin{align*}
		\bm{\phi}_k^{(m+1)} ~ & = ~ \underset{\bm{\phi}_k' \in \bm{\Theta}}{\text{argmax}} ~  \int_\mathcal{Y} \left(g_n(y)w_k(y;\bm{\theta}^{(m)}) \pi_k^{(m)} h(y; \bm{\phi}_k') \right)^{\frac{1}{2}} dy.
		\end{align*}
		\STATE 3. Update $\pi_k^{(m+1)}$:
		\begin{align*}
		\pi_k^{(m+1)} = \frac{HD_{n,k}^2(\bm{\phi}^{(m+1)}|\bm{\theta}^{(m)})}{\sum_{l=1}^K HD_{n,l}^2(\bm{\phi}^{(m+1)}|\bm{\theta}^{(m)})}, \quad \text{(HMIX type update)}
		\end{align*}
        ~~~~or
        \begin{align*}
		\pi_k^{(m+1)} = \frac{\sqrt{\pi_k^{(m)}} HD_{n,k}(\bm{\phi}^{(m+1)}|\bm{\theta}^{(m)})}{\sum_{l=1}^K\sqrt{\pi_l^{(m)}} HD_{n,l}(\bm{\phi}^{(m+1)}|\bm{\theta}^{(m)})}, \quad \text{(DM-Mix type update (HDMIX))} 
		\end{align*}
		~~~~where 
        \begin{align*}
       HD_{n,k}(\bm{\phi}^{(m+1)}|\bm{\theta}^{(m)}) = \int_\mathcal{Y} \left(g_n(y) w_k(y;\bm{\theta}^{(m)}) h(y; \bm{\phi}_k^{(m+1)})\right)^{\frac{1}{2}}dy.
        \end{align*}       		        
		\STATE 4. Update $w_k(y; \bm{\theta}^{(m+1)})$, 
        compute $HD(\bm{\theta}^{(m+1)})$ and the difference 
        \begin{align*}
        \epsilon_{m+1} = |HD_n(\bm{\theta}^{(m+1)}) - HD_n(\bm{\theta}^{(m)}) |.
        \end{align*}		
		\STATE 5. Set m = m+1.
		\UNTIL{$\epsilon_{m}< \text{threshold}$.}
	\end{algorithmic}
\end{algorithm}

Algorithm \ref{NED:1}, we use vNED divergence. Specifically, the update for $\pi_k'$ therefore can be written as
\begin{small}
    \begin{align*}
    \pi_k' = \frac{\text{vNED}_k(\bm{\theta}'|\bm{\theta})}{\sum_{l=1}^K \text{vNED}_l(\bm{\theta}'|\bm{\theta}) }, \quad \text{where} \quad  \text{vNED}_k(\bm{\theta}'|\bm{\theta}) = \int_\mathcal{Y} \exp\left( -\frac{ \pi_k h(y; \bm{\phi}_k') }{g(y)w_k(y;\bm{\theta})} \right)\pi_k h(y; \bm{\phi}_k') dy.
\end{align*}
\end{small}

\begin{algorithm}[htbp]
    \scriptsize
	\caption{The DM-vNEDMIX Algorithm}
	\label{NED:1}
	Set m = 0.
	\begin{algorithmic}
		\REPEAT
		\STATE 1. Compute
		\begin{align*}
		\text{vNED}_n(\bm{\theta}^{(m)}) = \sum_{k=1}^K \int_\mathcal{Y} \exp \left(-\frac{\pi_k^{(m)} h(y; \bm{\phi}_k^{(m)})}{g_n(y)w_k(y;\bm{\theta}^{(m)}) }\right)g_n(y)w_k(y;\bm{\theta}^{(m)})  dy, \quad \text{where} \quad w_k(y;\bm{\theta}) = \frac{\pi_k h(y;\bm{\phi}_k)}{\sum_{l=1}^K \pi_l h(y;\bm{\phi}_l)}.
		\end{align*}
		\STATE 2. Update $\bm{\phi}_k^{(m+1)}$:
		\begin{align*}
		\bm{\phi}_k^{(m+1)} ~ & = ~ \underset{\bm{\phi}_k' \in \bm{\Theta}}{\text{argmin}} ~  \int_\mathcal{Y} \exp \left(-\frac{\pi_k^{(m)} h(y; \bm{\phi}_k')}{g_n(y)w_k(y;\bm{\theta}^{(m)}) }  \right)g_n(y)w_k(y;\bm{\theta}^{(m)})  dy.
		\end{align*}		
		\STATE 3. Update $\pi_k^{(m+1)}$:
		\begin{align*}
		\pi_k^{(m+1)} =  \frac{\text{vNED}_{n,k}(\bm{\phi}_k^{(m+1)}| \bm{\theta}^{(m)})}{\sum_{l=1}^K \text{vNED}_{n,l}(\bm{\phi}_l^{(m+1)}| \bm{\theta}^{(m)}) }, \quad \text{where}
		\end{align*}
        \begin{align*}
       \text{vNED}_{n,k}(\bm{\phi}_k^{(m+1)}| \bm{\theta}^{(m)}) = \int_\mathcal{Y} \exp\left( -\frac{ \pi_k^{(m)} h(y; \bm{\phi}_k^{(m+1)}) }{g_n(y)w_k(y;\bm{\theta}^{(m)})} \right)\pi_k^{(m)} h(y; \bm{\phi}_k^{(m+1)}) dy.
        \end{align*}       		
		\STATE 4. Update $w_k(y; \bm{\theta}^{(m+1)})$, 
        compute $\text{vNED}_n(\bm{\theta}^{(m+1)})$ and the difference 
        \begin{align*}
        \epsilon_{m+1} = | \text{vNED}_n(\bm{\theta}^{(m+1)}) - \text{vNED}_n(\bm{\theta}^{(m)}) |.
        \end{align*}		
		\STATE 5. Set m = m+1.
		\UNTIL{$\epsilon_{m}< \text{threshold}$.}
	\end{algorithmic}
\end{algorithm}

For Algorithm \ref{NELMIX}, by using recurrence relation in Poisson distribution, we can derive an algorithm similar to the HELMIX algorithm based on vNED. We refer this algorithm as the DM-NELMIX algorithm. 

Let $h\left(y;\lambda\right) = e^{\lambda}\lambda^{y}/ y!$. For given initial values $\bm{\theta}^{(0)} = (\pi_1^{(0)},\cdots, \pi_K^{(0)}, \lambda_1^{(0)}, \cdots, \lambda_K^{(0)} )$.
\begin{algorithm}[htbp]
    \footnotesize
	\caption{The DM-NELMIX Algorithm}
	\label{NELMIX}
	Set m = 0.
	\begin{algorithmic}
		\REPEAT
		\STATE 1. Compute
		\begin{align*}
		\text{vNED}_n^*(\bm{\theta}^{(m)}) = \sum_{k=1}^K \sum_{y \in \mathcal{Y}} \exp \left(-\frac{\pi_k^{(m)} h(y; \bm{\phi}_k^{(m)})}{g_n(y)w_k(y;\bm{\theta}^{(m)}) }\right)g_n(y)w_k(y;\bm{\theta}^{(m)}),
		\end{align*}
		~~~~ where 
        \begin{align*}
        w_k(y;\bm{\theta}^{(m)}) = \frac{\pi_k^{(m)} h(y ; \lambda_k^{(m)})}{\sum_{l=1}^{K}\pi_l^{(m)} h(y ; \lambda_l^{(m)})}. 
        \end{align*}     
		\STATE 2. Find $\lambda_k^{(m+1)}$:
		\begin{align*}
		\lambda_k^{(m+1)} ~ & = ~ \sum_{y \in \mathcal{Y}} y w_{yk}^{(m)}  \Big/\sum_{y \in \mathcal{Y}}  w_{yk}^{(m)},
		\end{align*}
		~~~~where 		
		\begin{align*}
		w_{yk}^{(m)} = \exp\left(- \frac{\pi_k^{(m)} h(y; \lambda_k^{(m)})}{ g_{n}(y)w_k(y;\bm{\theta}^{(m)})  } \right)\pi_k^{(m)} h(y; \lambda_k^{(m)}).
		\end{align*}		
		\STATE 3. Update $\pi_k^{(m+1)}$:
		\begin{align*}
			\pi_k^{(m+1)} = \text{vNED}_{n,k}\left(\pi_k^{(m)}, \lambda_k^{(m+1)}\right) \Big/\sum_{l=1}^{K} \text{vNED}_{n,l} \left(\pi_l^{(m)}, \lambda_l^{(m+1)} \right),
		\end{align*}
		~~~~where 
		\begin{align*}
		\text{vNED}_{n,k}\left(\pi_k^{(m)}, \phi_k^{(m+1)}\right) = \sum_{y \in \mathcal{Y}} \exp\left(- \frac{\pi_k^{(m)} h(y ; \lambda_k^{(m+1)})}{ g_{n}(y)w_k(y; \bm{\theta}^{(m+1)}) } \right)\pi_k^{(m)} h(y;\lambda_k^{(m+1)}).
		\end{align*}				
		\STATE 4. Update $w_k(y; \bm{\theta}^{(m+1)})$, 
        compute $\text{vNED}_n^*(\bm{\theta}^{(m+1)})$ and the difference 
        \begin{align*}
        \epsilon_{m+1} = |\text{vNED}_n^*(\bm{\theta}^{(m+1)}) - \text{vNED}_n^*(\bm{\theta}^{(m)}) |.
        \end{align*}					
		\STATE 5. Set m = m+1.
		\UNTIL{$\epsilon_{m}< \text{threshold}$.}
	\end{algorithmic}
\end{algorithm}

\newpage 

\section*{I: Additional Simulation Results}\label{add-simulations}

\noindent \textbf{Discrete Models}

In this section, we assume $K$ is known. 
For discrete data we estimate the nonparametric distribution $g_n(\cdot)$ using discrete kernels. 
We implemented both cross-validation and moment-based bandwidth selection; the results were similar, while the moment-based method was significantly faster, so we report only the moment-based results.

\noindent \textbf{Poisson Mixture Model}

Suppose $f(\cdot; \bm{\theta}) = 0.4\,\mathrm{Pois}(0.5) + 0.6\,\mathrm{Pois}(10)$.
Figure~\ref{fig:pi1} shows the average estimates of $\pi_1$ across kernels and sample sizes $n\in\{20,50,100,200\}$;
the dashed line indicates the true value $0.4$.



\begin{figure}[H]
    \centering
    \includegraphics[width=0.8\textwidth]{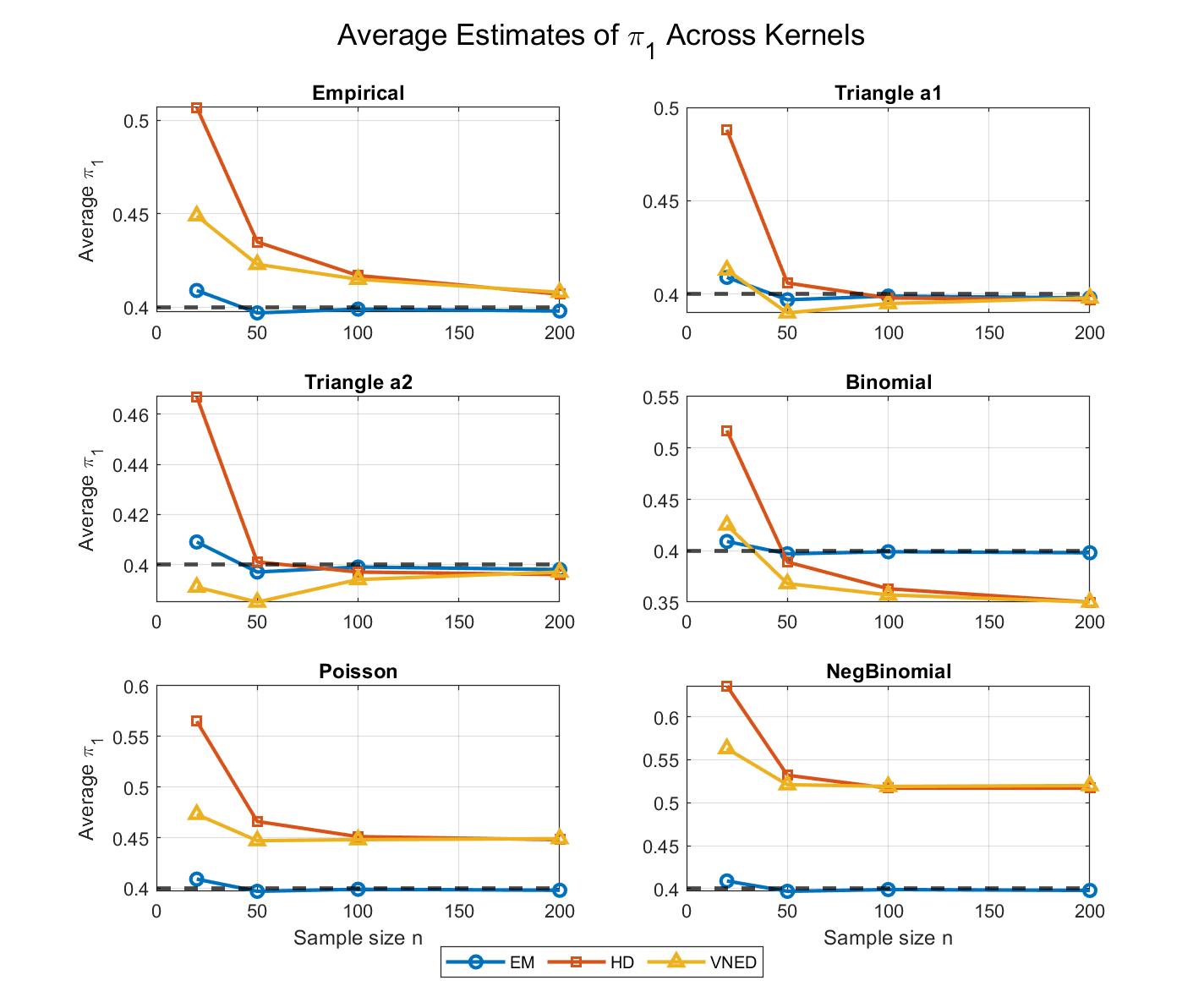}
    \caption{\small Average estimates of $\pi_1$ across kernels for sample sizes $n=20,50,100,200$. 
    Solid lines represent EM, HD, and vNED methods; dashed horizontal line indicates the true value $0.4$.}
    \label{fig:pi1}
\end{figure}

\newpage 
Figure~\ref{Fig-ISE-PG-1} reports the mean ISE for the two-component Poisson mixture as $n$ increases at several bandwidths $c$.
The results indicate: (i) with small $n$, discrete kernels are beneficial; (ii) bandwidth choice is important; 
and (iii) the triangle kernel with $a=1$ performs well overall for this model.


\begin{figure}[H] 
  \begin{subfigure}[b]{0.5\linewidth}
    \centering
    \includegraphics[width=0.75\linewidth]{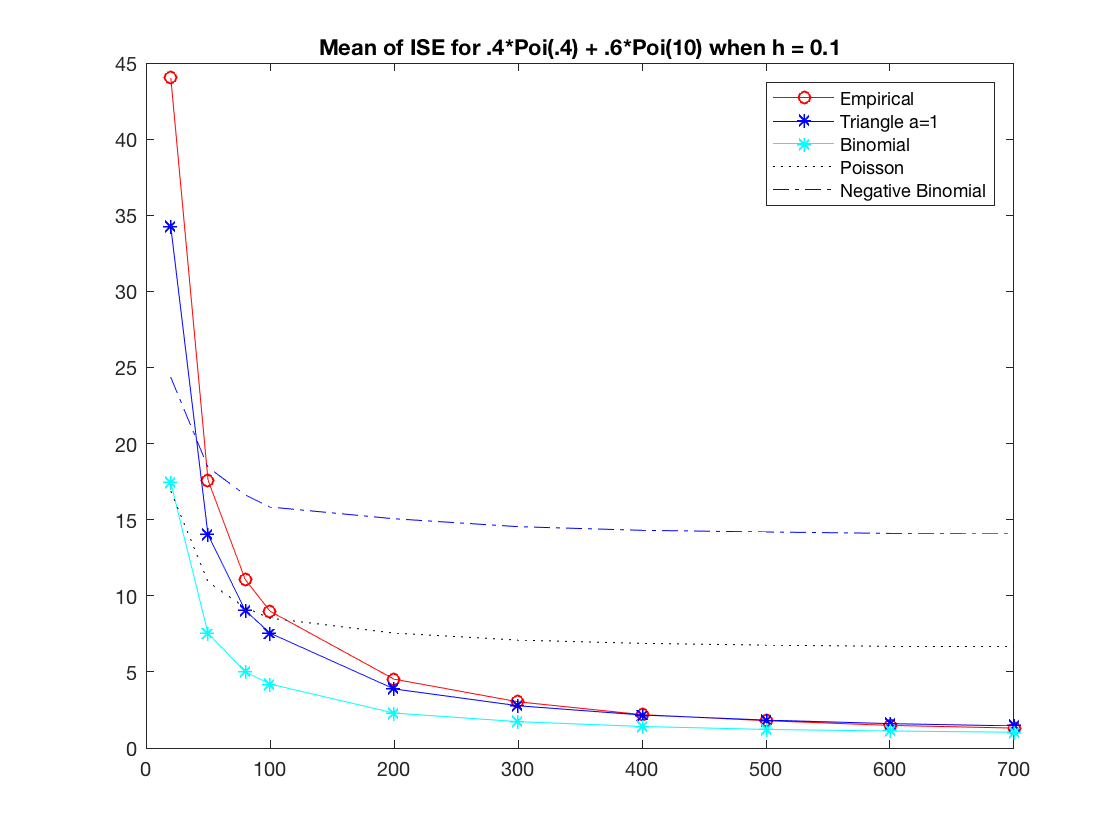} 
    \caption{$c=0.1$} 
    \label{fig7:a1} 
    \vspace{4ex}
  \end{subfigure}
  \begin{subfigure}[b]{0.5\linewidth}
    \centering
    \includegraphics[width=0.75\linewidth]{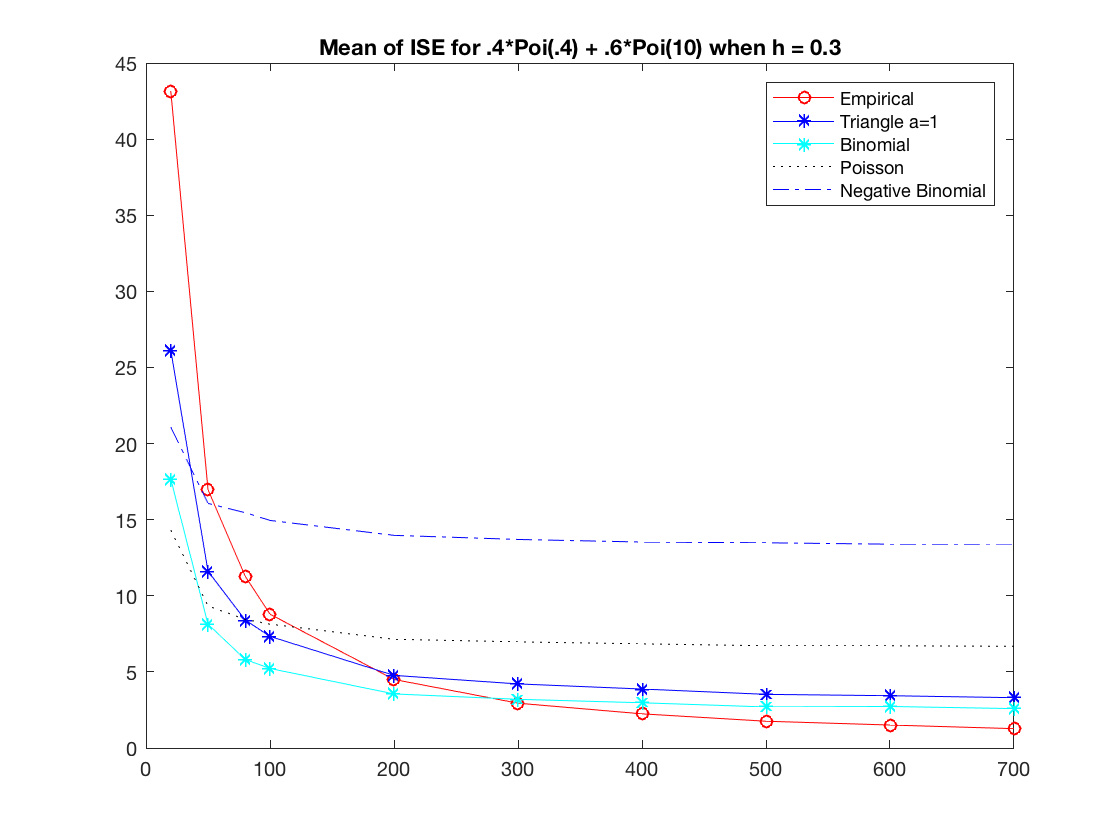} 
    \caption{$c=0.3$} 
    \label{fig7:b1} 
    \vspace{4ex}
  \end{subfigure} 
  \begin{subfigure}[b]{0.5\linewidth}
    \centering
    \includegraphics[width=0.75\linewidth]{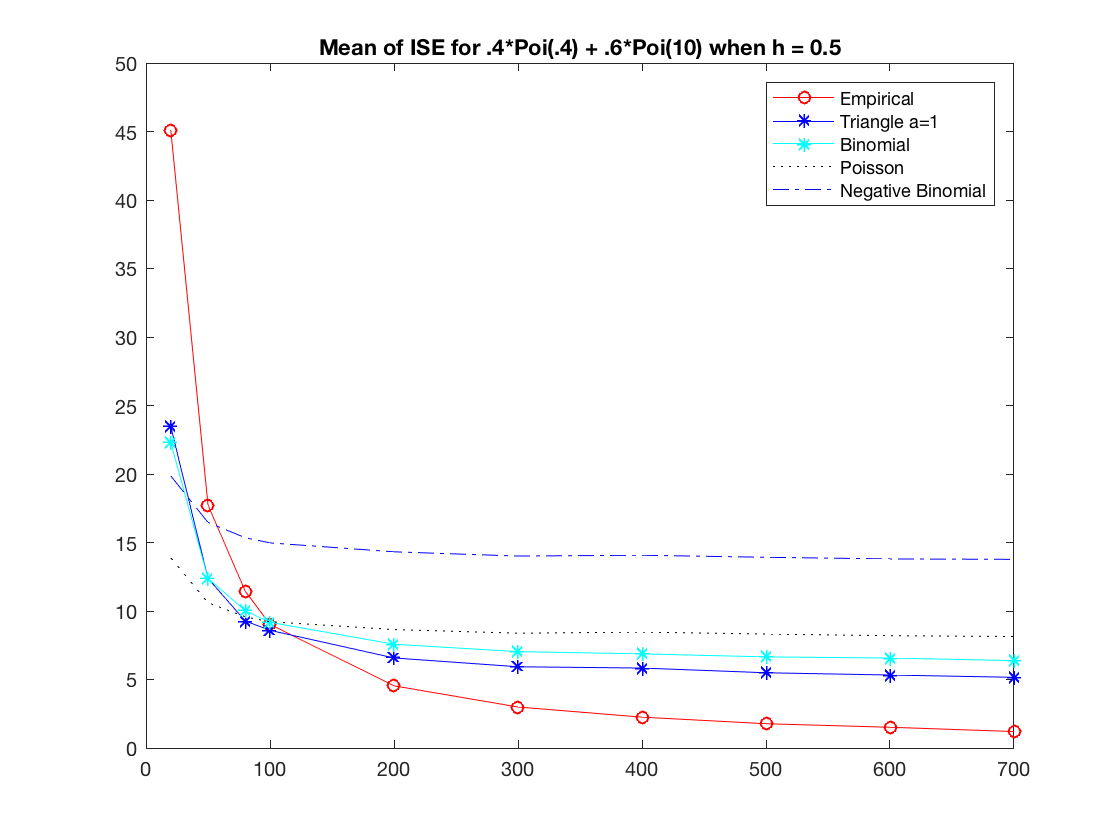} 
    \caption{$c=0.5$} 
    \label{fig7:c1} 
  \end{subfigure}
  \begin{subfigure}[b]{0.5\linewidth}
    \centering
    \includegraphics[width=0.75\linewidth]{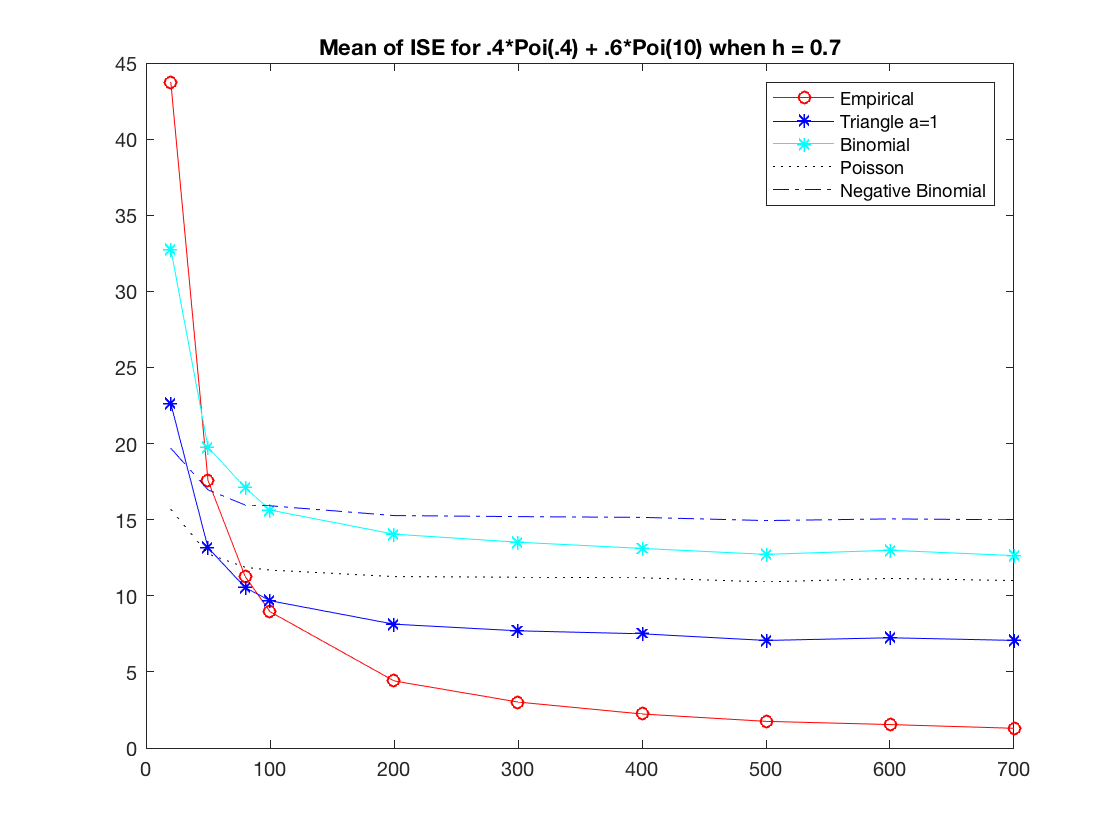} 
    \caption{$c=0.7$} 
    \label{fig7:d1} 
  \end{subfigure} 
  \caption{Mean of ISE for Model $0.4Poi(0.5) + 0.6Poi(10)$ as $n$ changes}
  \label{Fig-ISE-PG-1}
\end{figure}

In this section for discrete models, all simulation results use the empirical p.m.f.\ 
\[
g_n(y) = \frac{n_y}{n},\qquad n_y=\sum_{i=1}^n \mathbf{1}\{Y_i=y\}.
\]



\begin{table}[H]
\centering
\captionsetup{font=small}
\caption{Estimates Using Different Kernels for $0.4 Poi(0.5) + 0.6Poi(10)$}
\resizebox{1.0\textwidth}{!}{
\centering
\begin{tabular}{cc|ccc|ccc|ccc}
\toprule
Kernel               &     & \multicolumn{3}{c|}{Empirical}                          & \multicolumn{3}{c|}{Triangle $a=1$}                     & \multicolumn{3}{c}{Triangle $a=2$}                     \\ \hline
$n$                  &     & $\hat{\pi}_1$   & $\hat{\lambda}_1$ & $\hat{\lambda}_2$ & $\hat{\pi}_1$   & $\hat{\lambda}_1$ & $\hat{\lambda}_2$ & $\hat{\pi}_1$   & $\hat{\lambda}_1$ & $\hat{\lambda}_2$ \\ \hline
\multirow{3}{*}{20}  & EM  & 0.409 (0.113) & 0.544 (0.282)   & 9.904 (1.007)   & 0.409 (0.113) & 0.544 (0.282)   & 9.904 (1.007)   & 0.409 (0.113) & 0.544 (0.282)   & 9.904 (1.007)   \\
                     & HD  & 0.507 (0.148) & 0.410 (0.249)   & 9.605 (1.087)   & 0.488 (0.176) & 0.487 (0.270)   & 9.604 (1.145)   & 0.467 (0.183) & 0.498 (0.300)   & 9.578 (1.144)   \\
                     & vNED & 0.449 (0.135) & 0.456 (0.268)   & 9.556 (1.120)   & 0.413 (0.155) & 0.589 (0.292)   & 9.639 (1.146)   & 0.391 (0.157) & 0.631 (0.349)   & 9.638 (1.135)   \\ \hline
\multirow{3}{*}{50}  & EM  & 0.397 (0.074) & 0.490 (0.173)   & 10.01 (0.548)   & 0.397 (0.074) & 0.490 (0.173)   & 10.01 (0.548)   & 0.397 (0.074) & 0.490 (0.173)   & 10.01 (0.548)   \\
                     & HD  & 0.435 (0.083) & 0.442 (0.171)   & 9.788 (0.598)   & 0.406 (0.089) & 0.570 (0.211)   & 9.772 (0.599)   & 0.401 (0.091) & 0.572 (0.221)   & 9.768 (0.599)   \\
                     & vNED & 0.423 (0.079) & 0.463 (0.175)   & 9.866 (0.598)   & 0.390 (0.083) & 0.621 (0.216)   & 9.873 (0.582)   & 0.385 (0.085) & 0.620 (0.227)   & 9.873 (0.583)   \\ \hline
\multirow{3}{*}{100} & EM  & 0.399 (0.050) & 0.502 (0.113)   & 10.02 (0.415)   & 0.399 (0.050) & 0.502 (0.113)   & 10.02 (0.415)   & 0.399 (0.050) & 0.502 (0.113)   &10.02 (0.415)   \\
                     & HD  & 0.417 (0.053) & 0.478 (0.114)   & 9.849 (0.462)   & 0.398 (0.056) & 0.584 (0.152)   & 9.841 (0.473)   & 0.397 (0.057) & 0.584 (0.152)   & 9.835 (0.474)   \\
                     & vNED & 0.415 (0.052) & 0.482 (0.110)  & 9.917 (0.463)   & 0.395 (0.054) & 0.600 (0.147)   & 9.922 (0.464)   & 0.394 (0.055) & 0.598 (0.145)   & 9.915 (0.464)   \\ \hline
\multirow{3}{*}{200} & EM  & 0.398 (0.032)   & 0.502 (0.073)     & 10.07 (0.296)     & 0.398 (0.032)   & 0.502 (0.073)     & 10.07 (0.296)     & 0.398 (0.032)   & 0.502 (0.073)     & 10.07 (0.296)     \\
                     & HD  & 0.407 (0.033)   & 0.489 (0.074)     & 9.879 (0.305)     & 0.397 (0.035)   & 0.551 (0.085)     & 9.875 (0.307)     & 0.396 (0.035)   & 0.553 (0.087)     & 9.869 (0.307)     \\
                     & vNED & 0.408 (0.033)   & 0.488 (0.070)     & 9.941 (0.299)     & 0.398 (0.035)   & 0.553 (0.082)     & 9.942 (0.300)     & 0.397 (0.035)   & 0.554 (0.084)     & 9.934 (0.300)     \\ \hline
Kernel               &     & \multicolumn{3}{c|}{Binomial}                           & \multicolumn{3}{c|}{Poisson}                            & \multicolumn{3}{c}{Negative Binomial}                  \\ \hline
$n$                  &     & $\hat{\pi}_1$   & $\hat{\lambda}_1$ & $\hat{\lambda}_2$ & $\hat{\pi}_1$   & $\hat{\lambda}_1$ & $\hat{\lambda}_2$ & $\hat{\pi}_1$   & $\hat{\lambda}_1$ & $\hat{\lambda}_2$ \\ \hline
\multirow{3}{*}{20}  & EM  & 0.409 (0.113)   & 0.544 (0.282)     & 9.904 (1.007)     & 0.409 (0.113)   & 0.544 (0.282)     & 9.904 (1.007)     & 0.409 (0.113)   & 0.544 (0.282)     & 9.904 (1.007)     \\
                     & HD  & 0.517 (0.176)   & 0.354 (0.225)     & 9.441 (1.210)     & 0.565 (0.183)   & 0.426 (0.252)     & 9.380 (1.221)     & 0.636 (0.171)   & 0.466 (0.268)     & 9.400 (1.270)     \\
                     & vNED & 0.425 (0.153)   & 0.408 (0.230)     & 9.453 (1.234)     & 0.473 (0.165)   & 0.519 (0.255)     & 9.396 (1.220)     & 0.563 (0.164)   & 0.565 (0.287)     & 9.366 (1.511)     \\ \hline
\multirow{3}{*}{50}  & EM  & 0.397 (0.074)   & 0.490 (0.173)     & 10.01 (0.548)     & 0.397 (0.074)   & 0.490 (0.173)     & 10.01 (0.548)     & 0.397 (0.074)   & 0.490 (0.173)     & 10.01 (0.548)     \\
                     & HD  & 0.389 (0.090)   & 0.438 (0.153)     & 9.503 (0.635)     & 0.466 (0.095)   & 0.604 (0.213)     & 9.582 (0.727)     & 0.532 (0.093)   & 0.681 (0.255)     & 9.586 (0.805)     \\
                     & vNED & 0.368 (0.081)   & 0.463 (0.144)     & 9.554 (0.614)     & 0.447 (0.086)   & 0.651 (0.191)     & 9.649 (0.733)     & 0.521 (0.084)   & 0.732 (0.230)     & 9.618 (0.896)     \\ \hline
\multirow{3}{*}{100} & EM  & 0.399 (0.050)   & 0.502 (0.113)     & 10.02 (0.415)     & 0.399 (0.050)   & 0.502 (0.113)     & 10.02 (0.415)     & 0.399 (0.050)   & 0.502 (0.113)     & 10.02 (0.415)     \\
                     & HD  & 0.363 (0.053)   & 0.492 (0.111)     & 9.510 (0.522)     & 0.451 (0.058)   & 0.765 (0.196)     & 9.752 (0.576)     & 0.517 (0.060)   & 0.903 (0.260)     & 9.812 (0.630)     \\
                     & vNED & 0.357 (0.050)   & 0.502 (0.101)     & 9.562 (0.498)     & 0.448 (0.054)   & 0.777 (0.168)     & 9.818 (0.552)     & 0.519 (0.055)   & 0.912 (0.224)     & 9.832 (0.646)     \\ \hline
\multirow{3}{*}{200} & EM  & 0.398 (0.032)   & 0.502 (0.073)     & 10.07 (0.296)     & 0.398 (0.032)   & 0.502 (0.073)     & 10.07 (0.296)     & 0.398 (0.032)   & 0.502 (0.073)     & 10.07 (0.296)     \\
                     & HD  & 0.350 (0.031)   & 0.513 (0.072)     & 9.491 (0.322)     & 0.448 (0.035)   & 0.878 (0.125)     & 9.911 (0.374)     & 0.517 (0.035)   & 1.089 (0.177)     & 10.09 (0.445)     \\
                     & vNED & 0.350 (0.031)   & 0.515 (0.067)     & 9.547 (0.310)     & 0.449 (0.034)   & 0.861 (0.104)     & 9.942 (0.353)     & 0.520 (0.033)   & 1.055 (0.148)     & 10.05 (0.438)     \\ \bottomrule
\end{tabular}
}
\label{tab:PoisAve}
\end{table}

Figure \ref{fig:Poislambda1} and Figure \ref{fig:Poislambda2} are figure illustrations of Table \ref{tab:PoisAve}.

\begin{figure}[H]
    \centering
    \includegraphics[width=0.8\textwidth]{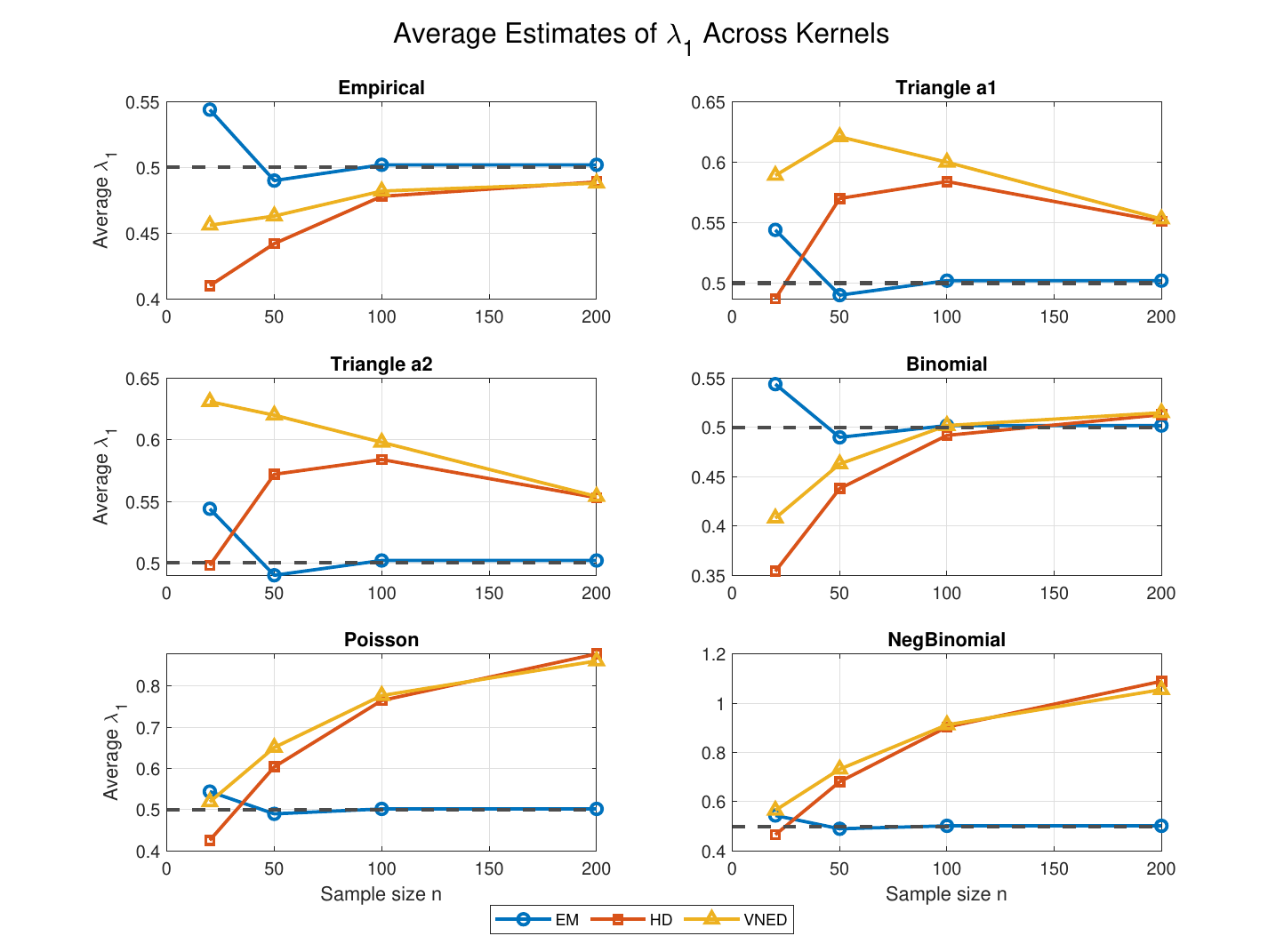}
    \caption{\small Average estimates of $\lambda_1$ across kernels for sample sizes $n=20,50,100,200$. 
    Solid lines represent EM, HD, and vNED methods; dashed horizontal line indicates the true value $0.4$.}
    \label{fig:Poislambda1}
\end{figure}

\begin{figure}[H]
    \centering
    \includegraphics[width=0.8\textwidth]{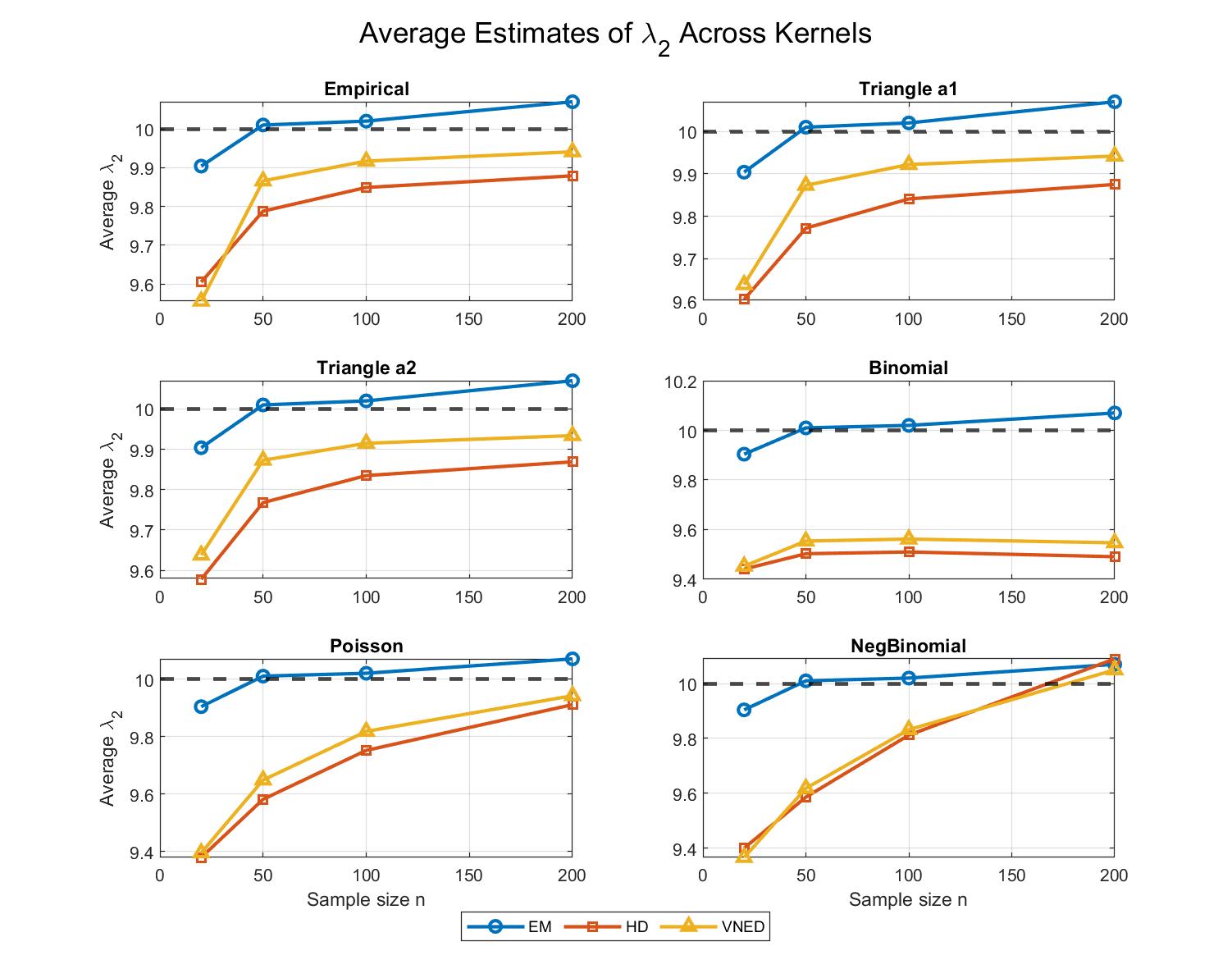}
    \caption{\small Average estimates of $\lambda_2$ across kernels for sample sizes $n=20,50,100,200$. 
    Solid lines represent EM, HD, and vNED methods; dashed horizontal line indicates the true value $0.4$.}
    \label{fig:Poislambda2}
\end{figure}

\noindent \textbf{PG Mixture Model with Discrete Kernel}

Now suppose the underlying model is two component PG mixture given by 
$ f(\cdot; \bm{\theta}) = 0.3PG(10, 1) + 0.7PG(1, 2),$ where $PG(\alpha,\beta)$ denotes a Poisson--Gamma mixture with $\mathrm{Gamma}(\alpha,\beta)$ mixing (shape $\alpha$, rate $\beta$).
For $\epsilon\in\{0,0.05,0.10,0.15,0.20,0.30\}$, we inject outliers by replacing an $\epsilon$ fraction of observations with the fixed value $50$:
$Y_i^{(\epsilon)}=(1-B_i)Y_i + 50\,B_i$ with $B_i\sim\mathrm{Bernoulli}(\epsilon)$ independently.
At each $\epsilon$ we estimate by EM, HMIX, and vNEDMIX. For each parameter, the figure plots the Monte Carlo average (over $5{,}000$ replications) versus $\epsilon$; the dotted line marks the truth.

\begin{table}[H]
\centering
\scriptsize
\captionsetup{font=small}
\caption{Estimates Using Different Kernels for $0.3 PG(10, 1) + 0.7PG(1, 2)$}
\label{ISE-PG-2}
\begin{tabular}{ccccccc}
\toprule
$n=1000$    &        & $\hat{\pi}_1$ & $\hat{\alpha}_1$ & $\hat{\beta}_1$ & $\hat{\alpha}_2$ & $\hat{\beta}_2$ \\ \hline
EM          &         & 0.298 (0.018) & 10.63 (2.827)    & 1.058 (0.269)   & 1.091 (0.378)    & 2.216 (0.938)   \\ \hline
\multirow{2}{*}{Empirical}         & HD  & 0.289 (0.018) & 13.60 (4.089)    & 1.364 (0.391)   & 1.009 (0.315)    & 1.985 (0.760)   \\
                                   & vNED & 0.289 (0.018) & 12.82 (3.876)    & 1.287 (0.374)   & 1.043 (0.346)    & 2.081 (0.845)   \\ \hline
\multirow{2}{*}{Triangle $a=1$}    & HD  & 0.292 (0.018) & 13.54 (4.033)    & 1.358 (0.386)   & 1.079 (0.331)    & 2.072 (0.776)   \\
                                   & vNED & 0.292 (0.018) & 12.68 (3.766)    & 1.273 (0.363)   & 1.117 (0.363)    & 2.175 (0.862)   \\ \hline
\multirow{2}{*}{Triangle $a=2$}    & HD  & 0.292 (0.018) & 13.89 (4.270)    & 1.388 (0.407)   & 1.033 (0.281)    & 1.865 (0.624)   \\
                                   & vNED & 0.293 (0.018) & 12.93 (3.904)    & 1.293 (0.375)   & 1.064 (0.299)    & 1.945 (0.672)   \\ \hline
\multirow{2}{*}{Binomial}          & HD  & 0.272 (0.017) & 14.21 (4.622)    & 1.420 (0.442)   & 1.226 (0.178)    & 1.826 (0.368)   \\
                                   & vNED & 0.275 (0.017) & 12.61 (3.787)    & 1.262 (0.363)   & 1.263 (0.182)    & 1.907 (0.376)   \\ \hline
\multirow{2}{*}{Poisson}           & HD  & 0.209 (0.016) & 15.90 (4.911)    & 1.416 (0.423)   & 0.865 (0.044)    & 0.703 (0.069)   \\
                                   & vNED & 0.216 (0.016) & 12.76 (3.264)    & 1.146 (0.283)   & 0.890 (0.042)    & 0.745 (0.067)   \\ \hline
\multirow{2}{*}{Negative Binomial} & HD  & 0.148 (0.014) & 29.87 (16.50)    & 2.346 (1.290)   & 0.824 (0.029)    & 0.447 (0.036)   \\
                                   & vNED & 0.152 (0.014) & 23.22 (9.331)    & 1.845 (0.745)   & 0.834 (0.025)    & 0.459 (0.031)   \\ \bottomrule
\end{tabular}
\end{table}

Figure \ref{Fig-ISE-PG-2} compare the mean of ISE for two component PG mixture model as $n$ increases when choosing different bandwidth $c$. This indicates that (i) when the sample size is small, discrete kernels are beneficial, (ii) choosing proper discrete bandwidth is also crucial, and (iii) empirical kernel may be the best choice for this model. 

\begin{figure}[H] 
  \begin{subfigure}[b]{0.5\linewidth}
    \centering
    \includegraphics[width=0.75\linewidth]{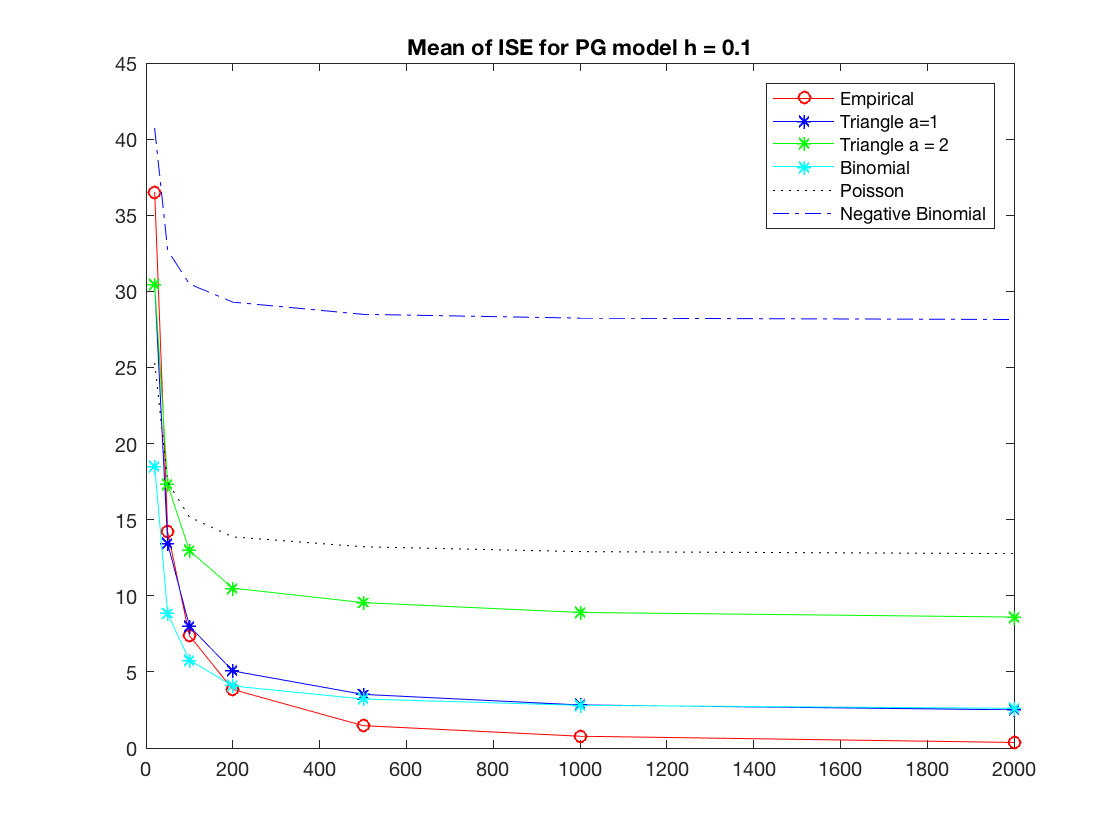} 
    \caption{$c=0.1$} 
    \label{fig7:a2} 
    \vspace{4ex}
  \end{subfigure}
  \begin{subfigure}[b]{0.5\linewidth}
    \centering
    \includegraphics[width=0.75\linewidth]{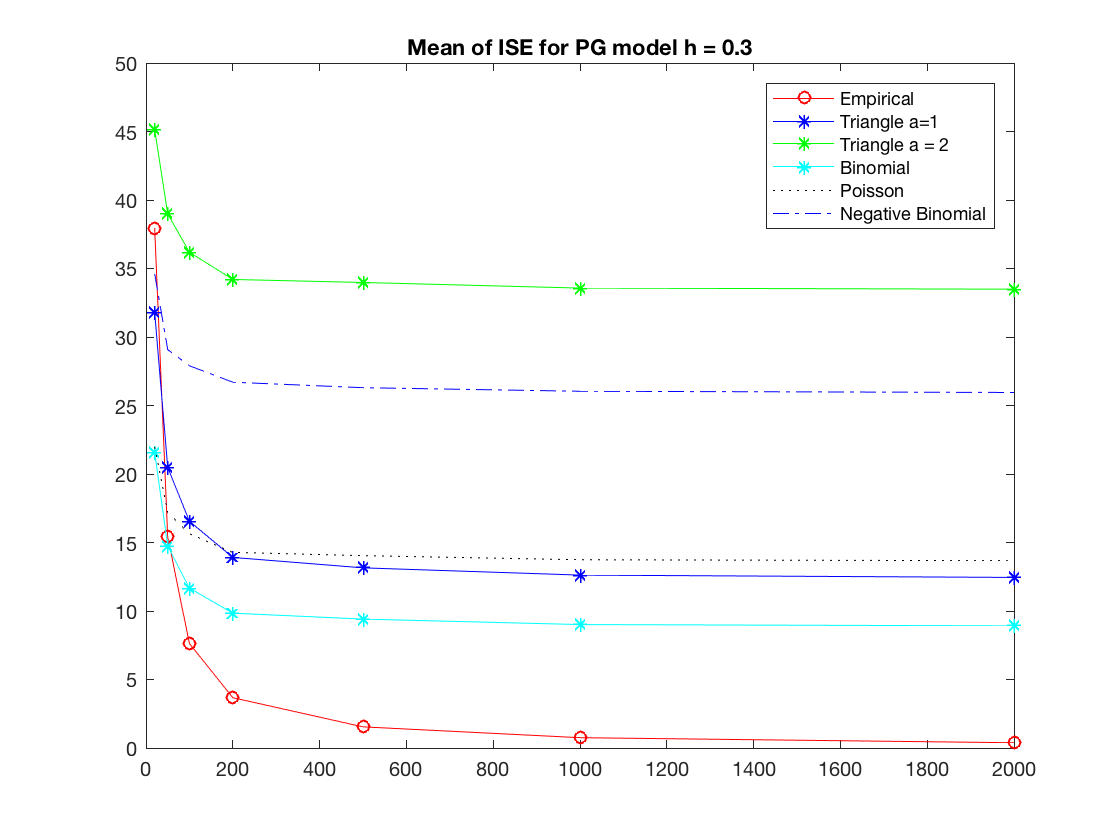} 
    \caption{$c=0.3$} 
    \label{fig7:b2} 
    \vspace{4ex}
  \end{subfigure} 
  \begin{subfigure}[b]{0.5\linewidth}
    \centering
    \includegraphics[width=0.75\linewidth]{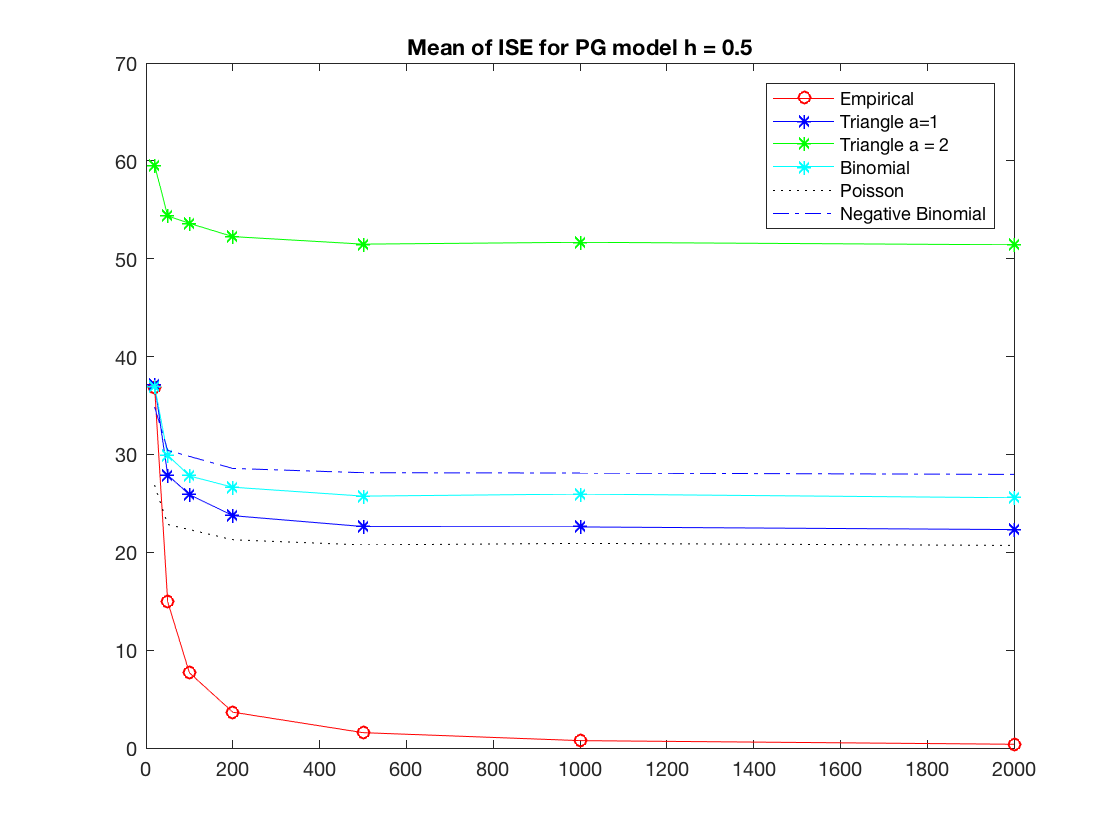} 
    \caption{$c=0.5$} 
    \label{fig7:c2} 
  \end{subfigure}
  \begin{subfigure}[b]{0.5\linewidth}
    \centering
    \includegraphics[width=0.75\linewidth]{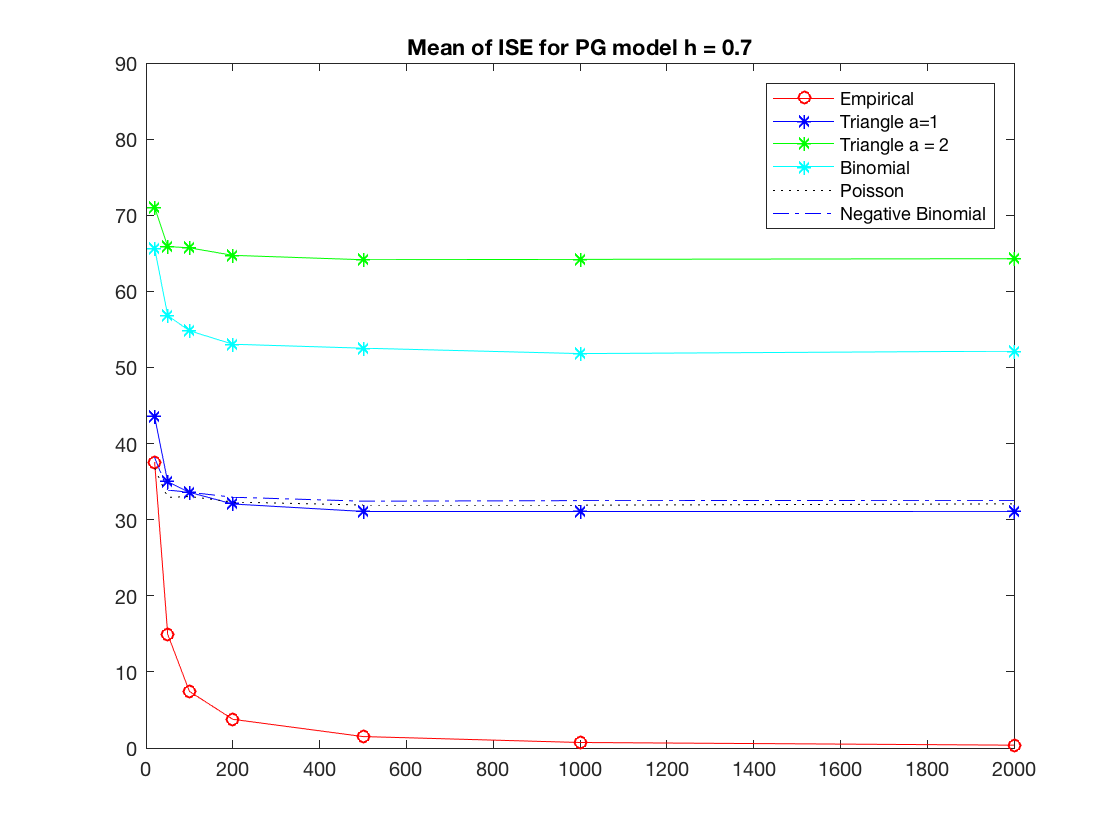} 
    \caption{$c=0.7$} 
    \label{fig7:d2} 
  \end{subfigure} 
  \caption{Mean of ISE for Model $0.3PG(10, 1) + 0.7PG(1,2)$ as $n$ changes}
  \label{Fig-ISE-PG-2}
\end{figure}

\noindent \textbf{PL Mixture to Test Robustness}


We simulate from a two–component PL mixture with
  $\pi_1=0.3$, $(\mu_1,\sigma_1)=(3,0.5)$ and $(\mu_2,\sigma_2)=(1,0.5)$.
  For each contamination level $\epsilon\in\{0,0.05,0.10,0.15,0.20,0.30\}$, an $\epsilon$-fraction
  of observations is replaced by a large outlier value ($50$). Sample size $n=2000$,
  replications $R=5000$.
  Panels (a)--(e) plot the replication averages of the estimated parameters
  $(\pi_1,\mu_1,\sigma_1,\mu_2,\sigma_2)$ versus $\epsilon$ for three methods:
  EM (MLE), HMIX (Hellinger divergence), and vNEDMIX (vNED divergence).
  Horizontal dotted lines mark the true parameter values.
  For PL likelihood evaluations we use fixed Gauss--Hermite quadrature (common nodes/weights across methods).
  Warm starts are used across contamination levels to stabilize fits, and axis limits are set robustly
  so occasional non–convergent runs do not distort the display.

\begin{figure}[H]
    \centering
    \includegraphics[width=0.8\textwidth]{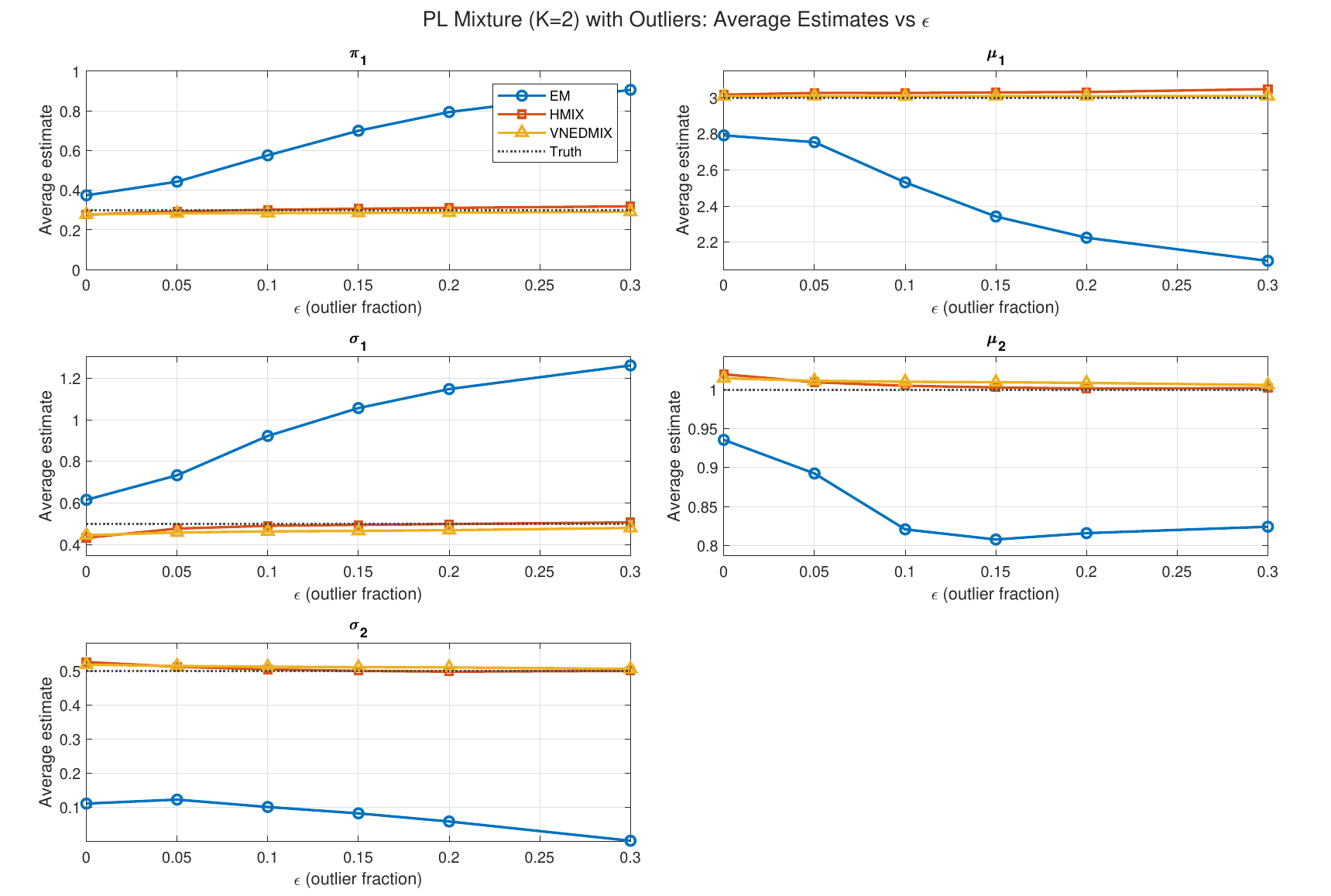}
    \caption{\small Outlier robustness for a Poisson--Lognormal (PL) mixture (known $K=2$).}
    \label{fig:pl_outlier}
\end{figure}

This figure assesses robustness of three estimation strategies for the PL mixture under increasing contamination.
All methods perform well at $\epsilon=0$. As $\epsilon$ grows, EM becomes
progressively more sensitive to the injected large counts, typically inflating location/scale parameters for the
high-mean component and, to a lesser extent, perturbing the mixture weight $\pi_1$. In contrast, HMIX and vNEDMIX downweight the influence
of extreme observations through their discrepancy objectives, and therefore track the dotted truth lines more closely
over a wider range of $\epsilon$. The divergence-based methods thus exhibit markedly improved stability for both
location $(\mu_k)$ and dispersion $(\sigma_k)$ parameters under contamination, while maintaining competitive accuracy
in the clean setting. The use of fixed Gauss--Hermite quadrature provides fast, consistent likelihood approximations
for the PL components across all methods.

\noindent \textbf{Continuous Models}

In this subsection, we consider the continuous model case. Specifically, we consider the two component mixture normal model, i.e.,
\begin{align*}
    f\left(\cdot; \bm{\theta} \right) = \pi_1 N\left(\mu_1, \sigma_1^2\right) + (1-\pi_1) N\left(\mu_2, \sigma_2^2\right) .
\end{align*}
In the following simulation, we let true parameters $\pi_1 = 0.3, \mu_1 = 10; \sigma_1 = 1, \mu_2 =0, \sigma_2 = 1,$ and the sample size is $n= 200$. For the nonparametric density estimate $g_n(\cdot)$, we use the Epanechnikov kernel and the optimal bandwidth is applied. Also, the number of components $K$ is unknown and estimated based on the DIC criteria as described before. 

\begin{table}[H]
\centering
\scriptsize
\caption{Two-Component Normal (Epanechnikov kernel; 100\% chose $K=2$)}
\begin{tabular}{lcccccc}
\toprule
 &  & $\hat{\pi}_1$ & $\hat{\mu}_1$ & $\hat{\sigma}_1$ & $\hat{\mu}_2$ & $\hat{\sigma}_2$ \\
\midrule
\multirow{3}{*}{EM}
 & Ave & 0.299 & 9.998 & 0.986 & 0.000 & 0.995 \\
 & StD & 0.032 & 0.126 & 0.091 & 0.086 & 0.059 \\
 & MSE & 0.001 & 0.016 & 0.009 & 0.007 & 0.004 \\
\midrule
\multirow{3}{*}{HMIX}
 & Ave & 0.299 & 9.997 & 1.236 & 0.000 & 1.250 \\
 & StD & 0.032 & 0.126 & 0.073 & 0.086 & 0.050 \\
 & MSE & 0.001 & 0.016 & 0.061 & 0.007 & 0.065 \\
\midrule
\multirow{3}{*}{vNEDMIX}
 & Ave & 0.299 & 9.997 & 1.243 & 0.000 & 1.253 \\
 & StD & 0.032 & 0.126 & 0.073 & 0.086 & 0.050 \\
 & MSE & 0.001 & 0.016 & 0.064 & 0.007 & 0.067 \\
\bottomrule
\end{tabular}
\end{table}

We observe that for continuous normal mixture models, the HMIX, vNEDMIX are competitive to the EM algorithm.

\begin{table}[H]
\scriptsize
\centering
\caption{Average Runtime (seconds) per simulation for mixture models with $n=20{,}000$ and true $K=2$.}
\label{tab:mix-runtime-rotated}
\begin{tabular}{lcc}
\toprule
 & \textbf{$K$ known} & \textbf{$K$ unknown} \\
\midrule
\textbf{Pois} & 0.2 s & 16 s \\
\textbf{PG}   & 0.7 s & 48 s \\
\textbf{PL}   & 46 s & 66 s \\
\bottomrule
\end{tabular}
\end{table}

\newpage
\section*{J: Additional Image Analysis}\label{supp:add-image-ana}

\textbf{Original Image Recovery}


We observe from Table \ref{chap5:model-selection-supp} that the BIC, HIC, and vNEDIC all prefer $K=5$. However, when we recover the images the difference is negligible (figures not shown here). Thus, we choose  $K=3$. Figure \ref{phan-image} and Figure \ref{Lena-image} show that the EM, HMIX, and vNED algorithms are comaparable in reconstructing the images.

\begin{figure}[H] 
  \begin{subfigure}[b]{0.5\linewidth}
    \centering
    \includegraphics[width=0.75\linewidth]{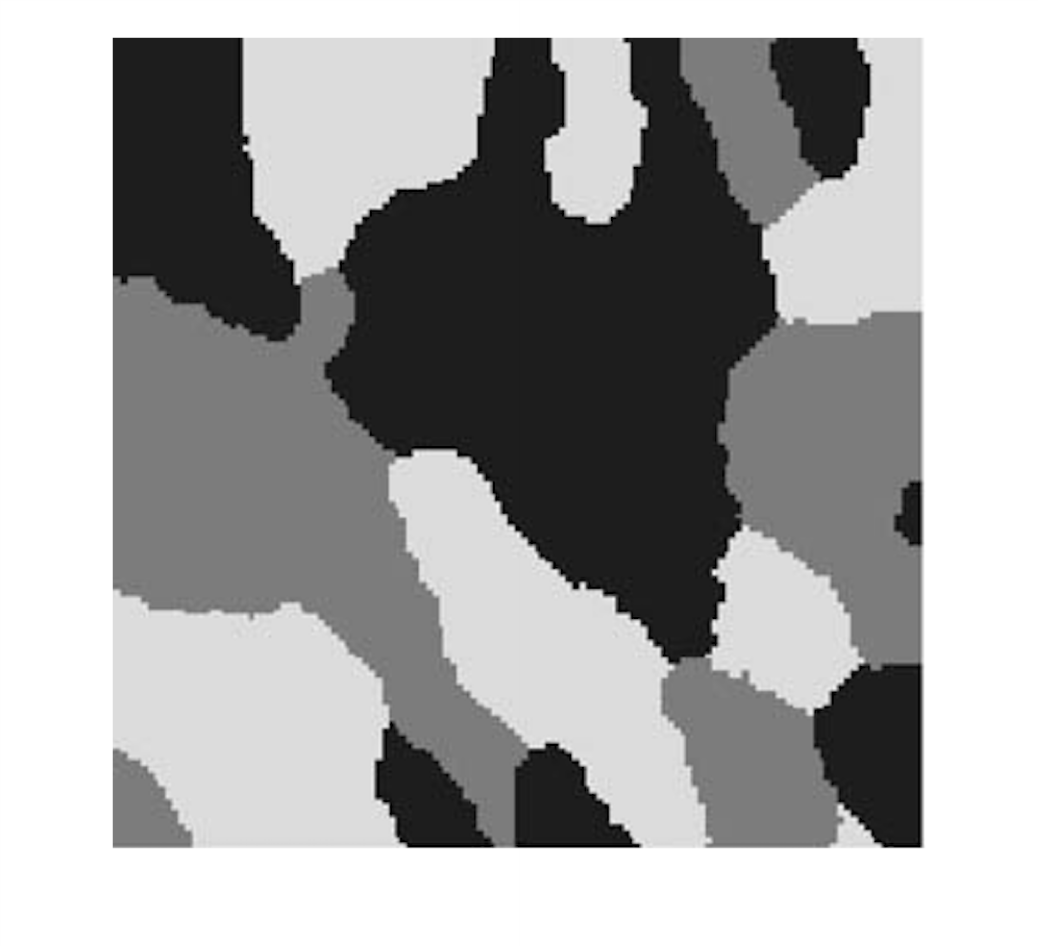} 
    \caption{Original three-class phantom image} 
    \label{ori:image_a} 
    \vspace{4ex}
  \end{subfigure}
  \begin{subfigure}[b]{0.5\linewidth}
    \centering
    \includegraphics[width=0.75\linewidth]{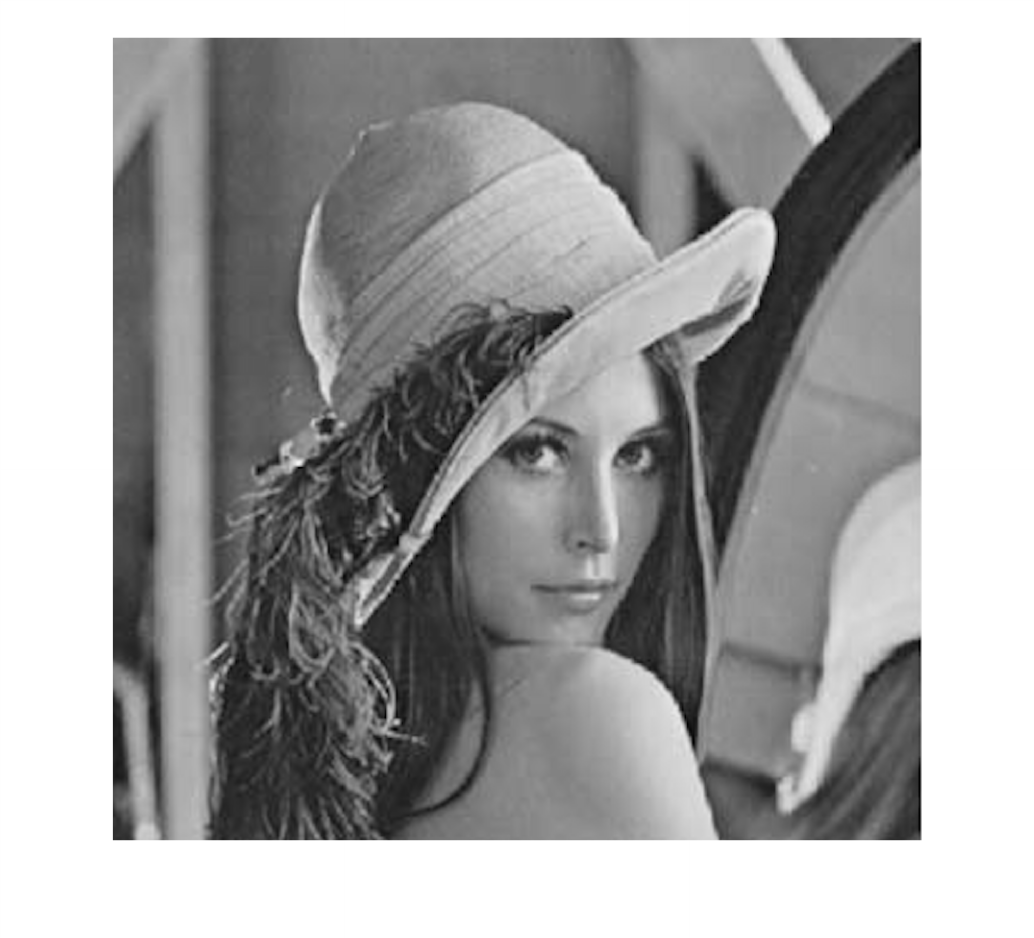} 
    \caption{Original lena image with 256 gray scale values} 
    \label{ori:image_b} 
    \vspace{4ex}
  \end{subfigure} 
\end{figure}

\begin{figure}[H] 
  \begin{subfigure}[b]{0.33\linewidth}
    \centering
    \includegraphics[width=0.75\linewidth]{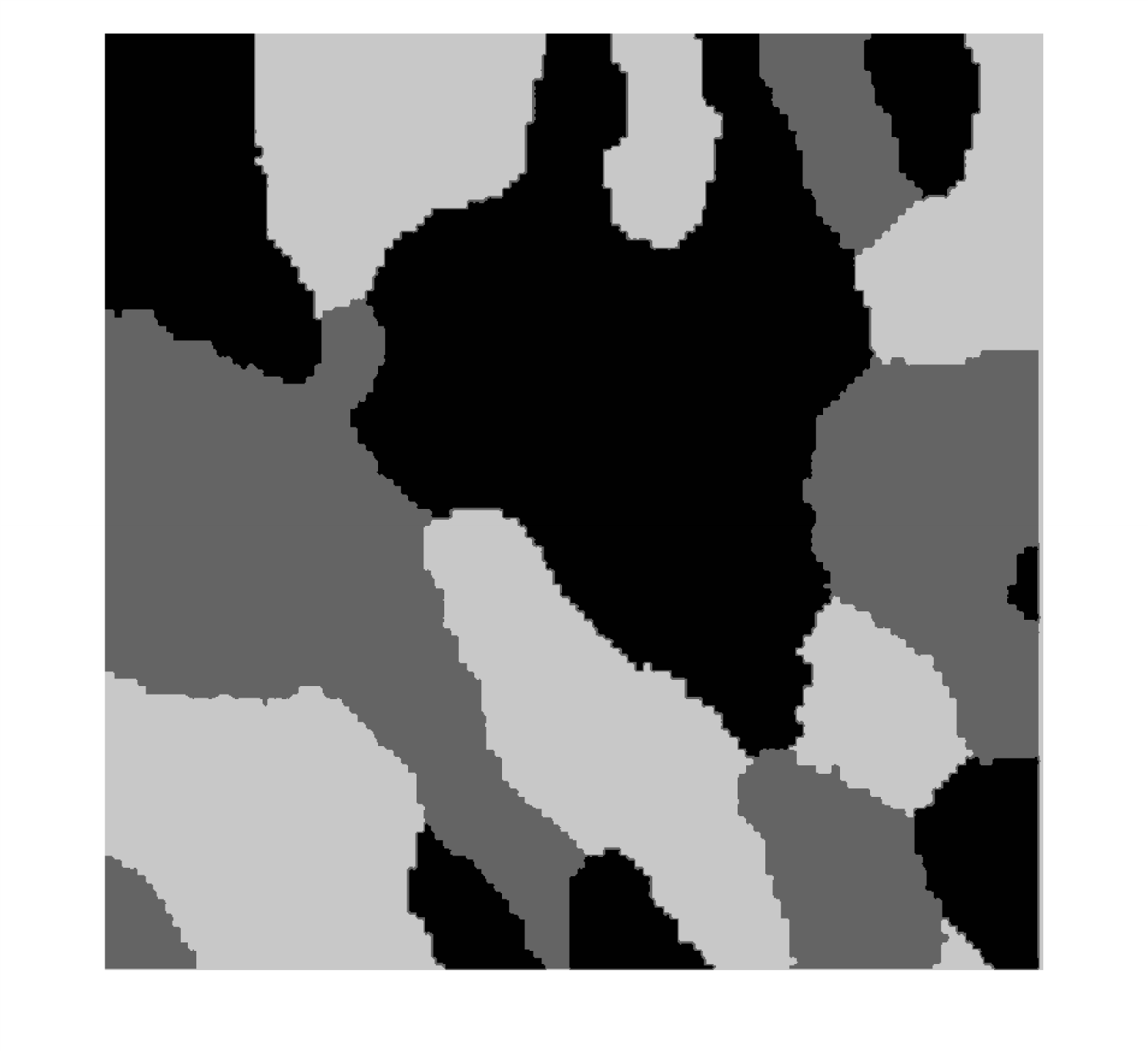} 
    \caption{EM algorithm} 
    \label{em_a:poiss} 
    \vspace{4ex}
  \end{subfigure}
  \begin{subfigure}[b]{0.33\linewidth}
    \centering
    \includegraphics[width=0.75\linewidth]{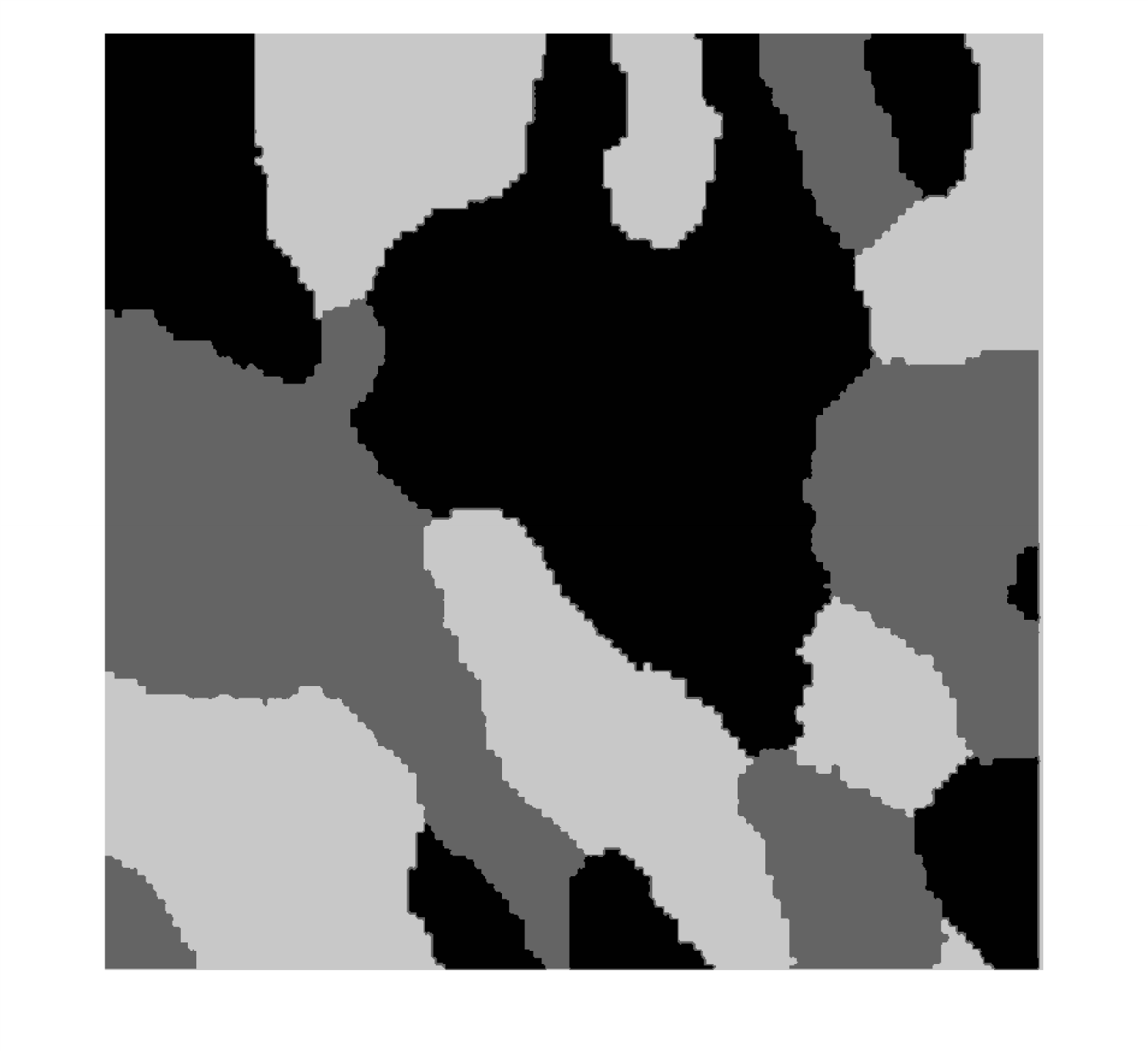} 
    \caption{HMIX algorithm} 
    \label{hmix_a:poiss} 
    \vspace{4ex}
  \end{subfigure} 
  \begin{subfigure}[b]{0.33\linewidth}
    \centering
    \includegraphics[width=0.75\linewidth]{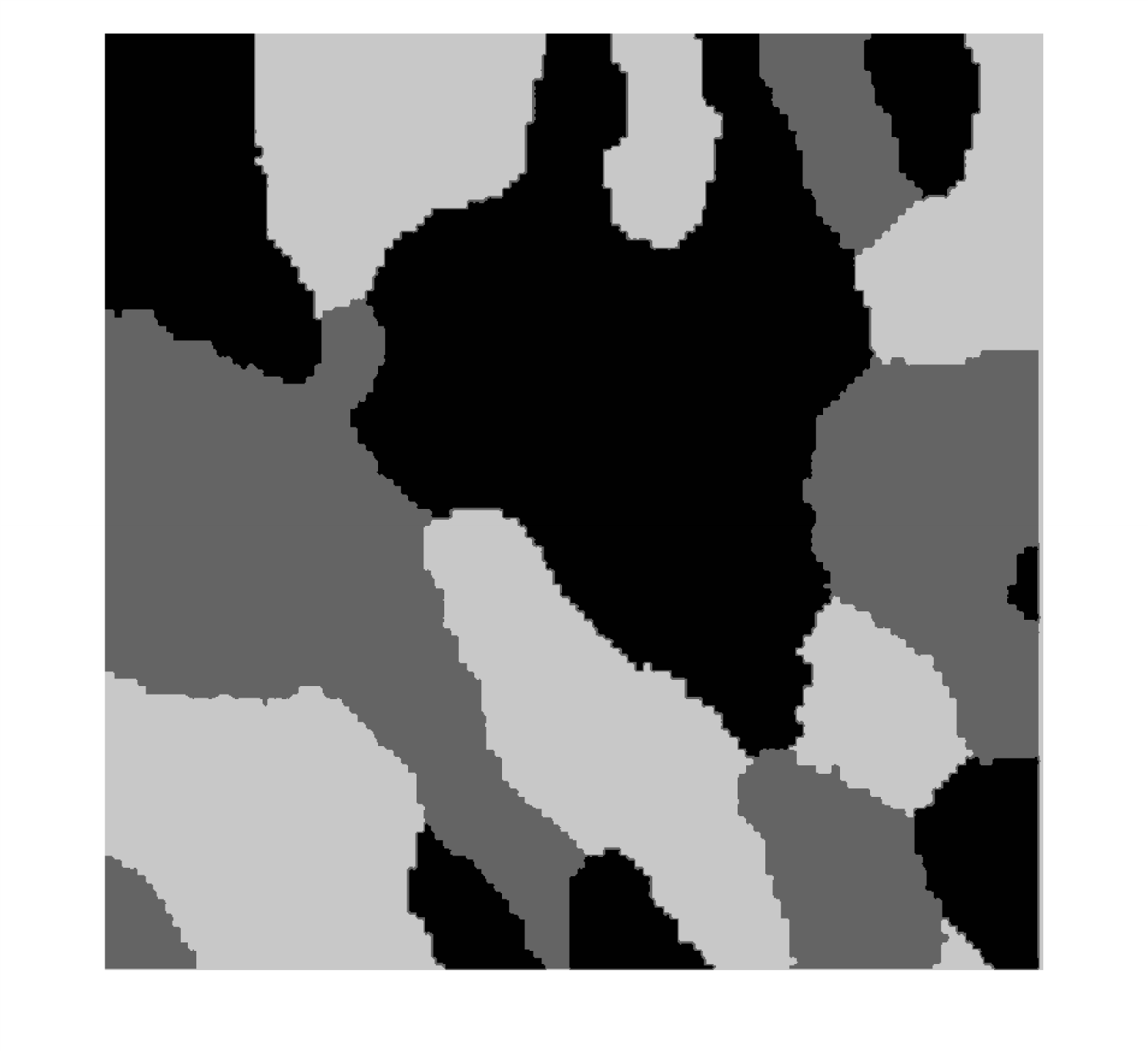} 
    \caption{vNEDMIX algorithm} 
    \label{vNEDmix_a:poiss} 
    \vspace{4ex}
  \end{subfigure}
  \caption{Three-class phantom image segamentation $K=3$}
  \label{phan-image}
\end{figure}

\begin{figure}[H] 
  \begin{subfigure}[b]{0.33\linewidth}
    \centering
    \includegraphics[width=0.75\linewidth]{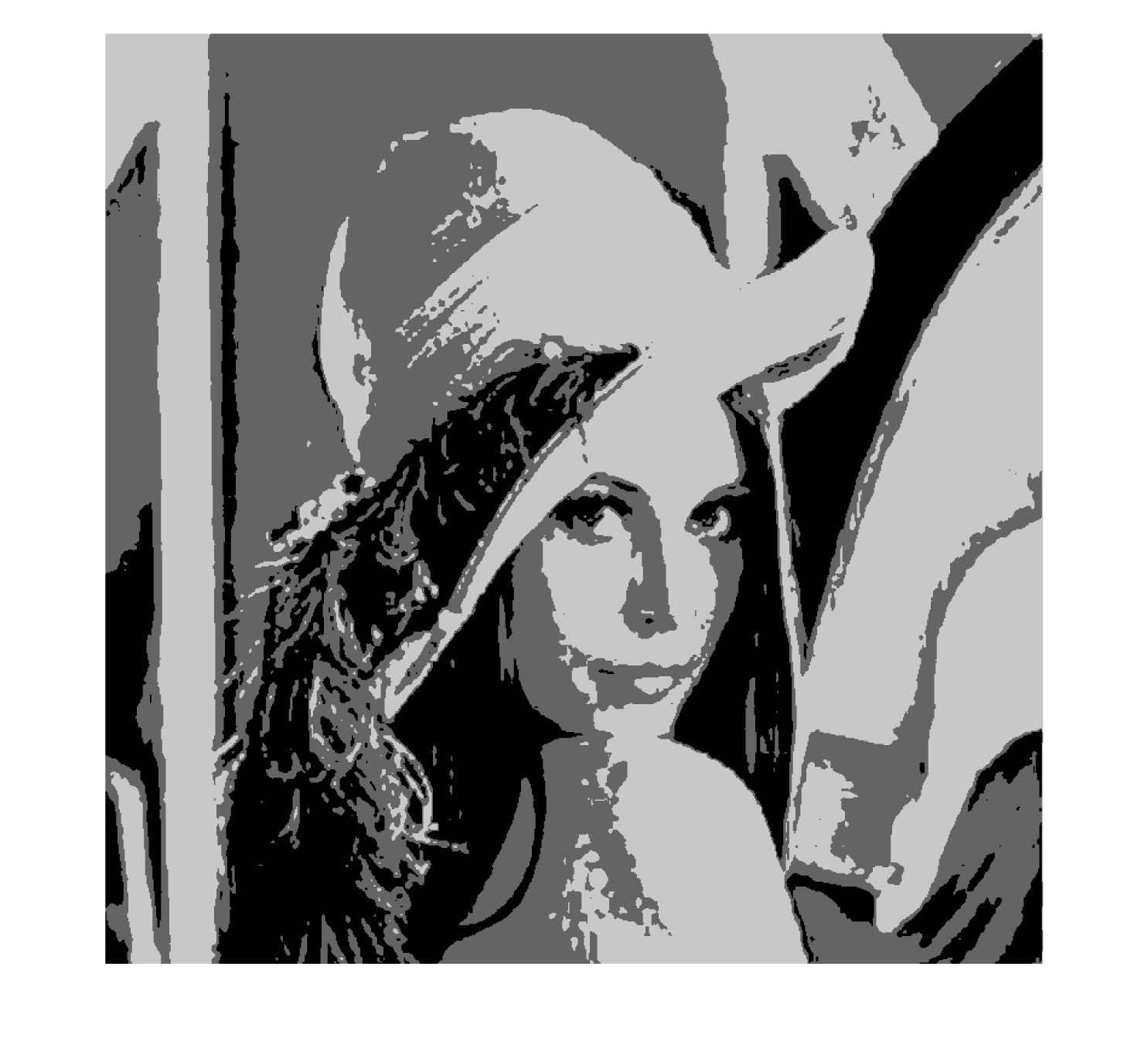} 
    \caption{EM algorithm} 
    \label{em_c:poiss} 
    \vspace{4ex}
  \end{subfigure}
  \begin{subfigure}[b]{0.33\linewidth}
    \centering
    \includegraphics[width=0.75\linewidth]{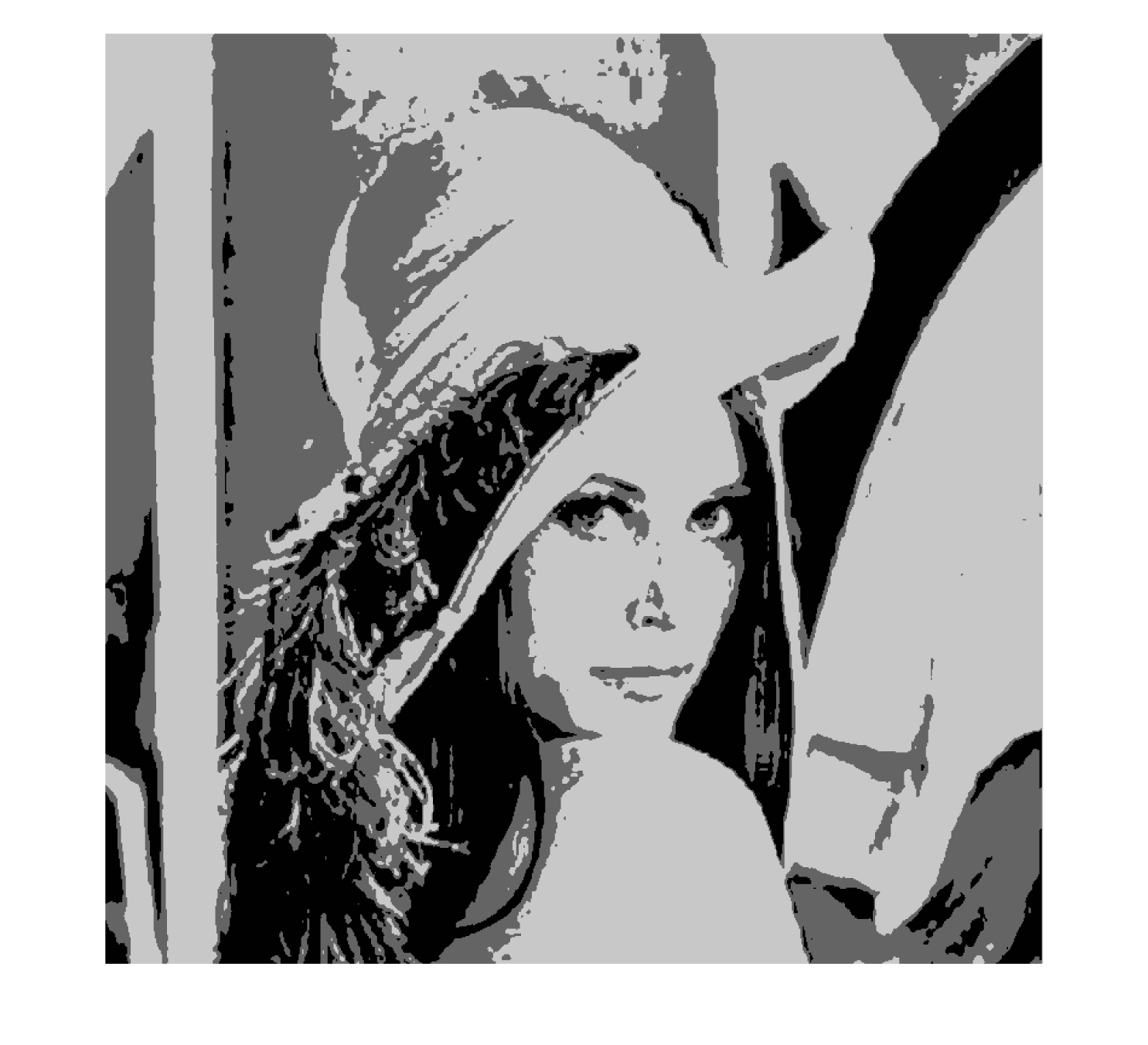} 
    \caption{HMIX algorithm} 
    \label{hmix_c:poiss} 
    \vspace{4ex}
  \end{subfigure} 
  \begin{subfigure}[b]{0.33\linewidth}
    \centering
    \includegraphics[width=0.75\linewidth]{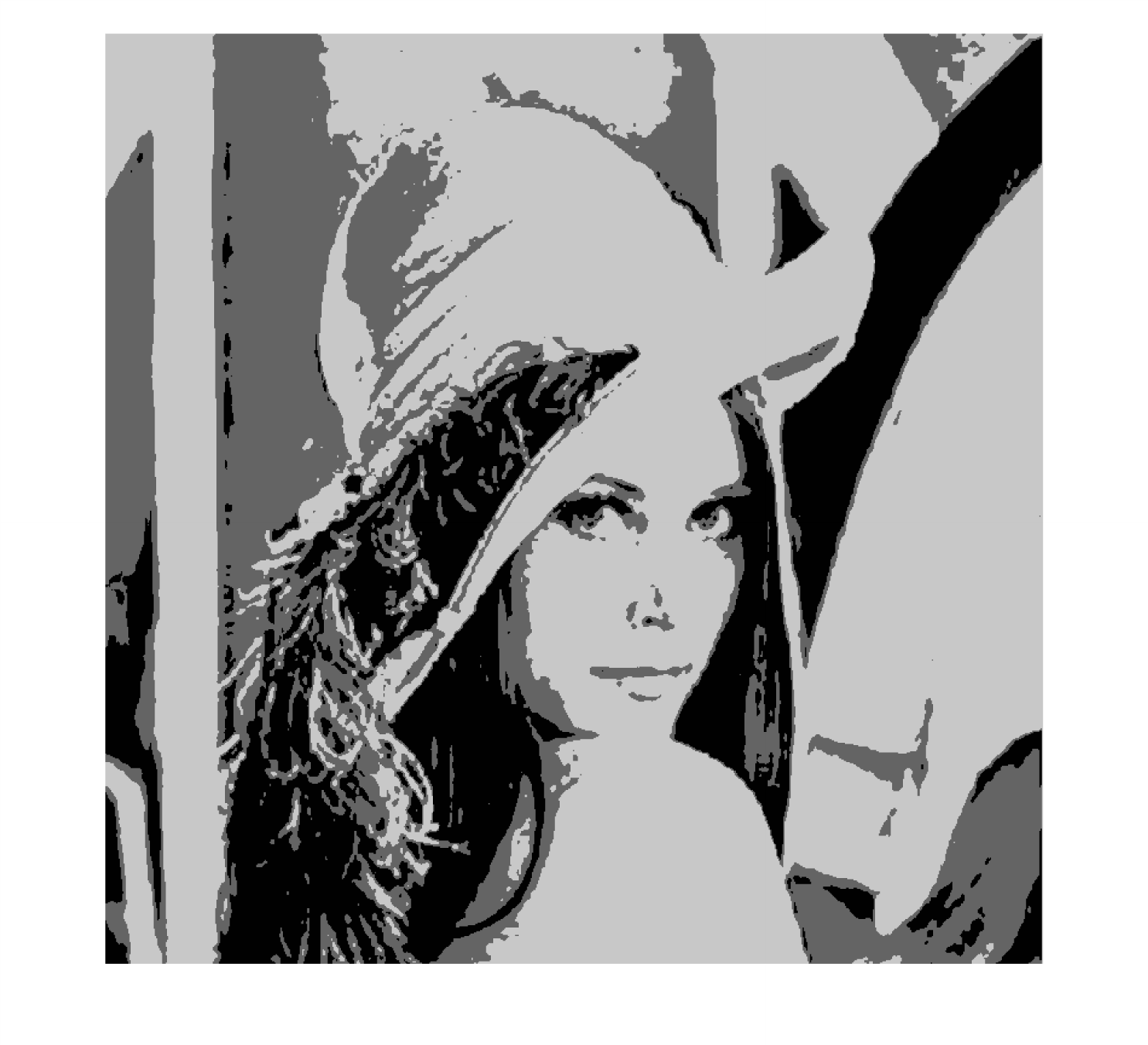} 
    \caption{vNEDMIX algorithm} 
    \label{vNEDmix_c:poiss} 
    \vspace{4ex}
  \end{subfigure}
  \caption{Lena Image Segamentation $K=3$}
  \label{Lena-image}
\end{figure}

The following tables are additional analysis for Phontom image and Lena image.

\begin{table}[H]
\centering
\scriptsize
\caption{Comparison of Different Model Selection Criteria (Phantom and Lena)}
\label{chap5:model-selection-supp}
\resizebox{\textwidth}{!}{%
\begin{tabular}{@{}l l c c c c c@{}}
\toprule
Image & Criterion & \# of components = 1 & 2 & 3 & 4 & 5 \\
\midrule
\multirow{3}{*}{Phantom}
& BIC    & $2.327\times 10^7$ & $1.021\times 10^7$ & $\mathbf{5.584}\times \mathbf{10}^{\mathbf{6}}$ & $5.584\times 10^6$ & $5.584\times 10^{6}$ \\
& HIC    & 1.353              & 0.998              & \textbf{0.804}                                   & 0.804               & 0.804               \\
& vNEDIC & 0.897              & 0.817              & \textbf{0.768}                                   & 0.768               & 0.768               \\
\midrule
\multirow{3}{*}{Lena}
& BIC    & $1.599\times 10^7$ & $8.497\times 10^6$ & $6.965\times 10^6$ & $6.399\times 10^6$ & $\mathbf{6.290}\times \mathbf{10}^{\mathbf{6}}$ \\
& HIC    & 0.689              & 0.396              & 0.192              & 0.062              & \textbf{0.022} \\
& vNEDIC & 0.648              & 0.546              & 0.479              & 0.407              & \textbf{0.382} \\
\bottomrule
\end{tabular}%
}
\end{table}

\begin{table}[H]
\centering
\scriptsize
\caption{Parameter Point Estimates for Phantom Image}
\resizebox{\textwidth}{!}{%
\begin{tabular}{@{}l l *{9}{c}@{}}
\toprule
    &        & $\hat{\pi}_1$ & $\hat{\lambda}_1$ & $\hat{\pi}_2$ & $\hat{\lambda}_2$ & $\hat{\pi}_3$ & $\hat{\lambda}_3$ & $\hat{\pi}_4$ & $\hat{\lambda}_4$ & $\hat{\lambda}_5$ \\
\midrule
\multirow{3}{*}{$K=2$}
    & EM        & 0.3665 & 28.99 & 0.6335 & 173.9 &        &       &       &       &       \\
    & HMIX     & 0.5835 & 28.79 & 0.4165 & 218.1 &        &       &       &       &       \\
    & vNEDMIX   & 0.6030 & 28.96 & 0.3970 & 218.7 &        &       &       &       &       \\
\midrule
\multirow{3}{*}{$K=3$}
    & EM        & 0.3636 & 28.65 & 0.3050 & 123.5 & 0.3314 & 219.3 &       &       &       \\
    & HMIX     & 0.4097 & 28.79 & 0.2987 & 124.0 & 0.2916 & 218.2 &       &       &       \\
    & vNEDMIX   & 0.3851 & 28.90 & 0.2857 & 124.2 & 0.3291 & 218.7 &       &       &       \\
\midrule
\multirow{3}{*}{$K=4$}
    & EM        & 0.1966 & 28.65 & 0.1670 & 28.65 & 0.3050 & 123.5 & 0.3314 & 219.3 &       \\
    & HMIX     & 0.2215 & 28.79 & 0.1882 & 28.79 & 0.2987 & 124.0 & 0.3609 & 218.2 &       \\
    & vNEDMIX   & 0.1991 & 28.90 & 0.1860 & 28.90 & 0.2857 & 124.2 & 0.3291 & 218.7 &       \\
\midrule
\multirow{3}{*}{$K=5$}
    & EM        & 0.1818 & 28.65 & 0.1818 & 28.65 & 0.3050 & 123.5 & 0.1657 & 219.3 & 219.3 \\
    & HMIX     & 0.2049 & 28.79 & 0.2049 & 28.79 & 0.2987 & 124.0 & 0.1458 & 218.2 & 218.2 \\
    & vNEDMIX   & 0.1926 & 28.90 & 0.1926 & 28.90 & 0.2857 & 124.2 & 0.1646 & 218.7 & 218.7 \\
\bottomrule
\end{tabular}%
}
\end{table}

\begin{table}[H]
\centering
\scriptsize
\caption{Parameter Point Estimates for Lena Image}
\resizebox{\textwidth}{!}{%
\begin{tabular}{@{}l l *{9}{c}@{}}
\toprule
    &        & $\hat{\pi}_1$ & $\hat{\lambda}_1$ & $\hat{\pi}_2$ & $\hat{\lambda}_2$ & $\hat{\pi}_3$ & $\hat{\lambda}_3$ & $\hat{\pi}_4$ & $\hat{\lambda}_4$ & $\hat{\lambda}_5$ \\
\midrule
\multirow{3}{*}{$K=2$}
    & EM        & 0.3788 & 67.34 & 0.6212 & 151.1 &        &       &       &       &       \\
    & HMIX     & 0.3747 & 91.62 & 0.6253 & 146.1 &        &       &       &       &       \\
    & vNEDMIX   & 0.3119 & 48.28 & 0.6881 & 141.1 &        &       &       &       &       \\
\midrule
\multirow{3}{*}{$K=3$}
    & EM        & 0.2284 & 52.03 & 0.3731 & 108.8 & 0.3985 & 167.9 &       &       &       \\
    & HMIX     & 0.2254 & 48.77 & 0.2977 & 96.55 & 0.4769 & 148.1 &       &       &       \\
    & vNEDMIX   & 0.2470 & 48.01 & 0.5180 & 137.2 & 0.2350 & 189.9 &       &       &       \\
\midrule
\multirow{3}{*}{$K=4$}
    & EM        & 0.1907 & 48.16 & 0.2331 & 90.85 & 0.3935 & 138.0 & 0.1827 & 189.8 &       \\
    & HMIX     & 0.1932 & 48.29 & 0.2370 & 92.36 & 0.3939 & 139.3 & 0.1760 & 191.0 &       \\
    & vNEDMIX   & 0.1945 & 47.74 & 0.2401 & 94.13 & 0.3950 & 141.3 & 0.1705 & 192.8 &       \\
\midrule
\multirow{3}{*}{$K=5$}
    & EM        & 0.1794 & 47.08 & 0.1832 & 84.43 & 0.2162 & 118.7 & 0.2832 & 150.8 & 196.3 \\
    & HMIX     & 0.1792 & 47.48 & 0.1802 & 85.45 & 0.2187 & 119.3 & 0.2839 & 150.9 & 196.3 \\
    & vNEDMIX   & 0.1782 & 47.46 & 0.1807 & 87.02 & 0.2213 & 120.3 & 0.2823 & 151.2 & 196.5 \\
\bottomrule
\end{tabular}%
}
\end{table}






\begin{figure}[H]
  \centering
  \captionsetup{font=footnotesize}
  \newcommand{\panelw}{0.48\linewidth} 

  \begin{subfigure}{\panelw}
    \centering
    \includegraphics[width=\linewidth,keepaspectratio]{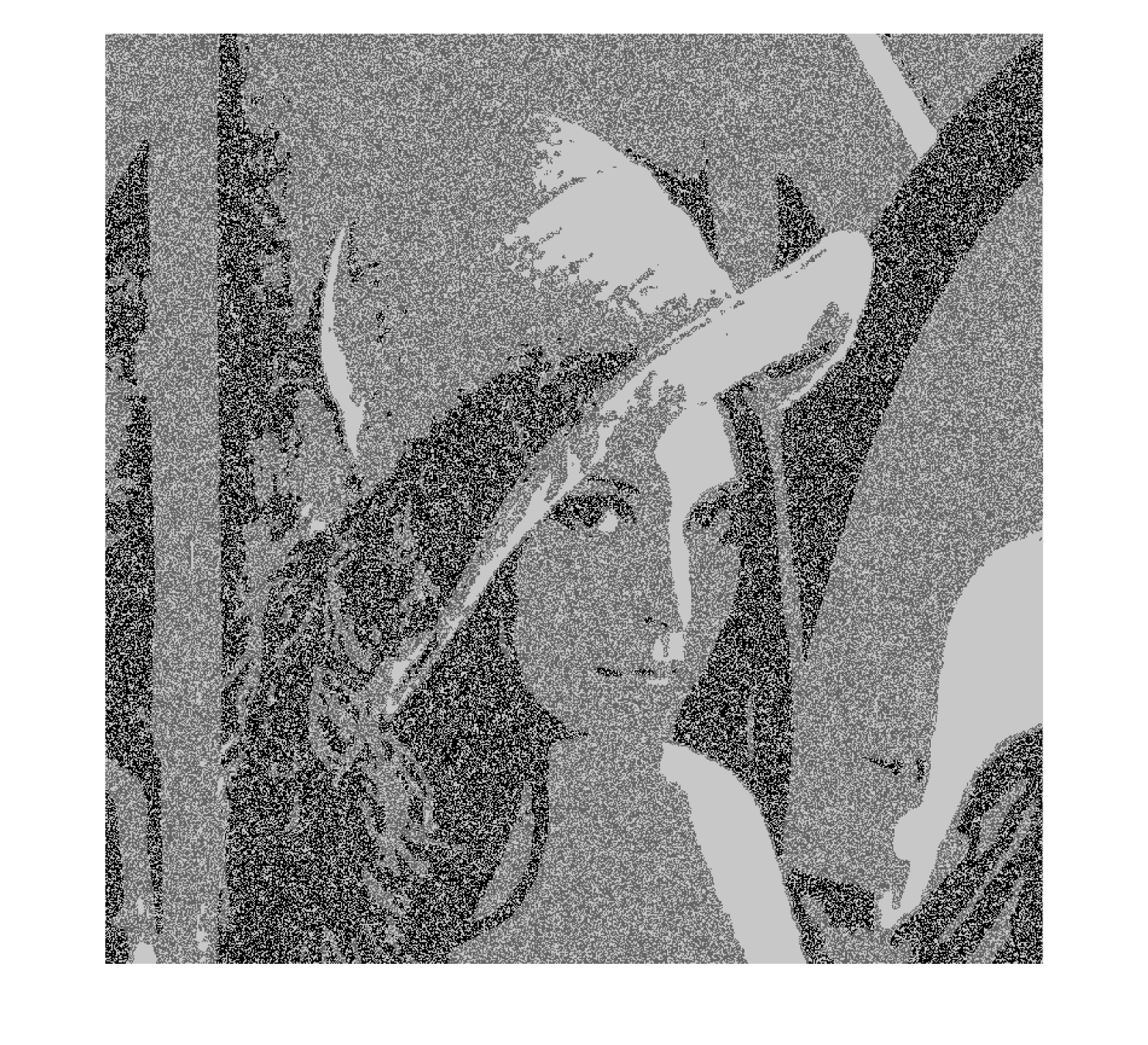}
    \caption{Recovered image from EM algorithm}
    \label{fig:em_recon_lena}
  \end{subfigure}\hfill
  \begin{subfigure}{\panelw}
    \centering
    \includegraphics[width=\linewidth,keepaspectratio]{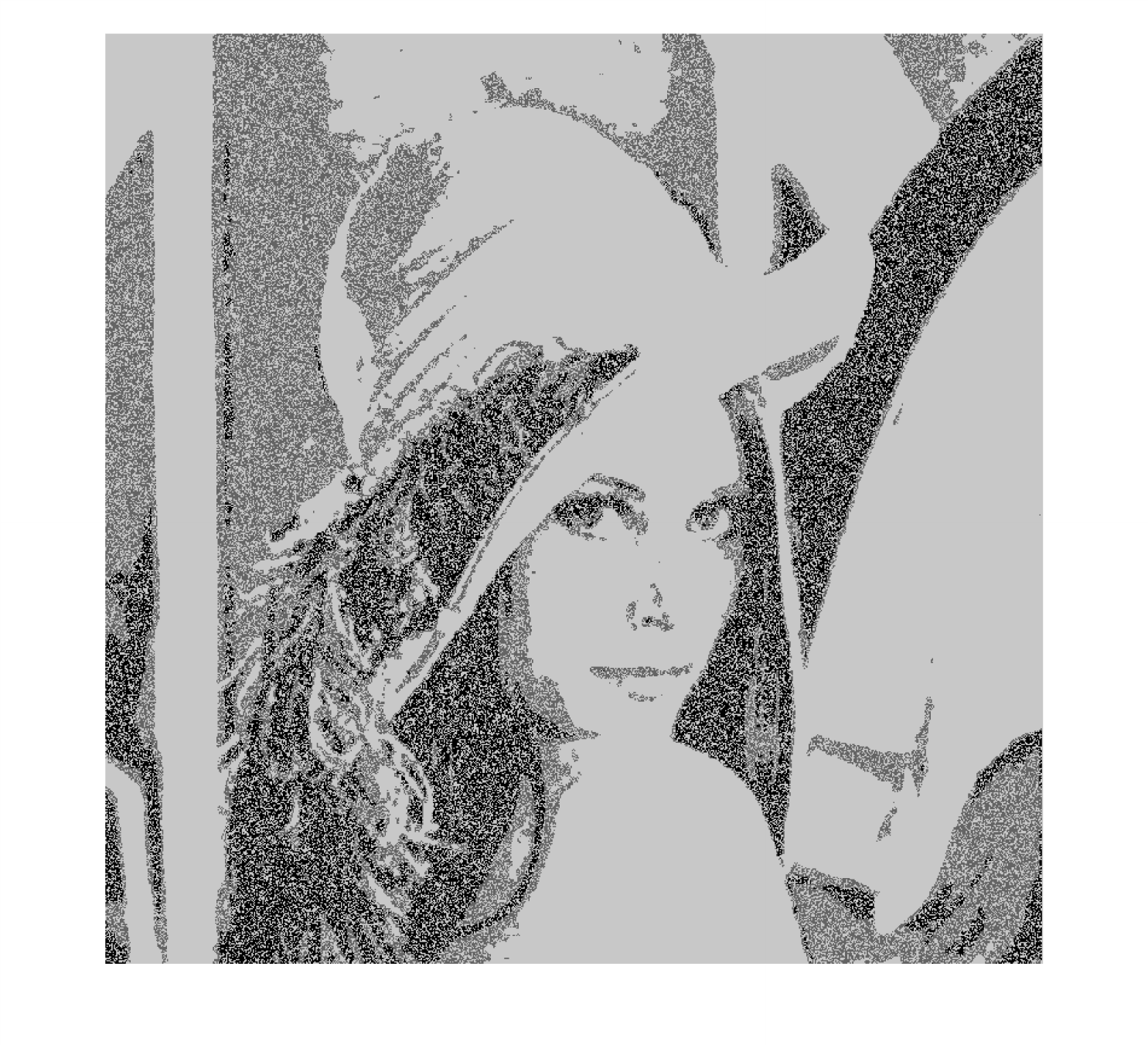}
    \caption{Recovered image from vNEDMIX algorithm}
    \label{fig:vNEDmix_recon_lena}
  \end{subfigure}

  \caption{Lena image reconstruction after adding $30\%$ outliers: EM vs.\ vNEDMIX.}
  \label{fig:lena-image-recons}
\end{figure}

\newpage 

\section*{K: Assumptions}\label{App-assumptions}

This section lays out all the necessary assumptions for the proofs of the main lemmas and
theorems in Sections 3 and 4.

\noindent\textbf{Connection with Z--estimation.}
The standing assumptions in this appendix are tailored so that the minimum--divergence
estimator and its DM--based approximations can be analyzed along the lines of the
classical Z--estimation scheme. In particular, for fixed $K$ we can write the MDE as the
solution of a system of estimating equations
\[
\Psi_n({\bm{\theta}})
  := \nabla_{\bm{\theta}} D_G(g_n,f_{\bm{\theta}})
  \;=\; 0,
\qquad
\Psi({\bm{\theta}};g)
  := \nabla_{\bm{\theta}} D_G(g,f_{\bm{\theta}}),
\]
with population target ${\bm{\theta}}^\star\in\arg\min_{\bm{\theta}} D_G(g,f_{\bm{\theta}})$ satisfying
$\Psi({\bm{\theta}}^\star;g)=0$. Assumptions \((\mathrm{C0})\)–\((\mathrm{C3})\), \((\mathrm{F1})\), \((\mathrm{K1})\)–\((\mathrm{K2})\) and
\((\mathrm{M1})\)–\((\mathrm{M8})\) guarantee: (i) identification and a local quadratic expansion of $D_G$
around ${\bm{\theta}}^\star$; (ii) a uniform law of large numbers for $\Psi_n({\bm{\theta}})$ on a neighborhood of
${\bm{\theta}}^\star$; and (iii) Fr\'echet differentiability and nonsingularity of the Jacobian
$H({\bm{\theta}}^\star):=\nabla_{\bm{\theta}}\Psi({\bm{\theta}}^\star;g)$. These are exactly the ingredients that
replace the abstract stochastic--equicontinuity conditions in the general Z--estimation
theory, and they allow us to derive consistency and $\sqrt{n}$--normality for the solution of
$\Psi_n({\bm{\theta}})=0$ in the usual way.

At the same time, the DM algorithm introduces two additional layers: the plug--in density
$g_n$ and the finite number of iterations $m_n$. The operator--level bounds in
Section~3(in particular the contraction and noisy--contraction results)
are used to control the optimization error
$\|{\bm{\theta}}_{m_n,n}-\hat{\bm{\theta}}_n\|$, where $\hat{\bm{\theta}}_n$ solves $\Psi_n({\bm{\theta}})=0$, while the
assumptions in this appendix ensure that $\hat{\bm{\theta}}_n$ itself behaves as a standard
Z--estimator. Appendix~N makes these connections explicit by working out the linearization
of $\Psi_n$ around ${\bm{\theta}}^\star$ and the associated Godambe information, and by showing
how the DM--specific error terms can be absorbed into the usual Z--estimation expansions
under our choice of $m_n$.

\noindent \textbf{Assumptions for Model Identifiability}

\noindent (\textbf{A0}) Assume that $G(0) = 0, G'(0) = 0$ and $G{''} (0) = 1$. Note that the above conditions imply $G'(\delta) <0$ when $\delta <0$ and $G'(\delta)>0$ when $\delta > 0.$

\noindent We now state two identifiability conditions below which enable us to deduce results concerning the mixture model with random effects.
\\
\noindent (\textbf{A1}) $ \lambda_1 \neq \lambda_2$ implies $s\left(y| \lambda_1\right) \neq s\left(y| \lambda_2\right)$ for all $y$.

\noindent (\textbf{A2}) $\bm{\phi}_1 \neq \bm{\phi}_2$ implies $r\left(\lambda, \bm{\phi}_1 \right) \neq r\left(\lambda, \bm{\phi}_2 \right)$ on a set of positive Lebesgue measure.

\noindent 
Note that (\textbf{D2}) is satisfied if the condition (\textbf{A1})-(\textbf{A2}) hold. 


\noindent \textbf{Assumptions for $Q_n(\cdot|\cdot)$, $\bm{\Theta}_{0,n}$, and $D_n(\cdot)$.}








\noindent \textbf{Parameter Space, Stationary Points, and Divergence Function Assumptions}

\noindent (\textbf{B1}) $Q(\bm{\theta}'|\bm{\theta})$ and $Q_n(\bm{\theta}'|\bm{\theta})$ are both continuous in $\bm{\theta}'$ and $\bm{\theta}$. Also, $Q(\bm{\theta}'|\bm{\theta})$ is continuously differentiable w.r.t. $\bm{\theta}'$. 

\noindent (\textbf{B2})\label{Level-comp} $\bm{\Theta}_{\bm{\theta}_0} = \{\bm{\theta}' \in \bm{\Theta}: D(\bm{\theta}') \leq  D(\bm{\theta}_{0})  \}$ and $\bm{\Theta}_{\bm{\theta}_{0,n}} = \{\bm{\theta}_n' \in \bm{\Theta}: D_n(\bm{\theta}_n') \leq  D_n(\bm{\theta}_{0,n})  \}$ are compact.

\noindent (\textbf{B3}) $D(\bm{\theta}') < \infty$ and $D_n(\bm{\theta}') < \infty$ for $\bm{\theta}'\in \bm{\Theta}$.

\noindent (\textbf{B4}) All stationary points in $S_{D}$ and $S_{n,D}$ are isolated; that is, for any $\bm{\theta} \in S_{D}$ and $\bm{\theta} \in S_{n,D}$, there exists a neighborhood of $\bm{\theta}$ which does not contain any other points of $S_{D}$ and $S_{n,D}$.

\noindent \textbf{Assumptions for Consistency of the DM Algorithm Sequence}


\noindent (\textbf{C0}) The density $s(\cdot|\lambda)$ is bounded, $r(\cdot; \bm{\phi}_k)$ is continuous in $\bm{\phi}_k$ for each $1\leq k \leq K.$ Also, assume that there exists a function $R(\cdot)$ such that $r(\lambda;\bm{\phi}_k) \leq R(\lambda)$ for all $1\leq k \leq K$ and $\int_{\Real}R(\lambda) d\lambda < \infty.$

\noindent (\textbf{C1}) The parameter space $\bm{\Theta}$ is compact.

\noindent (\textbf{C2}) $G(-1) + G'(\infty) < \infty,$ where 
$G'(\infty) = \lim_{u \to \infty} G(u)/u.$

\noindent (\textbf{C3}) $g_n(\cdot)$ is strongly consistent to $g(\cdot)$.

\noindent (\textbf{C4}) $c_n \to 0, n c_n \to \infty$ as $n \to \infty.$

\noindent \textbf{DM Algorithm Objective Function Assumptions}


{\textbf{(D1)} Closed graph / outer semicontinuity of the update map.}
The DM update correspondences $M_n:\bm \Theta \rightrightarrows \bm \Theta$ and $M: \bm \Theta \rightrightarrows \bm \Theta$
have closed graphs on $\Theta$; that is, if $\bm{\theta}^{(j)}\to\bar{\bm{\theta}}$ and $\bm{\eta}^{(j)}\in M_n(\bm{\theta}^{(j)})$ with
$\bm{\eta}^{(j)}\to\bar{\bm{\eta}}$, then $\bar{\bm{\eta}}\in M_n(\bar{\bm{\theta}})$ (and analogously for $M$). Equivalently,
$M_n$ and $M$ are outer semicontinuous on $\Theta$.
\emph{Single-valued specialization.} When, for each $\bm{\theta}$ in a compact neighborhood, the minimizer of
$Q_n(\cdot\mid\bm{\theta})$ (resp.\ $Q(\cdot\mid\bm{\theta})$) is unique, $M_n$ (resp.\ $M$) is single-valued and
\textbf{(D1)} reduces to ordinary continuity of the function $\bm{\theta}\mapsto M_n(\bm{\theta})$
(resp.\ $\bm{\theta}\mapsto M(\bm{\theta})$) by standard argmin-continuity/Berge’s maximum theorem.

\noindent (\textbf{D2}) For given $\bm{\theta} \in \bm{\Theta}$, both $Q(\cdot|\bm{\theta})$ and $Q_n(\cdot|\bm{\theta})$ have a unique global minimum.

Note that, under $\textbf{B}$, if (\textbf{D2}) holds, then (\textbf{D1}) is always satisfied. However, the converse may not hold. We can obtain the continuity of $M(\cdot)$  by replacing conditions (\textbf{B}') in the proposition with conditions (\textbf{B}).

\noindent \textbf{Kernel Assumptions for Central Limit Theorem}

\noindent (\textbf{K1}) The kernel $\mathcal{K}(\cdot)$ is symmetric at 0, has second moment, and twice continuously differentiable on a compact support $Supp(\mathcal{K})$.

\noindent (\textbf{K2}) The bandwidth $c_n$ satisfies $c_n \to 0$, $\sqrt{n}c_n \to \infty,$ and $\sqrt{n} c_n^4 \to 0$ as $n \to \infty$.

\noindent \textbf{Regularity Conditions for Central Limit Theorem}

Below we state the main assumptions that are required in the proof of the CLT. These assumptions describe the smoothness properties of the postulated density.

\noindent (\textbf{M1}) $A(\delta)$, $A'(\delta)$, $A'(\delta)(\delta+1)$ and $A{''}(\delta)(\delta+1)$ are bounded on $[-1, \infty)$.

\noindent (\textbf{M2}) 
The following conditions ensure $L^1$-continuity of both the Hessian of the density and the score–quadratic component under parameter convergence.
Specifically, as $n \to \infty$, $\bm{\phi}_n \to \bm{\theta}^*$,
\begin{align*}
 &\int_{\Real} |\nabla^2 f(y; \bm{\phi}_n) - \nabla^2 f(y; \bm{\theta}^*)|dy = o_p(1) \quad \text{and}& \\
 &\int_{\Real} |u(y; \bm{\phi}_n)u'(y; \bm{\phi}_n)f(y;\bm{\phi}_n) - u(y; \bm{\theta}^*)u'(y;\bm{\theta}^*)f(y;\bm{\theta}^*) |dy = o_p(1).&
\end{align*}
\noindent (\textbf{M3}) The matrix $I(\bm{\theta}^*)$ given by 
\begin{align*}
    I(\bm{\theta}^*) = \int_{\Real} u(y; \bm{\theta}^*)u'(y;\bm{\theta}^*) f(y;\bm{\theta}^*)dy
\end{align*}
 is finite.
 
\noindent (\textbf{M4}) 
An integrability constraint ensuring that the squared second derivative, scaled by the influence ratio, remains finite.
\begin{align*}
\int_{\Real} \frac{|u(y;\bm{\theta}^*)|}{f(y;\bm{\theta}^*)} f{''}^2(y;\bm{\theta}^*) dy < \infty.
\end{align*}
\noindent In addition, let $\{\alpha_n: n\geq 1 \}$ be a sequence diverging to infinity, and 
\begin{align*}
    &\int_\Real \int_{|y|\leq \alpha_n} \frac{|u(y;\bm{\theta}^*)|}{f(y;\bm{\theta}^*)} \tilde{H}_n(t, y) dydt < \infty,
\end{align*}    
\text{where} $\tilde{H}_n(t, y) =  \sup\{ H_n(t, y), t \in Supp(\mathcal{K}), |y|\leq \alpha_n \}$, $H_n(t, y) = \mathcal{K}(t) t^2 f{''}(\tilde{y}_n(t);\theta)$, and $\tilde{y}_n(t)$ is a point between $(\min(y-c_nt, y), \max(y-c_nt, y))$.

\noindent (\textbf{M5}) 
This condition controls the tail probability that the shifted observation $Y_1 - c_n t$ falls outside the truncation region.
\begin{align*}
    h_n \equiv n \sup_{t \in Supp(\mathcal{K})} \mathbb{P}\left( |Y_1 - c_nt| > \alpha_n \right) = O_p(1).
\end{align*}
\noindent (\textbf{M6}) 
The following condition ensures that the truncated mass of the score component becomes negligible under the normalization $1/(\sqrt{n}\,c_n).$
\begin{align*}
   \lim_{n\to \infty} \frac{1}{\sqrt{n} c_n} \int_{|y|\leq \alpha_n}u(y;\bm{\theta}^*)dy = 0.
\end{align*}
\noindent (\textbf{M7}) 
This condition requires that the ratio of the shifted density to the original density remains uniformly bounded over the truncation region.
\begin{align*}
    \sup_{|y| \leq \alpha_n} \sup_{t \in Supp(\mathcal{K})} \frac{f(y-c_nt;\bm{\theta}^*)}{f(y;\bm{\theta}^*)} = O_p(1).
\end{align*}
\noindent (\textbf{M8}) 
The following condition ensures that the squared $L^2$ distance between the shifted and unshifted score functions vanishes uniformly over t.
\begin{align*}
    \lim_{n\to \infty} \sup_{t \in Supp(\mathcal{K})}\int_{\Real}\left( u(y+c_nt;\bm{\theta}^*)- u(y;\bm{\theta}^*)\right)\left(u(y+c_nt; \bm{\theta}^*) - u(y;\bm{\theta}^*)\right)' f(y;\bm{\theta}^*) dy = 0.
\end{align*}

\noindent \textbf{Parametric Model Assumptions}

\noindent (\textbf{F1}) $f(\cdot;\bm{\theta})$ is twice continuously differentiable.

\noindent \textbf{Regularity Conditions for Breakdown Point Analysis}

\noindent (\textbf{O1}) $\int_\Real |g(y) - \eta_n(y) |dy \to 2$ as $n \to \infty$. 

\noindent (\textbf{O2}) $\int_\Real |f(y;\bm{\theta}) - \eta_n(y) |dy \to 2$ as $n \to \infty$ uniformly for $|\bm{\theta}| \leq c$ for every fixed $c$.

\noindent (\textbf{O3}) $\int_\Real |g(y) - f(y;\bm{\theta}_{\epsilon,n}^*) |dy \to 2 $ as $n \to \infty$ if $|\bm{\theta}_{\epsilon,n}^*| \to \infty$ as $n\to \infty$, where $\bm{\theta}_{\epsilon, n}^* = T(g_{\epsilon, n})$.

\newpage 

\newpage

\section*{L: Proofs}\label{all_pfs}

\subsection{Proofs of Results in Section 2 of the Main}

\begin{proof}[Detailed Proof of Lemma \ref{Inequality00}]
Let $U(t) = tG(-1+a/t)$, where $a \geq 0, t >0$. We first show that $U(\cdot)$ is a convex function on $(0, \infty)$. Since $G(\cdot)$ is thrice differentiable and convex, we take the second derivative with respect to $t$. Let $\delta = \left(-1 + a/t\right),$ 
so the second derivative of $U(\cdot)$ w.r.t. $t$ is 
\begin{align*}
\begin{split}
\frac{\partial^2 U}{\partial t^2} & =  \frac{\partial G(\delta)}{\partial \delta}\left(-\frac{a}{t^2}\right)  - \left(\frac{\partial^2 G(\delta)}{\partial \delta^2} \left(-\frac{a}{t^2}\right) \frac{a}{t} + \frac{\partial G(\delta)}{\partial \delta} \left( -\frac{a}{t^2} \right)    \right) \\
&=\frac{\partial^2 G(\delta)}{\partial \delta^2} \frac{a^2}{t^3} \geq 0
\end{split}
\end{align*}
as $a \geq 0$, $t >0$ and convexity of $G(\cdot)$. Hence $U(\cdot)$ is convex. Next we will show that for almost all $y$, $a \geq 0$,  $\bm{\theta}' \in \bm{\Theta}$, and every $\tilde{q}(\cdot|y)$,
\begin{align}\label{lem:ine:1}
f(y;\bm{\theta}')G\left( -1 +\frac{a}{f(y;\bm{\theta}')} \right) \leq \int_{\mathcal{Z}} f(y;\bm{\theta}') w(Z|Y;\bm{\theta}') G\left( -1 + \frac{a \tilde{q}(z|y)}{f(y;\bm{\theta}') w(Z|Y;\bm{\theta}')} \right) dz.
\end{align}
First note that the left hand side (LHS) of (\ref{lem:ine:1}) is $U(f(y;\bm{\theta}'))$; for the right hand side (RHS) of (\ref{lem:ine:1}), by Jensen's inequality, we have 
\begin{align*}
U(f(y;\bm{\theta}')) &= U\left(\bm{E}_{\tilde{q}} \left[ \frac{f(y;\bm{\theta}') w(Z|y;\bm{\theta}')}{\tilde{q}(Z|y)}  \right] \right) \\
&\leq \bm{E}_{\tilde{q}} \left[ U\left(  \frac{f(y;\bm{\theta}') w(Z|y;\bm{\theta}')}{\tilde{q}(Z|y)}     \right)   \right] \\
&= \int_{\mathcal{Z}} f(y;\bm{\theta}') w(Z|y;\bm{\theta}') G\left( -1 + \frac{a \tilde{q}(z|y)}{f(y;\bm{\theta}') w(Z|y;\bm{\theta}')} \right) dz.
\end{align*}
Hence (\ref{lem:ine:1}) holds. Now let $a = g(y)$, and taking another integral w.r.t. $\mathcal{Y}$, we have $\mathscr{D}(q, \bm{\theta}') \geq D(\bm{\theta}').$ In addition, the equality holds if and only if $\tilde{q}(z|y) = w(z|y;\bm{\theta}')$ for almost every $y$. The proof is complete.
\end{proof}

\setcounter{equation}{0}\renewcommand{\theequation}{A.\arabic{equation}}


\begin{proof}[Proof of Lemma~\textup{(finite stationary points at a fixed value)}]
Fix $D_n^\star\in\Real$ and set
\[
S\;:=\;\{\bm{\theta}\in\bm{\Theta}:\ \nabla D_n(\bm{\theta})=0,\; D_n(\bm{\theta})=D_n^\star\}.
\]
By \textup{(B1)} $D_n$ and $\nabla D_n$ are continuous; hence $S=\nabla D_n^{-1}(\{0\})\cap D_n^{-1}(\{D_n^\star\})$
is closed. Let $c>D_n^\star$; by \textup{(B2)} the sublevel set $\{\bm{\theta}:\ D_n(\bm{\theta})\le c\}$ is compact, so $S$ is compact.
By \textup{(B4)} every stationary point is isolated: for each $\theta\in S$ there exists $r_\theta>0$ such that
$B(\bm{\theta},r_{\bm{\theta}})\cap\{\nabla D_n=0\}=\{\bm{\theta}\}$.
Then $\{B(\bm{\theta},r_{\bm{\theta}}/2):\bm{\theta}\in S\}$ is an open cover of the compact set $S$, so it
admits a finite subcover, forcing $S$ to be finite (each ball contains exactly one point of $S$ by isolation).
\end{proof}

\subsection*{Proof of Proposition~3}

\begin{proof}[Proof of Proposition~3 (limit--set structure: convergence or finite cycle)]
Let $\{\bm{\theta}_{m,n}\}_{m\ge0}$ be a sample--level DM sequence with $\bm{\theta}_{m+1,n}\in M_n(\bm{\theta}_{m,n})$.
By Proposition~1 (majorization descent), $D_n(\bm{\theta}_{m+1,n})\le D_n(\bm{\theta}_{m,n})$ for all $m$, and
$D_n(\bm{\theta}_{m,n})\downarrow D_n^\star$ for some $D_n^\star\in\Real$. 
Denote the $\omega$--limit set $\Omega:=\{\,\bar{\bm{\theta}}:\ \exists\,m_j\uparrow\infty,\ \bm{\theta}_{m_j,n}\to\bm{\bar\theta},\}$.
By \textup{(B2)}, all iterates lie in a compact sublevel set, so $\Omega\neq\varnothing$.
By continuity of $D_n$ and the convergence of $D_n(\bm{\theta}_{m,n})$, every $\bar{\bm{\theta}}\in\Omega$ satisfies $D_n(\bar{\bm{\theta}})=D_n^\star$.
By Proposition~1, every $\bar{\bm{\theta}}\in\Omega$ is stationary for $D_n$; together with the previous lemma,
$\Omega$ is a \emph{finite} set.

Next, we show $\Omega$ is $M_n$--invariant and that $M_n$ acts as a permutation on $\Omega$.
Assume \textup{(D1)} (continuity of $M_n$). If $\bm{\theta}_{m_j,n}\to\bar{\bm{\theta}}$, then
$\bm{\theta}_{m_j+1,n}=M_n(\bm{\theta}_{m_j,n})\to M_n(\bar{\bm{\theta}})$; hence $M_n(\bar{\bm{\theta}})\in\Omega$, so $M_n(\Omega)\subseteq\Omega$.
Conversely, given any $z\in\Omega$, pick $\{m_j\}$ with $\bm{\theta}_{m_j+1,n}\to z$; compactness yields a subsequence
$\bm{\theta}_{m_{j_k},n}\to y\in\Omega$ with $M_n(y)=z$ by continuity, so $M_n:\Omega\to\Omega$ is surjective,
hence bijective on the finite set $\Omega$. Therefore $M_n|\_\Omega$ is a \emph{permutation}, so $\Omega$
is a finite union of cycles. Because $\{\bm{\theta}_{m,n}\}$ has $\Omega$ as its \emph{entire} limit set, only one such
cycle can occur: otherwise, invariance would force the orbit to eventually remain in a neighborhood of a single
cycle and it could not accumulate on the others. Hence $\Omega=\{\bm{\theta}^\ast_{1,n},\dots,\bm{\theta}^\ast_{t,n}\}$ with
\[
M_n(\bm{\theta}^\ast_{i,n})= \bm \theta^\ast_{i+1,n}\quad(i=1,\dots,t-1),\qquad M_n(\bm{\theta}^\ast_{t,n})=\bm{\theta}^\ast_{1,n},
\]
and each $\bm{\theta}^\ast_{i,n}$ is stationary with $D_n(\bm{\theta}^\ast_{i,n})=D_n^\star$.

It remains to establish (iii): convergence of the parallel subsequences $\{\bm{\theta}_{tm+i,n}\}_{m\ge0}$ to $\bm{\theta}^\ast_{i,n}$.
By continuity of $M_n$ and isolation \textup{(B4)}, fix disjoint open balls $U_i:=B(\bm{\theta}^\ast_{i,n},r)$ so small that
$M_n\big(\overline{U_i}\big)\subset U_{i+1}$ for all $i$ (indices modulo $t$); this is possible since
$M_n(\bm{\theta}^\ast_{i,n})=\bm{\theta}^\ast_{i+1,n}$ and $M_n$ is continuous. Because $\mathrm{dist}(\bm{\theta}_{m,n},\Omega)\to0$ and the $U_i$
cover a neighborhood of $\Omega$, there exists $m_0$ after which every iterate belongs to $\bigcup_i U_i$ and, moreover,
membership cycles deterministically: $\bm{\theta}_{m,n}\in U_i \Rightarrow \bm{\theta}_{m+1,n}\in U_{i+1}$.
In particular, for each $i$ there exists $m_i\ge m_0$ with $\bm{\theta}_{m_i,n}\in U_i$, and then by $t$-step invariance
$M_n^t(\overline{U_i})\subset U_i$ we have $\bm{\theta}_{m_i+kt,n}\in U_i$ for all $k\ge0$. Any limit point of
$\{\bm{\theta}_{m_i+kt,n}\}_{k\ge0}$ lies in $\Omega\cap U_i=\{\bm{\theta}^\ast_{i,n}\}$, so the subsequence converges:
$\bm{\theta}_{tm+i,n}\to\bm{\theta}^\ast_{i,n}$ as claimed.
\end{proof}

\subsection*{Proof of Proposition~4}

\begin{proof}[Proof of Proposition~4 (no cycles under uniqueness; strict descent)]
Assume \textup{(B)} and \textup{(D2)}. Uniqueness in \textup{(D2)} implies that for each $\bm{\theta}$, the map
$\bm{\theta}'\mapsto Q_n(\bm{\theta}'\mid\bm{\theta})$ has a \emph{unique} minimizer, so the update is single--valued and continuous
(standard argmin continuity), i.e., \textup{(D1)} holds.

\emph{Strict descent away from stationarity.} If $\bm{\theta}_{m+1,n}=M_n(\bm{\theta}_{m,n})\neq \bm{\theta}_{m,n}$, then by uniqueness
\[
Q_n(\bm{\theta}_{m+1,n}\mid\bm{\theta}_{m,n})\;<\;Q_n(\bm{\theta}_{m,n}\mid\bm{\theta}_{m,n})=D_n(\bm{\theta}_{m,n}),
\]
and by majorization $D_n(\bm{\theta}_{m+1,n})\le Q_n(\bm{\theta}_{m+1,n}\mid\bm{\theta}_{m,n})$, hence
$D_n(\bm{\theta}_{m+1,n})<D_n(\bm{\theta}_{m,n})$.

\emph{Excluding cycles.} If $\Omega$ contained a cycle of length $t\ge2$, then along that cycle all points are stationary
and have objective value $D_n^\star$, so once the orbit enters a small invariant neighborhood of the cycle, the updates
cannot be strictly decreasing (the values along one pass around the cycle remain equal), contradicting the strict
descent above unless the update lands exactly at a fixed point. Therefore $\Omega$ is a singleton $\{\bm{\theta}^\ast_n\}$,
and the sequence converges: $\bm{\theta}_{m,n}\to \bm{\theta}^\ast_n$.

\emph{Fixed point identity.} By continuity of $M_n$ and the update relation $\bm{\theta}_{m+1,n}=M_n(\bm{\theta}_{m,n})$,
passing to the limit yields $\bm{\theta}^\ast_n=M_n(\bm{\theta}^\ast_n)$. The final monotonicity statement follows directly from the
strict descent shown above unless $\bm{\theta}_{m,n}$ has already reached $\bm{\theta}^\ast_n$.
\end{proof}

\subsection*{Proof of Theorem \ref{THM:Population:contraction}}

\begin{proof}[Proof of Theorem \ref{THM:Population:contraction}]
Since both $M(\bm{\theta})$ and $\bm{\theta}^*$ are in $\bm{\Theta}$, we may apply condition 
\begin{align}\label{First-order-1}
\langle \nabla Q(\bm{\theta}^*|\bm{\theta}^*), \bm{\theta}' - \bm{\theta}^*  \rangle \geq 0 \quad \text{for all} \quad \bm{\theta}' \in \bm{\Theta}.
\end{align}
with $\bm{\theta}'=M(\bm{\theta})$:
\begin{align*}
\langle \nabla Q(\bm{\theta}^*|\bm{\theta}^*), M(\bm{\theta}) - \bm{\theta}^*  \rangle \geq 0 \quad \text{for all} \quad \bm{\theta}' \in \bm{\Theta},
\end{align*}
and apply condition 
\begin{align}\label{First-order-2}
\langle  \nabla Q(M(\bm{\theta})|\bm{\theta}), \bm{\theta}' - M(\bm{\theta}) \rangle \geq 0 \quad \text{for all} \quad \bm{\theta}' \in \bm{\Theta}.
\end{align}
with $\bm{\theta}'=\bm{\theta}^*$:
\begin{align*}
\langle \nabla Q(M(\bm{\theta})|\bm{\theta}) , \bm{\theta}^* - M(\bm{\theta}) \rangle \geq 0 \quad \text{for all} \quad \bm{\theta}' \in \bm{\Theta}.
\end{align*}
Adding the above two inequalities and then perform some algebra yields the condition
\begin{small}
    \begin{align}\label{Thm1:Ineq1}
\langle \nabla Q(M(\bm{\theta})|\bm{\theta}^*) - \nabla Q(\bm{\theta}^*|\bm{\theta}^*) , M(\bm{\theta}) - \bm{\theta}^* \rangle \leq
\langle \nabla Q(M(\bm{\theta})|\bm{\theta}^*) - \nabla Q(M(\bm{\theta})|\bm{\theta}) , M(\bm{\theta}) - \bm{\theta}^* \rangle.
\end{align}
\end{small}
Now the $\lambda$-strong convexity condition implies that the left-hand side is lower bounded as (by letting $\bm{\theta}_1 = M(\bm{\theta}), \bm{\theta}_2 = \bm{\theta}^*$ and $\bm{\theta}_1 = \bm{\theta}^, \bm{\theta}_2 = M(\bm{\theta})$ respectively)
\begin{align}\label{Thm:1-Lower-bound}
\langle \nabla Q(M(\bm{\theta})|\bm{\theta}^*) - \nabla Q(\bm{\theta}^*|\bm{\theta}^*) , M(\bm{\theta}) - \bm{\theta}^* \rangle \geq  \lambda ||\bm{\theta}^* - M(\bm{\theta})||_2^2 .
\end{align}
On the other hand, the FOS($\gamma$) condition together with the Cauchy-Schwarz inequality implies that the right-hand side upper bounded as 
\begin{align}\label{Thm:1-Upper-bound}
\langle \nabla Q(M(\bm{\theta})|\bm{\theta}^*) - \nabla Q(M(\bm{\theta})|\bm{\theta}), M(\bm{\theta}) - \bm{\theta}^* \rangle \leq \gamma ||\bm{\theta}^* - M(\bm{\theta})||_2 ||\bm{\theta} - \bm{\theta}^* ||_2,
\end{align}
Combining inequalities (\ref{Thm:1-Lower-bound}) and (\ref{Thm:1-Upper-bound}) with original bound (\ref{Thm1:Ineq1}) yields
\begin{align}
\lambda \vert \vert \bm{\theta}^* - M(\bm{\theta}) \vert \vert_2 \leq \gamma \vert \vert \bm{\theta}^* - M(\bm{\theta}) \vert \vert_2 \vert \vert \bm{\theta}- \bm{\theta}^* \vert \vert_2,
\end{align}
and canceling terms completes the proof.
\end{proof}

\subsection*{Proof of Theorem \ref{thm:noisy}}

\begin{proof}[Proof of Theorem \ref{thm:noisy}]
From (\ref{pop-sam-lower}), for any fixed $\bm{\theta} \in \mathbb{B}_2(r';\bm{\theta}^*)$,
\begin{align*}
 ||  M_n(\bm{\theta}) - M(\bm{\theta}) ||_2 \leq \epsilon_M(n, \rho)
\end{align*}
with probability at least $1-\rho$. 
It suffices to show that 
\begin{align}\label{THM-2:pf}
|| \bm{\theta}_{m+1, n} - \bm{\theta}^* ||_2 \leq \kappa ||\bm{\theta}_{m, n} -\bm{\theta}^*  ||_2 + \epsilon_M(n, \rho).
\end{align}
Indeed, when this bound holds, we may iterate it to show that
\begin{align*}
||\bm{\theta}_{m, n} -\bm{\theta}^*  ||_2 &\leq \nonumber \kappa ||\bm{\theta}_{m-1, n} -\bm{\theta}^*  ||_2 + \epsilon_M(n, \rho)   \\
&\leq \nonumber \kappa\left\{ \kappa ||\bm{\theta}_{m-2, n} -\bm{\theta}^* ||_2 +  \epsilon_M(n, \rho) \right\} + \epsilon_M(n, \rho)\\
&\leq \nonumber \kappa^m ||\bm{\theta}_{0, n} -\bm{\theta}^* ||_2 + \left\{\sum_{k=0}^{m-1} \kappa^k   \right\} \epsilon_M(n, \rho) \\
&\leq  \kappa^m ||\bm{\theta}_{0, n} -\bm{\theta}^* ||_2 + \frac{1}{1-\kappa} \epsilon_M(n, \rho).
\end{align*}
It remains to prove (\ref{THM-2:pf}), and we do so by induction. Beginning with $m=0$, we have 
\begin{align*}
||\bm{\theta}_{1, n} - \bm{\theta}^*  ||_2 = ||M_n(\bm{\theta}_{0,n}) - \bm{\theta}^*  ||_2 &\leq  ||M(\bm{\theta}_{0,n}) - \bm{\theta}^*  ||_2 + ||M_n(\bm{\theta}_{0,n}) - M(\bm{\theta}_{0,n}) ||_2 \\
&\leq \kappa ||\bm{\theta}_{0, n} - \bm{\theta}^*   ||_2 + \epsilon_M(n, \rho) \\
&\leq  \kappa r' + (1-\kappa)r' = r'.
\end{align*}
In the induction from $m\mapsto m+1$, suppose that $||\bm{\theta}_{m,n} - \bm{\theta}^*||_2 \leq r$, and the bound (\ref{THM-2:pf}) at iteration $m$. Now at iteration $m+1$, 
\begin{align*}
||\bm{\theta}_{m+1,n} - \bm{\theta}^* ||_2 = ||M_n(\bm{\theta}_{m,n}) - \bm{\theta}^*  ||_2 &\leq  ||M(\bm{\theta}_{m,n}) - \bm{\theta}^*  ||_2 + ||M_n(\bm{\theta}_{m,n}) - M(\bm{\theta}_{m,n})||_2 \\
&\leq \kappa || \bm{\theta}_{m,n} -\bm{\theta}^* ||_2 + \epsilon_M(n, \rho) \\
&\leq \kappa r' + (1-\kappa)r'  = r'.
\end{align*}
Thus $||\bm{\theta}_{m+1, n} - \bm{\theta}^* ||_2 \leq r'$, and this completes the proof.
\end{proof}
\subsection*{Proof of Theorem 3}

\noindent \textbf{(i) Consistency.} Under \textbf{(C0)–(C3)} the DM surrogate decomposition
$Q_n({\bm{\theta}}'\!\mid{\bm{\theta}})=D_n({\bm{\theta}}')+H_G({\bm{\theta}}'\!\mid{\bm{\theta}})$ with $H_G\ge 0$ yields
$D_n({\bm{\theta}}_{m+1,n})\le D_n({\bm{\theta}}_{m,n})$ (monotone descent).
By sublevel compactness the sequence $\{{\bm{\theta}}_{m,n}\}$ is tight for each $n$; any limit point is stationary.
Uniform convergence and identification (C0)–(C3) imply the stationary set concentrates at ${\bm{\theta}}^\star$
as $n\to\infty$; hence $\lim_{n}\lim_{m}{\bm{\theta}}_{m,n}={\bm{\theta}}^\star$ a.s.
If $m_n\to\infty$ then ${\bm{\theta}}_{m_n,n}\stackrel{p}{\to}{\bm{\theta}}^\star$.

\noindent \textbf{(ii) $\sqrt n$–normality for truncated iterates.}
Under Theorem 1
there exists $\kappa\in(0,1)$ such that $M$ is $\kappa$–contractive on $B_2(r;{\bm{\theta}}^\star)$.
Let $\widehat{{\bm{\theta}}}_n:=\arg\min_{\bm{\theta}} D_n({\bm{\theta}})$ and note that
$\widehat{{\bm{\theta}}}_n\in M_n(\widehat{{\bm{\theta}}}_n)$ (unique by the fixed-order regularity).
Under \textbf{(F1)}, \textbf{(C1)}–\textbf{(C2)}, \textbf{(K1)}–\textbf{(K2)}, \textbf{(M1)}–\textbf{(M8)} the
fixed-order MDE satisfies
\[
\sqrt n\,(\widehat{{\bm{\theta}}}_n-{\bm{\theta}}^\star)\ \Rightarrow\ \mathcal N\!\big(0,\,I({\bm{\theta}}^\star)^{-1}\big).
\]
Moreover, the sample update map $M_n$ inherits the contraction locally with factor
$\kappa_n\le \kappa+o_p(1)$ (same FOS/strong-convexity argument with $g$ replaced by $g_n$ and uniform LLN).
Hence for any initialization ${\bm{\theta}}_{0,n}\in B_2(r;{\bm{\theta}}^\star)$,
\[
\|{\bm{\theta}}_{t,n}-\widehat{{\bm{\theta}}}_n\|_2\ \le\ \kappa_n^{\,t}\,\|{\bm{\theta}}_{0,n}-\widehat{{\bm{\theta}}}_n\|_2
\qquad\text{(w.p.\ $\to1$).}
\]
Choose
\(
m_n\ \ge\ \big\lceil (\tfrac12\log n + c_0)/|\log\kappa| \big\rceil
\)
so that $\sqrt n\,\kappa^{\,m_n}\to0$ and therefore $\sqrt n\,\kappa_n^{\,m_n}\to0$.
Decompose
\[
\sqrt n\,({\bm{\theta}}_{m_n,n}-{\bm{\theta}}^\star)
= \underbrace{\sqrt n\,({\bm{\theta}}_{m_n,n}-\widehat{{\bm{\theta}}}_n)}_{\to 0\ \text{in prob.}}
+ \underbrace{\sqrt n\,(\widehat{{\bm{\theta}}}_n-{\bm{\theta}}^\star)}_{\Rightarrow\ \mathcal N(0,I({\bm{\theta}}^\star)^{-1})}.
\]
The first term is $o_p(1)$ by the geometric bound above, the second term is the fixed-order CLT.
Slutsky’s lemma gives
\(
\sqrt n\,({\bm{\theta}}_{m_n,n}-{\bm{\theta}}^\star)\Rightarrow \mathcal N(0,I({\bm{\theta}}^\star)^{-1}).
\)
Finally, the displayed choice implies \(m_n = \lceil (\tfrac12 \log n)/|\log\kappa| \rceil + O(1)\).
\qed

\subsection*{Proof of Corollary 3 (Finite-step Godambe CLT)}
Let $\widehat{{\bm{\theta}}}_n$ solve $\Psi_n({\bm{\theta}}):=\nabla_{\bm{\theta}} D_G(g_n,f_{\bm{\theta}})=0$ and let
${\bm{\theta}}_{t+1,n}\in M_n({\bm{\theta}}_{t,n})$ with ${\bm{\theta}}_{0,n}\in B_2(r;{\bm{\theta}}^\star)$.
Under \textup{(G1)–(G4)} at ${\bm{\theta}}^\dagger:=\arg\min_{\bm{\theta}} D_G(g,f_{\bm{\theta}})$,
the Z–estimation CLT gives
\(
\sqrt n\,(\widehat{{\bm{\theta}}}_n-{\bm{\theta}}^\dagger)\Rightarrow \mathcal N(0,H^{-1} V H^{-1})
\)
with
\(
H:=\nabla_{\bm{\theta}}\Psi({\bm{\theta}}^\dagger;g),\
V:=Var_g\!\big[A'(\tfrac{g}{f_{{\bm{\theta}}^\dagger}}-1)s_{{\bm{\theta}}^\dagger}(Y)\big].
\)
By Theorem 1 and the same uniformity argument as in Theorem 1 (ii),
the sample update map contracts locally with factor $\kappa_n\le \kappa+o_p(1)$, hence
\(
\|{\bm{\theta}}_{m_n,n}-\widehat{{\bm{\theta}}}_n\|_2 \le \kappa_n^{\,m_n}\|{\bm{\theta}}_{0,n}-\widehat{{\bm{\theta}}}_n\|_2.
\)
With the threshold $\sqrt n\,\kappa^{m_n}\to0$, we get
\(
\sqrt n\,\|{\bm{\theta}}_{m_n,n}-\widehat{{\bm{\theta}}}_n\|_2 \to 0
\)
in probability. Decompose
\[
\sqrt n({\bm{\theta}}_{m_n,n}-{\bm{\theta}}^\dagger)
= \sqrt n({\bm{\theta}}_{m_n,n}-\widehat{{\bm{\theta}}}_n) + \sqrt n(\widehat{{\bm{\theta}}}_n-{\bm{\theta}}^\dagger),
\]
apply Slutsky’s lemma, and conclude
\(
\sqrt n({\bm{\theta}}_{m_n,n}-{\bm{\theta}}^\dagger)\Rightarrow \mathcal N(0,H^{-1} V H^{-1}).
\)
At the model ($g=f_{{\bm{\theta}}^\star}$) and with $A'(0)=1$, $H=V=I({\bm{\theta}}^\star)$, so the covariance is $I({\bm{\theta}}^\star)^{-1}$.
\qed

\subsection*{Proof of Corollary 4 (Finite-step (Godambe-Wilks)}
Under the assumptions of Corollary 4 and $\sqrt n\,\kappa^{m_n}\to0$,
a quadratic expansion of $D_G(g_n,f_{\bm{\theta}})$ at $\widehat{{\bm{\theta}}}_n$ yields
\[
2n\{D_G(g_n,f_{{\bm{\theta}}^\dagger})-D_G(g_n,f_{{\bm{\theta}}_{m_n,n}})\}
= n({\bm{\theta}}_{m_n,n}-{\bm{\theta}}^\dagger)^\top H\,({\bm{\theta}}_{m_n,n}-{\bm{\theta}}^\dagger)\ +\ o_p(1),
\]
since \(n\|{\bm{\theta}}_{m_n,n}-\widehat{{\bm{\theta}}}_n\|_2^2=(\sqrt n\,\kappa^{m_n})^2=o_p(1)\).
Let \(J:=H^{-1/2} V H^{-1/2}\) with eigenvalues \(\{\lambda_j\}_{j=1}^{p(K_0)}\).
By Corollary 3,
\(n({\bm{\theta}}_{m_n,n}-{\bm{\theta}}^\dagger)^\top H({\bm{\theta}}_{m_n,n}-{\bm{\theta}}^\dagger)\Rightarrow
\sum_{j=1}^{p(K_0)}\lambda_j\,\chi^2_{1,j}\).
If $g=f_{{\bm{\theta}}^\star}$ and $A'(0)=1$, then $H=V=I({\bm{\theta}}^\star)$ and the limit is $\chi^2_{p(K_0)}$.
\qed

\subsection*{Proof of Corollary 5 (Population contraction under contamination)}
\label{supp:cor-pop-contr-eps}
Define $\Psi_\epsilon(\bm\theta;g):=\nabla_{\bm\theta} D_G(g,f_{\bm\theta})
=\int f_{\bm\theta}(y)\,s_{\bm\theta}(y)\,B\!\big(g(y)/f_{\bm\theta}(y)\big)\,dy$,
$s_{\bm\theta}=\nabla_{\bm\theta}\log f_{\bm\theta}$, $B(u)=G(u)-uG'(u)$.
By \textbf{(F1)}, \textbf{(C1)}–\textbf{(C2)}, \textbf{(K1)}–\textbf{(K2)}, \textbf{(M1)}–\textbf{(M8)},
the maps $\bm\theta\mapsto \nabla_{\bm\theta}\Psi_\epsilon(\bm\theta;g_\epsilon)$ and the Gâteaux derivative 
$h\mapsto \partial_g\Psi_\epsilon(\bm\theta;g_\epsilon)[h]$ are continuous in $(\bm\theta,g_\epsilon)$ on 
$B_2(r;\bm\theta^\star)\times\{g_\epsilon:\epsilon\in[0,\epsilon_0]\}$ (see Appendix N, Gâteaux lemma).
At $\epsilon=0$, Theorem~1 provides FOS$(\gamma)$ and local curvature $\lambda>0$
on $B_2(r_0;\bm\theta^\star)$ with $\kappa=\gamma/\lambda<1$. By continuity in $\epsilon$,
there exist $r\in(0,r_0]$ and $\bar\kappa\in(0,1)$ such that, for all $\epsilon\in[0,\epsilon_0]$,
FOS$(\gamma_\epsilon)$ and local $\lambda_\epsilon$–strong convexity of $D_G(g_\epsilon,\cdot)$ hold on 
$B_2(r;\bm\theta^\dagger_\epsilon)$ and $\gamma_\epsilon/\lambda_\epsilon\le\bar\kappa<1$.
Let $\bm\theta_+\in M_\epsilon(\bm\theta)$ be any minimizer of $Q_\epsilon(\cdot\mid\bm\theta)$.
By strong convexity of $q_\epsilon(\cdot):=Q_\epsilon(\cdot\mid\bm\theta^\dagger_\epsilon)$ and its optimality at $\bm\theta^\dagger_\epsilon$,
\[
\lambda_\epsilon\,\|\bm\theta_+-\bm\theta^\dagger_\epsilon\|_2^2
\ \le\ \big\langle \nabla_{\bm\theta'} q_\epsilon(\bm\theta_+)-\nabla_{\bm\theta'} q_\epsilon(\bm\theta^\dagger_\epsilon),\,\bm\theta_+-\bm\theta^\dagger_\epsilon\big\rangle.
\]
Add and subtract $\nabla_{\bm\theta'}Q_\epsilon(\bm\theta_+\mid\bm\theta)$ and use the optimality condition
$\langle \nabla_{\bm\theta'} Q_\epsilon(\bm\theta_+\mid\bm\theta),\,\bm\theta^\dagger_\epsilon-\bm\theta_+\rangle\ge 0$ to get
\[
\lambda_\epsilon\,\|\bm\theta_+-\bm\theta^\dagger_\epsilon\|
\ \le\ \big\|\nabla_{\bm\theta'}Q_\epsilon(\bm\theta_+\mid\bm\theta)-\nabla_{\bm\theta'}Q_\epsilon(\bm\theta_+\mid\bm\theta^\dagger_\epsilon)\big\|
\ \le\ \gamma_\epsilon\,\|\bm\theta-\bm\theta^\dagger_\epsilon\|,
\]
where the last step is FOS$(\gamma_\epsilon)$. Hence
$\|\bm\theta_+-\bm\theta^\dagger_\epsilon\|\le(\gamma_\epsilon/\lambda_\epsilon)\|\bm\theta-\bm\theta^\dagger_\epsilon\|
\le \bar\kappa\,\|\bm\theta-\bm\theta^\dagger_\epsilon\|$, proving the contraction.
\qed

\subsection*{Proof of Corollary~6 (Noisy contraction and opt-to-stat under contamination)}
\label{supp:proof-noisy-contr-eps}
Let $\bm\theta_{t+1,\epsilon,n}\in M_{\epsilon,n}(\bm\theta_{t,\epsilon,n})$ with
$\bm\theta_{t,\epsilon,n}\in B_2(r;\bm\theta^\dagger_\epsilon)$.
Choose $\bm\eta_t\in M_\epsilon(\bm\theta_{t,\epsilon,n})$ so that
$\|\bm\theta_{t+1,\epsilon,n}-\bm\eta_t\|_2 \le \mathcal M_{\mathrm{unif}}(n,r;\epsilon)$.
By Corollary~5 (population contraction under contamination),
\[
\|\bm\eta_t-\bm\theta^\dagger_\epsilon\|_2\;\le\;\bar\kappa\,\|\bm\theta_{t,\epsilon,n}-\bm\theta^\dagger_\epsilon\|_2.
\]
Therefore
\[
\|\bm\theta_{t+1,\epsilon,n}-\bm\theta^\dagger_\epsilon\|_2
\ \le\ \|\bm\theta_{t+1,\epsilon,n}-\bm\eta_t\|_2+\|\bm\eta_t-\bm\theta^\dagger_\epsilon\|_2
\ \le\ \bar\kappa\,\|\bm\theta_{t,\epsilon,n}-\bm\theta^\dagger_\epsilon\|_2+\mathcal M_{\mathrm{unif}}(n,r;\epsilon),
\]
establishing the contaminated noisy recursion.

\medskip
\noindent\textit{Bounding $\mathcal M_{\mathrm{unif}}(n,r;\epsilon)$.}
Fix $\bm\theta\in B_2(r;\bm\theta^\dagger_\epsilon)$.
By strong convexity of $Q_\epsilon(\cdot\mid\bm\theta)$ on the ball with modulus $\lambda_\epsilon$
and the optimality conditions for $M_\epsilon(\bm\theta)$ and $M_{\epsilon,n}(\bm\theta)$,
the minimizer map is locally inverse–Lipschitz:
\begin{align}\label{eq:inv-Lip}
\operatorname{dist}\!\big(M_{\epsilon,n}(\bm\theta),M_\epsilon(\bm\theta)\big)
\ \le\ \frac{1}{\lambda_\epsilon}\,
\sup_{\bm\eta\in B_2(r;\bm\theta^\dagger_\epsilon)}
\big\|\nabla_{\bm\theta'}Q_{\epsilon,n}(\bm\eta\mid\bm\theta)-\nabla_{\bm\theta'}Q_{\epsilon}(\bm\eta\mid\bm\theta)\big\|.
\end{align}
Using the score representation and the mean–value inequality for the RAF derivative,
for any $\bm\eta\in B_2(r;\bm\theta^\dagger_\epsilon)$,
\begin{align*}
\big\|\nabla_{\bm\theta'}Q_{\epsilon,n}(\bm\eta\mid\bm\theta)-\nabla_{\bm\theta'}Q_{\epsilon}(\bm\eta\mid\bm\theta)\big\|
 &\le  A'_{\max}\,
\sup_{\bm\zeta\in B_2(r;\bm\theta^\dagger_\epsilon)}\|s_{\bm\zeta}\|_{\mathcal H^\ast}\|g_{\epsilon,n}-g_\epsilon\|_{\mathcal H}\\
&\le C_{fos}\,\Env(K)\,A'_{\max}\,\|g_{\epsilon,n}-g_\epsilon\|_{\mathcal H},
\end{align*}
uniformly on the ball (the last inequality is the same envelope bound used in the main after Theorem~2, adapted to $g_\epsilon$).
Combining with \eqref{eq:inv-Lip} and taking the sup over $\bm\theta$ gives the contaminated analogue of (3.7):
\begin{equation}\label{eq:Munif-eps}
\mathcal M_{\mathrm{unif}}(n,r;\epsilon)
\ \lesssim\ \frac{C_{fos}\,A'_{\max}\,\Env(K)}{\lambda_\epsilon}\;\|g_{\epsilon,n}-g_\epsilon\|_{\mathcal H}.
\end{equation}
Under the assumption $\|g_{\epsilon,n}-g_\epsilon\|_{\mathcal H}=o_p(n^{-1/2})$ and the uniform boundedness of
$\lambda_\epsilon^{-1}$ for $\epsilon\in[0,\epsilon_0]$, we obtain
$\mathcal M_{\mathrm{unif}}(n,r;\epsilon)=o_p(n^{-1/2})$.

\medskip
\noindent\textit{Conclusion (opt-to-stat).}
Iterating the noisy recursion and using Lemma~\ref{lem:M-noisyrec} yields
\[
\|\bm\theta^{(m_n)}_{\epsilon,n}-\widehat{\bm\theta}_{\epsilon,n}\|_2
\ \le\ \bar\kappa^{\,m_n}\,\|\bm\theta^{(0)}_{\epsilon,n}-\widehat{\bm\theta}_{\epsilon,n}\|_2
+ \frac{1-\bar\kappa^{\,m_n}}{1-\bar\kappa}\,\mathcal M_{\mathrm{unif}}(n,r;\epsilon).
\]
Multiplying by $\sqrt{n}$ and using $\sqrt n\,\bar\kappa^{\,m_n}\to0$ together with
$\sqrt n\,\mathcal M_{\mathrm{unif}}(n,r;\epsilon)=o_p(1)$ gives
$\sqrt n\,\|\bm\theta^{(m_n)}_{\epsilon,n}-\widehat{\bm\theta}_{\epsilon,n}\|_2\to^p 0$.
\qed

\subsection*{S.R.1 \; Robust contaminated noisy contraction for bounded–RAF class}
\label{supp:robust-bRAF}
\begin{cor}[Robust class (NED/vNED)]
Assume $A'_{\max}:=\sup_{\delta\ge-1}|A'(\delta)|<\infty$ and $A'$ is nonincreasing on $[0,\infty)$
(e.g.,  NED, vNED). Then there exist $r>0$ and $\bar\kappa\in(0,1)$ such that, for every
$\varepsilon\in[0,\varepsilon_0]$ and any selection
$\bm\theta_{t+1,\varepsilon,n}\in M_{\varepsilon,n}(\bm\theta_{t,\varepsilon,n})$ with
$\bm\theta_{t,\varepsilon,n}\in B_2(r;\bm\theta^\dagger_\varepsilon)$,
\[
\|\bm\theta_{t+1,\varepsilon,n}-\bm\theta^\dagger_\varepsilon\|_2
\ \le\ \bar\kappa\,\|\bm\theta_{t,\varepsilon,n}-\bm\theta^\dagger_\varepsilon\|_2
\ +\ \mathcal M_{\mathrm{unif}}(n,r;\varepsilon),
\]
and, if $\|g_{\varepsilon,n}-g_\varepsilon\|_{\mathcal H}=o_p(n^{-1/2})$, then
$\mathcal M_{\mathrm{unif}}(n,r;\varepsilon)=o_p(n^{-1/2})$; consequently any
$m_n\ge \lceil(\tfrac12\log n+c_0)/|\log\bar\kappa|\rceil$ satisfies
$\sqrt n\,\|\bm\theta^{(m_n)}_{\varepsilon,n}-\widehat{\bm\theta}_{\varepsilon,n}\|\to^p 0$.
\end{cor}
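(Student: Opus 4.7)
The plan is to specialize the noisy-contraction machinery of Corollary~6 to the bounded-RAF regime, where the RAF derivative envelope $A'_{\max}$ is finite and, crucially, independent of $\varepsilon$. The noisy-contraction inequality itself follows from Corollary~6 once one verifies that the contraction factor $\bar\kappa$ and the ball radius $r$ can be chosen \emph{uniformly} in $\varepsilon\in[0,\varepsilon_0]$. For this I would invoke Corollary~5 directly: under the uniform FOS and strong-convexity hypotheses standing throughout Section~4, it yields a pair $(r,\bar\kappa)$ with $\bar\kappa\in(0,1)$ not depending on $\varepsilon$. One then picks a nearly optimal selection $\bm\eta_t\in M_\varepsilon(\bm\theta_{t,\varepsilon,n})$, applies the population contraction $\|\bm\eta_t-\bm\theta^\dagger_\varepsilon\|_2\le\bar\kappa\|\bm\theta_{t,\varepsilon,n}-\bm\theta^\dagger_\varepsilon\|_2$, and closes by the triangle inequality with the operator-deviation term bounded by $\mathcal M_{\mathrm{unif}}(n,r;\varepsilon)$.

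The main content is to promote the score-envelope inequality (eq. (L.2) in the proof of Corollary~6) to the claim $\mathcal M_{\mathrm{unif}}(n,r;\varepsilon)=o_p(n^{-1/2})$ uniformly in $\varepsilon$. Under the bounded-RAF hypothesis, $A'_{\max}=\sup_{\delta\ge -1}|A'(\delta)|<\infty$ is a fixed constant (not depending on $\varepsilon$), the score envelope $\mathrm{Env}(K)$ and the local FOS modulus $C_{\mathrm{fos}}$ are model-side constants on $B_2(r;\bm\theta^\star)$, and the strong-convexity constant $\lambda_\varepsilon$ is uniformly bounded below on $[0,\varepsilon_0]$ by the standing assumption. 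Thus (L.2) reads
\[
\mathcal M_{\mathrm{unif}}(n,r;\varepsilon)\ \le\ \frac{C_{\mathrm{fos}}\,A'_{\max}\,\mathrm{Env}(K)}{\inf_{\varepsilon\in[0,\varepsilon_0]}\lambda_\varepsilon}\;\|g_{\varepsilon,n}-g_\varepsilon\|_{\mathcal H},
\]
and the assumed plug-in rate $\|g_{\varepsilon,n}-g_\varepsilon\|_{\mathcal H}=o_p(n^{-1/2})$ delivers the desired $o_p(n^{-1/2})$ bound uniformly in $\varepsilon$. The monotonicity of $A'$ on $[0,\infty)$ is used implicitly to rule out sign-oscillation pathologies when upper-bounding the Gâteaux derivative of $\Psi_\varepsilon$ in $g$, so that the envelope inequality extends to the entire contamination path.

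To close the argument, I would iterate the noisy recursion. Setting $a_t:=\|\bm\theta^{(t)}_{\varepsilon,n}-\bm\theta^\dagger_\varepsilon\|_2$, the recursion yields
\[
a_{m_n}\ \le\ \bar\kappa^{\,m_n} a_0\ +\ \frac{\mathcal M_{\mathrm{unif}}(n,r;\varepsilon)}{1-\bar\kappa}.
\]
Since $\widehat{\bm\theta}_{\varepsilon,n}$ is itself a fixed point of $M_{\varepsilon,n}$ and lies in $B_2(r;\bm\theta^\dagger_\varepsilon)$ eventually (by the same noisy recursion applied to the trivial iterate), a triangle-inequality sandwich converts the bound on $a_{m_n}$ into one on $\|\bm\theta^{(m_n)}_{\varepsilon,n}-\widehat{\bm\theta}_{\varepsilon,n}\|_2$ with the same dominant terms. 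Multiplying by $\sqrt n$, the choice $m_n\ge\lceil(\tfrac12\log n+c_0)/|\log\bar\kappa|\rceil$ gives $\sqrt n\,\bar\kappa^{m_n}\to 0$, and the uniform envelope bound gives $\sqrt n\,\mathcal M_{\mathrm{unif}}(n,r;\varepsilon)=o_p(1)$; Slutsky yields the opt-to-stat conclusion.

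The main obstacle is the uniformity-in-$\varepsilon$ of the model-side constants $(\bar\kappa,r,C_{\mathrm{fos}},\mathrm{Env}(K),\lambda_\varepsilon)$. At $\varepsilon=0$ these are produced by Theorem~\ref{THM:Population:contraction}; the extension to $\varepsilon\in[0,\varepsilon_0]$ rests on the Gâteaux-continuity of $\Psi_\varepsilon$ in $g_\varepsilon$ established in Appendix~N and on the fact that bounded-RAF generators (NED, vNED) produce no uncontrolled blow-ups of the influence envelopes as $\varepsilon$ varies. This is precisely the structural hypothesis that fails for unbounded-RAF generators such as KL, which explains why the corollary is restricted to the bounded-RAF class.
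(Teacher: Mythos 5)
Your overall route is the same as the paper's: uniform-in-$\varepsilon$ contraction from Corollary~5, the triangle-inequality decomposition through a population selection $\bm\eta_t\in M_\varepsilon(\bm\theta_{t,\varepsilon,n})$, the inverse-Lipschitz/strong-convexity bound converting gradient perturbations into argmin deviations, the envelope inequality $\mathcal M_{\mathrm{unif}}(n,r;\varepsilon)\lesssim C_{\mathrm{fos}}A'_{\max}\mathrm{Env}(K)\lambda_\varepsilon^{-1}\|g_{\varepsilon,n}-g_\varepsilon\|_{\mathcal H}$ with all constants uniform over $[0,\varepsilon_0]$ because $A'_{\max}<\infty$, and the noisy linear recursion with $m_n\asymp\log n$. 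All of that matches the paper's proof of this corollary (and of Corollary~6) step for step.

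There is, however, one step that does not work as written: the final ``triangle-inequality sandwich'' that converts the bound on $a_{m_n}=\|\bm\theta^{(m_n)}_{\varepsilon,n}-\bm\theta^\dagger_\varepsilon\|_2$ into a bound on $\|\bm\theta^{(m_n)}_{\varepsilon,n}-\widehat{\bm\theta}_{\varepsilon,n}\|_2$. A plain triangle inequality through $\bm\theta^\dagger_\varepsilon$ introduces the term $\|\widehat{\bm\theta}_{\varepsilon,n}-\bm\theta^\dagger_\varepsilon\|_2$, which under the Godambe CLT is exactly $O_p(n_{}^{-1/2})$ with a nondegenerate Gaussian limit — it is \emph{not} $o_p(n^{-1/2})$. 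After multiplying by $\sqrt n$ you therefore get $O_p(1)$, not the claimed $o_p(1)$, so the opt-to-stat conclusion cannot be reached by that route. The paper avoids this by running the noisy recursion \emph{centered at} $\widehat{\bm\theta}_{\varepsilon,n}$ rather than at $\bm\theta^\dagger_\varepsilon$: since $\widehat{\bm\theta}_{\varepsilon,n}$ is a fixed point of $M_{\varepsilon,n}$ and the sample map inherits the contraction with factor $\kappa_n\le\bar\kappa+o_p(1)$ (the same FOS/strong-convexity argument with $g_\varepsilon$ replaced by $g_{\varepsilon,n}$, plus the uniform deviation bound), one gets $\|\bm\theta^{(m_n)}_{\varepsilon,n}-\widehat{\bm\theta}_{\varepsilon,n}\|_2\le\bar\kappa^{\,m_n}\|\bm\theta^{(0)}_{\varepsilon,n}-\widehat{\bm\theta}_{\varepsilon,n}\|_2+\tfrac{1-\bar\kappa^{\,m_n}}{1-\bar\kappa}\mathcal M_{\mathrm{unif}}(n,r;\varepsilon)$ directly, and both terms vanish after scaling by $\sqrt n$. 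Replacing your last paragraph with this re-centered recursion closes the argument; everything before it stands.
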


\begin{proof}
(1) \emph{Inverse–Lipschitz for the argmin map.} Strong convexity of $Q_\varepsilon(\cdot\mid\bm\theta)$ with modulus
$\lambda_\varepsilon$ on $B_2(r;\bm\theta^\dagger_\varepsilon)$ yields, for every $\bm\theta$ in the ball,
\begin{equation}\label{eq:inv-lip-eps}
\operatorname{dist}\!\big(M_{\varepsilon,n}(\bm\theta),\,M_\varepsilon(\bm\theta)\big)
\ \le\ \frac{1}{\lambda_\varepsilon}\,
\sup_{\bm\eta\in B_2(r;\bm\theta^\dagger_\varepsilon)}
\big\|\nabla_{\bm\theta'}Q_{\varepsilon,n}(\bm\eta\mid\bm\theta)-\nabla_{\bm\theta'}Q_{\varepsilon}(\bm\eta\mid\bm\theta)\big\|.
\end{equation}
(2) \emph{Gradient perturbation with bounded RAF derivative.} Using the score representation and the mean-value form,
\[
\sup_{\bm\eta\in B_2(r;\bm\theta^\dagger_\varepsilon)}
\big\|\nabla_{\bm\theta'}Q_{\varepsilon,n}(\bm\eta\mid\bm\theta)-\nabla_{\bm\theta'}Q_{\varepsilon}(\bm\eta\mid\bm\theta)\big\|
\ \le\ C_{\mathrm{fos}}\;\!A'_{\max}\,\mathrm{Env}(K)\,\|g_{\varepsilon,n}-g_\varepsilon\|_{\mathcal H},
\]
uniformly on the ball (bounded, nonincreasing $A'$ allows the envelope to be absorbed).
Combining with \eqref{eq:inv-lip-eps} and taking sup over $\bm\theta$ yields
\begin{equation}\label{eq:munif-eps}
\mathcal M_{\mathrm{unif}}(n,r;\varepsilon)\ \lesssim\
\frac{C_{\mathrm{fos}}\,A'_{\max}\,\mathrm{Env}(K)}{\lambda_\varepsilon}\,
\|g_{\varepsilon,n}-g_\varepsilon\|_{\mathcal H}.
\end{equation}
Under the plug-in rate and a uniform curvature lower bound
$\inf_{\varepsilon\le\varepsilon_0}\lambda_\varepsilon>0$, we get $\mathcal M_{\mathrm{unif}}=o_p(n^{-1/2})$.
Noisy recursion + Lemma (noisy linear recurrence) imply the opt-to-stat bound.
\end{proof}

\subsection*{S.R.2 \; KL requires a local floor: failure mode without it}
\label{supp:kl-failure}
\begin{prop}[KL needs a local density/score floor]
Let $G(u)=u\log u-u+1$ (KL). Assume only the standing smoothness on $B_2(r;\bm\theta^\dagger_\varepsilon)$
but no local lower density/score bound. Then there exist contamination sequences $\{\eta_n\}$ with
$\|g_{\varepsilon,n}-g_\varepsilon\|_{\mathcal H}=O_p(n^{-1/2})$ for which
$\mathcal M_{\mathrm{unif}}(n,r;\varepsilon)\not=o_p(n^{-1/2})$; hence the opt-to-stat step may fail.
If, in addition, a local density/score floor holds (e.g. $\inf_{\bm\theta\in B_2(r;\bm\theta^\dagger_\varepsilon)}
\inf_{y\in\mathcal Y_r} f(y;\bm\theta)\ge c>0$), then \eqref{eq:munif-eps} applies with $A'_{\max}=1$ and the contaminated
noisy contraction and opt-to-stat bounds hold for KL as well.
\end{prop}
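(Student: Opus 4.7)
The proposition decomposes into a positive recovery under the floor and a negative failure without it. Both halves exploit the KL-specific identities $A(\delta)=\delta$, $A'(\delta)\equiv 1$, and $A''(\delta)=0$, which imply
\[
\Psi^{\mathrm{KL}}(\bm\theta;g)
:=\nabla_{\bm\theta}D_G(g,f_{\bm\theta})
=-\int g(y)\,s_{\bm\theta}(y)\,dy
\]
is affine in $g$ and its Gâteaux expansion around $g_\varepsilon$ is exact, with no remainder:
\[
\Psi^{\mathrm{KL}}(\bm\theta;g_{\varepsilon,n})-\Psi^{\mathrm{KL}}(\bm\theta;g_\varepsilon)
=-\int s_{\bm\theta}(y)\,(g_{\varepsilon,n}-g_\varepsilon)(y)\,dy.
\]
This identity is the workhorse for both halves.

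For the positive direction, I would invoke the floor $\inf_{\bm\theta\in B_2(r;\bm\theta^\dagger_\varepsilon)}\inf_{y\in\mathcal Y_r}f(y;\bm\theta)\ge c>0$. Since $s_{\bm\theta}=\nabla f_{\bm\theta}/f_{\bm\theta}$, the floor yields the uniform bound $\sup_{\bm\theta,\,y\in\mathcal Y_r}\|s_{\bm\theta}(y)\|_2\le \|\nabla f_{\bm\theta}\|_\infty/c$, giving finite $\mathrm{Env}(K)$ and finite seminorm $\|\cdot\|_{\mathcal H}$ on $\mathcal Y_r$. Bounding the previous display pointwise by this envelope yields $\|\Psi^{\mathrm{KL}}_n-\Psi^{\mathrm{KL}}\|\le \mathrm{Env}(K)\,\|g_{\varepsilon,n}-g_\varepsilon\|_{\mathcal H}$, which is precisely \eqref{eq:munif-eps} with $A'_{\max}=1$. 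Plugging into the inverse-Lipschitz step of Corollary~\ref{cor:noisy-contraction-eps} (using strong convexity of $Q_\varepsilon(\cdot\mid\bm\theta)$ with modulus $\lambda_\varepsilon$) gives $\mathcal M_{\mathrm{unif}}(n,r;\varepsilon)=o_p(n^{-1/2})$ under the plug-in rate hypothesis, and the contaminated noisy-contraction plus opt-to-stat conclusions of that corollary transfer verbatim to the KL setting.

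For the failure direction, the plan is to take the empirical measure $g_{\varepsilon,n}=n^{-1}\sum_{i=1}^n\delta_{Y_i}$ of i.i.d.\ samples from $g_\varepsilon$ (or any plug-in achieving the exact parametric $n^{-1/2}$ rate) and invoke the empirical process CLT on the restricted score class. Donsker's theorem gives $\|g_{\varepsilon,n}-g_\varepsilon\|_{\mathcal H}=O_p(n^{-1/2})$, verifying the hypothesis; moreover $\sqrt{n}(g_{\varepsilon,n}-g_\varepsilon)$ converges to a nondegenerate Gaussian process in this seminorm, so the rate is sharp and not $o_p(n^{-1/2})$. By exact linearity of the KL estimating function,
\[
\sqrt{n}\bigl(\Psi^{\mathrm{KL}}_n(\bm\theta)-\Psi^{\mathrm{KL}}(\bm\theta)\bigr)
=-\sqrt{n}\!\int s_{\bm\theta}\,d(g_{\varepsilon,n}-g_\varepsilon)
\]
inherits a nondegenerate Gaussian limit indexed by $\bm\theta$ with covariance involving $\mathrm{Var}_{g_\varepsilon}(s_{\bm\theta}(Y))$. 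Without the floor the score envelope need not be in $L^2(g_\varepsilon)$ on the full ball, precluding any sharpening of the $n^{-1/2}$ rate. Strong convexity of $Q_\varepsilon(\cdot\mid\bm\theta)$ together with an upper Lipschitz property of its gradient transfers this two-sided rate to $\mathcal M_{\mathrm{unif}}$, so $\sqrt{n}\,\mathcal M_{\mathrm{unif}}(n,r;\varepsilon)$ does not converge to zero in probability and the opt-to-stat step of Corollary~\ref{cor:noisy-contraction-eps} fails on this sequence.

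\textbf{Main obstacle.} The delicate step is establishing a matching lower bound on $\mathcal M_{\mathrm{unif}}$ rather than merely an upper bound. The inverse-Lipschitz direction used throughout gives $\mathcal M_{\mathrm{unif}}\lesssim \|\Psi_n-\Psi\|/\lambda_\varepsilon$; to exhibit failure one needs the reverse $\mathcal M_{\mathrm{unif}}\gtrsim \|\Psi_n-\Psi\|/L_\varepsilon$, which follows from an upper Lipschitz constant $L_\varepsilon$ on $\nabla_{\bm\theta'}Q_\varepsilon(\cdot\mid\bm\theta)$ provided by the standing smoothness, but must be argued uniformly in $\varepsilon$. A further delicacy is ruling out cancellation in the limit Gaussian process: the covariance $\mathrm{Var}_{g_\varepsilon}(s_{\bm\theta^\dagger_\varepsilon}(Y))$ must be nonzero in at least one coordinate direction, a generic identifiability condition that must nevertheless be verified for the specific overdispersed count mixtures (Poisson--Gamma, Poisson--lognormal) highlighted in the main text.
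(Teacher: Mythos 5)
Your positive half (under the floor) is essentially the paper's argument: the floor gives a finite score envelope, the exact linearity of the KL score map gives \eqref{eq:munif-eps} with $A'_{\max}=1$, and the machinery of Corollary~\ref{cor:noisy-contraction-eps} transfers. That part is fine.

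The negative half has a genuine gap, and it is the half that carries the content of the proposition. The paper's proof exhibits an explicit adversarial family: sets $A_m$ with $g_\varepsilon(A_m)>0$ on which $\sup_{y\in A_m}f(y;\bm\theta)\to0$ uniformly over the ball, with $\eta_m$ concentrated on $A_m$, so that the score envelope $\mathrm{Env}(K)$ in the inverse-Lipschitz bound \eqref{eq:inv-lip-eps} diverges along $m$ and no bound of the form $\mathcal M_{\mathrm{unif}}\lesssim C\,\|g_{\varepsilon,n}-g_\varepsilon\|_{\mathcal H}$ can hold with a constant uniform over the admitted contaminations. Your argument instead takes a \emph{fixed, benign} plug-in (the empirical measure) and argues via Donsker that the gradient fluctuation is exactly of order $n^{-1/2}$. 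This does not prove what the proposition asserts, for two reasons. First, it is not KL-specific: a nondegenerate $n^{-1/2}$ Gaussian fluctuation of the empirical plug-in afflicts bounded-RAF generators identically, so your construction cannot account for the contrast with S.R.1, where the positive conclusion is driven by the hypothesis $\|g_{\varepsilon,n}-g_\varepsilon\|_{\mathcal H}=o_p(n^{-1/2})$ rather than by boundedness of $A'$ alone; the statement explicitly asks for contamination sequences $\{\eta_n\}$, i.e., for an adversarial choice exploiting the vanishing of $f(\cdot;\bm\theta)$, which you never make. Second, your argument is internally inconsistent in the only regime that matters: you verify the hypothesis $\|g_{\varepsilon,n}-g_\varepsilon\|_{\mathcal H}=O_p(n^{-1/2})$ by invoking Donsker on the score class, which requires a square-integrable envelope — exactly what the absence of a floor denies. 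If the envelope is finite, the upper bound \eqref{eq:munif-eps} with $A'_{\max}=1$ applies and there is no KL-specific failure to exhibit; if it is infinite, your Donsker step (and hence your verification of the hypothesis) collapses. Finally, the matching lower bound $\mathcal M_{\mathrm{unif}}\gtrsim\|\Psi_n-\Psi\|/L_\varepsilon$, which you correctly flag as the "main obstacle," is asserted but never supplied, and without it even your weaker claim is unproven. To repair the argument you should replace the empirical-process route by the paper's diagonalization over contaminations concentrated where the model density vanishes, showing the proportionality constant between plug-in error and operator deviation blows up.
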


\begin{proof}
For KL, $A'(\delta)\equiv 1$, so the bound \eqref{eq:munif-eps} hinges on (i) a uniform curvature lower bound
$\lambda_\varepsilon>0$ and (ii) a finite score envelope $\mathrm{Env}(K)$ on the ball.
Without a local floor one can pick sets $A_m$ with $g_\varepsilon(A_m)>0$ and
$\sup_{y\in A_m} f(y;\bm\theta)\to 0$ for all $\bm\theta$ in the ball; letting $\eta_m$ concentrate on $A_m$
and taking $g_{\varepsilon,n}^{(m)}$ with $\|g_{\varepsilon,n}^{(m)}-g_\varepsilon^{(m)}\|_{\mathcal H}=O_p(n^{-1/2})$,
the score envelope diverges and the supremum in \eqref{eq:inv-lip-eps} cannot be $O_p(n^{-1/2})$ uniformly in $m$.
Equivalently $\mathcal M_{\mathrm{unif}}(n,r;\varepsilon)\not=o_p(n^{-1/2})$, so the noisy-contraction/opt-to-stat step may fail.
With a local floor, the same argument as in S.R.1 goes through with $A'_{\max}=1$.
\end{proof}

\subsection*{Proof of Theorem 4}

\begin{proof}[Proof of Theorem 4]
For clarity fix the target (contaminated) density at $\bm\theta^\star$:
\[
g_{\epsilon,n}(y)\;:=\;f_{\epsilon,n}(y;\bm\theta^\star)\;=\;(1-\epsilon)\,f(y;\bm\theta^\star)+\epsilon\,\eta_n(y),
\]
and recall the DM population functional $T(g):=\arg\min_{\bm\theta\in\Theta} D_G(g,f_{\bm\theta})$ (defined as a singleton under uniqueness).
Thus $\bm\theta^\star_{\epsilon,n}=T(g_{\epsilon,n})$ and $\bm\theta^\star_\epsilon=T(g_\epsilon)$, where
$g_\epsilon:=(1-\epsilon)\,f(\cdot;\bm\theta^\star)+\epsilon\,\eta$ is the $n\to\infty$ limit when it exists.

\medskip
\noindent\textbf{(1) Boundedness and convergence for fixed $\epsilon$.}
Assume \textbf{(C2)} and \textbf{(O2)} hold uniformly in $n$ and $\epsilon\in[0,\epsilon_0)$, and that for the fixed $\epsilon$ the minimizer
$\bm\theta^\star_{\epsilon,n}$ is unique for all $n\ge1$.
Set $\phi_{\epsilon,n}(\bm\theta):=D_G(g_{\epsilon,n},f_{\bm\theta})$ and
$\phi_\epsilon(\bm\theta):=D_G(g_\epsilon,f_{\bm\theta})$.
By linearity of $D_G$ in its first argument and the regularity in \textbf{(C2)}–\textbf{(O2)}, we have
\[
\sup_{\bm\theta\in\Theta}\big|\phi_{\epsilon,n}(\bm\theta)-\phi_\epsilon(\bm\theta)\big|
\;\longrightarrow\;0\quad(n\to\infty),
\]
and the sublevel sets $\{\bm\theta:\phi_{\epsilon,n}(\bm\theta)\le c\}$ are compact uniformly in $n$ (equicoercivity).
Hence, by the argmin-continuity/Berge maximum theorem (or van der Vaart’s Thm 5.7), the sequence of minimizers is bounded and
\[
\bm\theta^\star_{\epsilon,n}\ =\ \arg\min\phi_{\epsilon,n}\ \longrightarrow\ \arg\min\phi_{\epsilon}\ =\ \bm\theta^\star_\epsilon.
\]

\medskip
\noindent\textbf{(2) Gâteaux derivative at the model along the mixture direction.}
Let $\Psi(\bm\theta;g):=\nabla_{\bm\theta}D_G(g,f_{\bm\theta})$ and note that $T(g)$ is characterized by
$\Psi(T(g);g)=\bm0$. Under \textbf{(M1)}–\textbf{(M2)} (differentiability in $\bm\theta$ and Hadamard/Gâteaux differentiability in $g$),
the implicit function map $g\mapsto T(g)$ is differentiable at $(\bm\theta^\star,g_0)$ with
$g_0=f(\cdot;\bm\theta^\star)$ and nonsingular
$H:=\nabla_{\bm\theta}\Psi(\bm\theta^\star;g_0)$:
\[
\mathrm{D}T_{g_0}[h]\;=\;-\,H^{-1}\,\partial_g\Psi(\bm\theta^\star;g_0)[h].
\]
Calibration $A'(0)=1$ and evaluation at the model give the well-known score form
$\partial_g\Psi(\bm\theta^\star;g_0)[h]=-\int u(y;\bm\theta^\star)\,h(y)\,dy$, where
$u(y;\bm\theta):=\nabla_{\bm\theta}\log f(y;\bm\theta)$,
and $H=I(\bm\theta^\star)$ (Fisher information).
Consider the contamination path
\[
g_{\epsilon,n}=(1-\epsilon)\,g_0+\epsilon\,\eta_n
\quad\Rightarrow\quad
\dot g_{\,\epsilon}\big|_{\epsilon=0}= \eta_n - g_0.
\]
Therefore
\[
\mathrm{D}T_{g_0}[\eta_n-g_0]
\;=\; I(\bm\theta^\star)^{-1}\int u(y;\bm\theta^\star)\,\big(\eta_n(y)-g_0(y)\big)\,dy.
\]
Since $\int u(y;\bm\theta^\star)\,g_0(y)\,dy=\bm0$, we obtain
\[
\lim_{\epsilon\downarrow 0}\frac{\bm\theta^\star_{\epsilon,n}-\bm\theta^\star}{\epsilon}
\;=\; \mathrm{D}T_{g_0}[\eta_n-g_0]
\;=\; I(\bm\theta^\star)^{-1}\int \eta_n(y)\,u(y;\bm\theta^\star)\,dy.
\]
This is the claimed influence-function expression.
\end{proof}

\subsection*{Proof of Theorem 5}

\begin{proof}[Proof of Theorem 5]
Write $\widehat{\bm\theta}_{\epsilon,n}:=\arg\min_{\bm\theta}D_G(g_{\epsilon,n},f_{\bm\theta})$ and 
$\bm\theta^\dagger_\epsilon:=\arg\min_{\bm\theta}D_G(g_\epsilon,f_{\bm\theta})$.
Under (G1)–(G4) (uniformly in $\epsilon\in[0,\epsilon_0]$), the Z–estimation CLT gives
\[
\sqrt{n}\,(\widehat{\bm\theta}_{\epsilon,n}-\bm\theta^\dagger_\epsilon)\ \Rightarrow\ 
\mathcal N\!\big(0,\ H_\epsilon^{-1}V_\epsilon H_\epsilon^{-1}\big),
\]
with $H_\epsilon:=\nabla_{\bm\theta}\Psi(\bm\theta^\dagger_\epsilon;g_\epsilon)$ and
$V_\epsilon:=Var_{g_\epsilon}\!\big[A'(\tfrac{g_\epsilon}{f_{\bm\theta^\dagger_\epsilon}}-1)\,s_{\bm\theta^\dagger_\epsilon}(Y)\big]$.
By the population contraction (Theorem 1) and the uniform plug-in rate
$\|g_{\epsilon,n}-g_\epsilon\|_{\mathcal H}=o_p(n^{-1/2})$, the sample update map contracts locally with factor
$\kappa_n\le \kappa+o_p(1)$ and the noisy deviation is $o_p(n^{-1/2})$ (since
$\mathcal M_{\mathrm{unif}}(n,r)\lesssim C_{fos}A'_{\max}\Env(K)\,\|g_{\epsilon,n}-g_\epsilon\|_{\mathcal H}$).
Hence, for any initialization in $B_2(r;\bm\theta^\star)$,
\[
\|\bm\theta^{(m)}_{\epsilon,n}-\widehat{\bm\theta}_{\epsilon,n}\|_2\ \le\ \kappa_n^{\,m}\,C_r
\quad\Rightarrow\quad
\sqrt{n}\,\|\bm\theta^{(m_n)}_{\epsilon,n}-\widehat{\bm\theta}_{\epsilon,n}\|_2
\;\le\; C_r\,\sqrt{n}\,\kappa_n^{\,m_n}\ \xrightarrow{p}\ 0,
\]
because $\sqrt{n}\kappa^{m_n}\!\to 0$ and $\kappa_n\to\kappa$ in probability.
Decompose
\[
\sqrt{n}\big(\bm\theta^{(m_n)}_{\epsilon,n}-\bm\theta^\dagger_\epsilon\big)
=\sqrt{n}\big(\bm\theta^{(m_n)}_{\epsilon,n}-\widehat{\bm\theta}_{\epsilon,n}\big)
+\sqrt{n}\big(\widehat{\bm\theta}_{\epsilon,n}-\bm\theta^\dagger_\epsilon\big),
\]
apply the previous display and the Z–estimation CLT, and conclude by Slutsky’s lemma.
At the model ($\epsilon=0$) with $A'(0)=1$, $H_\epsilon=V_\epsilon=I(\bm\theta^\star)$ and the covariance reduces to $I(\bm\theta^\star)^{-1}$.
\end{proof}

\subsection*{Proof of Theorem 6}

\begin{proof}[Proof of Theorem 6]
Under the assumptions of Theorem 5, the contraction bound yields
$\|\bm\theta^{(m_n)}_{\epsilon,n}-\widehat{\bm\theta}_{\epsilon,n}\|_2\le \kappa_n^{\,m_n}C_r$ with
$\sqrt{n}\kappa^{m_n}\to0$, hence
$\sqrt{n}\|\bm\theta^{(m_n)}_{\epsilon,n}-\widehat{\bm\theta}_{\epsilon,n}\|_2\to 0$ in probability.
Decompose
\[
\sqrt{n}\big(\bm\theta^{(m_n)}_{\epsilon,n}-\bm\theta^\dagger_\epsilon\big)
= \sqrt{n}\big(\bm\theta^{(m_n)}_{\epsilon,n}-\widehat{\bm\theta}_{\epsilon,n}\big)
+ \sqrt{n}\big(\widehat{\bm\theta}_{\epsilon,n}-\bm\theta^\dagger_\epsilon\big),
\]
and invoke the Z–estimation CLT for the second term (Theorem 5 already assumes (G1)–(G4)).
Slutsky’s lemma yields the same Gaussian limit.
\end{proof}

\subsection*{S.BD.1 \; Local breakdown lower bound for unbounded RAF under a density–ratio cap}
\label{supp:cor-bdlb-unbd-A}
\begin{cor}
Assume the setup of Theorem~7 except that (ii) is replaced by:
\smallskip\noindent
\emph{(ii$_\Gamma$) Local envelope under density–ratio cap.}
There exists $\Gamma\in(0,\infty)$ such that, for all $\bm\theta\in B_2(r;\bm\theta^\star)$ and all contaminations $q$
under consideration,
\[
0\ \le\ \frac{q(y)}{f(y;\bm\theta)}-1\ \le\ \Gamma \quad \text{for $f(\cdot;\bm\theta)$–a.e.\ } y,
\]
and
\(
\|\nabla_{\bm\theta} D_G(q,f_{\bm\theta})\|\le S_K\,A_\Gamma
\)
with 
\( A_\Gamma:=\sup_{\delta\in[-1,\Gamma]}|A(\delta)| < \infty\).

\smallskip
Then the conclusion of Theorem~7 holds with 
\(\epsilon^\dagger\) replaced by 
\[
\epsilon^\dagger(\Gamma)\ :=\ \frac{\lambda\,r}{S_K\,A_\Gamma}.
\]
\end{cor}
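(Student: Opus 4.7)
Plan: This corollary is a verbatim localization of Theorem~7 in which the global RAF envelope $A_{\max}$ is replaced throughout by its restriction $A_\Gamma$ to the admissible range of residuals. My approach is to check that under (ii$_\Gamma$) the same uniform envelope on $\nabla_{\bm{\theta}} D_G(q, f_{\bm{\theta}})$ that hypothesis (ii) provides in Theorem~7 still holds, but with $A_\Gamma$ in place of $A_{\max}$, and then to re-run the Theorem~7 argument line-for-line with this replacement.

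The first step is the envelope check. Starting from the identity
\[
\nabla_{\bm{\theta}} D_G(q, f_{\bm{\theta}})
\;=\; -\!\int A\!\Big(\tfrac{q(y)}{f(y;\bm{\theta})}-1\Big)\, s_{\bm{\theta}}(y)\, f(y;\bm{\theta})\,dy ,
\]
the local cap (ii$_\Gamma$) forces the argument of $A$ to lie in $[0,\Gamma]\subset[-1,\Gamma]$ for $f(\cdot;\bm{\theta})$-a.e.\ $y$ and for every $\bm{\theta}\in B_2(r;\bm{\theta}^\star)$, so that $|A(q/f_{\bm{\theta}}-1)|\le A_\Gamma$ pointwise. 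The score-envelope constant $S_K$ enters only through the score integral and is unchanged. Combining the two bounds yields
\[
\sup_{\bm{\theta}\in B_2(r;\bm{\theta}^\star)}\|\nabla_{\bm{\theta}} D_G(q, f_{\bm{\theta}})\|\ \le\ S_K\,A_\Gamma ,
\]
which is exactly the form required by the Theorem~7 hypothesis, with $A_\Gamma$ in place of $A_{\max}$.

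The second step is to reinvoke the Theorem~7 derivation with this new envelope. Strong convexity (i) with modulus $\lambda$, combined with the stationarity of $\hat{\bm{\theta}}_\epsilon$ at the contaminated divergence and the uniform envelope just derived, produces the same chain of inequalities as in Theorem~7 and gives
\[
\lambda\,\|\hat{\bm{\theta}}_\epsilon-\bm{\theta}^\star\|\ \le\ \epsilon\,S_K\,A_\Gamma ,
\]
so that $\hat{\bm{\theta}}_\epsilon$ remains in $B_2(r;\bm{\theta}^\star)$ and the distance bound holds as long as $\epsilon<\epsilon^\dagger(\Gamma)=\lambda r/(S_K A_\Gamma)$; local uniqueness inside the ball is inherited from (iii).

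The hard part is justifying that the density-ratio cap in (ii$_\Gamma$) is truly uniform over $\bm{\theta}\in B_2(r;\bm{\theta}^\star)$ and over the admissible contaminations $q$, so that the pointwise inequality $|A(q/f_{\bm{\theta}}-1)|\le A_\Gamma$ survives inside the integral for every $\bm{\theta}$ visited by $\hat{\bm{\theta}}_\epsilon$ as $\epsilon$ varies over $[0,\epsilon^\dagger(\Gamma))$. The hypothesis (ii$_\Gamma$) is stated precisely with this uniformity in mind; without it, $\hat{\bm{\theta}}_\epsilon$ could drift to a region where the cap fails and $A$ ceases to be controlled, which is exactly the failure mode the supplement warns about for HD and KL in the absence of any local floor. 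Once the uniformity is granted, the remainder of the proof reduces to bookkeeping and the breakdown bound follows from Theorem~7 with $\epsilon^\dagger$ simply rescaled by the factor $A_{\max}/A_\Gamma$.
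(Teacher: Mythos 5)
Your proposal is correct and follows essentially the same route as the paper's own proof, which simply reruns the Theorem~7 argument with $A_{\max}$ replaced by $A_\Gamma$; your additional verification that the density--ratio cap yields the pointwise bound $|A(q/f_{\bm\theta}-1)|\le A_\Gamma$ inside the gradient integral is a harmless (indeed, slightly redundant) elaboration, since (ii$_\Gamma$) already posits the envelope $\|\nabla_{\bm\theta} D_G(q,f_{\bm\theta})\|\le S_K A_\Gamma$ as a hypothesis.
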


\begin{proof}
Repeat the proof of Theorem~7 replacing $A_{\max}$ by $A_\Gamma$ via (ii$_\Gamma$).
All constants are uniform on $B_2(r;\bm\theta^\star)$ by (i) and (iii), hence
$D_G((1-\varepsilon)g+\varepsilon q,f_{\bm\theta})-D_G(g,f_{\bm\theta})\ge 
\varepsilon\,\|\nabla D_G(q,f_{\bm\theta})\|\,\|\bm\theta-\bm\theta^\star\|- \tfrac{\lambda}{2}\|\bm\theta-\bm\theta^\star\|^2$,
which yields $\|\widehat{\bm\theta}_\varepsilon-\bm\theta^\star\|\le \varepsilon S_K A_\Gamma/\lambda$
for any $\varepsilon<\lambda r/(S_K A_\Gamma)$ and ensures $\widehat{\bm\theta}_\varepsilon\in B_2(r;\bm\theta^\star)$.
\end{proof}

\noindent \textbf{Examples of $A_\Gamma$.}
Hellinger: $A(\delta)=2(\sqrt{1+\delta}-1)$, so $A_\Gamma=2(\sqrt{1+\Gamma}-1)$.
KL: $A(\delta)=\delta$, so $A_\Gamma=\max\{1,\Gamma-1\}$.

\subsection*{S.BD.2 \; Failure of any uniform breakdown lower bound for unbounded RAF without a local floor}
\label{supp:prop-kl-fail}
\begin{prop}
Let $G$ be an unbounded–RAF generator (e.g., KL with $A(\delta)=\delta$ or HD with $A(\delta)=2(\sqrt{1+\delta}-1)$).
Assume \textnormal{(i)} and \textnormal{(iii)} of Theorem~7 hold on $B_2(r;\bm\theta^\star)$, but \emph{no} local density/score floor is imposed; i.e., for every $\Gamma<\infty$ there exists a probability $q_\Gamma$ and $\bm\theta_\Gamma\in B_2(r;\bm\theta^\star)$ with
\(
\operatorname*{ess\,sup}_y\frac{q_\Gamma(y)}{f(y;\bm\theta_\Gamma)}\ge \Gamma.
\)
Then for every $\varepsilon_0>0$ and every $C>0$ there exists $\Gamma$ and a contamination $g_{\varepsilon}^{(\Gamma)}=(1-\varepsilon)g+\varepsilon q_\Gamma$ with some $\varepsilon\in(0,\varepsilon_0]$ such that the $\varepsilon$–minimizer
$\widehat{\bm\theta}_\varepsilon$ of $D_G(g^{(\Gamma)}_\varepsilon,f_{\bm\theta})$ satisfies
\[
\|\widehat{\bm\theta}_\varepsilon-\bm\theta^\star\|\ >\ C.
\]
In particular, there is no uniform positive breakdown lower bound that depends only on $(\lambda,S_K)$ when $A_{\max}=+\infty$ and no density-ratio/score floor is assumed.
\end{prop}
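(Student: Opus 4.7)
The plan is to show failure of any uniform breakdown lower bound for unbounded-RAF generators by explicitly constructing, for each preassigned $\varepsilon_0>0$ and $C>0$, a contamination $(q_\Gamma,\varepsilon)$ whose $\varepsilon$-minimizer $\widehat{\bm\theta}_\varepsilon$ lies outside $B_2(C;\bm\theta^\star)$. The strategy reverses the logic in the proof of Theorem~\ref{thm:breakdown}: there, the envelope $\|\nabla_{\bm\theta} D_G(q,f_{\bm\theta})\|\le S_K A_{\max}$ produced an upper bound on displacement via $\lambda$-strong convexity; here, unboundedness of $A$ together with the no-floor hypothesis drives the same gradient to infinity, so no bound of the form $\varepsilon S_K A_{\max}/\lambda$ can hold, and a direct objective comparison pushes the global minimizer arbitrarily far.

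The concrete steps are as follows. Fix $\varepsilon\in(0,\varepsilon_0]$ (for KL, even $\varepsilon=\varepsilon_0$ suffices). For each large $\Gamma$, invoke the no-floor hypothesis to extract $(q_\Gamma,\bm\theta_\Gamma)$ with $\bm\theta_\Gamma\in B_2(r;\bm\theta^\star)$ and a measurable set $E_\Gamma$ of positive mass on which $q_\Gamma/f(\cdot;\bm\theta_\Gamma)\ge \Gamma$. In the paradigmatic KL case ($A(\delta)=\delta$), the gradient identity
\[
\nabla_{\bm\theta} D_G(g_\varepsilon^{(\Gamma)},f_{\bm\theta})
=-\int f(y;\bm\theta)\,s_{\bm\theta}(y)\,A\!\Big(\tfrac{g_\varepsilon^{(\Gamma)}(y)}{f(y;\bm\theta)}-1\Big)\,dy
\]
evaluated at $\bm\theta^\star$ collapses, via $\int s_{\bm\theta^\star}f(\cdot;\bm\theta^\star)=0$, to $-\varepsilon\int s_{\bm\theta^\star}\,dq_\Gamma$; choosing $q_\Gamma$ to concentrate near a point $y_\Gamma$ where $s_{\bm\theta^\star}(y_\Gamma)$ is unbounded (immediate for any location or scale family with noncompact support) drives this gradient to infinity, and the classical M-projection identity $\int s_{\bm\theta}f_{\bm\theta}\,dy=\int s_{\bm\theta}\,dg_\varepsilon$ at the minimizer forces $\widehat{\bm\theta}_\varepsilon$ to chase the tail, yielding $\|\widehat{\bm\theta}_\varepsilon-\bm\theta^\star\|>C$ for suitable $y_\Gamma$. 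For general unbounded $A$, I would instead compare objective values: pick $\bm\theta^\sharp\in\Theta$ with $\|\bm\theta^\sharp-\bm\theta^\star\|>C$, take $q_\Gamma\approx f(\cdot;\bm\theta^\sharp)$ with support essentially disjoint from that of $f(\cdot;\bm\theta^\star)$, and observe that $D_G(g_\varepsilon^{(\Gamma)},f_{\bm\theta^\sharp})$ remains bounded (the $\varepsilon$-component matches the model at $\bm\theta^\sharp$, and the $(1-\varepsilon)$-component lives on a disjoint set contributing a fixed total), whereas on $E_\Gamma$ the integrand of $D_G(g_\varepsilon^{(\Gamma)},f_{\bm\theta^\star})$ is $G(\delta)\,f(y;\bm\theta^\star)$ with $\delta\to\infty$ and diverges for any unbounded $G$. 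The global minimizer must therefore lie closer to $\bm\theta^\sharp$ than to $\bm\theta^\star$, which combined with hypothesis (iii) places it outside $B_2(C;\bm\theta^\star)$.

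The hard part will be handling RAFs such as HD, for which $A(\delta)\sim 2\sqrt{\delta}$ grows slowly and the divergent integrand on $E_\Gamma$ is damped by the smallness of $f(\cdot;\bm\theta^\star)$ there; a bare gradient argument may fail for small $\varepsilon_0$. The objective-comparison route above circumvents this by exploiting the \emph{relative} objective values at $\bm\theta^\sharp$ versus $\bm\theta^\star$, but requires a quantitative lower bound on $\int_{E_\Gamma}G(\delta)\,f(\cdot;\bm\theta^\star)\,dy$ dominating the $O(1)$ objective at $\bm\theta^\sharp$ — a delicate computation that depends jointly on the growth of $G$ and on the tail geometry of the model. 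A secondary technical issue is ensuring the blow-up direction is not cancelled by symmetries of the model (e.g., a symmetric $q_\Gamma$ in a symmetric location family would give zero gradient), which is handled by making $q_\Gamma$ asymmetric about $\bm\theta^\star$ so that the contamination contributes a definite signed shift in the score.
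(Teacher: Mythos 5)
Your proposal takes a genuinely different and more ambitious route than the paper. The paper's proof of this proposition does \emph{not} construct an actual breakdown: it simply picks $\Gamma$ with $A_\Gamma>\lambda C/(\varepsilon_0 S_K)$, sets $\varepsilon=\min\{\varepsilon_0,\lambda r/(2S_K A_\Gamma)\}$, and observes that the displacement bound inherited from Theorem~\ref{thm:breakdown} degenerates to $\varepsilon S_K A_\Gamma/\lambda\ge C$ --- i.e.\ it establishes only the final sentence (no uniform lower bound depending on $(\lambda,S_K)$ can be extracted by that method), not a verified instance of $\|\widehat{\bm\theta}_\varepsilon-\bm\theta^\star\|>C$. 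You instead try to exhibit contaminations that demonstrably move the minimizer. Your KL half is sound in spirit and actually stronger than what the paper proves: the estimating equation reduces to $\int s_{\bm\theta}\,dg_\varepsilon=0$, and a point mass where the score is large does break the MLE functional (modulo the caveat that under (C1) the parameter space is compact, so one argues $\|\cdot\|>C$ rather than divergence).

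The general unbounded-RAF half, however, has a fatal gap, and it is exactly the one you flagged. Your objective-comparison step asserts that $\int_{E_\Gamma}G(\delta)\,f(y;\bm\theta^\star)\,dy$ diverges ``for any unbounded $G$.'' This conflates unboundedness of the RAF $A$ with superlinear growth of $G$: under the paper's standing condition (C2), $G'(\infty)=\lim_{u\to\infty}G(u)/u<\infty$, so $\int G(\delta)f\,dy\lesssim G'(\infty)\int g_\varepsilon+|G(-1)|\int f$ is \emph{always} finite. For Hellinger specifically, $D_{\mathrm{HD}}(g_\varepsilon,f_{\bm\theta^\star})\le 4$ uniformly, and under your disjoint-support construction with $q_\Gamma\approx f(\cdot;\bm\theta^\sharp)$ one computes $D_{\mathrm{HD}}(g_\varepsilon,f_{\bm\theta^\star})=2(1-\sqrt{1-\varepsilon})^2+2\varepsilon=O(\varepsilon)$ while $D_{\mathrm{HD}}(g_\varepsilon,f_{\bm\theta^\sharp})\approx 2[(1-\varepsilon)+(1-\sqrt{\varepsilon})^2]\approx 4$. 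The comparison therefore goes the \emph{wrong} way: the minimizer stays near $\bm\theta^\star$, consistent with the classical positive breakdown of minimum Hellinger distance estimation (\cite{Simp87}). So your construction cannot deliver the displayed inequality for HD, and no refinement of the quantitative lower bound you anticipate needing will rescue it. If you want a proof that matches what the paper actually establishes, restrict the constructive argument to generators with $A_{\max}=+\infty$ \emph{via} superlinear $G$ (KL), and for the slowly growing case fall back on the paper's weaker ``the Theorem~\ref{thm:breakdown} envelope bound is vacuous'' argument --- or note explicitly that the displayed breakdown claim, as stated, is not justified for HD by either proof.
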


\begin{proof}
Fix $\varepsilon_0,C>0$.
By assumption, pick $\Gamma$ so large that $A_\Gamma>\lambda C/\varepsilon_0S_K$ and select $q_\Gamma$ with
$\operatorname*{ess\,sup}_y q_\Gamma(y)/f(y;\bm\theta)\ge \Gamma$ for all $\bm\theta\in B_2(r;\bm\theta^\star)$.
Let $\varepsilon:=\min\{\varepsilon_0,\ \lambda r/(2S_K A_\Gamma)\}$ and set $g_\varepsilon^{(\Gamma)}:=(1-\varepsilon)g+\varepsilon q_\Gamma$.
Arguing as in the proof of Theorem~7 but with $A_{\max}$ replaced by $A_\Gamma$ (which is now arbitrarily large),
the best bound one can obtain is $\|\widehat{\bm\theta}_\varepsilon-\bm\theta^\star\|\le \varepsilon S_K A_\Gamma/\lambda \ge C$.
Since $\Gamma$ is arbitrary, no uniform lower bound $\varepsilon^\dagger>0$ (independent of $\Gamma$) can be guaranteed.
\end{proof}

\subsection*{Proof of Theorem 7}

\begin{proof}[Proof of Theorem 7]
Fix $r>0$ and consider any $\bm\theta$ with $\|\bm\theta-\bm\theta^\star\|_2=r$.
By (i) ($\lambda$–strong convexity of $D_G(g,\cdot)$ on $B(\bm\theta^\star,r)$) and the optimality of $\bm\theta^\star$,
\[
D_G\big(g,f_{\bm\theta}\big)-D_G\big(g,f_{\bm\theta^\star}\big)\ \ge\ \lambda\,r^2.
\]
By (ii), along the line segment between $\bm\theta^\star$ and $\bm\theta$ (which lies in $B(\bm\theta^\star,r)$), the mean-value form of the fundamental theorem of calculus and the gradient envelope give
\[
\Big|\,D_G\big(q,f_{\bm\theta}\big)-D_G\big(q,f_{\bm\theta^\star}\big)\,\Big|
\ \le\ \sup_{\tilde{\bm\theta}\in B(\bm\theta^\star,r)}\big\|\nabla_{\bm\theta} D_G(q,f_{\tilde{\bm\theta}})\big\|_2\ \cdot \ \|\bm\theta-\bm\theta^\star\|_2
\ \le\ S_K A_{\max}\,r.
\]
Linearity of the disparity in its first argument yields, for the contaminated target
$g_\epsilon=(1-\epsilon)g+\epsilon q$,
\[
\Delta_\epsilon(\bm\theta)\ :=\ D_G(g_\epsilon,f_{\bm\theta})-D_G(g_\epsilon,f_{\bm\theta^\star})
=(1-\epsilon)\,\underbrace{\Delta_g(\bm\theta)}_{\ge \lambda r^2}\ +\ \epsilon\,\Delta_q(\bm\theta),
\]
with $\Delta_q(\bm\theta):=D_G(q,f_{\bm\theta})-D_G(q,f_{\bm\theta^\star})$.
Using the worst-case (adversarial) sign for $\Delta_q$,
\[
\Delta_\epsilon(\bm\theta)\ \ge\ (1-\epsilon)\,\lambda r^2\ -\ \epsilon\,S_K A_{\max}\,r.
\]
Consequently, for any $\epsilon$ satisfying
\[
(1-\epsilon)\,\lambda r^2\ -\ \epsilon\,S_K A_{\max}\,r\ >\ 0
\qquad\Longleftrightarrow\qquad
\epsilon\ <\ \frac{\lambda r^2}{\lambda r^2+S_K A_{\max} r},
\]
we have $\Delta_\epsilon(\bm\theta)>0$ for all $\|\bm\theta-\bm\theta^\star\|_2=r$.
By continuity, $D_G(g_\epsilon,\cdot)$ cannot attain a minimum on or outside the sphere of radius $r$; thus any minimizer $\hat{\bm\theta}_\epsilon$ lies in $B(\bm\theta^\star,r)$.
The same display with $\|\bm\theta-\bm\theta^\star\|_2\le r$ yields the Lipschitz bound
\[
D_G(g_\epsilon,f_{\bm\theta})-D_G(g_\epsilon,f_{\bm\theta^\star})
\ \ge\ \lambda\,\|\bm\theta-\bm\theta^\star\|_2^2 - \epsilon\,S_KA_{\max}\,\|\bm\theta-\bm\theta^\star\|_2,
\]
whose minimizer over the segment $[0,r]$ is attained at
$\|\bm\theta-\bm\theta^\star\|_2\le (\epsilon S_K A_{\max})/\lambda$.
Therefore, provided $\epsilon<\epsilon^\ddagger:=\lambda r^2/(\lambda r^2+S_KA_{\max} r)$, we have
$\hat{\bm\theta}_\epsilon\in B(\bm\theta^\star,r)$ and
$\|\hat{\bm\theta}_\epsilon-\bm\theta^\star\|_2\le (\epsilon S_K A_{\max})/\lambda$.
\end{proof}

\subsection*{Proof of Theorem 8}

\begin{proof}[Proof of Theorem 8]
We start with the proof of part (1). Fix $K\le K_{\max}$ and write, for $\bm\theta\in\bm\Theta_K$,
\[
D_{1n}(\bm\theta)=\int f_{\bm\theta}(y)\,G\!\Big(\frac{g_{1n}(y)}{f_{\bm\theta}(y)}\Big)\,dy,
\qquad
D(\bm\theta)=\int f_{\bm\theta}(y)\,G\!\Big(\frac{g(y)}{f_{\bm\theta}(y)}\Big)\,dy,
\]
where $g_{1n}$ is the density/pmf estimator built on the selection split $\mathcal D_{1n}$.
For each $y$ and $\bm\theta$, set $u_{1n}(y):=g_{1n}(y)/f_{\bm\theta}(y)$ and $u(y):=g(y)/f_{\bm\theta}(y)$.
By the mean–value theorem,
\[
\Big|G\!\big(u_{1n}(y)\big)-G\!\big(u(y)\big)\Big|
\;\le\;
\big(\sup_{\delta\ge-1}|A'(\delta)|\big)\,\Big|u_{1n}(y)-u(y)\Big|
\;=\; A'_{\max}\,\frac{|g_{1n}(y)-g(y)|}{f_{\bm\theta}(y)},
\]
hence
\[
\big|D_{1n}(\bm\theta)-D(\bm\theta)\big|
\;\le\; \int f_{\bm\theta}(y)\,A'_{\max}\,\frac{|g_{1n}(y)-g(y)|}{f_{\bm\theta}(y)}\,dy
\;=\; A'_{\max}\,\|g_{1n}-g\|_{L^1}.
\]
This bound is uniform in $\bm\theta\in\bm\Theta_K$, so
\[
\sup_{\bm\theta\in\bm\Theta_K}\big|D_{1n}(\bm\theta)-D(\bm\theta)\big|
\;\le\; A'_{\max}\,\|g_{1n}-g\|_{L^1}
\;\xrightarrow{p}\;0,
\]
provided $\|g_{1n}-g\|_{L^1}\xrightarrow{p}0$.
The latter holds for (i) the empirical pmf on a fixed finite alphabet (by the LLN/CLT in total variation), and
(ii) kernel estimators on $\mathbb R^d$ under standard bandwidth conditions ($h\to0$, $n_1 h^d\to\infty$)
ensuring $L^1$-consistency. This proves (1).

We next turn to the proof of (2), regarding the identifiability gap. Fix $K<K_0$. Under correct specification at $K_0$ we have
$D^\star_{K_0}=\inf_{\bm\theta\in\bm\Theta_{K_0}}D(\bm\theta)=D(\bm\theta^\star)=0$.
We will show $D^\star_K>0$. We do this in three steps.

\emph{Step 1 (existence of a population minimizer on $\bm\Theta_K$).}
By the standing regularity (continuity of $\bm\theta\mapsto f_{\bm\theta}$ and of
$D(\bm\theta)=\int f_{\bm\theta}G(g/f_{\bm\theta})$) and the compactness/coercivity used in §K for sublevel sets,
the continuous map $D:\bm\Theta_K\to\mathbb R_+$ attains its minimum on $\bm\Theta_K$:
there exists $\widehat{\bm\theta}_K\in\bm\Theta_K$ with
$D(\widehat{\bm\theta}_K)=\min_{\bm\theta\in\bm\Theta_K}D(\bm\theta)=:D^\star_K$.

\emph{Step 2 (strict propriety of the disparity).}
Since $G$ is convex with $G(1)=0$ and strictly convex at $1$ (calibration $A'(0)=1$),
the disparity is \emph{strictly proper}:
$D(g,f_{\bm\theta})\ge0$ with equality iff $f_{\bm\theta}=g$ a.e.
(Equivalently, $D(g,f_{\bm\theta})=0\iff g/f_{\bm\theta}\equiv1$ a.e.)

\emph{Step 3 (minimality of $K_0$ excludes $g\in\bm\Theta_K$).}
By the definition of $K_0$ (true order is minimal) and identifiability of the mixture family,
$g=f(\cdot;\bm\theta^\star)\notin\bm\Theta_K$ for all $K<K_0$; i.e., there is no $\bm\theta\in\bm\Theta_K$
with $f_{\bm\theta}=g$.

\emph{Conclusion.}
If, towards a contradiction, $D^\star_K=0$, then by Step~1 there exists
$\widehat{\bm\theta}_K\in\bm\Theta_K$ with $D(\widehat{\bm\theta}_K)=0$.
By Step~2 this forces $f_{\widehat{\bm\theta}_K}=g$ a.e., contradicting Step~3.
Hence $D^\star_K>0$ for every $K<K_0$, which proves the claimed identifiability gap. 

We now turn to the proof of (3), namely the local regularity at $K_0$. To this end, work under correct specification at order $K_0$: $g=f_{\bm\theta^\star}$ with $\bm\theta^\star\in\bm\Theta_{K_0}$.
Let $D(\bm\theta):=D_G(g,f_{\bm\theta})$ and $D_{1n}(\bm\theta):=D_G(g_{1n},f_{\bm\theta})$ on the selection split $\mathcal D_{1n}$.
Write $s_{\bm\theta}(y):=\nabla_{\bm\theta}\log f(y;\bm\theta)$ and $B(u):=G(u)-uG'(u)$.
Assume the fixed-order smoothness/identifiability you list (e.g., \textbf{(F1)}, \textbf{(K1)}–\textbf{(K2)}, \textbf{(M1)}–\textbf{(M8)}):
$f_{\bm\theta}$ is $C^2$ in a neighborhood $\mathcal N(\bm\theta^\star)$ with integrable envelopes for $s_{\bm\theta}$ and
$\nabla s_{\bm\theta}$, and the Fisher information
$I(\bm\theta^\star):=\int f_{\bm\theta^\star}s_{\bm\theta^\star}s_{\bm\theta^\star}^\top\,dy$ is positive definite.

\smallskip\noindent\textit{(a) $C^2$ and curvature at $\bm\theta^\star$.}
By dominated differentiation,
\[
\nabla_{\bm\theta}D(\bm\theta)
=\int f_{\bm\theta}(y)\,s_{\bm\theta}(y)\,B\!\Big(\frac{g(y)}{f_{\bm\theta}(y)}\Big)\,dy. \tag{$\ast$}
\]
At $\bm\theta=\bm\theta^\star$ we have $g/f_{\bm\theta^\star}\equiv1$, hence $B(1)=G(1)-G'(1)$, but
$\int f_{\bm\theta^\star}s_{\bm\theta^\star}\,dy=\bm 0$, so $\nabla_{\bm\theta}D(\bm\theta^\star)=\bm 0$.
Differentiating once more and using $B'(u)=G'(u)-(G'(u)+uG''(u))=-uG''(u)$ gives
\[
\nabla^2_{\bm\theta}D(\bm\theta^\star)
=\int f_{\bm\theta^\star}\,s_{\bm\theta^\star}\,(-B'(1))\,s_{\bm\theta^\star}^\top\,dy
=G''(1)\,I(\bm\theta^\star)\ \succ\ 0,
\]
so $D$ is $C^2$ and locally strongly convex at $\bm\theta^\star$. Therefore there exist $r>0$ and $\lambda>0$ such that
\begin{equation}\label{eq:local-strong-cvx}
D(\bm\theta)-D(\bm\theta^\star)\ \ge\ \tfrac{\lambda}{2}\,\|\bm\theta-\bm\theta^\star\|_2^2
\qquad\text{for all }\bm\theta\in B_2(r;\bm\theta^\star).
\end{equation}

\smallskip\noindent\textit{(b) Consistency of the selection-split minimizer.}
Let $\widehat{\bm\theta}_{K_0,1n}\in\arg\min_{\bm\Theta_{K_0}}D_{1n}(\bm\theta)$.
By part (1) of the Theorem (ULLN on $\bm\Theta_{K_0}$) and the uniqueness of the population minimizer
at $\bm\theta^\star$, argmin continuity (Berge’s maximum theorem / van der Vaart Thm 5.7) yields
$\widehat{\bm\theta}_{K_0,1n}\xrightarrow{p}\bm\theta^\star$.

\smallskip\noindent\textit{(c) Quadratic rate for the population risk.}
A second-order Taylor expansion of $D$ at $\bm\theta^\star$ gives, for $\widehat{\bm\theta}_{K_0,1n}$ in $B_2(r;\bm\theta^\star)$,
\[
D(\widehat{\bm\theta}_{K_0,1n})-D(\bm\theta^\star)
=\tfrac12\,(\widehat{\bm\theta}_{K_0,1n}-\bm\theta^\star)^\top
\nabla^2_{\bm\theta}D(\tilde{\bm\theta}_n)\,
(\widehat{\bm\theta}_{K_0,1n}-\bm\theta^\star),
\]
for some $\tilde{\bm\theta}_n$ on the segment between $\bm\theta^\star$ and $\widehat{\bm\theta}_{K_0,1n}$. By continuity of the Hessian, $\nabla^2_{\bm\theta}D(\tilde{\bm\theta}_n)\to\nabla^2_{\bm\theta}D(\bm\theta^\star)$ in probability, hence the quadratic bound \eqref{eq:local-strong-cvx} implies
\[
D(\widehat{\bm\theta}_{K_0,1n})-D(\bm\theta^\star)\ \asymp\ \|\widehat{\bm\theta}_{K_0,1n}-\bm\theta^\star\|_2^2.
\]
Thus it suffices to show $\|\widehat{\bm\theta}_{K_0,1n}-\bm\theta^\star\|_2=O_p(n_1^{-1/2})$, which we now verify under the fixed-order regularity.

\smallskip\noindent\textit{(d) Local expansion of the selection-split score and $n_1^{-1/2}$ parameter rate.}
The first-order condition $\bm 0=\nabla D_{1n}(\widehat{\bm\theta}_{K_0,1n})$ and a mean-value expansion around $\bm\theta^\star$ give
\[
\bm 0=\nabla D_{1n}(\bm\theta^\star)
+\Big[\nabla^2 D(\bm\theta^\star)+o_p(1)\Big]\,(\widehat{\bm\theta}_{K_0,1n}-\bm\theta^\star),
\]
where the $o_p(1)$ term uses the uniform LLN of derivatives on a neighborhood (from your \textbf{(M)} and \textbf{(F)} conditions).
By \((\ast)\) with $g$ replaced by $g_{1n}$ and a first-order expansion of $B$ at $1$,
\[
\nabla D_{1n}(\bm\theta^\star)
=\int f_{\bm\theta^\star}s_{\bm\theta^\star}\,B\!\Big(\frac{g_{1n}}{f_{\bm\theta^\star}}\Big)dy
= B'(1)\int s_{\bm\theta^\star}(y)\,\big(g_{1n}(y)-g(y)\big)\,dy + R_{n,1},
\]
where $B'(1)=-G''(1)$ and the remainder $R_{n,1}=O_p(\|g_{1n}-g\|_{L^2}^2)=O_p(n_1^{-1})$
for the discrete pmf case (finite support) and, more generally, under your bandwidth conditions.
Hence $\nabla D_{1n}(\bm\theta^\star)=O_p(n_1^{-1/2})$.
Since $\nabla^2 D(\bm\theta^\star)=G''(1)I(\bm\theta^\star)$ is nonsingular,
\[
\widehat{\bm\theta}_{K_0,1n}-\bm\theta^\star
= -\Big[\nabla^2 D(\bm\theta^\star)\Big]^{-1}\,\nabla D_{1n}(\bm\theta^\star) + o_p(n_1^{-1/2})
= O_p(n_1^{-1/2}).
\]
Therefore $D(\widehat{\bm\theta}_{K_0,1n})-D(\bm\theta^\star)=O_p(n_1^{-1})$by the quadratic equivalence above.

Turning to (4), set $\overline{\mathcal R}_{n_1}(K):=\inf_{\bm\theta\in\bm\Theta_K} D_{1n}(\bm\theta)$ and
$\mathcal R(K):=\inf_{\bm\theta\in\bm\Theta_K} D(\bm\theta)$.
By (1) (ULLN on each fixed $\bm\Theta_K$), $\overline{\mathcal R}_{n_1}(K)\xrightarrow{p}\mathcal R(K)$ for every $K\le K_{\max}$.
\emph{Overfit ($K>K_0$).}:
By (4), for each $K>K_0$,
\[
\overline{\mathcal R}_{n_1}(K)-\overline{\mathcal R}_{n_1}(K_0)
=\{D_{1n}(\widehat{\bm\theta}_{K,1n})-D(\bm\theta^\star)\}
-\{D_{1n}(\widehat{\bm\theta}_{K_0,1n})-D(\bm\theta^\star)\}
=O_p(n_1^{-1}).
\]
Finally, to prove (5), we consider the
\emph{Underfit ($K<K_0$).} case.
By part (2), $\mathcal R(K)-\mathcal R(K_0)=:c_K>0$. Hence for any $\delta\in(0,c_K)$, with probability $\to1$,
\[
\overline{\mathcal R}_{n_1}(K)-\overline{\mathcal R}_{n_1}(K_0)\ \ge\ c_K-\delta.
\]
The penalty difference contributes $(b_{n_1}/n_1)\{p(K)-p(K_0)\}$, which is nonpositive and vanishes by $b_{n_1}/n_1\to0$.
Therefore, eventually $\mathrm{GDIC}_{n_1}(K)>\mathrm{GDIC}_{n_1}(K_0)$ for all $K<K_0$. The penalty difference is $(b_{n_1}/n_1)\{p(K)-p(K_0)\}$, which is strictly positive and dominates $O_p(n_1^{-1})$ because
$b_{n_1}\to\infty$ while $b_{n_1}/n_1\to0$ (e.g., $b_{n_1}=\tfrac12\log n_1$).
Hence $\mathrm{GDIC}_{n_1}(K)>\mathrm{GDIC}_{n_1}(K_0)$ with probability $\to1$ for each $K>K_0$. Combining the two cases yields $\bm P(\widehat K_n=K_0)\to1$.
\end{proof}

\subsection*{Proof of Theorem 9}

\begin{proof}[Proof of Theorem 9]
\emph{(a) Fixed order.}
On the estimation split $\mathcal D_{2n}$, let
$\widehat{\bm\theta}_{n_2}\in\arg\min_{\bm\theta\in\bm\Theta_K} D(g_{n_2},f_{\bm\theta})$.
Under correct model specification at $K_0$, calibration $A'(0)=1$, the plug-in rate
$\|g_{n_2}-g\|_{\mathcal H}=o_p(n_2^{-1/2})$, and the fixed-order regularity
\textbf{(F1)}, \textbf{(C1)}–\textbf{(C2)}, \textbf{(K1)}–\textbf{(K2)}, \textbf{(M1)}–\textbf{(M8)},
the standard Z–estimation CLT yields
\[
\sqrt{n_2}\,(\widehat{\bm\theta}_{n_2}-\bm\theta^\star)\ \Rightarrow\ \mathcal N\!\big(0,\,I(\bm\theta^\star)^{-1}\big).
\]

\smallskip
\noindent\emph{(b) Post-selection (unconditional).}
By Theorem 8, $\mathbb{P}(\widehat K_n=K_0)\to1$.
On the event $\{\widehat K_n=K_0\}$, the dimension-matched pair satisfies
$\overline{\bm\theta}_{n_2}=\widehat{\bm\theta}_{n_2}$ and $\bm\theta^\star(\widehat K_n)=\bm\theta^\star$.
Hence
\[
\sqrt{n_2}\,\{\overline{\bm\theta}_{n_2}-\bm\theta^\star(\widehat K_n)\}
=\sqrt{n_2}\,(\widehat{\bm\theta}_{n_2}-\bm\theta^\star)\ \Rightarrow\ \mathcal N\!\big(0,\,I(\bm\theta^\star)^{-1}\big),
\]
and the same limit holds unconditionally by $\mathbb{P}(\widehat K_n=K_0)\to1$.

\emph{Truncated DM on $\mathcal D_{2n}$.}
If $\bm\theta^{(m_{n_2})}_{n_2}$ is the $m_{n_2}$–iterate with $m_{n_2}=O(\log n_2)$ as in Theorem 3(ii),
then $\sqrt{n_2}\,\|\bm\theta^{(m_{n_2})}_{n_2}-\widehat{\bm\theta}_{n_2}\|=o_p(1)$ and the same limits follow by Slutsky.
\end{proof}

\subsection*{Proof of Theorem 10 (GDIC over/under–estimation bounds; bounded–RAF)}\label{supp:thm-gdic-bounds}
\begin{proof}
\emph{Underfit.}
Write $\Delta_\epsilon(K)$ as in the main text.
Bound the empirical minimizer’s value by quadratic expansion and operator–noise:
\(
\sup_{\theta\in\bm\Theta_K}\big|D_{1n}(\theta)-D(\theta)\big|=O_p\!\big(\|g_{1n}-g\|_{\mathcal H}\big)=O_p(n_1^{-1/2})
\)
by bounded \(A'_{\max}\) and the score envelope.
On the selection split, the value error at the (curved) minimizer is $O_p(n_1^{-1})$, so for $t\in(0,\Delta_\epsilon(K))$,
\[
\mathbb{P}\!\Big(D_{1n}(\widehat\theta_{K,1n})-D_{1n}(\widehat\theta_{K_0,1n})\le -\,t\Big)
\ \le\ \exp(-c_1 n_1 t^2)+c_2\,\mathbb{P}\!\big(\|g_{1n}-g\|_{\mathcal H} > t/2\big),
\]
by a Bernstein (or Hoeffding) inequality for bounded-RAF contrasts. Plug $t=\tfrac12\Delta_\epsilon(K)$ and sum over $K<K_0$.

\emph{Overfit.}
Write \(\nu_K=p(K)-p(K_0)\).
By the (Godambe–)Wilks expansion on the selection split,
\(
2n_1\{D_{1n}(\widehat\theta_{K_0,1n})-D_{1n}(\widehat\theta_{K,1n})\}\Rightarrow \chi^2_{\nu_K}
\).
Thus, for BIC-type penalty ($b_{n_1}=\tfrac12\log n_1$),
\begin{align*}
\mathbb{P}\!\Big(D_{1n}(\widehat\theta_{K,1n})+\tfrac{\log n_1}{2n_1}p(K)\le
D_{1n}(\widehat\theta_{K_0,1n})+\tfrac{\log n_1}{2n_1}p(K_0)\Big)
\ &\le\ \mathbb{P}\!\big(\chi^2_{\nu_K}\ge \nu_K\log n_1+o(1)\big)\\
&=O(n_1^{-\nu_K/2}),
\end{align*}
using standard $\chi^2$ tails. Union bound over $K>K_0$ yields the claim.
\end{proof}

\subsection*{Proof of Proposition 5 (BIC vs AIC; contamination and unbounded RAF)}\label{supp:prop-gdic-bic-aic}
\begin{proof}
\emph{(i)}–(BIC) follow directly from Theorem 10.
\emph{(ii) AIC:} If $b_{n_1}\equiv b\in(0,\infty)$, the overfit test reduces to
$\mathbb{P}\!\{\chi^2_{\nu_K}\ge \nu_K b + o(1)\}$, which converges to a strictly positive limit $c(\nu_K)\in(0,1)$.
\emph{(iii) KL:} With unbounded RAF $A$, the uniform operator-noise/curvature control used in Theorem S.GDIC.1
fails without a local density-ratio or score floor. Under such a floor, the same bounds hold (use $A'_{\max}=1$);
without it, the failure mode in Prop. S.R.2 shows that neither underfit nor overfit probabilities need decay in general.
\end{proof}
 The following proposition is a negative result describing how KL/BIC needs a local floor.
\begin{prop}[KL/BIC needs a local floor; a one-point overfit failure mode]\label{prop:kl-onepoint}
Let $G$ be the likelihood disparity (KL). Assume only the smoothness of Section 4 and no local density/score floor on $B_2(r;\bm\theta^\star)$.
Fix $K_0$ and $\Delta p\ge1$. Then for every $n_1$ and every $M>0$ there exists a point $y_M$ with
$\inf_{\bm\theta\in\bm\Theta_{K_0}} \{-\log f(y_M;\bm\theta)\}\ge M$.
For the $\varepsilon$–contaminated selection split (rate $\varepsilon>0$), choose $M=(\Delta p)\tfrac12\log n_1+1$.
With probability at least $1-\exp(-c\,\varepsilon n_1)$ (some $c>0$), at least one observation falls in a neighbourhood of $y_M$; refitting with $K_0{+}1$ components that isolates this point reduces the average KL contrast by at least $M/n_1$, which exceeds the BIC penalty $(\Delta p)\tfrac{\log n_1}{2n_1}$.
Thus
\[
\liminf_{n_1\to\infty}\mathbb{P}\!\big(\widehat K_n\ge K_0{+}1\big)\ \ge\ 1-e^{-c\varepsilon}\;>\;0.
\]
If a local density/score floor holds (e.g., $\inf_{\bm\theta\in B_2(r;\bm\theta^\star)} \inf_{y\in\mathcal Y_r} f(y;\bm\theta)\ge c_*>0$), then the overfit probability decays at least at the BIC rate (Wilks + penalty), and the GDIC–BIC conclusions of Theorem~8 and Theorem~10 carry over to KL.
\end{prop}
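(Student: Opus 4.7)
The plan is to exploit the unboundedness of the KL residual-adjustment function $A_{\mathrm{KL}}(\delta)=\delta$: whenever a single observation lands at a point $y$ where $\inf_{\bm\theta\in\bm\Theta_{K_0}}f(y;\bm\theta)$ is extremely small, its contribution $-\tfrac1{n_1}\log f(y;\widehat{\bm\theta}_{K_0,1n})$ to the empirical KL contrast can be made to dominate the BIC penalty, so that GDIC selects an extra component. The existence of such a point $y_M$ is immediate under the no-floor hypothesis: since $\inf_{\bm\theta\in\bm\Theta_{K_0}}\inf_{y\in\mathcal Y_r}f(y;\bm\theta)=0$, for every $M>0$ we can pick $y_M\in\mathcal Y_r$ with $\sup_{\bm\theta\in\bm\Theta_{K_0}}f(y_M;\bm\theta)\le e^{-M}$, yielding $\inf_{\bm\theta\in\bm\Theta_{K_0}}\{-\log f(y_M;\bm\theta)\}\ge M$.

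Next I would bound the probability that at least one contaminated observation in $\mathcal D_{1n}$ falls in a small neighbourhood $U_M$ of $y_M$. Under i.i.d.\ $\varepsilon$-contamination with an adversary concentrating mass $p_M>0$ on $U_M$, each observation independently hits $U_M$ with probability $\ge \varepsilon p_M$, so the standard Bernoulli lower-tail bound gives $\mathbb{P}(X_{n_1}\ge 1)\ge 1-(1-\varepsilon p_M)^{n_1}\ge 1-\exp(-c\,\varepsilon n_1)$ with $c=p_M$. Since $p_M$ is fixed by the construction (not by $n_1$), this tends to one, and in particular the $\liminf$ in the conclusion is bounded below by $1-e^{-c\varepsilon}>0$.

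The main obstacle is the contrast-reduction step: showing that a $(K_0+1)$-component refit isolating the single contaminated point decreases the average KL contrast by at least $M/n_1 - o(n_1^{-1})$. The plan is to compare $\widehat{\bm\theta}_{K_0,1n}$ with the explicit feasible $(K_0+1)$-mixture $f_\ast:=(1-\delta)f_{\widehat{\bm\theta}_{K_0,1n}}+\delta\,h(\cdot;\bm\phi_b)$, where $h(\cdot;\bm\phi_b)$ is a component with location near $y_M$ and scale $b\downarrow 0$, and $\delta\in(0,1)$ is a fixed small weight. For the contaminated point $Y_{i_0}\in U_M$, the log-density shifts from $-\log f(Y_{i_0};\widehat{\bm\theta}_{K_0,1n})\ge M$ to $-\log(\delta\,h(Y_{i_0};\bm\phi_b))$, which stays $O(\log(1/b))$ and can be absorbed into $o(1)$ terms once $M\gg 1$; the remaining $n_1-1$ observations each pay only $-\log(1-\delta)=O(\delta)$, since $h(\cdot;\bm\phi_b)$ is negligible outside $U_M$. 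Averaging produces a net decrease of at least $M/n_1$ up to $O(\delta/n_1)$, and choosing $M=(\Delta p)\tfrac12\log n_1+1$ makes this strictly exceed the BIC penalty $(\Delta p)\log n_1/(2n_1)$. The technical care required is (a) choosing $\bm\phi_b$ admissibly so $f_\ast\in\bm\Theta_{K_0+1}$, which is standard for location-scale component families, and (b) controlling the non-isolated log-sum uniformly; both are routine under the smoothness assumptions already in force.

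For the positive part, a local floor $\inf_{\bm\theta\in B_2(r;\bm\theta^\star)}\inf_{y\in\mathcal Y_r}f(y;\bm\theta)\ge c_\ast>0$ caps the Pearson residual on $\mathcal Y_r$ at $\delta\le 1/c_\ast-1=:\Gamma$, so $A_\Gamma=\sup_{\delta\in[-1,\Gamma]}|\delta|=\max\{1,\Gamma\}<\infty$. All bounded-RAF arguments underlying Theorems~\ref{thm:gdic-consistency} and~\ref{thm:gdic-robust-bounds} then go through with $A_{\max}$ replaced by $A_\Gamma$; in particular, the selection-split Wilks expansion combined with the BIC penalty yields the overfit decay $\mathbb{P}(\widehat K_n\ge K_0+1)=O(n_1^{-\Delta p/2})$, recovering the GDIC--BIC conclusions for KL under the floor.
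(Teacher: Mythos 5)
Your overall route is the same as the paper's (the paper only gives a three–line sketch: construct $y_M$, show one contaminated point lands near it with high probability, and argue that isolating that point with an extra component reduces the average KL contrast by at least $M/n_1$). The probability step and the local–floor converse are fine. However, the key quantitative step — the contrast reduction — contains an accounting error that, as written, makes the construction fail. You take $f_\ast=(1-\delta)f_{\widehat{\bm\theta}_{K_0,1n}}+\delta\,h(\cdot;\bm\phi_b)$ with $\delta$ a \emph{fixed} small weight and claim the remaining $n_1-1$ observations contribute only $O(\delta/n_1)$ to the averaged contrast. In fact each of those $n_1-1$ points pays $-\log(1-\delta)\asymp\delta$, so after averaging their total cost is $\asymp\delta$, a \emph{constant}, whereas the gain at the isolated point is only $O(\log n_1/n_1)$. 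With fixed $\delta$ the net change in $D_{1n}$ is therefore an increase, not a decrease, and GDIC would not overfit.

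To repair the step you must let $\delta=\delta_{n_1}\downarrow 0$ with $\delta_{n_1}=o(\log n_1/n_1)$ (e.g.\ $\delta_{n_1}\asymp n_1^{-1}$), and then track the extra $\log(1/\delta_{n_1})\asymp\log n_1$ paid at the isolated point: the decrease there is $\tfrac1{n_1}\{M+\log\delta_{n_1}+\log h(y_M;\bm\phi_b)\}$, so beating the penalty $(\Delta p)\log n_1/(2n_1)$ with $M=(\Delta p)\tfrac12\log n_1+1$ additionally requires $h(y_M;\bm\phi_b)\gtrsim n_1$. This is achievable for continuous location–scale components by shrinking $b$, but impossible for the discrete (pmf) families emphasized in the paper, where $h\le 1$; there one must instead enlarge $M$ by an extra $\log n_1$ (which is essentially what the paper's supplementary sketch does by taking $M=\nu_{K_0+1}\log n_1+1$ rather than $(\Delta p)\tfrac12\log n_1+1$). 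A second, more minor point: your existence argument for $y_M$ silently upgrades the failure of the floor, $\inf_{\bm\theta}\inf_y f(y;\bm\theta)=0$, to the uniform statement $\sup_{\bm\theta\in\bm\Theta_{K_0}}f(y_M;\bm\theta)\le e^{-M}$; these are not equivalent in general (the small values could occur at different $y$ for different $\bm\theta$), and one needs a tail/compactness argument (standard for light–tailed families over compact parameter sets) to close this quantifier gap — though the paper itself also asserts the existence of $y_M$ without justification.
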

 The proof is standard and is similar to the Proof of Theorem 10 and Proposition 5.

\subsection*{Proof of Theorem 11: Overfit bound for bounded RAF }
\label{supp:sgdic-b1}
The proof of the Theorem relies on the proof of the following lemma.
\begin{lem}[Per–point leverage, bounded RAF]\label{lem:per-point}
Fix $K_0$ and $K>K_0$. On the selection split of size $n_1$, for any subset $S\subset\{1,\dots,n_1\}$ with $|S|=m$,
the maximal possible decrease of the empirical contrast $D_{1n}$ achievable by refitting with $K$ components that dedicate $(K{-}K_0)$ new components to absorb exactly the $m$ points in $S$ is at most $mA_{\max}/n_1$. 
\end{lem}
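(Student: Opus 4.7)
The plan is to reduce the per--point leverage bound to a pointwise identity for the residual--adjustment function $A(\delta)$ and then sum the contributions across the $m$ observations in $S$. The key algebraic ingredient is the ODE identity
\[
\frac{d}{d\delta}\!\left[\frac{G(\delta)}{1+\delta}\right] \;=\; \frac{A(\delta)}{(1+\delta)^2}, \qquad \frac{G(\delta)}{1+\delta}\bigg|_{\delta=0}=0,
\]
obtained by differentiating $G(\delta)/(1+\delta)$ and invoking $A(\delta)=(1+\delta)G'(\delta)-G(\delta)$ together with $G(0)=0$. Integrating from $0$ and using that $A$ is nondecreasing with $A(0)=0$ (since $A'(\delta)=(1+\delta)G''(\delta)\ge 0$) and bounded by $A_{\max}$ yields
\[
h(\delta):=\frac{G(\delta)}{1+\delta} \;\le\; A_{\max}\,\frac{\delta}{1+\delta} \;\le\; A_{\max} \qquad (\delta\ge 0).
\]
Rewriting the divergence integrand via $g_n(y)=(1+\delta_n(y;\bm{\theta}))f_{\bm{\theta}}(y)$ then gives the pointwise leverage ceiling
\[
G(\delta_n(y;\bm{\theta}))\,f_{\bm{\theta}}(y) \;=\; g_n(y)\cdot h(\delta_n(y;\bm{\theta})) \;\le\; A_{\max}\,g_n(y)
\]
on the ``excess empirical mass'' set $\{\delta_n\ge 0\}$, which is precisely the regime in which adding a component can reduce the local contribution.

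I would then combine this pointwise ceiling with a best--case argument at $S$. Since $G\ge 0$ with $G(0)=0$, the most a dedicated new component can do at a point $y\in S$ is to drive $\delta_n(y;\bm{\theta}_K)\to 0$, making the local integrand vanish; hence the improvement at $S$ is bounded by the contribution under $\bm{\theta}_{K_0}$:
\[
D_{1n}(\bm{\theta}_{K_0})-D_{1n}(\bm{\theta}_K)
\;\le\; \sum_{y\in S} G(\delta_n(y;\bm{\theta}_{K_0}))\,f_{\bm{\theta}_{K_0}}(y)
\;\le\; A_{\max}\sum_{y\in S} g_n(y)
\;=\; \frac{m\,A_{\max}}{n_1},
\]
using $g_n(y)=1/n_1$ at each observation point. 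For a discrete associated kernel with $\sum_x \mathcal{K}_{y,c}(x)=1$ (in place of the empirical pmf), the same total--mass identity $\sum_{y\in S} g_n(y)=m/n_1$ holds, so the bound transfers verbatim.

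The main obstacle I anticipate is the bookkeeping of mass redistribution outside $S$: when total weight $\alpha=m/n_1$ is transferred to the new components, the retained $K_0$ components are rescaled by $(1-\alpha)$, which shifts $\delta_n$ at non--$S$ points and can alter the non--$S$ contribution. Expanding the non--$S$ increment to first order gives a term of size $\alpha\int_{S^c} A(\delta_n(y;\bm{\theta}_{K_0}))f_{\bm{\theta}_{K_0}}(y)\,dy$, which is $O(m/n_1)$ with sign controlled by the RAF; this first--order contribution can be absorbed into an enlarged envelope or removed by composing the refit with an explicit rescaling of the retained mixing weights that preserves the non--$S$ integrand. A secondary minor subtlety is the regime $\delta_n(y;\bm{\theta}_{K_0})<0$ at some $y\in S$, where $h$ is not bounded by $A_{\max}$; but this is not the ``excess mass'' regime, so the dedicated component has essentially no leverage to exploit, and one can either restrict to absorbed points satisfying $\delta_n\ge 0$ (the standard contamination setup) or drop such terms from the sum, tightening the bound either way.
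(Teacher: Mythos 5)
Your argument is correct and is essentially the paper's own proof in different variables: the identity $\tfrac{d}{d\delta}\big[G(\delta)/(1+\delta)\big]=A(\delta)/(1+\delta)^2$ that you integrate is, under the substitution $u=g_n(y)/(1+\delta)$, exactly the paper's observation that $-A$ is the derivative of $u\mapsto u\,G\!\big(g_n(y)/u-1\big)$, and both routes give the same per-point ceiling $A_{\max}\,g_n(y)=A_{\max}/n_1$ summed over $S$. You are in fact more careful than the paper's one-line sketch on the two loose ends it silently ignores --- the effect of renormalizing the retained components on the non-$S$ (and unobserved-support) contributions, and the $\delta<0$ regime at points of $S$ --- and your proposed remedies (absorbing the $O(m/n_1)$ rescaling term, restricting absorption to points with $\delta_n\ge 0$) are the appropriate ones.
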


\begin{proof}
Write $D_{1n}(\theta)=\frac1{n_1}\sum_{i=1}^{n_1} \phi_\theta(Y_i)$ with $\phi_\theta(y):=f_\theta(y)\,G\!\big(\frac{1}{n_1 f_\theta(y)}\big)$ when $g_{1n}$ is the empirical pmf.
For bounded RAF $A$, the pointwise decrement in $\phi_\theta(y)$ from changing $f_\theta$ arbitrarily at a single $y$ is bounded by $A_{\max}$, so the total decrease contributed by the $m$ indices in $S$ is at most $mA_{\max}$; dividing by $n_1$ gives the claim. (A formal proof can be obtained by interpolating between the two fitted models and using the mean–value formula with $A$ as the directional derivative of $u\mapsto uG(1/u)$.)
\end{proof}

\textbf{Proof of the Theorem}: Let $\widehat\theta_{K,1n}$ and $\widehat\theta_{K_0,1n}$ be selection-split minimizers.
Then
\[
\mathrm{GDIC}_{n_1}(K)-\mathrm{GDIC}_{n_1}(K_0)
=\bigl\{D_{1n}(\widehat\theta_{K,1n})-D_{1n}(\widehat\theta_{K_0,1n})\bigr\}
+\frac{\nu_K b_{n_1}}{n_1}.
\]
On the event $\{X_{n_1}\le m\}$, Lemma~\ref{lem:per-point} yields
$D_{1n}(\widehat\theta_{K,1n})-D_{1n}(\widehat\theta_{K_0,1n})\ge -\,mA_{\max}/n_1 - R_{n_1}$ with $R_{n_1}=O_p(n_1^{-1/2})$ from sampling error (bounded RAF + ULLN).
Thus for $m<m_{\min}(K,n_1)=\lceil \nu_K b_{n_1}/A_{\max}\rceil$ and large $n_1$,
$\mathrm{GDIC}_{n_1}(K)-\mathrm{GDIC}_{n_1}(K_0)>0$ on $\{X_{n_1}\le m\}$, which implies
\[
\mathbb{P}(\widehat K_n\ge K)\ \le\ \mathbb{P}\!\big(X_{n_1}\ge m_{\min}(K,n_1)\big)\ +\ o(1).
\]
For $X_{n_1}\sim{\rm Bin}(n_1,\varepsilon)$, Chernoff’s bound gives
$\mathbb{P}(X_{n_1}\ge m_{\min})\le \exp\{-n_1\,\mathrm{kl}(m_{\min}/n_1\|\varepsilon)\}$, yielding the stated decay for BIC.
\qed

\subsection*{Proof of Theorem 11: Underfit bound for bounded RAF}
\label{supp:sgdic-b0}
For each fixed $K<K_0$, let $\Delta_0(K)=\inf_{\theta\in\Theta_K}D(\theta)-D(\theta^\star)>0$.
Bound the contaminated population gap by linear response with bounded RAF:
$\Delta_\varepsilon(K)\ge \Delta_0(K)-2A_{\max}\varepsilon$ (two contrasts shift by at most $A_{\max}\varepsilon$ each).
Selection-split ULLN gives
$\sup_{\theta\in\Theta_K}|D_{1n}(\theta)-D(\theta)|=O_p(n_1^{-1/2})$ uniformly for bounded-RAF class.
Hence, for any $t\in(0,\Delta_\varepsilon(K))$,
\[
\mathbb{P}\!\Big(D_{1n}(\widehat\theta_{K,1n})-D_{1n}(\widehat\theta_{K_0,1n})\le -t\Big)
\ \le\ \exp\!\big(-c_1 n_1 t^2\big)+o(1),
\]
by a Bernstein/Hoeffding bound for bounded contrasts. Choosing $t=\tfrac12\Delta_\varepsilon(K)$ yields
\[
\mathbb{P}(\widehat K_n=K)\ \le\ \exp\!\Big(-c_1 n_1 [\Delta_0(K)-2A_{\max}\varepsilon]^2\Big)+o(1),
\]
and summing over $K<K_0$ gives the claim.
\qed
\subsection*{One-point overfit failure mode for KL}
\label{supp:sgdic-kl}
Construct $y_M$ with $\inf_{\theta\in\Theta_{K_0}}[-\log f(y_M;\theta)]\ge M$ and let $\varepsilon>0$ be fixed.
Under $\varepsilon$–contamination on $\mathcal D_{1n}$, with probability $\to 1-e^{-c\varepsilon}$ some $Y_i$ lies in a small neighborhood of $y_M$.
Refitting with $K_0{+}1$ by dedicating one component to that $Y_i$ reduces the average KL contrast at least by $M/n_1$,
while the BIC penalty is $(\nu_{K_0+1}\log n_1)/(2n_1)$. Choosing $M=(\nu_{K_0+1}\log n_1)+1$ forces
$\mathrm{GDIC}_{n_1}(K_0{+}1)<\mathrm{GDIC}_{n_1}(K_0)$ on that event, so 
$\liminf_{n_1\to\infty}\mathbb{P}(\widehat K_n\ge K_0{+}1)\ge 1-e^{-c\varepsilon}>0$.
If a local density/score floor holds, the per-point leverage is bounded and S.GDIC-B1 applies with $A_{\max}$ replaced by $A_\Gamma$.
\qed

\subsection*{Proof of Theorem 12}

\begin{proof}[Proof of Theorem 12]
Condition on $\mathcal D_{1n}$; then $\widehat K_n=\widehat K_n(\mathcal D_{1n})$ is fixed and independent of $\mathcal D_{2n}$.
On $\mathcal D_{2n}$, by part (a) of Theorem 9 at $K=\widehat K_n$ (and the fixed-order regularity at $K_0$),
\[
\sqrt{n_2}\,\{\overline{\bm\theta}_{n_2}-\bm\theta^\star(\widehat K_n)\}
\ \Rightarrow\ \mathcal N\!\big(0,\,I(\bm\theta^\star)^{-1}\big)\quad\text{in probability}.
\]
Since $\mathbb{P}(\widehat K_n=K_0)\to1$, we have
$\sqrt{n_2}\,\{\overline{\bm\theta}_{n_2}-\bm\theta^\star(K_0)\}\Rightarrow\mathcal N(0,I(\bm\theta^\star)^{-1})$
in probability (stable convergence), and hence unconditionally as well.
If a truncated iterate $\bm\theta^{(m_{n_2})}_{n_2}$ is used, the additional $o_p(1)$
term $\sqrt{n_2}\|\bm\theta^{(m_{n_2})}_{n_2}-\overline{\bm\theta}_{n_2}\|$ vanishes by Theorem 4(ii),
and the same limit holds by Slutsky.
\end{proof}

\newpage

\section*{M: Reference Results Used in the Proofs of the Main Theorems}\label{sec:M-refresults}
We collect standard facts used throughout. Proofs are omitted; see the cited references.
\\
\begin{lem}[Uniform LLN on fixed-order classes]\label{lem:M-ULLN}
For each fixed $K\le K_{\max}$,
\[
\sup_{\bm\theta\in\bm\Theta_K}\big|D_{1n}(\bm\theta)-D(\bm\theta)\big|\xrightarrow{p}0
\quad(n_1\to\infty),
\]
whenever $G$ is convex with $A'_{\max}<\infty$ and $g_{1n}$ is $L^1$–consistent (empirical pmf on a finite alphabet or KDE with $h\to0$, $n_1h^d\to\infty$).
\emph{Used in:} Theorem 8 proof of (1) and (3).
\end{lem}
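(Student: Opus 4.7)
The strategy is to reduce the sup-norm deviation on the class $\bm{\Theta}_K$ to a single $L^1$-type distance between the plug-in estimator $g_{1n}$ and the target $g$, which has no $\bm{\theta}$-dependence and can be handled by off-the-shelf density-estimation theory. The key observation, which is already used implicitly in the proof of Theorem~\ref{thm:gdic-consistency}(1), is that the disparity $D_G(g,f_{\bm{\theta}})=\int f_{\bm{\theta}}(y)\,G(g(y)/f_{\bm{\theta}}(y)-1)\,dy$ depends on $g$ only through the fibre-wise ratio $g/f_{\bm{\theta}}$, so integrating against $f_{\bm{\theta}}$ erases the $\bm{\theta}$-dependence of the upper bound once $G$ is controlled by a single Lipschitz-type constant.

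First, for each fixed $y$ I would apply the mean-value theorem to the map $u\mapsto G(u-1)$ on the segment joining $u(y)=g(y)/f_{\bm{\theta}}(y)$ and $u_{1n}(y)=g_{1n}(y)/f_{\bm{\theta}}(y)$; under the ratio convention of Section~\ref{sec:DMLS} the endpoints are well defined, and the effective Lipschitz slope on the relevant interval is controlled by the RAF envelope $A'_{\max}<\infty$, giving
\[
\big|G(u_{1n}(y)-1)-G(u(y)-1)\big|\ \le\ A'_{\max}\,\frac{|g_{1n}(y)-g(y)|}{f_{\bm{\theta}}(y)}.
\]
Multiplying by $f_{\bm{\theta}}(y)$ and integrating, the $f_{\bm{\theta}}$-factor cancels and I obtain a bound which is \emph{free of} $\bm{\theta}$:
\[
\big|D_{1n}(\bm{\theta})-D(\bm{\theta})\big|\ \le\ A'_{\max}\,\|g_{1n}-g\|_{L^1}.
\]
Taking $\sup_{\bm{\theta}\in\bm{\Theta}_K}$ preserves the inequality, since the right-hand side is $\bm{\theta}$-independent.

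Second, I would verify $L^1$-consistency of $g_{1n}$ in the two stated regimes. For the empirical pmf on a finite alphabet, $\|g_{1n}-g\|_{L^1}$ equals twice the total variation distance, which is a finite sum of $|N_y/n_1-g(y)|$ terms and goes to zero a.s.\ by the classical LLN. For a kernel density estimator on $\mathbb{R}^d$ with bandwidth $h=h_{n_1}\downarrow 0$ and $n_1 h^d\to\infty$, the Devroye–Györfi $L^1$-consistency theorem gives $\|g_{1n}-g\|_{L^1}\xrightarrow{p}0$ for any $g\in L^1$. Combining with the display above yields $\sup_{\bm{\theta}\in\bm{\Theta}_K}|D_{1n}(\bm{\theta})-D(\bm{\theta})|\xrightarrow{p}0$, as required.

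The main subtlety is the mean-value step, which implicitly requires that the effective slope of $G$ on the relevant range of ratios be bounded by $A'_{\max}$ (as in the paper's convention in the proof of Theorem~\ref{thm:gdic-consistency}(1)); extending this to regions where $g/f_{\bm{\theta}}$ can be very large calls for either a local density-ratio floor or, in the truly unbounded case, a truncation argument combined with an $L^1$ tail bound on $\{f_{\bm{\theta}}<\epsilon\}$ and a compactness argument on $\bm{\Theta}_K$. Once this is handled within the bounded-RAF class covered by the hypotheses, everything else is routine.
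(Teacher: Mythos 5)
Your proposal is essentially identical to the paper's own argument (given in the proof of Theorem~\ref{thm:gdic-consistency}(1)): a pointwise mean-value bound $|G(u_{1n}-1)-G(u-1)|\le A'_{\max}\,|g_{1n}-g|/f_{\bm\theta}$, cancellation of $f_{\bm\theta}$ upon integration to get the $\bm\theta$-free bound $A'_{\max}\|g_{1n}-g\|_{L^1}$, and then $L^1$-consistency of the empirical pmf or KDE. Your closing remark correctly flags the one delicate point the paper also elides, namely that the Lipschitz constant of $G$ on the relevant range must genuinely be controlled by $A'_{\max}$ (since $A'(\delta)=(1+\delta)G''(\delta)$, boundedness of $A'$ does not by itself bound $G'$ uniformly on $[-1,\infty)$), but within the bounded-RAF convention adopted by the paper your treatment matches theirs.
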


\medskip
\begin{lem}[Argmin consistency / continuity]\label{lem:M-argmin}
Let $M_n(\bm \theta):=\arg\min_{\vartheta} D_{1n,K}(\vartheta)$ and $M(\bm \theta):=\arg\min_{\vartheta} D_K(\vartheta)$ on a compact set.
If $\sup_\vartheta|D_{1n,K}(\vartheta)-D_K(\vartheta)|\to 0$ and $\vartheta_K^\star$ is the unique minimizer of $D_K$, then every measurable selection $\widehat\vartheta_{K,1n}\in M_n(\cdot)$ satisfies $\widehat\vartheta_{K,1n}\stackrel{p}\to \vartheta_K^\star$.
\emph{Used in:} Theorem 8, Part (3).
\end{lem}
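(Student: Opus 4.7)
The plan is to use the standard well--separated minimizer argument in the spirit of van der Vaart (Asymptotic Statistics, Thm.~5.7) / Berge's maximum theorem, adapted to the set--valued selection $M_n$. First, I would exploit the uniqueness of $\vartheta_K^\star$ together with the compactness of $\bm\Theta_K$ and continuity of $D_K$ (inherited from the disparity structure assumed throughout Appendix~K) to derive a \emph{well--separation} property: for every open neighborhood $U$ of $\vartheta_K^\star$, there exists $\eta = \eta(U) > 0$ such that
\[
\inf_{\vartheta \in \bm\Theta_K \setminus U} D_K(\vartheta) \;\ge\; D_K(\vartheta_K^\star) + \eta.
\]
This follows because $\bm\Theta_K \setminus U$ is compact and $D_K$ is continuous with unique minimum on $\bm\Theta_K$ at $\vartheta_K^\star$, so the infimum is attained and strictly exceeds $D_K(\vartheta_K^\star)$.

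Next, for any measurable selection $\widehat\vartheta_{K,1n} \in M_n(\cdot)$, I would chain two uses of the uniform convergence assumption $\Delta_n := \sup_{\vartheta} |D_{1n,K}(\vartheta) - D_K(\vartheta)| \xrightarrow{p} 0$ to obtain
\[
D_K(\widehat\vartheta_{K,1n})
\;\le\; D_{1n,K}(\widehat\vartheta_{K,1n}) + \Delta_n
\;\le\; D_{1n,K}(\vartheta_K^\star) + \Delta_n
\;\le\; D_K(\vartheta_K^\star) + 2\Delta_n,
\]
where the middle step uses that $\widehat\vartheta_{K,1n}$ minimizes $D_{1n,K}$ (and so does at least as well as $\vartheta_K^\star$). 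Given any neighborhood $U$ of $\vartheta_K^\star$, pick $\eta = \eta(U)$ from the well--separation step and note
\[
\mathbb{P}(\widehat\vartheta_{K,1n} \notin U)
\;\le\; \mathbb{P}\!\big(D_K(\widehat\vartheta_{K,1n}) \ge D_K(\vartheta_K^\star) + \eta\big)
\;\le\; \mathbb{P}(2\Delta_n \ge \eta)
\;\to\; 0.
\]
Since $U$ was arbitrary, this is exactly $\widehat\vartheta_{K,1n} \xrightarrow{p} \vartheta_K^\star$.

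The main technical nuisance, rather than any deep obstacle, is verifying that a measurable selection from $M_n(\cdot)$ exists and that the supremum $\Delta_n$ is a genuine measurable random variable; both are standard under the compactness of $\bm\Theta_K$, continuity of $D_{1n,K}$ in $\vartheta$, and separability of the index set (a measurable--selection theorem, e.g.\ Kuratowski--Ryll-Nardzewski, applied to the closed--valued outer semicontinuous correspondence $M_n$). I would briefly note that under (B1)--(B2) the selection exists and $\Delta_n$ is measurable, so the bound $\mathbb{P}(2\Delta_n \ge \eta) \to 0$ follows directly from the assumed ULLN and no further equicontinuity or entropy control is needed beyond what is already invoked in Lemma~\ref{lem:M-ULLN}.
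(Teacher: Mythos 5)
Your proof is correct and is exactly the standard well--separation argument (van der Vaart, Thm.~5.7 / Berge) that the paper itself invokes for this lemma; the paper omits the proof in Appendix~M and simply cites these references, so your argument supplies precisely the intended reasoning. The sandwich $D_K(\widehat\vartheta_{K,1n})\le D_K(\vartheta_K^\star)+2\Delta_n$ combined with the compactness--based separation constant $\eta(U)$ is the right chain, and your measurability remarks cover the only technical loose end.
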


\medskip
\begin{lem}[Berge maximum theorem (single-valued specialization)]\label{lem:M-Berge}
If $Q_n(\cdot\mid \bm \theta)$ has a unique minimizer for each $\theta$ and sublevel sets are compact, then the argmin map $\bm \theta\mapsto M_n(\bm \theta)$ is continuous in a neighborhood of any fixed point.
\emph{Used in:} Propositions 3 and 4 for local continuity, and cycle exclusion under uniqueness.
\end{lem}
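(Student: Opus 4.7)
The plan is to reduce the statement to the classical Berge maximum theorem and then upgrade upper hemicontinuity to ordinary continuity via uniqueness. First I would fix $\bar{\bm{\theta}}$ and choose a compact sublevel set $\mathcal{C}\supseteq M_n(\bar{\bm{\theta}})$ on which $Q_n(\cdot\mid\bar{\bm{\theta}})$ attains its minimum; nonemptiness of the argmin follows from joint continuity of $Q_n$ (B1) together with the Weierstrass extreme value theorem. To control the argmin as $\bm{\theta}$ varies near $\bar{\bm{\theta}}$, I would pick a level $c>\min_{\bm{\theta}'}Q_n(\bm{\theta}'\mid\bar{\bm{\theta}})$ and use joint continuity to obtain a neighborhood $U$ of $\bar{\bm{\theta}}$ on which the $c$-sublevel sets of $Q_n(\cdot\mid\bm{\theta})$ remain contained in a common compact enlargement $\mathcal{C}'\supseteq\mathcal{C}$; this prevents minimizers from escaping to infinity as $\bm{\theta}$ is perturbed.

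Next I would establish the closed-graph (outer semicontinuity) property of the correspondence $\bm{\theta}\mapsto M_n(\bm{\theta})$ on $U$. Let $\bm{\theta}^{(j)}\to\bar{\bm{\theta}}$ in $U$ and $\bm{\eta}^{(j)}\in M_n(\bm{\theta}^{(j)})$ with $\bm{\eta}^{(j)}\to\bar{\bm{\eta}}\in\mathcal{C}'$. The optimality inequality $Q_n(\bm{\eta}^{(j)}\mid\bm{\theta}^{(j)})\le Q_n(\bm{\zeta}\mid\bm{\theta}^{(j)})$ holds for every admissible $\bm{\zeta}$; passing to the limit using joint continuity of $Q_n$ in both arguments yields $Q_n(\bar{\bm{\eta}}\mid\bar{\bm{\theta}})\le Q_n(\bm{\zeta}\mid\bar{\bm{\theta}})$, so $\bar{\bm{\eta}}\in M_n(\bar{\bm{\theta}})$. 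Combined with nonemptiness and uniform compact-valuedness inside $\mathcal{C}'$, this is the standard sufficient package for upper hemicontinuity of $M_n$ on $U$.

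Finally, I would invoke the uniqueness hypothesis: since $M_n(\bm{\theta})$ is a singleton for every $\bm{\theta}\in U$, the correspondence collapses to a single-valued map. For a single-valued correspondence taking values in a Hausdorff space, upper hemicontinuity is equivalent to ordinary continuity of the underlying function, which delivers the conclusion. Equivalently, one could argue directly: if $\bm{\theta}^{(j)}\to\bar{\bm{\theta}}$ and $M_n(\bm{\theta}^{(j)})\not\to M_n(\bar{\bm{\theta}})$, then by the uniform compactness above one can extract a subsequence $M_n(\bm{\theta}^{(j_k)})\to\bm{\eta}^\star\neq M_n(\bar{\bm{\theta}})$; the closed-graph property forces $\bm{\eta}^\star\in M_n(\bar{\bm{\theta}})$, contradicting uniqueness.

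The main obstacle I expect is the neighborhood-uniform confinement of the argmin inside a compact set; pointwise compactness of sublevel sets is not by itself enough, since a sequence of minimizers for nearby parameters could in principle escape. I would handle this through outer semicontinuity of sublevel sets under continuous perturbations, namely $\{Q_n(\cdot\mid\bm{\theta})\le c\}\subseteq\{Q_n(\cdot\mid\bar{\bm{\theta}})\le c+\varepsilon\}$ for $\bm{\theta}$ sufficiently close to $\bar{\bm{\theta}}$, together with the compactness hypothesis applied to the latter sublevel set. Once this containment is secured, the remainder is the standard Berge argument and the single-valued reduction.
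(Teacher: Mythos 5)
Your argument is correct and is exactly the standard closed-graph-plus-uniqueness route that the paper itself relies on: Section~M states this lemma as a reference fact with the proof omitted, and assumption (D1) together with Lemma~\ref{lem:M-closedgraph} encode precisely the outer-semicontinuity step you derive before collapsing the correspondence to a single-valued map via the subsequence argument. The only caveat is that your neighborhood-uniform confinement step requires $|Q_n(\bm{\theta}'\mid\bm{\theta})-Q_n(\bm{\theta}'\mid\bar{\bm{\theta}})|$ to be small uniformly in $\bm{\theta}'$, which pointwise joint continuity alone does not supply on a noncompact domain, but which is immediate under the paper's standing compactness assumptions (B2)/(C1), so no genuine gap remains in this setting.
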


\medskip
\begin{lem}[Closed graph / outer semicontinuity]\label{lem:M-closedgraph}
If $Q_n(\cdot\mid\bm \theta)$ is continuous and sublevel sets are compact, then the update correspondence $M_n:\Theta\rightrightarrows\Theta$ has a closed graph (outer semicontinuous): $\theta^{(j)}\to\bar\theta$, $\eta^{(j)}\in M_n(\theta^{(j)})$, $\eta^{(j)}\to\bar\eta$ $\Rightarrow$ $\bar\eta\in M_n(\bar\theta)$.
\emph{Used in:} Proposition 3 for verifying the invariance of the limit set.
\end{lem}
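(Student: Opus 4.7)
The approach is to pass to the limit in the defining inequality for argmins, using joint continuity of the surrogate. Since $\eta^{(j)}\in M_n(\theta^{(j)})$ means, by definition, that $Q_n(\eta^{(j)}\mid\theta^{(j)})\le Q_n(\vartheta\mid\theta^{(j)})$ for every competitor $\vartheta\in\Theta$, the desired conclusion $\bar\eta\in M_n(\bar\theta)$ reduces to establishing the same inequality with $(\bar\eta,\bar\theta)$ in place of $(\eta^{(j)},\theta^{(j)})$. Fixing an arbitrary $\vartheta\in\Theta$ and sending $j\to\infty$, joint continuity of $Q_n$ in both arguments sends the left-hand side to $Q_n(\bar\eta\mid\bar\theta)$ and the right-hand side to $Q_n(\vartheta\mid\bar\theta)$; the inequality $Q_n(\bar\eta\mid\bar\theta)\le Q_n(\vartheta\mid\bar\theta)$ therefore holds for every $\vartheta$, which is precisely the statement $\bar\eta\in M_n(\bar\theta)$.

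The role of the sublevel-compactness hypothesis is ancillary: it guarantees that $M_n(\theta)$ is nonempty (and in fact compact) at every $\theta$, so that the minimizing selections $\eta^{(j)}$ exist in the first place, and it ensures that the convergent subsequence $\eta^{(j)}\to\bar\eta$ does not drift out of $\Theta$. Once existence and localization in $\Theta$ are in place, the closed-graph step itself is a purely scalar passage to the limit in a single inequality and does not invoke compactness again.

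The only genuine obstacle is confirming that $Q_n$ is jointly continuous in $(\vartheta,\theta)$ rather than merely separately continuous in each slot. Under assumption (B1) this joint continuity is assumed directly, but in the mixture setting one should verify that the integrand defining $Q_n$ admits a dominated-convergence argument uniformly on compacta in both arguments, i.e.\ continuity of $f_{\bm\theta}$, of the responsibilities $w_k(y;\bm\theta)$, and an integrable envelope on $G(\tau_k(y))\,\pi_k' h(y;\bm\phi_k')$ that is stable under perturbations of both $\vartheta$ and $\theta$. Beyond this verification, the lemma is the standard closed-graph (outer-semicontinuity) form of Berge's maximum theorem specialized to the DM surrogate, which is exactly the form invoked by Propositions~3 and~4 of the main text.
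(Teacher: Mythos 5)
Your argument is correct and is the standard one: pass to the limit in the defining inequality $Q_n(\eta^{(j)}\mid\theta^{(j)})\le Q_n(\vartheta\mid\theta^{(j)})$ for each fixed competitor $\vartheta$. The paper states this lemma as a reference fact with the proof omitted (Section M collects "standard facts... proofs are omitted"), so there is no authorial proof to compare against; yours fills that gap in exactly the expected way. Your one substantive observation is also the right one to make: the lemma's hypothesis as written ("$Q_n(\cdot\mid\bm\theta)$ is continuous") literally asserts only continuity in the first slot, whereas the limit passage on the left-hand side needs \emph{joint} continuity of $(\vartheta,\bm\theta)\mapsto Q_n(\vartheta\mid\bm\theta)$; this is supplied by (B1), and you correctly identify sublevel compactness (together with closedness of $\bm\Theta$) as serving only to guarantee nonemptiness of $M_n$ and that $\bar{\bm\eta}$ remains in $\bm\Theta$.
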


\medskip
\begin{lem}[Local quadratic expansion and curvature]\label{lem:M-C2}
Under the fixed-order smoothness (F1),(K1)–(K2),(M1)–(M8),
\[
\nabla_{\bm\theta} D(\bm\theta^\star)=\bm 0,\qquad 
\nabla^2_{\bm\theta} D(\bm\theta^\star)=G''(1)\,I(\bm\theta^\star)\succ0,
\]
and for some $r,\lambda>0$,
\(
D(\bm\theta)-D(\bm\theta^\star)\ge \tfrac{\lambda}{2}\|\bm\theta-\bm\theta^\star\|_2^2
\)
on $B_2(r;\bm\theta^\star)$.
\emph{Used in:} Theorem 8, part (3) and contraction radius in Theorem 3.
\end{lem}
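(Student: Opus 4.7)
The plan is to verify the three claims in order by twice differentiating the integral representation of $D(\bm{\theta})$ at $\bm{\theta}^\star$ and then invoking the correct-specification identity $g=f_{\bm{\theta}^\star}$ together with the calibration (A0). Throughout I write $\delta(y;\bm{\theta})=g(y)/f(y;\bm{\theta})-1$, $s_{\bm{\theta}}(y)=\nabla_{\bm{\theta}}\log f(y;\bm{\theta})$, and $A(\delta)=(1+\delta)G'(\delta)-G(\delta)$, and I will use (F1), (M1)--(M4) to justify all interchanges of differentiation and integration below.

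First, I would differentiate under the integral in $D(\bm{\theta})=\int G(\delta(y;\bm{\theta}))\,f(y;\bm{\theta})\,dy$. A direct computation, using $\nabla_{\bm{\theta}}\delta=-(1+\delta)s_{\bm{\theta}}$ and collecting terms, produces the score representation
\[
\nabla_{\bm{\theta}} D(\bm{\theta})
\;=\;-\int A\!\bigl(\delta(y;\bm{\theta})\bigr)\,s_{\bm{\theta}}(y)\,f(y;\bm{\theta})\,dy,
\]
where the envelope conditions in (M1)--(M3) dominate the integrand so that dominated differentiation applies on a neighborhood of $\bm{\theta}^\star$. Evaluating at $\bm{\theta}^\star$, correct specification gives $\delta\equiv 0$ and the calibration (A0) yields $A(0)=G'(0)-G(0)=0$, so $\nabla_{\bm{\theta}} D(\bm{\theta}^\star)=0$.

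Next, I would differentiate a second time. Because $A(0)=0$, the term that arises from differentiating $s_{\bm{\theta}}f$ at $\bm{\theta}^\star$ drops out, while the term from differentiating $A(\delta)$ contributes $A'(0)$ times an outer product of scores. Using $A'(\delta)=(1+\delta)G''(\delta)$ and the calibration $G''(0)=1$ (equivalently, $G''(1)=1$ in the density-ratio parametrization used in the lemma) gives
\[
\nabla^2_{\bm{\theta}} D(\bm{\theta}^\star)
\;=\;G''(1)\int s_{\bm{\theta}^\star}(y)\,s_{\bm{\theta}^\star}(y)^{\!\top} f(y;\bm{\theta}^\star)\,dy
\;=\;G''(1)\,I(\bm{\theta}^\star),
\]
which is positive definite by (M3). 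The justification of the second interchange uses the $L^1$--continuity of $\nabla^2 f_{\bm{\theta}}$ and of $s_{\bm{\theta}}s_{\bm{\theta}}^\top f_{\bm{\theta}}$ furnished by (M2), together with the envelope in (M4).

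Finally, to get the local quadratic lower bound I would apply Taylor's theorem with integral remainder centred at $\bm{\theta}^\star$. Since $\nabla D(\bm{\theta}^\star)=0$,
\[
D(\bm{\theta})-D(\bm{\theta}^\star)
\;=\;\tfrac12(\bm{\theta}-\bm{\theta}^\star)^{\!\top}\!\!
\int_0^1 2(1-t)\,\nabla^2 D\bigl(\bm{\theta}^\star+t(\bm{\theta}-\bm{\theta}^\star)\bigr)\,dt
\,(\bm{\theta}-\bm{\theta}^\star).
\]
Continuity of $\nabla^2 D$ at $\bm{\theta}^\star$ and positive definiteness of $\nabla^2 D(\bm{\theta}^\star)$ allow me to choose $r>0$ so small that $\nabla^2 D(\vartheta)\succeq \lambda\,\mathbf{I}$ for all $\vartheta\in B_2(r;\bm{\theta}^\star)$, with $\lambda$ equal to half the smallest eigenvalue of $G''(1)I(\bm{\theta}^\star)$, which delivers the claimed lower bound. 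The main obstacle is the second-order dominated-differentiation step: once (M2) and (M4) are invoked to interchange $\nabla^2$ with the integral and to control $\int |u|\,f''^{\,2}/f$, everything else is algebra at $\delta=0$ and a routine Taylor/continuity argument.
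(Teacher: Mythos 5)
Your proposal is correct and matches the paper's own argument (given in step (a) of the proof of Theorem 8, part (3)): differentiate under the integral, use correct specification to kill the gradient, identify the Hessian as $G''(1)\,I(\bm\theta^\star)$, and pass to the local quadratic bound by continuity of the Hessian. The only difference is bookkeeping — you work in the Pearson-residual parametrization with the RAF $A(\delta)$ (so the gradient vanishes because $A(0)=0$), while the paper works with the ratio $u=g/f_{\bm\theta}$ and $B(u)=G(u)-uG'(u)=-A(u-1)$ (so the gradient vanishes via the mean-zero score); you correctly flag that the lemma's $G''(1)$ is $G''(0)=1$ in your parametrization.
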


\medskip
\begin{lem}[Noisy linear recursion]\label{lem:M-noisyrec}
If $x_{t+1}\le \kappa x_t + b$ with $\kappa\in(0,1)$ then 
\(
x_t \le \kappa^t x_0 + \frac{1-\kappa^t}{1-\kappa}b \le \kappa^t x_0 + \frac{b}{1-\kappa}.
\)
\emph{Used in:} Theorem 1, Theorem 3, finite-step bounds in Theorem 1.
\end{lem}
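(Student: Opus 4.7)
The plan is to prove the inequality by straightforward induction on $t$, using the geometric-series identity $\sum_{j=0}^{t-1}\kappa^{j}=(1-\kappa^{t})/(1-\kappa)$ to collect the accumulated noise terms. The second, looser bound then follows immediately from $1-\kappa^{t}\le 1$ and $1-\kappa>0$.

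Concretely, the base case $t=0$ is the trivial equality $x_0\le\kappa^{0}x_0+0$, since $(1-\kappa^{0})/(1-\kappa)=0$. For the inductive step, assume the bound holds at time $t$, i.e.\ $x_t\le \kappa^{t}x_0+\tfrac{1-\kappa^{t}}{1-\kappa}b$. Applying the hypothesis $x_{t+1}\le\kappa x_t+b$ and substituting gives
\[
x_{t+1}\ \le\ \kappa\!\left(\kappa^{t}x_0+\tfrac{1-\kappa^{t}}{1-\kappa}b\right)+b
\ =\ \kappa^{t+1}x_0+\tfrac{\kappa-\kappa^{t+1}+(1-\kappa)}{1-\kappa}\,b
\ =\ \kappa^{t+1}x_0+\tfrac{1-\kappa^{t+1}}{1-\kappa}\,b,
\]
which is the claimed bound at $t+1$. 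The loose form follows from $\tfrac{1-\kappa^{t}}{1-\kappa}\le\tfrac{1}{1-\kappa}$, valid whenever $\kappa\in(0,1)$.

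There is essentially no conceptual obstacle here; the only thing to be careful about is sign and monotonicity bookkeeping: the recursion is stated as an inequality (not an equality), so one must check that the inductive step preserves the direction of the inequality, which it does because multiplication by $\kappa>0$ and addition of $b$ are both monotone. Nonnegativity of $b$ is used implicitly only in the second (``loose'') form via $1-\kappa^{t}\ge 0$; the first (``tight'') form of the bound holds without any sign assumption on $b$ and without any assumption on $x_0$. No further machinery from the paper is needed, and in particular this lemma is used as a black box in the proofs of Theorems 1, 2, and 3 where $\kappa$ is the contraction factor and $b$ is the operator–noise term $\mathcal M_{\mathrm{unif}}(n,\rho)$.
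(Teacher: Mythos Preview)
Your proof is correct and matches the paper's approach: the paper omits a formal proof of this lemma in Section~M but carries out exactly this unrolling inline in the proof of Theorem~\ref{thm:noisy}, writing $x_m\le\kappa x_{m-1}+b\le\kappa^{m}x_0+\bigl(\sum_{k=0}^{m-1}\kappa^{k}\bigr)b\le\kappa^{m}x_0+b/(1-\kappa)$. Your induction is simply the formal version of that iteration, and your remark that $b\ge 0$ is needed only for the loose form is accurate.
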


\medskip
\begin{lem}[Stable (conditional) convergence under sample splitting]\label{lem:M-stable}
If $\mathcal D_{1n}\perp\!\!\!\perp\mathcal D_{2n}$ and, conditionally on $\mathcal D_{1n}$, 
$Z_{n_2}(\mathcal D_{2n})\Rightarrow \mathcal N(0,\Sigma)$ in probability, then the same holds unconditionally; if $\mathbb{P}(\widehat K_n=K_0)\to1$, post-selection versions follow by Slutsky.
\emph{Used in:} Theorem 10.
\end{lem}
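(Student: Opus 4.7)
The plan is to split the lemma into its two assertions---conditional $\Rightarrow$ unconditional weak convergence, and the post-selection consequence---and attack each via a short bounded-convergence plus Slutsky pipeline. First I would make precise what ``conditionally on $\mathcal D_{1n}$ in probability'' means: since $\mathbb R^p$ is Polish a regular conditional distribution of $Z_{n_2}$ given $\mathcal D_{1n}$ exists, so the conditional characteristic function $\phi_n(t):=\mathbb E[e^{i t^\top Z_{n_2}}\mid \mathcal D_{1n}]$ is a well-defined random variable, and conditional weak convergence in probability is equivalent, by L\'evy/Portmanteau on a separable space, to $\phi_n(t)\xrightarrow{p}\phi_\infty(t):=e^{-t^\top \Sigma t/2}$ for every fixed $t\in\mathbb R^p$.

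Once this reformulation is in hand, the first assertion is quick. Since $|\phi_n(t)|\le 1$, the bounded convergence theorem upgrades convergence in probability to convergence of expectations, giving $\mathbb E[\phi_n(t)]\to \phi_\infty(t)$. By the tower property, $\mathbb E[\phi_n(t)]$ equals the unconditional characteristic function $\mathbb E[e^{i t^\top Z_{n_2}}]$, so L\'evy's continuity theorem yields $Z_{n_2}\Rightarrow \mathcal N(0,\Sigma)$ unconditionally. The independence hypothesis $\mathcal D_{1n}\perp\!\!\!\perp\mathcal D_{2n}$ is what makes the random quantity $\phi_n(t)$ a genuine conditional chf of $Z_{n_2}$ given $\mathcal D_{1n}$ and keeps the tower step unbiased; crucially, no almost sure conditional convergence is needed, only the in-probability version.

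For the second (post-selection) assertion, write $Z_{n_2}=\sqrt{n_2}\{\overline{\bm\theta}_{n_2}-\bm\theta^\star(K_0)\}$ and decompose via the indicator of $E_n:=\{\widehat K_n=K_0\}$. On $E_n$ the dimension-matched estimator coincides with the fixed-order estimator from Theorem~9(a), for which the conditional CLT with $\Sigma=I(\bm\theta^\star)^{-1}$ holds, and the first part of the lemma then yields the unconditional Gaussian limit on $E_n$. On $E_n^c$ the contribution vanishes in probability because $\mathbb P(E_n^c)\to 0$ together with tightness of the fixed-order quantities (Prokhorov applied to the fixed-$K$ CLT). A single application of Slutsky glues these two pieces together.

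The main obstacle---and essentially the only substantive piece of work---is the measurability/separability bookkeeping in the first step: passing from ``conditional weak convergence in probability,'' which is a statement about random probability measures on $\mathbb R^p$, to pointwise-in-$t$ convergence of $\phi_n(t)$ in probability, and then reconstructing a full weak limit from countably many such pointwise statements. I would fix a countable dense family of bounded Lipschitz test functions (equivalently, take $t\in\mathbb Q^p$), use the subsequence principle to extract an almost surely convergent subsequence along this countable family, invoke L\'evy's theorem along that subsequence, and then apply the ``every subsequence has a further convergent subsequence'' criterion to lift the conclusion back to the full sequence. After this bookkeeping, the remaining steps are routine.
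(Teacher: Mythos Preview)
The paper does not provide a proof of this lemma: Section~M opens by stating that it collects standard facts and that ``Proofs are omitted; see the cited references.'' Your proposal supplies a correct and standard argument---the characteristic-function plus bounded-convergence route for passing from conditional to unconditional weak convergence, and the indicator decomposition on $\{\widehat K_n=K_0\}$ plus Slutsky for the post-selection part---and this is precisely the mechanism the paper invokes implicitly when it applies the lemma in the proofs of Theorems~9 and~12 (e.g., ``Since $\mathbb{P}(\widehat K_n=K_0)\to1$, \ldots\ in probability (stable convergence), and hence unconditionally as well'').

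One minor remark: the ``main obstacle'' you describe---recovering full conditional weak convergence from pointwise-in-$t$ convergence of $\phi_n(t)$ via a countable family and subsequence extraction---is more than you need here. The lemma only requires the forward direction: you \emph{assume} conditional weak convergence in probability, which immediately gives $\phi_n(t)\xrightarrow{p}\phi_\infty(t)$ for each fixed $t$ (since $e^{it^\top\cdot}$ is a bounded continuous test function), and then bounded convergence plus the tower property plus L\'evy finish the job. The reverse implication, and hence the separability bookkeeping, is not actually invoked.
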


\begin{lem}[Weighted $\chi^2$ limit for quadratic forms]\label{lem:M-wilks}
If $\sqrt n(\widehat{\bm \theta}_n-\bm \theta^\dagger)\Rightarrow \mathcal N(0,H^{-1}V H^{-1})$ with $H\succ0$ and $V\succeq0$, then
\[
n(\widehat{\bm \theta}_n-\bm \theta^\dagger)^\top H\,(\widehat{\bm \theta}_n- \bm \theta^\dagger)\ \Rightarrow\ \sum_{j=1}^{p(K_0)}\lambda_j\,\chi^2_{1,j},
\]
where $\{\lambda_j\}$ are the eigenvalues of $J:=H^{-1/2} V H^{-1/2}$; if $H=V$, the limit is $\chi^2_{p(K_0)}$.
\emph{Used in:}  Theorem finite-step Theorem 6.
\end{lem}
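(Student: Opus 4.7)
The plan is to combine the continuous mapping theorem with a standard diagonalization of the quadratic form in a Gaussian. Since the conclusion is purely a statement about the limiting distribution of a smooth functional of $\sqrt n(\widehat{\bm\theta}_n-\bm\theta^\dagger)$, no new probabilistic input beyond the assumed CLT is required.

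First I would set $X_n:=\sqrt n(\widehat{\bm\theta}_n-\bm\theta^\dagger)$, so that by hypothesis $X_n\Rightarrow X\sim\mathcal N(0,\Sigma)$ with $\Sigma:=H^{-1}VH^{-1}$. The target quadratic form rewrites as $n(\widehat{\bm\theta}_n-\bm\theta^\dagger)^\top H(\widehat{\bm\theta}_n-\bm\theta^\dagger)=X_n^\top H X_n$. Because $H\succ 0$ is fixed and the map $x\mapsto x^\top H x$ is continuous on $\mathbb R^{p(K_0)}$, the continuous mapping theorem gives $X_n^\top H X_n\Rightarrow X^\top H X$.

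Next I would identify $X^\top H X$ as a weighted sum of independent $\chi^2_1$ variables. Apply the linear change of variable $Y:=H^{1/2}X$, which is Gaussian with mean zero and covariance
\[
\mathrm{Cov}(Y)=H^{1/2}\Sigma H^{1/2}=H^{1/2}H^{-1}VH^{-1}H^{1/2}=H^{-1/2}VH^{-1/2}=:J,
\]
so that $X^\top H X=\|Y\|_2^2$ with $Y\sim\mathcal N(0,J)$. Diagonalize the symmetric positive semidefinite $J$ as $J=U\Lambda U^\top$ with $\Lambda=\mathrm{diag}(\lambda_1,\dots,\lambda_{p(K_0)})$ and write $Y=U\Lambda^{1/2}Z$ for $Z\sim\mathcal N(0,I_{p(K_0)})$; then
\[
\|Y\|_2^2=Z^\top\Lambda Z=\sum_{j=1}^{p(K_0)}\lambda_j Z_j^2=\sum_{j=1}^{p(K_0)}\lambda_j\chi^2_{1,j},
\]
with the $\chi^2_{1,j}$ mutually independent. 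This is the claimed weighted $\chi^2$ limit. In the special case $H=V$ we have $J=H^{-1/2}HH^{-1/2}=I_{p(K_0)}$, hence every $\lambda_j=1$ and the sum collapses to $\chi^2_{p(K_0)}$.

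There is no genuine obstacle: the only mild care needed is that $H^{1/2}$ and $H^{-1/2}$ are well defined and continuous in $H$ (immediate from $H\succ0$), and that $V\succeq0$ guarantees $J\succeq0$ so the $\lambda_j$ are nonnegative and the quadratic form in $Z$ is genuinely a nonnegative combination of independent chi-squares. If one later wishes to use this lemma with estimated $(\widehat H_n,\widehat V_n)$ in place of $(H,V)$ — as is implicit in the Godambe-Wilks statement of Corollary 4 — the same argument combined with Slutsky's lemma (using $\widehat H_n\to H$, $\widehat V_n\to V$ in probability) delivers the analogous limit, but that extension is not needed for the present statement.
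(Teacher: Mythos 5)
Your proof is correct and is exactly the standard continuous-mapping-plus-diagonalization argument that the paper implicitly relies on: Lemma~\ref{lem:M-wilks} appears in Appendix~M as a reference result whose proof is explicitly omitted, and the proofs of Corollary~4 and the Appendix~N Godambe--Wilks theorem simply invoke the weighted $\chi^2$ limit without detail. Your derivation (CMT for $x\mapsto x^\top Hx$, the change of variable $Y=H^{1/2}X$ giving covariance $J=H^{-1/2}VH^{-1/2}$, and spectral decomposition of $J$) supplies that omitted argument correctly, including the degenerate case $V\succeq 0$ and the collapse to $\chi^2_{p(K_0)}$ when $H=V$.
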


\newpage 

\section*{N: Reference asymptotics: Z–estimation CLTs and Godambe–Wilks Theorems}


\subsection{Notation and standing assumptions}

Let $\bm{\Theta}\subset\mathbb R^{p(K)}$ be the parameter space, $f(\cdot;{\bm{\theta}})$ the observed–data model, and
$s_{\bm{\theta}}(y)=\nabla_{\bm{\theta}}\log f(y;{\bm{\theta}})$ the score. For a convex generator
$G:[-1,\infty)\to\mathbb R$ with $G(0)=G'(0)=0$ and $G''(0)=1$ (calibrated), define the residual–adjustment function
\[
A(\delta)=(1+\delta)G'(\delta)-G(\delta), \qquad A'(0)=G''(0)=1.
\]
For any density/pmf $g$, define the divergence and DM score map
\[
D_G(g,f_{\bm{\theta}})=\int G\!\Big(\frac{g}{f_{\bm{\theta}}}-1\Big)\,f_{\bm{\theta}}\,dy,\qquad
\Psi({\bm{\theta}};g):=\nabla_{\bm{\theta}} D_G(g,f_{\bm{\theta}})= -\int A\!\Big(\frac{g}{f_{\bm{\theta}}}-1\Big)\,s_{\bm{\theta}}\, f_{\bm{\theta}}\,dy.
\]
Let $g_n$ be a plug–in estimate of $g$ (empirical or a discrete–kernel estimator). We use the following assumptions; we quote them where needed.

\begin{description}
\item[(A1) Local well–posedness.] There is a neighborhood $\mathcal N$ of the target ${\bm{\theta}}^\dagger$ such that
$\Psi(\cdot;g)$ is continuously Fr\'echet–differentiable on $\mathcal N$, and the Jacobian
\[
H:=\nabla_{\bm{\theta}}\Psi({\bm{\theta}}^\dagger;g)=\nabla_{\bm{\theta}}^2 D_G(g,f_{\bm{\theta}})\big|_{{\bm{\theta}}={\bm{\theta}}^\dagger}
\]
is nonsingular; further, $D_G(g,\cdot)$ is $\lambda$–strongly convex on $\mathcal N$ (eigenvalues of $H$ bounded below by $\lambda>0$).

\item[(A2) Nuisance differentiability.] For any signed $h$ with $\int h=0$, the pathwise (G\^ateaux) derivative
$\partial_g\Psi({\bm{\theta}}^\dagger;g)[h]$ exists and is continuous in $h$.

\item[(A3) Second moments and entropy control.] There is an envelope $F\in L^2(g)$ such that
$\sup_{{\bm{\theta}}\in\mathcal N}\|A'(\tfrac{g}{f_{\bm{\theta}}}-1)s_{\bm{\theta}}\|\le F$, and the class
$\{A'(\tfrac{g}{f_{\bm{\theta}}}-1)s_{\bm{\theta}}:{\bm{\theta}}\in\mathcal N\}$ admits a Donsker/bracketing bound ensuring a uniform
$O_p(n^{-1/2})$ empirical process.

\item[(A4) Plug–in rate.] $\|g_n-g\|_{\mathcal H}=o_p(n^{-1/2})$ in a norm that implies
$\sup_{{\bm{\theta}}\in\mathcal N}\|\Psi_n({\bm{\theta}})-\Psi({\bm{\theta}};g)\|=o_p(n^{-1/2})$, where $\Psi_n({\bm{\theta}}):=\nabla_{\bm{\theta}} D_G(g_n,f_{\bm{\theta}})$.

\item[(A5) FOS and operator noise (for contraction).] On $B_2(r';{\bm{\theta}}^\ast)$ the population DM map
$M({\bm{\theta}}):=\arg\min_{{\bm{\theta}}'}\mathcal Q_G({\bm{\theta}}'\!\mid {\bm{\theta}})$ satisfies the first–order stability (FOS)
\[
\|M({\bm{\theta}})-M({\bm{\theta}}^\ast)\|\le \frac{\gamma_K}{\lambda}\,\|{\bm{\theta}}-{\bm{\theta}}^\ast\|,\qquad
\gamma_K\le \frac{C_{\mathrm{fos}}}{\pi_{\min}}\cdot K\cdot L_{\mathrm{comp}},
\]
and the sample map $M_n$ obeys
\[
\sup_{{\bm{\theta}}\in B_2(r';{\bm{\theta}}^\ast)}\|M_n({\bm{\theta}})-M({\bm{\theta}})\|
\le C_{\mathrm{op}}A'_{\max}\sqrt{\frac{p(K)+\log(1/\rho)}{n}},\qquad
A'_{\max}:=\sup_{{\bm{\theta}}\ge -1}|A({\bm{\theta}})|.
\]

\item[(A6) Robustness envelope.] For any density $q$,
$\sup_{{\bm{\theta}}\in\mathcal N}\|\nabla_{\bm{\theta}} D_G(q,f_{\bm{\theta}})\|\le S_K\,A_{\max}$, with
$S_K\lesssim C_1\sqrt{p(K)}$ or $C_2 K/\pi_{\min}$ in regular mixtures.
\end{description}

\medskip
At the correctly specified model $g=f_{{\bm{\theta}}^\ast}$, calibration gives $H=I({\bm{\theta}}^\ast)$ (Fisher information). For Hellinger (unbounded RAF), (A6) holds if $q/f_{\bm{\theta}}$ is uniformly bounded on $\mathcal N$.

\subsection{G\^ateaux derivative in the nuisance (orthogonality)}

\begin{lem}[G\^ateaux derivative]\label{lem:gateaux-app}
For any signed perturbation $h$ with $\int h=0$,
\[
\partial_g \Psi(\bm \theta;g)[h]
= -\int A'\!\Big(\frac{g}{f_{\bm{\theta}}}-1\Big)\,s_{\bm{\theta}}\, h\,dy.
\]
In particular, at $({\bm{\theta}},g)=({\bm{\theta}}^\ast,f_{{\bm{\theta}}^\ast})$, $\partial_g\Psi({\bm{\theta}}^\ast;g)[h]=-\int s_{{\bm{\theta}}^\ast}h\,dy$ (calibrated orthogonality).
\end{lem}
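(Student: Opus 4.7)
The plan is to compute the G\^ateaux derivative by direct differentiation under the integral sign. Starting from
\[
\Psi({\bm{\theta}};g+th)=-\int A\!\Big(\frac{g(y)+t\,h(y)}{f_{\bm{\theta}}(y)}-1\Big)\,s_{\bm{\theta}}(y)\,f_{\bm{\theta}}(y)\,dy,
\]
I will differentiate the integrand in $t$ at $t=0$ via the chain rule. Since $\partial_t\big[(g+th)/f_{\bm{\theta}}-1\big]=h/f_{\bm{\theta}}$, the pointwise derivative equals
\[
-A'\!\Big(\frac{g(y)}{f_{\bm{\theta}}(y)}-1\Big)\,\frac{h(y)}{f_{\bm{\theta}}(y)}\,s_{\bm{\theta}}(y)\,f_{\bm{\theta}}(y)
= -A'\!\Big(\frac{g(y)}{f_{\bm{\theta}}(y)}-1\Big)\,s_{\bm{\theta}}(y)\,h(y),
\]
where the ratio convention from Section~2 handles the null set $\{f_{\bm{\theta}}=0\}$.

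Next I will justify exchanging $\partial_t$ and $\int$ using a standard dominated--convergence argument. For $|t|\le t_0$ small, the difference quotients are bounded by $\sup_{|s|\le t_0}\!|A'(\frac{g+sh}{f_{\bm{\theta}}}-1)|\,|s_{\bm{\theta}}|\,|h|$, which is $(g\!+\!|h|)$--integrable by the local Lipschitz regularity of $A'$ (recall $A'(\delta)=(1+\delta)G''(\delta)$ is continuous by the standing $C^3$ hypothesis on $G$) together with the envelope condition (A3) bounding $A'(\tfrac{g}{f_{\bm{\theta}}}-1)s_{\bm{\theta}}$ by $F\in L^2(g)$. The constraint $\int h=0$ is not required for the interchange itself; it is needed only so that the perturbation $g+th$ is tangent to the set of probability measures and thus a legitimate direction in the statistical model.

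Finally I will evaluate at the correctly specified model. When $g=f_{{\bm{\theta}}^\ast}$, the argument $g/f_{{\bm{\theta}}^\ast}-1\equiv 0$, and the calibration $A'(0)=G''(0)=1$ imposed in Section~2 collapses the formula to $-\int s_{{\bm{\theta}}^\ast}(y)\,h(y)\,dy$, which is precisely the orthogonality statement used downstream to identify the Fisher information as the limiting Jacobian.

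The only delicate point is the interchange: away from the model the factor $A'(g/f_{\bm{\theta}}-1)$ can be large on sets where $f_{\bm{\theta}}$ is small, so the envelope in (A3) and the integrability of $h$ in the seminorm $\|\cdot\|_{\mathcal H}$ introduced in Remark~1 are essential. For bounded-RAF generators (NED, vNED) $A'_{\max}<\infty$ makes this immediate, while for KL or Hellinger the assumed local density-ratio/score floor (invoked elsewhere in the paper) plays the same role; I will note both cases explicitly so the lemma is applicable in all instantiations used in Sections~4--5.
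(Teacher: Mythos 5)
Your proposal is correct and follows essentially the same route as the paper's proof: form the difference quotient of $\Psi({\bm{\theta}};g+th)$, pass the limit inside the integral by dominated convergence under (A3), and use the cancellation $\frac{h}{f_{\bm{\theta}}}\,f_{\bm{\theta}}=h$ together with $A'(0)=1$ at the model. Your added remarks on the role of $\int h=0$ and on bounded versus unbounded RAF are consistent with how the paper uses the lemma elsewhere but do not change the argument.
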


\begin{proof}
Write $\delta(y;{\bm{\theta}},g)=g/f_{\bm{\theta}}-1$. For $g_t=g+th$, $t\in\mathbb R$,
\[
\frac{\Psi({\bm{\theta}};g_t)-\Psi({\bm{\theta}};g)}{t}
= -\int \frac{A(\delta_t)-A(\delta)}{t}\,s_{\bm{\theta}} f_{\bm{\theta}}\,dy
\to -\int A'(\delta)\,\frac{h}{f_{\bm{\theta}}}\,s_{\bm{\theta}} f_{\bm{\theta}}\,dy
\]
by dominated convergence, using (A3). Evaluating at $\delta\equiv 0$ gives $A'(0)=1$ and the claim at the model.
\end{proof}

\subsection{DM–Wilks and Godambe–Wilks (with robust pivot)}

\begin{thm}[DM–Wilks and Godambe–Wilks]\label{thm:wilks-godambe-app}
Let $\hat{\bm{\theta}}_n=\arg\min_{\bm{\theta}} D_G(g_n,f_{\bm{\theta}})$ and ${\bm{\theta}}^\dagger=\arg\min_{\bm{\theta}} D_G(g,f_{\bm{\theta}})$. Under \textup{(A1)–(A4)},
\[
2n\{D_G(g_n,f_{{\bm{\theta}}^\dagger})-D_G(g_n,f_{\hat{\bm{\theta}}_n})\}
= n(\hat{\bm{\theta}}_n-{\bm{\theta}}^\dagger)^\top H(\hat{\bm{\theta}}_n-{\bm{\theta}}^\dagger) + o_p(1),
\]
hence
\[
2n\{D_G(g_n,f_{{\bm{\theta}}^\dagger})-D_G(g_n,f_{\hat{\bm{\theta}}_n})\}\ \Rightarrow\ 
\sum_{j=1}^{p(K_0)} \lambda_j\,\chi^2_{1,j},
\]
where $\{\lambda_j\}$ are the eigenvalues of $J:=H^{-1/2} V H^{-1/2}$.
Under correct specification and calibration, $H=V=I({\bm{\theta}}^\ast)$ and the limit is $\chi^2_{p(K_0)}$.
\end{thm}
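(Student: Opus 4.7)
The plan is to reduce the divergence contrast to a quadratic form via a second-order Taylor expansion around $\hat{\bm{\theta}}_n$ (where the sample gradient vanishes), and then to transfer the Godambe CLT from Theorem~\ref{thm:sandwich-app} through a continuous mapping argument. First, I will write the mean-value expansion
\[
D_G(g_n,f_{\bm{\theta}^\dagger})-D_G(g_n,f_{\hat{\bm{\theta}}_n})
= -\langle \Psi_n(\hat{\bm{\theta}}_n),\,\bm{\theta}^\dagger-\hat{\bm{\theta}}_n\rangle
+ \tfrac{1}{2}\,(\hat{\bm{\theta}}_n-\bm{\theta}^\dagger)^\top H_n(\tilde{\bm{\theta}}_n)\,(\hat{\bm{\theta}}_n-\bm{\theta}^\dagger),
\]
where $H_n({\bm{\theta}}):=\nabla^2_{\bm{\theta}} D_G(g_n,f_{\bm{\theta}})$ and $\tilde{\bm{\theta}}_n$ lies on the segment between $\bm{\theta}^\dagger$ and $\hat{\bm{\theta}}_n$. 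Since $\hat{\bm{\theta}}_n$ minimizes $D_G(g_n,f_{\cdot})$, the linear term vanishes by the first-order condition $\Psi_n(\hat{\bm{\theta}}_n)=0$.

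Next, I will show that the mean-value Hessian can be replaced by the population Hessian $H=\nabla^2_{\bm{\theta}} D_G(g,f_{\bm{\theta}^\dagger})$ at a cost of $o_p(1)$ after the $2n$ rescaling. By the Godambe CLT (Theorem~\ref{thm:sandwich-app}) we have $\sqrt{n}(\hat{\bm{\theta}}_n-\bm{\theta}^\dagger)=O_p(1)$, hence $\tilde{\bm{\theta}}_n\to\bm{\theta}^\dagger$ in probability and $\tilde{\bm{\theta}}_n\in\mathcal N$ eventually. Combining (A1) (continuity of $\nabla^2_{\bm{\theta}} D_G(g,f_{\bm{\theta}})$ on $\mathcal N$) with the uniform plug-in control implied by (A3)--(A4)
(so that $\sup_{\bm{\theta}\in\mathcal N}\|H_n({\bm{\theta}})-\nabla^2_{\bm{\theta}} D_G(g,f_{\bm{\theta}})\|\to 0$ in probability) yields $H_n(\tilde{\bm{\theta}}_n)=H+o_p(1)$. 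Multiplying the expansion by $2n$ gives
\[
2n\big\{D_G(g_n,f_{\bm{\theta}^\dagger})-D_G(g_n,f_{\hat{\bm{\theta}}_n})\big\}
= n(\hat{\bm{\theta}}_n-\bm{\theta}^\dagger)^\top H(\hat{\bm{\theta}}_n-\bm{\theta}^\dagger)
+ n\cdot O_p(\|\hat{\bm{\theta}}_n-\bm{\theta}^\dagger\|^2)\cdot o_p(1),
\]
and the last term is $o_p(1)$ because $n\|\hat{\bm{\theta}}_n-\bm{\theta}^\dagger\|^2=O_p(1)$.

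For the distributional limit, I will apply the continuous mapping theorem to the quadratic form. Writing $W_n:=\sqrt{n}(\hat{\bm{\theta}}_n-\bm{\theta}^\dagger)\Rightarrow W\sim\mathcal N(0,H^{-1}VH^{-1})$ and decomposing $W=H^{-1/2}U$ with $U\sim\mathcal N(0,J)$, $J:=H^{-1/2}VH^{-1/2}$, we obtain $W_n^\top H W_n\Rightarrow U^\top U$; diagonalizing $J=Q\Lambda Q^\top$ with $\Lambda=\mathrm{diag}(\lambda_1,\dots,\lambda_{p(K_0)})$ rewrites $U^\top U=\sum_{j=1}^{p(K_0)}\lambda_j\xi_j^2$ with $\xi_j$ i.i.d.\ $\mathcal N(0,1)$, delivering the weighted-$\chi^2$ limit. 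Under correct specification with $A'(0)=1$, Corollary~\ref{cor:eff-app} gives $H=V=I(\bm{\theta}^\ast)$, so $J=\mathrm{I}_{p(K_0)}$ and the limit collapses to $\chi^2_{p(K_0)}$.

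The main obstacle will be the uniform replacement of $H_n(\tilde{\bm{\theta}}_n)$ by $H$: since the Hessian depends on the plug-in $g_n$ through the ratio $g_n/f_{\bm{\theta}}$ inside $A'$ and $A''$, bounding $\sup_{\bm{\theta}\in\mathcal N}\|H_n({\bm{\theta}})-\nabla^2_{\bm{\theta}} D_G(g,f_{\bm{\theta}})\|=o_p(1)$ requires a second-order analog of (A3)--(A4). This is straightforward for bounded-RAF generators on a local neighborhood (where the composed envelopes are integrable), but for unbounded RAFs such as KL or HD it presupposes a local density-ratio/score floor on $\mathcal N$, matching the caveat already recorded in Sections~\ref{sec:robustness}--\ref{sec:breakdown}; outside that regime one must control the third-derivative remainder by a separate envelope argument.
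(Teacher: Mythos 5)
Your proposal is correct and follows essentially the same route as the paper's proof: a second-order expansion of $D_G(g_n,f_{\cdot})$ at the sample minimizer (where the linear term vanishes), uniform replacement of the mean-value Hessian by $H$ under (A1)--(A4), and transfer of the Godambe CLT through the quadratic form to obtain the weighted-$\chi^2$ limit. The only cosmetic issue is the sign on the (vanishing) linear term in your expansion, and your closing remark about second-order envelopes for unbounded RAFs is a reasonable elaboration of what the paper subsumes under (A1)--(A4).
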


\begin{proof}
Let $\tilde{\bm{\theta}}_n$ lie on the segment between $\hat{\bm{\theta}}_n$ and ${\bm{\theta}}^\dagger$. A second–order expansion gives
\[
D_G(g_n,f_{{\bm{\theta}}^\dagger})-D_G(g_n,f_{\hat{\bm{\theta}}_n})
= \frac12({\bm{\theta}}^\dagger-\hat{\bm{\theta}}_n)^\top \nabla_{\bm{\theta}}^2 D_G(g_n,f_{\tilde{\bm{\theta}}_n})({\bm{\theta}}^\dagger-\hat{\bm{\theta}}_n).
\]
By (A1)–(A4), $\sup_{{\bm{\theta}}\in\mathcal N}\|\nabla_{\bm{\theta}}^2 D_G(g_n,f_{\bm{\theta}})-H\|=o_p(1)$, so the right–hand side equals
$\tfrac12({\bm{\theta}}^\dagger-\hat{\bm{\theta}}_n)^\top H({\bm{\theta}}^\dagger-\hat{\bm{\theta}}_n)+o_p(\|\hat{\bm{\theta}}_n-{\bm{\theta}}^\dagger\|^2)$.
Multiplying by $2n$ and using $\sqrt n(\hat\theta_n-\theta^\dagger)\Rightarrow \mathcal N(0,H^{-1} V H^{-1})$ ( Corollary 3 to Theorem 3) yields the weighted $\chi^2$ limit.
\end{proof}

\begin{cor}[Godambe–calibrated deviance]\label{cor:gw-pivot-app}
Let $\widehat H=\nabla_{\bm{\theta}}^2 D_G(g_n,f_{\bm{\theta}})|_{\hat{\bm{\theta}}_n}$ and
\[
\widehat V=\frac1n\sum_{i=1}^n \Big[A'\!\Big(\frac{g_n(Y_i)}{f_{\hat{\bm{\theta}}_n}(Y_i)}-1\Big)s_{\hat{\bm{\theta}}_n}(Y_i)\Big]
\Big[A'\!\Big(\frac{g_n(Y_i)}{f_{\hat{\bm{\theta}}_n}(Y_i)}-1\Big)s_{\hat{\bm{\theta}}_n}(Y_i)\Big]^{\!\top}.
\]
Then the Wald–type pivot
\[
\Lambda_n^{\mathrm{GW}}:=n(\hat{\bm{\theta}}_n-{\bm{\theta}}^\dagger)^\top \widehat V^{-1}(\hat{\bm{\theta}}_n-{\bm{\theta}}^\dagger)
\ \Rightarrow\ \chi^2_{p(K_0)},
\]
even under misspecification. At the model (calibrated), the raw DM deviance has a $\chi^2_{p(K_0)}$ limit (DM–Wilks).
\end{cor}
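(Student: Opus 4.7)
The plan is to combine the finite--step Godambe CLT (Corollary~\ref{cor:finite-godambe}) with consistency of $\widehat V$, then apply continuous mapping and Slutsky to reduce everything to a quadratic form in a Gaussian, and finally identify that form with $\chi^2_{p(K_0)}$ via a second--Bartlett--type identity. First I would establish $\widehat V \xrightarrow{p} V$ by decomposing $\widehat V - V$ into (i) empirical--process fluctuation $(P_n - P)[\phi\phi^\top]$ with $\phi(Y):=A'\!\big(\tfrac{g(Y)}{f_{\bm{\theta}^\dagger}(Y)}-1\big)\,s_{\bm{\theta}^\dagger}(Y)$, bounded using the Donsker envelope in (A3); (ii) a plug--in error controlled by local Lipschitz continuity of $A'$ and the rate $\|g_n-g\|_{\mathcal H}=o_p(1)$ from (A4); and (iii) a parameter--substitution error controlled by the consistency $\hat{\bm{\theta}}_n \xrightarrow{p} \bm{\theta}^\dagger$ from Theorem~\ref{Thm:1} or Corollary~\ref{cor:finite-godambe}, together with (F1) and the envelope in (A3). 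Continuous mapping then gives $\widehat V^{-1} \xrightarrow{p} V^{-1}$ under the non--degeneracy of $V$ on $\mathcal N$.

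Next, Corollary~\ref{cor:finite-godambe} yields $\sqrt{n}(\hat{\bm{\theta}}_n-\bm{\theta}^\dagger)\Rightarrow Z\sim\mathcal N(0, H^{-1}VH^{-1})$, and Slutsky's lemma gives
\[
\Lambda_n^{\mathrm{GW}}\ \Rightarrow\ Z^\top V^{-1} Z.
\]
Writing $Z=H^{-1}V^{1/2}W$ with $W\sim\mathcal N(0, I_{p(K_0)})$, the limit equals $W^\top M W$ where $M:=V^{1/2}H^{-1}V^{-1}H^{-1}V^{1/2}$, a quadratic form whose distribution is $\chi^2_{p(K_0)}$ if and only if $M=I_{p(K_0)}$, equivalently the information identity $H=V$.

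The main obstacle is therefore verifying $H=V$. At the correctly specified model $g=f_{\bm{\theta}^\ast}$ with calibration $A'(0)=1$, Lemma~\ref{lem:gateaux-app} yields $\partial_g\Psi(\bm{\theta}^\ast;g)[h]=-\int s_{\bm{\theta}^\ast}\,h\,dy$, and differentiating $\Psi(\bm{\theta};g)$ at $\bm{\theta}^\ast$ gives $H=V=I(\bm{\theta}^\ast)$, so $M=I_{p(K_0)}$ and $\Lambda_n^{\mathrm{GW}}\Rightarrow\chi^2_{p(K_0)}$ as claimed; the companion DM--Wilks statement for the raw deviance then follows by applying Theorem~\ref{thm:wilks-godambe-app} and specializing the weighted--$\chi^2$ limit to $H=V=I(\bm{\theta}^\ast)$, where all Godambe eigenvalues equal one. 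Under genuine misspecification the identity $H=V$ fails in general, so to preserve the literal ``even under misspecification'' clause the proof must invoke an added hypothesis: either a DM--specific Bartlett--type identity forcing $H=V$ at $\bm{\theta}^\dagger$, or an interpretation of $\widehat V$ as a plug--in for the full sandwich $H^{-1}VH^{-1}$ (in which case $\Lambda_n^{\mathrm{GW}}$ is a genuine Wald pivot, whose $\chi^2_{p(K_0)}$ limit follows immediately from Slutsky). Either reading closes the argument via the Slutsky step above; the honest core of the proof is that without one of these two ingredients, the Slutsky limit is a weighted $\chi^2$ rather than $\chi^2_{p(K_0)}$.
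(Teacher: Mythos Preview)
Your approach---Godambe CLT plus consistency of $\widehat V$ plus Slutsky---is exactly the paper's proof, which consists of a single sentence invoking Corollary~\ref{cor:finite-godambe} and ``consistency of $\widehat V$'' to conclude the $\chi^2_{p(K_0)}$ limit. Your decomposition of the $\widehat V\to V$ argument into empirical fluctuation, plug--in error, and parameter--substitution error is a useful fleshing out of what the paper leaves implicit.

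Where you go beyond the paper is in correctly flagging a step that the paper's own proof skips. With $\sqrt{n}(\hat{\bm\theta}_n-\bm\theta^\dagger)\Rightarrow Z\sim\mathcal N(0,H^{-1}VH^{-1})$ and $\widehat V\to V$, Slutsky gives $Z^\top V^{-1}Z$, which is $\chi^2_{p(K_0)}$ if and only if $V^{-1}=(H^{-1}VH^{-1})^{-1}=HV^{-1}H$; this is not automatic under misspecification. The paper simply asserts the $\chi^2$ limit without justifying this, so your caution is warranted rather than a gap on your part. Your two proposed remedies---a DM--Bartlett identity forcing $H=V$ at $\bm\theta^\dagger$, or reading $\widehat V$ as a plug--in for the full sandwich $H^{-1}VH^{-1}$---are precisely the right options. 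The fact that the statement defines $\widehat H$ but never uses it in $\Lambda_n^{\mathrm{GW}}$ strongly suggests the intended pivot was $n(\hat{\bm\theta}_n-\bm\theta^\dagger)^\top \widehat H\,\widehat V^{-1}\widehat H\,(\hat{\bm\theta}_n-\bm\theta^\dagger)$, for which the $\chi^2_{p(K_0)}$ limit under misspecification follows immediately from your Slutsky argument. For the correctly specified, calibrated case your reduction to $H=V=I(\bm\theta^\ast)$ via Lemma~\ref{lem:gateaux-app} matches the paper and is complete.
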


\begin{proof}
By Corollary 3 to Theorem 3 and consistency of $\widehat V$, $\sqrt n(\hat{\bm{\theta}}_n-{\bm{\theta}}^\dagger)\Rightarrow \mathcal N(0,H^{-1} V H^{-1})$ and
$n(\hat{\bm{\theta}}_n-{\bm{\theta}}^\dagger)^\top \widehat V^{-1}(\hat{\bm{\theta}}_n-{\bm{\theta}}^\dagger)\Rightarrow \chi^2_{p(K_0)}$.
\end{proof}

\phantomsection\label{S-Inequality00}
\phantomsection\label{S-Special_cases}
\phantomsection\label{S-supp:update-pi}
\phantomsection\label{S-add-simulations}
\phantomsection\label{S-ori:image_b}
\phantomsection\label{S-em_a:poiss}
\phantomsection\label{S-vNEDmix_a:poiss}
\phantomsection\label{S-supp:add-image-ana}

\bibliography{Bibfile.bib}

\end{document}